\newenvironment{sis}{\left\{\begin{aligned}}{\end{aligned}\right.}
\newtheorem{thm}{Theorem}[section]
\newtheorem{lemma}[thm]{Lemma}
\newtheorem{lemdef}[thm]{Lemma-Definition}
\newtheorem{prop}[thm]{Proposition}
\newtheorem{cor}[thm]{Corollary}
\newtheorem{fact}[thm]{Fact}
\newtheorem*{thmA}{Theorem A}
\newtheorem*{thmB}{Theorem B}
\newtheorem*{thmC}{Theorem C}
\newtheorem*{thmD}{Theorem D}
\numberwithin{equation}{section}
\theoremstyle{definition}
\newtheorem{defi}[thm]{Definition}
\newtheorem{convention}[thm]{}
\theoremstyle{remark}
\newtheorem{remark}[thm]{Remark}
\newtheorem{example}[thm]{Example}
\newcommand{\Z}{\mathbb{Z}}
\newcommand{\Q}{\mathbb{Q}}
\newcommand{\R}{\mathbb{R}}
\newcommand{\Pic}{\operatorname{Pic}}
\newcommand{\Hilb}{\operatorname{Hilb}}
\newcommand{\un}{\underline}
\newcommand{\ov}{\overline}
\newcommand{\wt}{\widetilde}
\newcommand{\wh}{\widehat}
\DeclareMathOperator{\Hom}{{Hom}}
\DeclareMathOperator{\Ext}{{Ext}}
\DeclareMathOperator{\Spec}{Spec \:}
\DeclareMathOperator{\Spf}{Spf \:}
\DeclareMathOperator{\Def}{Def}
\def \Im{{\rm Im}}
\DeclareMathOperator{\Supp}{Supp}
\DeclareMathOperator{\id}{id}
\def \PP{\mathbb{P}}
\def \Gm{{\mathbb G}_m}
\def \Ga{{\mathbb G}_a}
\def \GL{{\rm GL}}
\def\J{\overline J}
\def \P{\mathcal P}
\def \F{\mathcal F}
\def \X{\mathcal X}
\def \Y{\mathcal Y}
\def\I{\mathcal I}
\def \E{\mathcal E}
\def\O{\mathcal O}
\def \A{\mathcal A}
\def \SS{\mathcal S}
\def\M0{\mathcal M^0}
\def\bJ{{\mathbb J}}
\def\bJbar{\ov{{\mathbb J}}}
\def \M{\mathcal M}
\def\m {\mathfrak{m}}
\DeclareMathOperator{\Arr}{\mathcal{A}_X}
\DeclareMathOperator{\Pol}{\mathcal{P}_X}
\DeclareMathOperator{\V}{\mathcal{V}}
\newcommand{\bbN}{{\mathbb N}}
\newcommand{\bbC}{{\mathbb C}}
\newcommand{\bbQ}{{\mathbb Q}}
\newcommand{\bbZ}{{\mathbb Z}}
\newcommand{\cZ}{{\mathcal Z}}
\newcommand{\rk}{\operatorname{rk}}
\newcommand{\Hilbs}{\operatorname{{}^s Hilb}}
\title{Fine compactified Jacobians of reduced curves
}
\author{Margarida Melo, Antonio Rapagnetta, Filippo Viviani}
\begin{document}

\keywords{Compactified Jacobians, locally planar singularities, Abel map.}


\begin{abstract}

To every singular reduced projective curve $X$ one can associate many fine compactified Jacobians, depending on the choice of a polarization on $X$, each of which yields a modular compactification of a disjoint union of a finite
number of copies of the generalized Jacobian of $X$. We investigate the geometric properties of fine compactified Jacobians focusing on curves having locally planar singularities. We give examples of nodal curves admitting non isomorphic (and even non homeomorphic  over the field of complex numbers)  fine compactified Jacobians. We study universal fine compactified Jacobians, which are relative fine compactified Jacobians over the semiuniversal deformation space of the curve $X$. Finally, we investigate the existence of twisted Abel maps with values in suitable fine compactified Jacobians.

\end{abstract}

\maketitle

\tableofcontents

\section{Introduction}

\subsection*{Aim and motivation}

The aim of this paper is to study fine compactified Jacobians of a \emph{reduced} projective connected curve $X$ over an algebraically closed field $k$ (of arbitrary characteristic),
with special emphasis in the case where $X$ has locally planar singularities.

Recall that given such a curve $X$,  the {\em generalized Jacobian} $J(X)$ of $X$, defined to be the connected component of the Picard variety of $X$ containing the identity, parametrizes line bundles on $X$ that have multidegree zero, i.e. degree zero on each irreducible component of $X$. It turns out that $J(X)$ is a smooth irreducible algebraic group of dimension equal to the arithmetic genus $p_a(X)$ of $X$.
 However, if $X$ is a singular curve, the generalized Jacobian $J(X)$ is rarely complete. The problem of compactifying it, i.e. of constructing a projective variety (called a compactified Jacobian) containing
 $J(X)$ as an open subset,  is very natural and it has attracted the attention of many mathematicians, starting from the pioneering work of Mayer-Mumford and of  Igusa in the 50's, until the more recent works of D'Souza, Oda-Seshadri, Altmann-Kleiman, Caporaso, Pandharipande, Simpson, Jarvis, Esteves, etc... (we refer to the introduction of \cite{est1} for an  account of the different approaches).

In each of the above constructions,  compactified Jacobians parametrize (equivalence classes of) certain rank-$1$, torsion free sheaves on $X$ that are assumed to be semistable with respect to  a certain polarization. If the polarization is general (see below for the precise meaning of general), then all the semistable sheaves will also be stable. In this case, the associated compactified Jacobians will carry a universal sheaf and therefore we will speak of {\em fine compactified Jacobians} (see \cite{est1}).

\vspace{0.2cm}

The main motivation of this work, and of its sequels \cite{MRV1} and \cite{MRV2}, comes from the
Hitchin fibration for the moduli space of Higgs vector bundles on a fixed smooth and projective curve $C$ (see \cite{Hit}, \cite{Nit}), whose fibers are compactified Jacobians of certain singular covers of $C$, called spectral curves (see \cite{BNR}, \cite{Sch} and the Appendix of \cite{MRV1}).
The spectral curves  have always locally planar singularities (since they are contained in a smooth surface by construction), although they are not necessarily reduced nor irreducible. It is worth noticing that, in the case of reduced but not irreducible spectral curves, the compactified Jacobians appearing as fibers of the Hitchin fibration turn out to be fine compactified Jacobians under the assumption that the degree $d$ and the rank $r$ of the Higgs bundles are coprime. However, in the general case, Chaudouard-Laumon in their work \cite{CL1} and \cite{CL2} on the weighted fundamental Lemma (where they generalize the work of Ngo  on the fundamental Lemma, see \cite{Ngo1} and \cite{Ngo2}) have introduced a modified Hitchin fibration for which all fibers are fine compactified Jacobians.

According to Donagi-Pantev \cite{DP}, the conjectural geometric Langlands correspondence should induce, by passing to the semiclassical limit and taking into account that the general linear group $\GL_r$ is equal to its Langlands dual group, an autoequivalence of the derived category of the moduli space of Higgs bundles, which should intertwine the action of the classical limit tensorization functors with the action of the classical limit Hecke functors (see \cite[Conj. 2.5]{DP} for a precise formulation). In particular, such an autoequivalence should preserve the Hitchin fibration, thus inducing  fiberwise an autoequivalence of the compactified Jacobians of the spectral curves.  This conjecture is verified in loc. cit. over the open locus of smooth spectral curves, where the desired fiberwise autoequivalence reduces to the classical Fourier-Mukai autoequivalence for Jacobians of smooth curves, established by Mukai in \cite{mukai}. This autoequivalence was extended by D. Arinkin to compactified Jacobians of integral spectral curves in \cite{arin1} and \cite{arin2}. In the two sequels \cite{MRV1} and \cite{MRV2} to this work, which are strongly based on the present manuscript, we will extend the Fourier-Mukai autoequivalence
to any fine compactified Jacobian of a reduced curve with locally planar singularities.

\vspace{0,1cm}

\subsection*{Our results}

In order to state our main results, we need to review the definition of fine compactified Jacobians of a reduced curve $X$, following the approach of Esteves \cite{est1}
(referring the reader to \S\ref{S:comp-Jac}  for more details).

The starting point is a result of Altman-Kleiman \cite{AK} who showed that there is a scheme $\bJbar_X$, locally of finite type over $k$, parametrizing simple, rank-$1$, torsion-free sheaves on $X$, which, moreover, satisfies the existence part of the valuative criterion of properness (see Fact \ref{F:huge-Jac}). Clearly, $\bJbar_X$ admits a decomposition into a disjoint union $\bJbar_X=\coprod_{\chi\in \Z} \bJbar_X^{\chi}$, where $\bJbar_X^{\chi}$ is the open and closed subset of $\bJbar_X$ parametrizing sheaves $I$ of  Euler-Poincar\'e characteristic $\chi(I):=h^0(X,I)-h^1(X,I)$ equal to $\chi$.
As soon as $X$ is not irreducible, $\bJbar_X^{\chi}$ is not separated nor of finite type over $k$.
Esteves \cite{est1} showed that each $\bJbar_X^{\chi}$ can be covered by open and projective subschemes, the fine compactified Jacobians of $X$, depending on the choice of a generic polarization in the following way.

A \emph{polarization} on $X$ is a collection of rational numbers $\un q=\{\un q_{C_i}\}$, one for each irreducible component $C_i$ of $X$, such that $|\un q|:=\sum_i \un q_{C_i}\in \Z$.
A torsion-free rank-1  sheaf $I$ on $X$ of Euler characteristic  $\chi(I)$  equal to $|\un q|$ is called
\emph{$\un q$-semistable} (resp. \emph{$\un q$-stable}) if
for every proper subcurve $Y\subset X$, we have that
\begin{equation*}
\chi(I_Y)\geq  \un q_Y:= \sum_{C_i\subseteq Y} \un q_{C_i} \: \: (\text{resp. } > ),
\end{equation*}
where $I_Y$ is the biggest torsion-free quotient of the restriction $I_{|Y}$ of $I$ to the subcurve $Y$. A polarization $\un q$ is called {\em general} if $\un q_Y\not\in \Z$ for any proper subcurve $Y\subset X$ such that $Y$ and $Y^c$ are connected. If $\un q$ is general, there are no strictly $\un q$-semistable sheaves, i.e. if every $\un q$-semistable sheaf is also $\un q$-stable (see Lemma \ref{L:nondeg}); the converse being true for curves with locally planar singularities (see
Lemma \ref{L:ndg-conv}).  For every general polarization $\un q$, the subset $\ov{J}_X(\un q)\subseteq \bJbar_X$ parametrizing $\un q$-stable (or, equivalently, $\un q$-semistable) sheaves  is an open and projective subscheme (see Fact \ref{F:Este-Jac}), that we call the \emph{fine compactified Jacobian}  with respect to the polarization $\un q$.
The name ``fine'' comes from the fact that there exists an universal sheaf $\I$ on $X\times \bJbar_X$, unique up to tensor product with the pull-back of a line bundle from $\bJbar_X$, which restricts to a universal sheaf on $X\times \ov{J}_X(\un q)$
(see Fact \ref{F:huge-Jac}).

Our first main result concerns the properties of fine compactified Jacobians under the assumption that $X$ has locally planar singularities.

\begin{thmA}\label{T:MainA}
Let $X$ be a reduced projective connected curve of arithmetic genus $g$ and assume that $X$ has locally planar singularities. Then every fine compactified Jacobian $\ov{J}_X(\un q)$ satisfies the following properties:
\begin{enumerate}[(i)]
\item \label{T:MainA1} $\ov{J}_X(\un q)$ is a reduced scheme with locally complete intersection singularities and embedded dimension at most $2g$ at every point;
\item \label{T:MainA2} The smooth locus of $\ov{J}_X(\un q)$ coincides with the open subset $J_X(\un q)\subseteq \ov{J}_X(\un q)$ parametrizing line bundles; in particular $J_X(\un q)$ is dense in $\ov{J}_X(\un q)$ and
$\ov{J}_X(\un q)$ is of pure dimension equal to $p_a(X)$;
\item \label{T:MainA3} $\ov{J}_X(\un q)$ is connected;
\item \label{T:MainA4} $\ov{J}_X(\un q)$ has trivial dualizing sheaf;
\item \label{T:MainA5} $J_X(\un q)$ is the disjoint union of a number of copies of $J(X)$ equal to the complexity $c(X)$ of the curve $X$ (in the sense of Definition \ref{D:equiv-mult}); in particular, $\ov{J}_X(\un q)$ has $c(X)$ irreducible components, independently of the chosen polarization $\un q$ (see Corollary \ref{C:irre-comp}).
\end{enumerate}
\end{thmA}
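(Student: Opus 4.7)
The plan is to deduce everything from a local analysis of the completed local ring of $\ov{J}_X(\un q)$ at a point $[I]$, together with a deformation-to-smooth argument for the global statements and a combinatorial argument for the component count. The local step underpins the whole theorem: since $I$ is simple, its infinitesimal deformation functor on $X$ factors as the product, over the singular points $p$ of $X$, of the deformation functors of the completed stalks $\widehat{I}_p$ as modules over $\widehat{\O}_{X,p}\cong k[[u,v]]/(f)$. Using the explicit description of such local moduli---obtained by realising $\widehat{I}_p$ as a fractional ideal and analysing the deformation theory of ideals in a two-dimensional regular local ring modulo the curve equation $f$---I would show that the completed local ring of $\ov{J}_X(\un q)$ at $[I]$ is a reduced complete intersection of dimension $p_a(X)$ that is smooth precisely when every $\widehat{I}_p$ is free, i.e.\ when $I$ is a line bundle. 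Parts (i) and (ii) then follow: the l.c.i.\ property is local; reducedness follows from Cohen--Macaulayness together with generic reducedness on the smooth open locus $J_X(\un q)$; and density of $J_X(\un q)$ is a direct consequence of the local step, since each local factor contains a deformation to the free module.

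For parts (iii) and (iv), I would exploit the fact that any reduced projective curve with planar singularities admits a projective one-parameter smoothing $\X\to\Spec R$ (with $R$ a discrete valuation ring). Extending $\un q$ to a relative polarization which remains general on every fibre---achievable by a small perturbation of weights---I form the relative fine compactified Jacobian $\ov{J}_{\X/R}(\un q)\to\Spec R$, whose properness and flatness follow from the Esteves construction applied fibrewise together with the universal fine compactified Jacobian machinery developed later in the paper. By the fibrewise conclusion of the local step, this morphism is proper and flat, with geometrically reduced l.c.i.\ fibres of constant dimension $p_a(X)$; the generic fibre is the Jacobian of a smooth projective curve, hence connected with trivial dualizing sheaf. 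Upper semicontinuity of the number of connected geometric components in a proper flat family yields connectedness of $\ov{J}_X(\un q)$, proving (iii); and the relative dualizing sheaf of a flat Gorenstein family, being a line bundle trivial on the generic fibre, restricts to a trivial line bundle on the special fibre up to pullback from $\Spec R$, giving (iv).

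For part (v), the open subscheme $J_X(\un q)\subseteq \ov{J}_X(\un q)$ decomposes according to the multi-degree $\un d$ of the line bundle into strata, each of which is either empty or a torsor under the generalized Jacobian $J(X)$. Corollary \ref{C:irre-comp} identifies the set of multi-degrees supporting a nonempty $\un q$-stable stratum with a set of cardinality $c(X)$ by a purely combinatorial argument on the dual graph of $X$. Since $J_X(\un q)$ is dense in $\ov{J}_X(\un q)$ by (ii), and since $\ov{J}_X(\un q)$ has pure dimension $p_a(X)=\dim J(X)$, the closures of the individual strata are precisely the irreducible components of $\ov{J}_X(\un q)$; this gives both the claim about $J_X(\un q)$ and the count of irreducible components independently of $\un q$.

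The main obstacle will be the local analysis at the foundation of parts (i) and (ii): one needs a precise structural description of the local deformation ring of a rank-$1$ torsion-free module over a planar curve singularity as a reduced complete intersection of the expected dimension whose smooth locus is exactly the free-module locus. This input is delicate and seems to require either an explicit construction of the local moduli via fractional ideals or a comparison with the punctual Hilbert scheme of the smooth surface in which the singularity embeds; once this is in hand, all four remaining parts follow by essentially formal arguments together with the smoothability of reduced curves with planar singularities.
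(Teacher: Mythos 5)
Your argument for part \eqref{T:MainA4} has a genuine gap. Over a one-parameter smoothing $f:\SS\to \Spec R$ the relative dualizing sheaf $\omega_f$ of $\ov{J}_f(\un q)$ is indeed a line bundle trivial on the generic fibre, but from this you can only conclude that $\omega_f\cong \O(\sum_i a_iD_i)$ with the $D_i$ the irreducible components of the special fibre $\ov{J}_X(\un q)$ (and pullbacks from $\Spec R$ are trivial, so that caveat buys nothing). Since the special fibre has $c(X)$ irreducible components, typically more than one when $X$ is reducible, such a line bundle need not restrict to $\O_{\ov{J}_X(\un q)}$ --- this is exactly the phenomenon underlying component groups of N\'eron models. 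The paper's proof (Corollary \ref{C:triv-can}) instead works over the full semiuniversal base $\Spec R_X$: it trivializes the relative dualizing sheaf over the open locus $U$ of points whose fibres are smooth or one-nodal (using the abelian-variety case and Arinkin's computation for integral one-nodal curves), shows that the complement of $U$ has codimension at least two (Lemma \ref{L:codim1}), and extends the trivialization using that $\J_{\X}(\un q)$ is regular, hence $S_2$ (Theorem \ref{T:univ-rings}). That codimension-two argument has no analogue over a discrete valuation ring, so your route to \eqref{T:MainA4} cannot be completed as stated. Your argument for \eqref{T:MainA3} does go through once flatness of the relative Jacobian is secured, and is essentially the paper's (Corollary \ref{C:connect}) restricted to a one-parameter slice.

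Two further points. For \eqref{T:MainA1}--\eqref{T:MainA2} you defer the whole content to an unproven local statement --- that the deformation space of a rank-$1$ torsion-free module at a planar curve singularity is a reduced complete intersection whose smooth locus is exactly the free locus --- and you acknowledge this is the main obstacle; but that statement essentially \emph{is} the theorem. The paper does not prove it via local moduli of fractional ideals: it transports the known properties of the Hilbert scheme $\Hilb^d(X)$ of a curve with planar singularities (reduced with l.c.i.\ singularities, locally free locus dense and equal to the smooth locus, by Altman--Iarrobino--Kleiman and Brian\c{c}on--Granger--Speder) to $\bJbar_X$ by exhibiting twisted Abel maps $A_M^d$ that are smooth and surjective locally on the target (Theorem \ref{T:prop-J-planar}). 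For \eqref{T:MainA5}, describing Corollary \ref{C:irre-comp} as ``a purely combinatorial argument on the dual graph'' is inaccurate except for nodal curves: in the paper it is deduced from Kass's theorem that $J_f(\un q)$ is the N\'eron model of $\Pic$ of the generic fibre together with Raynaud's computation of the component group, so invoking it as an independent combinatorial input leaves that part of the argument unsupported.
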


Part \eqref{T:MainA1} and part \eqref{T:MainA2} of the above Theorem are deduced in Corollary \ref{C:prop-fineJac} from the analogous statements about the scheme $\bJbar_X$, which are in turn deduced, via the Abel map, from similar statements on the Hilbert scheme $\Hilb^n(X)$ of zero-dimensional subschemes of $X$ of length $n$ (see Theorem \ref{T:prop-J-planar}).
Part \eqref{T:MainA3} and part \eqref{T:MainA4} are proved in Section \ref{S:univ-Jac} (see Corollaries \ref{C:connect} and \ref{C:triv-can}), where we used in a crucial way the properties of the universal fine compactified Jacobians (see the discussion below). Finally, part \eqref{T:MainA5} is deduced in Corollary \ref{C:irre-comp} from a result of J. L. Kass \cite{Kas2} (generalizing previous results of S. Busonero (unpublished) and Melo-Viviani \cite{MV} for nodal curves) that says that any relative fine compactified Jacobian associated to a $1$-parameter regular smoothing  of $X$ (in the sense of Definition \ref{D:1par-sm}) is a compactification of the N\'eron model of the Jacobian of the generic fiber (see Fact \ref{F:Jac-Ner}), together with a result of Raynaud \cite{Ray} that describes the connected component of the central fiber of the above N\'eron model (see Fact \ref{F:complNer}).
In the proof of all the statements of the above Theorem, we use in an essential way the fact that the curve has locally planar singularities and indeed we expect that many of the above properties are false without this assumption (see also Remark \ref{R:Jac-integral}).

\vspace{0.1cm}

Notice that the above Theorem A implies that any two fine compactified Jacobians of a curve $X$ with locally planar singularities  are birational (singular) Calabi-Yau varieties. However, for a reducible curve, fine compactified Jacobians are not necessarily isomorphic (and not even homeomorphic if $k=\bbC$).

\begin{thmB}\label{T:MainB}
Let $X$ be a reduced projective connected curve.
\begin{enumerate}[(i)]
\item \label{T:MainB1} There is a finite number of isomorphism classes of fine compactified Jacobians of $X$.
\item \label{T:MainB2} The number of isomorphism classes of fine compactified Jacobians of a given curve $X$ can be arbitrarily large as $X$ varies, even among the class of nodal curves of genus $2$.
\item \label{T:MainB3}  If $k=\bbC$ then the number of homeomorphism classes of fine compactified Jacobians of a given curve $X$ can be arbitrarily large as $X$ varies, even among the class of nodal curves of genus $2$.
\end{enumerate}
\end{thmB}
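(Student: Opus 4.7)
\emph{Part \eqref{T:MainB1}.} My plan is to set up a wall-and-chamber structure on the space of polarizations. Fix an ordering $C_1,\dots,C_\gamma$ of the irreducible components of $X$ and identify a polarization $\un q$ with a point of the affine subspace $V:=\{v\in\Q^\gamma : |v|\in\Z\}$. The generic polarizations form the complement in $V$ of the locally finite hyperplane arrangement $\{H_Y^n : \un q_Y = n\}$, where $Y$ ranges over proper subcurves with both $Y$ and $Y^c$ connected and $n\in\Z$. By the very definition of $\un q$-stability, two generic polarizations in the same connected chamber of $V\setminus \bigcup H_Y^n$ yield the same set of stable sheaves, hence literally the same fine compactified Jacobian $\ov J_X(\un q)$. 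Next, for any line bundle $L$ of multidegree $\underline d$, the functor $I\mapsto I\otimes L$ gives an isomorphism $\ov J_X(\un q)\iso \ov J_X(\un q+\underline d)$. Since the arrangement $\{H_Y^n\}$ is invariant under translation by the lattice $\Lambda=\Z^\gamma\cap V$, it descends to a \emph{finite} arrangement on the compact real torus $V_\R/\Lambda$, whose chamber set is finite. Hence only finitely many isomorphism classes of $\ov J_X(\un q)$ occur.

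\emph{Part \eqref{T:MainB2}.} The plan is to exhibit, for each $n$, a nodal curve $X_n$ of arithmetic genus $2$ with at least $n$ pairwise non-isomorphic fine compactified Jacobians. I would look at curves with ``vine'' or ``cycle'' type configurations whose dual graph is rich enough to support many polarization chambers modulo $\Lambda$, and then tune the moduli of the components (cross-ratios of the nodes) so as to break the obvious symmetries which would otherwise identify chambers. By Theorem A\eqref{T:MainA5} all candidates have the same number of irreducible components and the same dimension, so to separate them I would compare intrinsic geometric data depending on the chamber -- concretely, for each node $p$ of $X_n$ the closed subscheme $\ov J_X(\un q)^{(p)}\subset \ov J_X(\un q)$ parametrizing sheaves which fail to be locally free at $p$ is cut out by stability and carries a natural structure of a fine compactified Jacobian of the partial normalization $X_n'$ at $p$; the way these loci meet and glue inside $\ov J_X(\un q)$ varies with the chamber and provides discrete invariants which, for suitably chosen moduli, distinguish the chambers up to isomorphism.

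\emph{Part \eqref{T:MainB3}.} Over $k=\bbC$ I would extract topological invariants from the same stratification. Each $\ov J_X(\un q)$ comes with a canonical stratification by the locally closed subsets indexed by subsets $S$ of the singular locus of $X$, and each stratum is a torsor under the generalized Jacobian of the corresponding partial normalization. The closure relations among the strata (i.e.\ the poset and the Chern-class data of how the strata glue) are governed by the polarization $\un q$, so they furnish topological invariants of $\ov J_X(\un q)$ -- for instance through the rational (or intersection) cohomology, or through the homotopy type of the incidence complex of components. In the examples built for part \eqref{T:MainB2}, I would track these invariants and show that they take arbitrarily many values as $n\to\infty$.

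\emph{Main obstacle.} The easy half is \eqref{T:MainB1}, which is essentially a finiteness statement about a periodic hyperplane arrangement. The hard part is \eqref{T:MainB2} and \eqref{T:MainB3}: since by Theorem A all fine compactified Jacobians of a fixed $X$ are birational Calabi--Yau varieties of the same dimension, with the same number of irreducible components, coarse invariants cannot separate them. The real work is to design the examples $X_n$ so that the non-isomorphism (respectively non-homeomorphism) of the various $\ov J_X(\un q)$ can be detected by an invariant one can actually compute, and to carry out that computation. I expect to need a precise combinatorial description of the stability chambers modulo $\Lambda$ for the chosen curves and a matching computation of the stratification of the associated $\ov J_X(\un q)$.
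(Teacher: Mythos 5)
Your part \eqref{T:MainB1} is correct and is essentially the paper's own argument (Proposition \ref{P:finite-eq}): chambers of the periodic arrangement $\Arr$, translation by line bundles of integral multidegree, finiteness of chambers meeting $[0,1)^{\gamma}$.

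For parts \eqref{T:MainB2} and \eqref{T:MainB3}, however, what you have written is a plan with two genuine gaps, and these are precisely where the work lies. First, you propose to distinguish fine compactified Jacobians by ``the way the non-locally-free loci meet and glue,'' but you never establish that this data is an invariant of the abstract variety $\ov J_X(\un q)$ (resp.\ of the underlying topological space). A priori the stratification by non-locally-free loci is defined via the modular interpretation, and an abstract isomorphism $\ov J_X(\un q)\cong \ov J_X(\un q')$ need not respect it. The paper closes this gap in two steps: Proposition \ref{P:inv-orb} shows that the poset of $J(X)$-orbits coincides with the \emph{singular poset} obtained by iterating the operation ``take the singular locus with reduced structure'' (this uses Corollary \ref{C:prop-fineJac}\eqref{C:prop-fineJac3}, i.e.\ that the smooth locus is exactly the locus of line bundles, which is where locally planar singularities enter); and Proposition \ref{P:inv-top} shows, via the computation $\dim_{\bbQ}H^{2g}(V,V\setminus\{0\},\bbQ)=2^{|S|}$ for a product of $|S|$ nodes with a smooth factor (Lemma \ref{L:lemtop}), that the number of nodes at which a sheaf fails to be locally free is a local topological invariant, so a homeomorphism preserves the stratification. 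Without some such argument your ``discrete invariants'' are not yet invariants of anything intrinsic. Second, you give no actual family of examples nor any computation showing the invariant takes arbitrarily many values; the paper does this by identifying the poset of orbits with the poset of regions of a simple toric line arrangement in $\R^2/\Z^2$ (Fact \ref{F:OS}, Oda--Seshadri) and, for the dollar-sign curve with two nodes blown up $n$ times, exhibiting arrangements $\V_i^{\pm}$ distinguished by their numbers of triangular regions ($4(n-i)+2$ versus $4(n-i)+4$). Finally, your suggestion to ``tune the moduli of the components (cross-ratios of the nodes)'' points in the wrong direction: the distinguishing invariant is purely combinatorial, determined by the dual graph and the chamber of the polarization, and is insensitive to the moduli of the (rational) components of the genus-$2$ examples.
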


Part \eqref{T:MainB1} of the above Theorem follows by Proposition \ref{P:finite-eq}, which says that there is a finite number of fine compactified Jacobians of a given curve $X$ from which all the others can be obtained via tensorization with some line bundle.
Parts \eqref{T:MainB2} and \eqref{T:MainB3} are proved by analyzing the poset of orbits for the natural action of the generalized Jacobian on  a given fine compactified Jacobian of a nodal curve. Proposition \ref{P:inv-orb} says that the poset of orbits is an invariant of the fine compactified Jacobian (i.e. it does not depend on the action of the generalized Jacobian) while Proposition \ref{P:inv-top} says that over $k=\bbC$ the poset of orbits is a topological invariant. Moreover, from the work of Oda-Seshadri \cite{OS}, it follows that the poset of orbits of a fine compactified Jacobian of a nodal curve $X$ is isomorphic to the poset of regions of a suitable simple toric arrangement of hyperplanes (see Fact \ref{F:OS}). In Example
 \ref{E:non-iso}, we construct a family of nodal curves of genus $2$ for which the number of simple toric arrangements with pairwise non isomorphic poset of regions grows to infinity, which concludes the proof of parts \eqref{T:MainB2} and \eqref{T:MainB3}.

We mention that, even though if fine compactified Jacobians of a given curve $X$  can be non isomorphic, they nevertheless share many geometric properties.
For example, the authors proved in \cite{MRV2} that any two fine compactified Jacobians of a reduced $X$ with locally planar singularities are derived equivalent under the Fourier-Mukai transform with kernel given by a natural Poincar\'e sheaf on the product.  This result seems to suggest an extension to (mildly) singular varieties of the conjecture of Kawamata \cite{Kaw}, which predicts that birational Calabi-Yau smooth projective varieties should be derived equivalent.
Moreover,  the third author, together with L. Migliorini and V. Schende, proved in \cite{MSV} that any two fine compactified Jacobians of $X$ (under the same assumptions on $X$) have the same Betti numbers if $k=\bbC$, which again seems to suggest an extension to (mildly) singular varieties of the result of Batyrev \cite{Bat} which  says that birational Calabi-Yau smooth projective varieties have the same Hodge numbers.

\vspace{0.2cm}

As briefly mentioned above, in the proof of parts \eqref{T:MainA3} and \eqref{T:MainA4} of Theorem A, an essential role is played by the properties of the \textbf{universal fine compactified Jacobians}, which are defined as follows. Consider the effective semiuniversal deformation $\pi:\X\to \Spec R_X$ of $X$ (see \S\ref{S:def-space} for details). For any (schematic) point $s\in \Spec R_X$, we denote by $\X_s:=\pi^{-1}(s)$ the fiber of $\pi$ over $s$ and by $\X_{\ov s}:=\X_s\times_{k(s)} \ov{k(s)}$ the geometric fiber over $s$. By definition, $X=\X_o=\X_{\ov{o}}$ where $o=[\m_X]\in \Spec R_X$ is the unique closed point $o\in \Spec R_X$ corresponding to the maximal ideal $\m_X$ of the complete local  $k$-algebra $R_X$.
A polarization $\un q$ on $X$ induces in a natural way a polarization $\un q^s$ on $\X_ {\ov s}$ for every $s\in \Spec R_X$ which, moreover, will be general if we start from a general polarization $\un q$ (see Lemma-Definition \ref{D:def-pola}).

\begin{thmC}\label{T:MainC}
Let $\un q$ be a general polarization on a reduced projective connected curve $X$.
There exists a scheme $u: \J_{\X}(\un q)\to \Spec R_X$ parametrizing coherent sheaves $\I$ on $\X$, flat over $\Spec R_X$, whose geometric fiber $\I_{\ov s}$ over any $s\in \Spec R_X$ is  a $\un q^s$-semistable (or, equivalently, $\un q^s$-stable) sheaf on $\X_{\ov s}$.  The morphism $u$  is projective
and its geometric fiber over any point $s\in \Spec R_X$ is isomorphic to $\J_{\X_{\ov s}}(\un q^s)$. In particular, the fiber of $\J_{\X}(\un q)\to \Spec R_X$
over the closed point $o=[\m_X]\in \Spec R_X$ is isomorphic to $\J_X(\un q)$.

Moreover, if $X$ has locally planar singularities then we have:
\begin{enumerate}[(i)]
\item \label{T:MainC1} The scheme $\J_{\X}(\un q)$ is regular and irreducible.
\item \label{T:MainC2} The map $u:\J_{\X}(\un q)\to \Spec R_X$ is flat of relative dimension $p_a(X)$ and it has trivial relative dualizing sheaf.
\item \label{T:MainC3} The smooth locus of $u$ is the open subset $J_{\X}(\un q)\subseteq \J_{\X}(\un q)$ parametrizing  line bundles on $\X$.
\end{enumerate}

\end{thmC}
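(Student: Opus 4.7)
The plan is to construct $\J_{\X}(\un q)$ as a relative version of the Esteves compactified Jacobian over $\Spec R_X$, and then, under the locally-planar hypothesis, to leverage the smoothness of the total space $\X$ of the semiuniversal deformation in order to transfer the regularity properties of Theorem A to the moduli space.

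For the first, unconditional part of the statement, I would apply the relative Altman--Kleiman construction (the relative version of Fact \ref{F:huge-Jac}) to the flat projective family $\pi$, obtaining a scheme $\bJbar_{\X/\Spec R_X}\to \Spec R_X$, locally of finite type, parametrizing simple, rank-$1$, torsion-free sheaves $\I$ on $\X$ that are flat over $\Spec R_X$ with fiberwise Euler characteristic $|\un q|$. The open condition of $\un q^s$-stability on the geometric fiber over $s$ cuts out $\J_{\X}(\un q)\subseteq \bJbar_{\X/\Spec R_X}$. Since $\un q$ is general, Lemma-Definition \ref{D:def-pola} ensures that each $\un q^s$ is general on $\X_{\ov s}$, so $\un q^s$-semistability and $\un q^s$-stability coincide, and the geometric fiber of $u$ over $s$ is exactly $\J_{\X_{\ov s}}(\un q^s)$. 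Projectivity of $u$ is then inherited fiberwise from Fact \ref{F:Este-Jac} together with standard Grothendieck--Altman--Kleiman relative techniques.

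Now assume $X$ has locally planar singularities. The crucial input is that the total space $\X$ is then a smooth variety, since curves with locally planar singularities admit a smooth semiuniversal deformation with smooth total space. From this, the same Abel-map strategy used in Theorem \ref{T:prop-J-planar} and Corollary \ref{C:prop-fineJac} for the absolute setting transfers verbatim: one shows the relative Hilbert scheme $\Hilb^n(\X/\Spec R_X)$ is smooth for every $n$, and then uses the relative Abel map, together with the fact that its target differs from the source by a projective-bundle-like structure over appropriate loci, to deduce regularity of $\bJbar_{\X/\Spec R_X}$, hence of its open $\J_{\X}(\un q)$. Irreducibility follows because the open $J_{\X}(\un q)\subseteq \J_{\X}(\un q)$ parametrizing line bundles is a torsor under the relative generalized Jacobian, which is smooth with irreducible fibers over the irreducible smooth base $\Spec R_X$, and is dense by the fiberwise density of $J_{\X_{\ov s}}(\un q^s)\subseteq \J_{\X_{\ov s}}(\un q^s)$ given by Theorem A\eqref{T:MainA2}; this yields \eqref{T:MainC1}. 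For \eqref{T:MainC2}, flatness of $u$ follows by miracle flatness, as $\J_{\X}(\un q)$ is regular, $\Spec R_X$ is regular, and each geometric fiber is equidimensional of dimension $p_a(\X_{\ov s})=p_a(X)$ by Theorem A\eqref{T:MainA2}. Triviality of $\omega_{\J_{\X}(\un q)/\Spec R_X}$ would be established by producing an everywhere non-vanishing global section via a determinantal construction applied to the universal sheaf $\I$ on $\X\times_{\Spec R_X}\J_{\X}(\un q)$, checking the triviality of the resulting line bundle by restricting to the generic fiber, where it reduces to the classical triviality of the canonical bundle of the Jacobian of a smooth curve. For \eqref{T:MainC3}, flatness of $u$ combined with regularity of source and target gives that $u$ is smooth at $[I]$ iff its geometric fiber is smooth at $[I]$; by Theorem A\eqref{T:MainA2} this happens iff $I$ is a line bundle on $\X_{\ov s}$; and by the openness of the line-bundle locus in a flat family of torsion-free sheaves, this in turn is equivalent to $I$ being a line bundle on $\X$ in a neighborhood of $[I]$.

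The delicate step I anticipate is the triviality of the relative dualizing sheaf: pointwise triviality on the generic smooth fiber combined with regularity of the total space does not by itself produce a global trivialization, since there is still a priori room for a non-trivial pull-back from $\Spec R_X$ or a twist supported on non-reduced substructure of the non-smooth fibers. The resolution I would pursue is the Deligne-pairing/determinantal line bundle attached to the universal sheaf, and then check that the candidate trivializing section does not vanish on $\J_{\X}(\un q)$ by exploiting the regularity of the total space and a direct analysis of the central fibre. This is also the step that \emph{bootstraps} Theorem A\eqref{T:MainA4}, namely the triviality of $\omega_{\J_X(\un q)}$, from the universal case, consistently with the structure of the paper outlined after Theorem A.
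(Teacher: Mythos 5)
Your outline gets the architecture of the first (unconditional) part roughly right, but the conditional part contains a genuine gap at its central step, the regularity of $\J_{\X}(\un q)$. You propose to deduce it by running the Abel-map argument of Theorem \ref{T:prop-J-planar} ``verbatim'' in the relative setting, claiming that the relative Hilbert scheme $\Hilb^n(\X/\Spec R_X)$ is smooth \emph{because the total space $\X$ is smooth}. That implication is false as a general principle: smoothness of $\X$ only expresses that the classifying map $\Spec R_X\to \prod_p\Def_{X,p}$ hits the constant direction $1\in T^1_{\wh\O_{X,p}}=k[[x,y]]/(f,\partial_xf,\partial_yf)$ at each singular point, i.e.\ it is transversality ``for $n=1$''; smoothness of $\Hilb^n(\X/\Spec R_X)$ for all $n$ requires versality of the family (a theorem of Shende, of depth comparable to the result it would replace), and is not a formal consequence of the absolute statements (a)--(c) quoted from \cite{AIK} and \cite{BGS}. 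The paper takes an entirely different route here: Theorem \ref{T:univ-rings} identifies $\wh{\O}_{\bJbar_{\X},I}$ with the semiuniversal deformation ring $R_{(X,I)}$ of the pair $(X,I)$, and regularity then follows from the Fantechi--G\"ottsche--van Straten smoothness of $\Def_{(X,I)}$ (Fact \ref{F:diag-smooth}). You either need that deformation-theoretic input or a genuine proof of relative Hilbert-scheme smoothness; neither is supplied. Relatedly, your irreducibility argument asserts that $J_{\X}(\un q)$ has irreducible fibers, which is false: the central fiber $J_X(\un q)$ has $c(X)$ connected components. The correct argument (the paper's) is that $\J_{\X}(\un q)$ is regular, $J_{\X}(\un q)$ is dense and smooth over $\Spec R_X$, so every connected component meets the \emph{generic} fiber, which is the connected Jacobian of a smooth curve.

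Two further points. First, projectivity of $u$ is not ``inherited fiberwise'': the paper must extend a vector bundle $E$ with $\un q^E=\un q$ to a bundle $\E$ on $\X$ (using $H^2(X,E\otimes E^\vee)=0$ and Grothendieck algebraization), verify via a nontrivial degree comparison that $\un q^{\E_{\ov s}}=\un q^s$ on every geometric fiber, and only then invoke Esteves's relative Theorems A and C of \cite{est1}. Second, for the triviality of $\omega_u$ you correctly flag the difficulty but your proposed determinantal construction is left entirely unexecuted; the paper instead trivializes $\omega_u$ over the open locus $u^{-1}(U)$ where fibers are smooth or uninodal (using Arinkin's \cite[Cor.~9]{arin1} for the irreducible nodal case and the torsor structure otherwise, together with $\Pic(U)=0$), and then extends across the complement, which has codimension $\geq 2$ by Lemma \ref{L:codim1}, using that $\J_{\X}(\un q)$ is regular, hence $S_2$. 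Note also that the logical order is the reverse of what you suggest: Theorem A\eqref{T:MainA3} and \eqref{T:MainA4} are \emph{deduced from} Theorem C, so you cannot quote Theorem A\eqref{T:MainA2} fiberwise and then claim to ``bootstrap'' A\eqref{T:MainA4} without circularity being at least addressed (the fiberwise statements actually used, Corollary \ref{C:prop-fineJac}, are proved independently, so this is repairable, but it should be said).
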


The first statement of the above Theorem is obtained in Theorem \ref{T:univ-fine} by applying to the family $\pi:\X\to \Spec R_X$ a result of Esteves \cite{est1} on the existence of relative fine compactified
Jacobians.

In order to prove the second part of the above Theorem, the crucial step is to identify the completed local ring of $\J_{\X}(\un q)$ at a point $I$ of the central fiber $u^{-1}(o)=\J_X(\un q)$ with the semiuniversal deformation ring
for the deformation functor $\Def_{(X,I)}$ of the pair $(X,I)$ (see Theorem \ref{T:univ-rings}).  Then, we deduce the regularity of  $\J_{\X}(\un q)$ from a result of Fantechi-G\"ottsche-van Straten \cite{FGvS} which says that, if $X$ has locally planar singularities, then the deformation functor $\Def_{(X,I)}$ is smooth.
The other properties stated in the second part of Theorem C, which are proved in Theorem  \ref{T:univ-Jac} and Corollary \ref{C:triv-can}, follow from the regularity of $\J_{\X}(\un q)$ together with the properties of the geometric fibers of the morphism $u$.

\vspace{0.2cm}

Our final result concerns the existence of (twisted) \textbf{Abel maps}  of degree one into fine compactified Jacobians, a topic which has been extensively studied (see e.g. \cite{AK}, \cite{egk0}, \cite{egk}, \cite{EK}, \cite{CE}, \cite{CCE}, \cite{CP}).
To this aim, we restrict ourselves to connected and projective reduced curves $X$ satisfying the following
\begin{equation*}
\un{\text{Condition } (\dagger)}: \text{Every separating point is a node,}
\end{equation*}
where a separating point of $X$ is a singular point $p$ of $X$ for which there exists a subcurve $Z$ of $X$ such that $p$ is the scheme-theoretic intersection of $Z$ and its complementary subcurve $Z^c:=\ov{X\setminus Z}$.
For example, every Gorenstein curve satisfies condition $(\dagger)$ by \cite[Prop. 1.10]{Cat}.
Fix now a curve $X$ satisfying condition $(\dagger)$ and let $\{n_1,\ldots$ $ ,n_{r-1}\}$ be its separating points, which are nodes.   Denote by $\wt{X}$ the partial normalization of $X$ at the set $\{n_1,\ldots,n_{r-1}\}$.
Since each $n_i$ is a node, the curve $\wt{X}$ is a disjoint union of $r$ connected reduced curves $\{Y_1,\ldots,Y_r\}$ such that each $Y_i$ does not have separating points.
We have a natural morphism
\begin{equation*}
\tau:\wt{X}=\coprod_i Y_i\to X.
\end{equation*}
We can naturally identify each $Y_i$ with a subcurve of $X$ in such a way that their union  is $X$ and that they do not have common
 irreducible components. We call the components $Y_i$ (or their image in $X$) the \emph{separating blocks}  of $X$.

\begin{thmD}\label{T:MainD}
Let $X$ be a reduced projective connected curve satisfying condition $(\dagger)$.
\begin{enumerate}[(i)]
\item \label{T:MainD1} The pull-back map
$$\begin{aligned}
\tau^*: \bJbar_X & \longrightarrow \prod_{i=1}^r \bJbar_{Y_i}\\
I & \mapsto (I_{|Y_1},\ldots, I_{|Y_r}),
\end{aligned}
$$
is an isomorphism. Moreover, given any fine compactified Jacobians $\ov{J}_{Y_i}(\un q^i)$ on $Y_i$, $i=1,\dots,r$, there exists a (uniquely determined) fine compactified Jacobian $\J_X(\un q)$ on $X$ such that
\begin{equation*}
\tau^*: \ov{J}_X(\un q)\xrightarrow{\cong} \prod_i \ov{J}_{Y_i}(\un q^i),
\end{equation*}
and every fine compactified Jacobian on $X$ is obtained in this way.

\item \label{T:MainD2}
For every $L\in \Pic(X)$, there exists a unique morphism $A_L:X\to \bJbar_X^{\chi(L)-1}$  such that for every $1\leq i\leq r$ and every $p\in Y_i$ it holds that
\begin{equation*}
\tau^*(A_L(p))=(M_1^i,\ldots,M_{i-1}^i,\m_p \otimes L_{|Y_i},M_{i+1}^i,\ldots,M_r^i)
\end{equation*}
for some (uniquely determined) line bundle $M_j^i$ on $Y_j$ for any $j\neq i$, where $\m_p$ is the ideal of the point $p$ in $Y_i$.

\item \label{T:MainD3} If, moreover, $X$ is Gorenstein, then there exists a general polarization $\un q$ with $|\un q|=\chi(L)-1$ such that
$\Im A_L\subseteq \J_X(\un q)$.

\item \label{T:MainD4} For every $L\in \Pic(X)$, the morphism $A_L$ is an embedding away from the rational separating blocks (which are isomorphic to $\PP^1$) while it contracts each rational separating block $Y_i\cong \PP^1$
 into a seminormal point of $A_L(X)$, i.e. an ordinary singularity with linearly independent tangent directions.


\end{enumerate}
\end{thmD}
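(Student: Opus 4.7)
My plan is to address the four parts sequentially, exploiting the block decomposition at the separating nodes as the unifying structure.

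For part \eqref{T:MainD1}, the key local observation is that at a separating node $n$, the unique non-locally-free rank-$1$ torsion-free stalk is the maximal ideal $\m_n$, whose endomorphism ring equals the normalization $\wt{\O}_{X,n}\cong k\times k$; hence $\m_n$ is not simple, and any simple $I$ must be locally free at every separating node. This forces the restrictions $I|_{Y_i}$ to be simple torsion-free rank-$1$ sheaves on the $Y_i$. Conversely, given any tuple of simple sheaves on the blocks, one glues them canonically at the separating nodes (where they are locally free), producing a simple sheaf on $X$; carried out in families this yields an inverse to $\tau^*$. The statement on fine compactified Jacobians then follows because $\un q$-(semi)stability decomposes block by block: any subcurve $Y\subset X$ meets each $Y_i$ in a subcurve, and both $\chi(I_Y)$ and $\un q_Y$ split additively over the blocks.

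For part \eqref{T:MainD2}, I would construct $A_L$ block by block. On each $Y_i$ the classical Abel map $p\mapsto \m_{p,Y_i}\otimes L|_{Y_i}$ is a morphism $Y_i\to \bJbar_{Y_i}$, realised by the ideal sheaf of the diagonal in $Y_i\times Y_i$ twisted by $L|_{Y_i}$, since $Y_i$ has no separating points and hence $\m_{p,Y_i}$ is simple. Via the isomorphism of part \eqref{T:MainD1} I would lift this to a morphism $Y_i\to \bJbar_X$ by specifying a constant component $M_j^i\in \bJbar_{Y_j}$ for each $j\neq i$. The block graph is a tree, and consistency at a separating node $n$ between adjacent blocks $Y_i$ and $Y_{i'}$ forces $M_{i'}^i=\m_{n,Y_{i'}}\otimes L|_{Y_{i'}}$; propagating through the tree yields the uniform rule $M_j^i=\m_{n(j,i),Y_j}\otimes L|_{Y_j}$, where $n(j,i)$ is the separating node of $Y_j$ lying on the unique path from $Y_j$ to $Y_i$. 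With these choices the blockwise maps match at every separating node and glue to give the required morphism, while uniqueness is built into the prescription.

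For part \eqref{T:MainD3}, the task reduces via part \eqref{T:MainD1} to producing on each $Y_i$ a general polarization $\un q^i$ for which $\m_{p,Y_i}\otimes L|_{Y_i}$ is $\un q^i$-stable for every $p\in Y_i$; the resulting polarizations are shifted so that the combined polarization $\un q$ on $X$ satisfies $|\un q|=\chi(L)-1$. I expect this to be the main obstacle, since the strict stability inequality must hold uniformly in $p$, including at singular points of $Y_i$ where the subsheaves of $\m_{p,Y_i}\otimes L|_{Y_i}$ along subcurves are more delicate. The Gorenstein hypothesis enters exactly here: it provides a well-behaved dualizing sheaf, needed to control $\chi(I_Y)$ via adjunction on arbitrary subcurves, and it permits one to construct each $\un q^i$ as a small generic perturbation of a canonical polarization adapted to $L$, chosen so that strict stability survives uniformly over the image of the blockwise Abel map. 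Finally, for part \eqref{T:MainD4}, a rational separating block $Y_i\cong \PP^1$ has $\Pic(Y_i)=\Z$, so $\m_{p,Y_i}\otimes L|_{Y_i}$ is independent of $p$ and $A_L$ contracts $Y_i$ to a point; locally at this point $A_L(X)$ is the union of the embedded images of the blocks adjacent to $Y_i$ meeting at one point, and verifying seminormality amounts to checking that the tangent directions coming from the different adjacent blocks are linearly independent, which I would do by computing the completed local ring of $A_L(X)$ there. For a non-rational block $Y_i$, injectivity of $A_L|_{Y_i}$ follows from the non-triviality of $\O_{Y_i}(p-p')$ in $J(Y_i)$ for $p\neq p'$ together with the fact that $\m_p\cong \m_{p'}$ forces this class to vanish, and the injectivity of the differential is a standard tangent-space computation for curves with locally planar singularities.
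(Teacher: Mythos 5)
Your overall strategy coincides with the paper's (Propositions \ref{P:norm-sheaves} and \ref{P:Abel-sep}, Lemma \ref{L:Im-Abel}, and the citation of \cite{CCE} for part (iv)): simple sheaves are locally free at separating nodes, the constant components $M_j^i$ are forced by propagating the gluing conditions through the tree of separating blocks, and part (iii) comes from an explicit polarization built out of $\deg_C L$ and $\deg_C(\omega_X)$ followed by a small perturbation. However, part (i) of your proposal has two genuine problems. First, the assertion that ``both $\chi(I_Y)$ and $\un q_Y$ split additively over the blocks'' is false as stated: for a subcurve $Z$ crossing a separating node one has $\chi(I_Z)=\chi(I_{Z\cap Y_1})+\chi(I_{Z\cap Y_2})-1$, and it is exactly this correction that forces the induced polarization $\un q=(\un q^1,\dots,\un q^r)$ to carry a $-\tfrac{1}{2}$ shift on each component of $Y_i$ meeting another block, so that $|\un q|=\sum_i|\un q^i|+1-r$. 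Without these shifts the total degrees do not even match ($\chi(I)=\sum_i\chi(I_{Y_i})-(r-1)$ while a naive $\un q$ would have $|\un q|=\sum_i|\un q^i|$), and the equivalence ``$I$ is $\un q$-semistable iff each $I_{|Y_i}$ is $\un q^i$-semistable'' requires the case analysis of the paper (subcurves contained in one block, subcurves whose complement is contained in one block, and $Z=Y_i$ itself) carried out with the shifted values.

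Second, and more seriously, you never prove that \emph{every} fine compactified Jacobian of $X$ arises from the product construction. Your argument only covers polarizations induced from the blocks, i.e.\ those with $\un q_{Y_i}\in\tfrac{1}{2}+\Z$; an arbitrary general polarization on $X$ need not satisfy this. One must show that for any general $\un q$ there is an induced general $\un q'$ with $\ov{J}_X(\un q)=\ov{J}_X(\un q')$, which the paper does in Proposition \ref{P:norm-sheaves}\eqref{P:norm-sheaves3} by writing $\un q_{Y_1}=m_1+\tfrac{1}{2}+r$ with $m_1\in\Z$ and $|r|<\tfrac{1}{2}$, moving the fractional excess $r$ between the two components adjacent to the separating node, and verifying that no semistability inequality changes because both sides impose the same integral bound on $\chi(I_{Y_i})$. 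This rounding step is not a formality and is entirely missing from your proposal. As a minor point on part (iv): for a singular point $p$ of a non-rational block, $\m_p$ is not a line bundle, so the injectivity argument via the class of $\O_{Y_i}(p-p')$ does not apply directly; this is why the paper simply quotes \cite[Thm. 6.3]{CCE} rather than rederiving it.
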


Some comments on the above Theorem are in order.

Part \eqref{T:MainD1}, which follows from Proposition \ref{P:norm-sheaves}, says that all fine compactified Jacobians of a curve satisfying
assumption $(\dagger)$ decompose uniquely as a product of fine compactified Jacobians of its separating blocks. This allows one to reduce many properties of fine compactified Jacobians of $X$ to properties of the fine compactified
Jacobians of its separating blocks $Y_i$,  which have the advantage of not having separating points.
Indeed, the first statement of part \eqref{T:MainD1} is due to Esteves \cite[Prop. 3.2]{est2}.

The map $A_L$ of part \eqref{T:MainD2}, which is
constructed  in Proposition \ref{P:Abel-sep}, is called the $L$-twisted Abel map. For a curve $X$ without separating points, e.g. the separating blocks $Y_i$, the map $A_L:X\to \bJbar_X$ is the natural map sending $p$ to
$\m_p\otimes L$. However, if $X$ has a separating point $p$, the ideal sheaf $\m_p$ is not simple and therefore the above definition is ill-behaved.
Part \eqref{T:MainD2} is saying that we can put together the natural Abel maps $A_{L_{|Y_i}}:Y_i\to \bJbar_{Y_i}$ on each separating block $Y_i$
in order to have a map $A_L$ whose restriction to $Y_i$ has $i$-th component equal to $A_{L_{|Y_i}}$  and it is constant  on the $j$-th components with $j\neq i$. Note that special cases of the Abel map $A_L$ (with $L=\O_X$ or $L=\O_X(p)$ for some smooth point $p\in X$) in the presence of separating
points have been considered before
by Caporaso-Esteves in \cite[Sec. 4 and Sec. 5]{CE} for nodal curves, by Caporaso-Coelho-Esteves in \cite[Sec. 4 and 5]{CCE} for Gorenstein curves and by Coelho-Pacini in \cite[Sec. 2]{CP} for curves of compact type.

Part \eqref{T:MainD3} says that if $X$ is Gorenstein then the image of each twisted Abel map $A_L$ is contained in a (non unique) fine compactified Jacobian. Any fine compactified Jacobian which contains the image
of a twisted Abel map is said to \emph{admit an Abel map}. Therefore, part \eqref{T:MainD3} says that any Gorenstein curve has some fine compactified Jacobian admitting an Abel map. However, we show that, in general,
 not every fine compactified Jacobian admits an Abel map: see Propositions \ref{P:JacIn}  and \ref{P:JacIV}  for some examples.

Part \eqref{T:MainD4} is proved by  Caporaso-Coelho-Esteves \cite[Thm. 6.3]{CCE} for Gorenstein curves, but their proof extends verbatim to our (more general) case.

\subsection*{Outline of the paper}

The paper is organized as follows.

Section \ref{S:comp-Jac} is devoted to collecting several facts on fine compactified Jacobians of reduced curves: in \S\ref{S:sheaves}, we consider the scheme $\bJbar_X$ parametrizing all simple torsion-free rank-1 sheaves on a curve $X$ (see Fact \ref{F:huge-Jac}) and we investigate its properties under the assumption that $X$ has locally planar singularities (see Theorem \ref{T:prop-J-planar}); in \S\ref{S:fine-Jac}, we introduce fine compactified Jacobians of $X$ (see Fact \ref{F:Este-Jac}) and study them under the assumption that $X$ has locally planar singularities (see Corollary \ref{C:prop-fineJac}).

In Section \ref{S:vary} we prove that there is a finite number of isomorphism classes of fine compactified Jacobians of a given curve  (see Proposition \ref{P:finite-eq}) although this number can be arbitrarily large even for nodal curves (see Corollary
 \ref{C:crit-iso} and Example \ref{E:non-iso}). In order to establish this second result, we study in detail in \S \ref{S:nodal} the poset of orbits for fine compactified Jacobians of nodal curves.

Section \ref{S:defo} is devoted to recalling and proving some basic facts on deformation theory: we study the deformation functor $\Def_X$ of a curve $X$ (see \S\ref{S:DefX}) and the deformation functor $\Def_{(X,I)}$ of a pair $(X,I)$
 consisting of a curve $X$ together with a torsion-free, rank-1 sheaf $I$ on $X$ (see \S\ref{S:Def-pair}). Finally, in \S\ref{S:def-space}, we study the semiuniversal deformation spaces for a curve $X$ and for a pair $(X,I)$ as above.

In Section \ref{S:univ-Jac}, we introduce the universal fine compactified Jacobians relative to the semiuniversal deformation of a curve $X$ (see Theorem \ref{T:univ-fine}) and we study its properties  under the assumption that $X$ has locally planar singularities (see Theorem \ref{T:univ-Jac}). We then deduce some interesting consequences of our results for fine compactified Jacobians (see Corollaries \ref{C:connect} and \ref{C:triv-can}).
In \S\ref{S:1par-sm}, we use a result of J.  L. Kass in order to prove that the pull-back of any universal fine compactified Jacobian under a 1-parameter regular smoothing of the curve (see Definition \ref{D:1par-sm}) is a compactification of the N\'eron model of the Jacobian of the general fiber (see Fact \ref{F:Jac-Ner}). From this result we get a formula for the number of irreducible components of a fine compactified Jacobian (see Corollary \ref{C:irre-comp}).

In Section \ref{S:Abel}, we introduce Abel maps: first for curves that do not have separating points (see \S\ref{S:Abel-nonsep}) and then for curves all of whose separating points are nodes (see \S\ref{S:Abel-sep}).

In Section \ref{S:genus1}, we illustrate the general theory developed so far with the study of fine compactified Jacobians of Kodaira curves, i.e. curves of arithmetic genus one with locally planar singularities and without separating points.

\subsection*{Notations}

The following notations will be used throughout the paper.

\begin{convention}
 	$k$ will denote an algebraically closed field (of arbitrary characteristic), unless otherwise stated. All \textbf{schemes} are $k$-schemes, and all morphisms are implicitly assumed to respect
	the $k$-structure.
\end{convention}

\begin{convention}\label{N:curves}
	A \textbf{curve}  is a \emph{reduced} projective scheme over $k$ of pure dimension $1$.
	
Given a curve $X$, we denote by $X_{\rm sm}$ the smooth locus of $X$, by $X_{\rm sing}$ its singular locus and by $\nu:X^{\nu}\to X$ the normalization morphism.
 We denote by $\gamma(X)$, or simply by $\gamma$ where there is no danger of confusion, the number of irreducible components of $X$.

We denote by $p_a(X)$  the \emph{arithmetic genus} of $X$, i.e.  $p_a(X):=1-\chi(\O_X)=1-h^0(X,\O_X)+h^1(X, \O_X) $.
We denote by $g^{\nu}(X)$ the \emph{geometric genus} of $X$, i.e. the sum of the genera of the connected components of the normalization $X^{\nu}$.

\end{convention}

\begin{convention}
	A \textbf{subcurve} $Z$ of a curve $X$ is a closed $k$-subscheme $Z \subseteq X$ that is reduced  and of pure dimension $1$.  We say that a subcurve $Z\subseteq X$ is proper if
	$Z\neq \emptyset, X$.
	
	Given two subcurves $Z$ and $W$ of $X$ without common irreducible components, we denote by $Z\cap W$ the $0$-dimensional subscheme of $X$ that is obtained as the
	scheme-theoretic intersection of $Z$ and $W$ and we denote by $|Z\cap W|$ its length.
	
	Given a subcurve $Z\subseteq X$, we denote by $Z^c:=\ov{X\setminus Z}$ the \textbf{complementary subcurve} of $Z$ and we set $\delta_Z=\delta_{Z^c}:=|Z\cap Z^c|$.
 \end{convention}

\begin{convention}
A curve $X$ is called \textbf{Gorenstein} if its dualizing sheaf $\omega_X$ is a line bundle.
\end{convention}

\begin{convention}
A curve $X$ has \textbf{locally complete intersection (l.c.i.) singularities at $p\in X$} if the completion $\wh{\O}_{X,p}$ of the local ring of $X$ at $p$ can be written as
$$\wh{\O}_{X,p}=k[[x_1,\ldots,x_r]]/(f_1,\ldots,f_{r-1}),$$
for some $r\geq 2$ and some $f_i\in k[[x_1,\ldots,x_r]]$. A curve $X$ has locally complete intersection (l.c.i.)
singularities if $X$ is l.c.i. at every $p\in X$.
It is well know that a curve with l.c.i. singularities is Gorenstein.
\end{convention}

\begin{convention}\label{N:lps}
A curve $X$ has \textbf{locally planar singularities at $p\in X$} if  the completion
$\wh{\O}_{X,p}$ of the local ring of $X$ at $p$ has embedded dimension two, or equivalently if it can be written
as
$$\wh{\O}_{X,p}=k[[x,y]]/(f),$$
for a reduced series $f=f(x,y)\in k[[x,y]]$.
A curve $X$ has locally planar singularities if $X$ has locally planar singularities at every $p\in X$.
Clearly, a curve with locally planar singularities has l.c.i. singularities, hence it is Gorenstein. A (reduced) curve has locally planar singularities if and only if it can be embedded in a smooth surface (see \cite{AK0}).
\end{convention}

\begin{convention}
A curve $X$ has a \textbf{node at $p\in X$} if  the completion
$\wh{\O}_{X,p}$ of the local ring of $X$ at $p$ is isomorphic to
$$\wh{\O}_{X,p}=k[[x,y]]/(xy).$$
\end{convention}

\begin{convention}\label{N:sep-node}
A \textbf{separating point} of a curve $X$ is a geometric point $n\in X$ for which there exists a subcurve
$Z\subset X$ such that $\delta_Z=1$ and $Z\cap Z^c=\{n\}$.
If $X$ is Gorenstein, then a separating point $n$ of $X$
is a node of $X$, i.e. $\wh{\O}_{X,n}=k[[x,y]]/(xy)$ (see Fact \ref{F:Gor-dagger}). However this is false in general
without the Gorenstein assumption (see Example \ref{Ex:non-dagger}).
\end{convention}



\begin{convention}\label{N:Jac-gen}
Given a curve $X$, the \textbf{generalized Jacobian} of $X$, denoted by $J(X)$ or by $\Pic^{\un 0}(X)$,
is the algebraic group whose group of $k$-valued points is the group of line bundles on $X$ of multidegree $\un 0$ (i.e. having
degree $0$ on each irreducible component of $X$) together with the multiplication given by the tensor product.
The generalized Jacobian of $X$ is a connected commutative smooth algebraic group of dimension equal to
$h^1(X,\O_X)$.
\end{convention}

\section{Fine Compactified Jacobians}\label{S:comp-Jac}

The aim of this section is to collect several facts about compactified Jacobians of connected reduced curves, with
special emphasis on connected reduced curves with locally planar singularities. Many of these facts are well-known to the experts but for many of them we could not find satisfactory references in the existing literature, at least at the
level of generality we need, e.g. for reducible curves. Throughout this section, we fix a connected reduced curve $X$.


\subsection{Simple rank-1 torsion-free sheaves}\label{S:sheaves}

We start by defining the sheaves on the connected curve $X$ we will be working with.

\begin{defi}
A coherent sheaf $I$ on a connected curve $X$ is said to be:
\begin{enumerate}[(i)]
\item \emph{rank-1} if $I$ has generic rank $1$ on every irreducible component of $X$;
\item \emph{torsion-free} (or pure, or $S_1$) if $\Supp(I)=X$ and every non-zero subsheaf $J\subseteq I$ is such that $\dim \Supp(J)=1$;
\item \emph{simple} if ${\rm End}_k(I)=k$.
\end{enumerate}
\end{defi}
\noindent Note that any line bundle on $X$ is a simple rank-1 torsion-free sheaf.

Consider the functor
\begin{equation}\label{E:func-Jbar}
\bJbar_X^* : \{{\rm Schemes}/k\}  \to \{{\rm Sets}\}
\end{equation}
which associates to a $k$-scheme $T$ the set of isomorphism classes of $T$-flat, coherent sheaves on $X\times _k T$
whose fibers over $T$ are simple rank-1 torsion-free sheaves (this definition agrees with the one in \cite[Def. 5.1]{AK} by virtue of \cite[Cor. 5.3]{AK}).
  The functor $\bJbar_X^*$ contains the open subfunctor
\begin{equation}\label{E:func-J}
\bJ_X^* : \{{\rm Schemes}/k\}  \to \{{\rm Sets}\}
\end{equation}
which associates to a $k$-scheme $T$ the set of isomorphism classes of line bundles on $X\times _k T$.

\begin{fact}[Murre-Oort, Altman-Kleiman, Esteves]\label{F:huge-Jac}
Let $X$ be a connected reduced curve. Then
\noindent
\begin{enumerate}[(i)]
\item \label{F:huge1} The Zariski (equiv. \'etale, equiv. fppf) sheafification of $\bJ_X^*$ is represented by a $k$-scheme $\Pic(X)=\bJ_X$, locally of finite type over $k$. Moreover, $\bJ_X$ is formally smooth over $k$.
\item \label{F:huge2} The Zariski (equiv. \'etale, equiv. fppf)  sheafification of $\bJbar_X^*$ is represented by a $k$-scheme $\bJbar_X$, locally of finite type over $k$. Moreover, $\bJ_X$ is an open subset of $\bJbar_X$ and $\bJbar_X$ satisfies the valuative criterion for universally closedness or, equivalently, the existence part of the valuative criterion for properness\footnote{Notice however that the scheme $\bJbar_X$ fails to be universally closed because it is not quasi-compact.
}.
\item \label{F:huge3} There exists a sheaf $\I$ on $X\times \bJbar_X$  such that for every $\F\in \bJbar_X^*(T)$ there exists a unique map $\alpha_{\F}:T\to \bJbar_X$ with the property that
$\F=(\id_X\times \alpha_{\F})^*(\I)\otimes \pi_2^*(N)$ for some $N\in \Pic(T)$, where $\pi_2:X\times T\to T$ is the projection onto the second factor.
The sheaf $\I$ is uniquely determined up  to tensor product with the pullback of an invertible sheaf on $\bJbar_X$ and it is called a \emph{universal sheaf}.
\end{enumerate}
\end{fact}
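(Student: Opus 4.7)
The plan is to assemble this statement from three well-documented sources in the literature, reducing each part to a citation plus a short check that the hypotheses apply to our curve $X$ (reduced, projective, connected over an algebraically closed field).

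For part \eqref{F:huge1}, I would invoke the theorem of Murre and Oort on representability of the relative Picard functor: since $X$ is projective and geometrically reduced over $k$ and has a $k$-rational point, the \'etale sheafification of $\bJ_X^*$ is represented by a $k$-scheme $\Pic(X)$ locally of finite type. Formal smoothness is then reduced to an infinitesimal lifting criterion: given a small extension $A'\twoheadrightarrow A$ of Artin local $k$-algebras with square-zero kernel $J$, the obstruction to lifting a line bundle on $X\otimes A$ lies in $H^2(X,\O_X)\otimes J$, which vanishes because $X$ is a curve. (Equivalently one identifies the tangent space with $H^1(X,\O_X)$ and notes the absence of obstructions.)

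For part \eqref{F:huge2}, the existence of $\bJbar_X$ as a $k$-scheme, locally of finite type, containing $\bJ_X$ as an open subscheme, is the main result of Altman--Kleiman's \emph{Compactifying the Picard scheme}: the simpleness hypothesis is exactly what is needed so that the moduli problem has no nontrivial automorphisms beyond $\Gm$, guaranteeing that the \'etale sheafification is a scheme rather than merely an algebraic space. The existence part of the valuative criterion is Esteves' extension theorem: given a discrete valuation ring $R$ with fraction field $K$ and a simple rank-$1$ torsion-free sheaf $I_K$ on $X_K$, one can, possibly after a finite base change, extend $I_K$ to an $R$-flat coherent sheaf on $X_R$ whose special fibre is again simple, rank-$1$, torsion-free. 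I would remark that quasi-compactness fails because the Euler characteristic of the fibre is unbounded, which is why one only gets the existence and not the uniqueness half.

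For part \eqref{F:huge3}, the existence of a universal sheaf $\I$ on $X\times \bJbar_X$ is again from Altman--Kleiman; the key point is that, being simple, each fibre has automorphism group exactly $\Gm$, so the natural $\Gm$-gerbe over the coarse moduli scheme is trivial (it admits a section because $X$ is projective with a rational point, and one can normalize along a section or by twisting with a suitable line bundle). Uniqueness of $\I$ up to pullback of a line bundle on $\bJbar_X$ then follows from the standard argument: two universal sheaves $\I, \I'$ give, for each point $[F]\in \bJbar_X$, a canonical identification $\I|_{X\times [F]}\cong \I'|_{X\times [F]}$ well-defined up to scalar; the sheaf $\pi_{2,*}\SHom(\I,\I')$ is then a line bundle on $\bJbar_X$ whose pullback, tensored with $\I$, recovers $\I'$.

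The only genuine content beyond citation is ensuring that the hypotheses of Altman--Kleiman and Esteves (geometrically integral versus merely reduced; projective; existence of a rational point) apply to our setting; since $X$ is a projective reduced $k$-curve over an algebraically closed field, none of these is an obstacle, and the main subtle point I would flag is the non-quasi-compactness of $\bJbar_X$, which is the reason the valuative criterion comes only with existence and forces one to work componentwise with the subschemes $\bJbar_X^\chi$ before carving out the projective $\ov J_X(\un q)$ in the next subsection.
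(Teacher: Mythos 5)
Your overall strategy --- assembling the statement from Murre--Oort, Altman--Kleiman and Esteves and then checking that the hypotheses apply --- is exactly the paper's, and parts (i) and (iii) match its proof: for (i) the paper likewise reduces formal smoothness to the vanishing of $H^2(X,\O_X)$ (via [BLR, Sec.\ 8.4, Prop.\ 2]), only adding the remark that the fppf and \'etale sheafifications of $\bJ_X^*$ coincide because $X$ has a $k$-rational point; for (iii) it simply points to the proof of Thm.\ 3.4 in Altman--Kleiman, which is the normalize-along-a-section argument you sketch.

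The one step whose justification is off is the schemeness of $\bJbar_X$ in part (ii). Simpleness of the sheaves is what makes the \'etale sheafification of $\bJbar_X^*$ representable by an \emph{algebraic space} locally of finite type (Altman--Kleiman, Thm.\ 7.4, combined with the observation that being torsion-free and rank $1$ is an open condition); it does not by itself promote that algebraic space to a scheme. For a reducible reduced curve the upgrade from algebraic space to scheme is a separate theorem of Esteves ([est1, Thm.\ B]), whose proof uses that every irreducible component of $X$ has a $k$-point; this is the citation the paper relies on and the one missing from your argument. A smaller point: the failure of the \emph{uniqueness} half of the valuative criterion is caused by the non-separatedness of $\bJbar_X^{\chi}$ for reducible $X$, not by non-quasi-compactness --- the latter is only responsible for the failure of universal closedness, as in the paper's footnote.
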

\begin{proof}
Part (i): the representability of the fppf sheafification of $\bJ_X^*$ follows from a result of Murre-Oort
(see \cite[Sec. 8.2, Thm. 3]{BLR} and the references therein). However, since $X$ admits a $k$-rational point (because $k$ is assumed to be algebraically closed), the fppf sheafification of $\bJ_X^*$ coincides with its \'etale (resp. Zariski) sheafification (see \cite[Thm. 9.2.5(2)]{FGA}).
The formal smoothness of $\bJ_X$ follows from \cite[Sec. 8.4, Prop. 2]{BLR}.

Part (ii): the representability of the \'etale sheafification (and hence of the fppf sheafification) of $\bJbar_X^*$ by an algebraic space
$\bJbar_X$ locally of finite type over $k$ follows from a general result of Altmann-Kleiman (\cite[Thm. 7.4]{AK}). Indeed, in \cite[Thm. 7.4]{AK} the authors state the result for the moduli functor of simple sheaves; however, since the condition of being torsion-free and rank-1 is an open condition (see e.g. the proof of \cite[Prop 5.12(ii)(a)]{AK}), we also get the representability of $\bJbar_X^*$. The fact that $\bJbar_X$ is a scheme follows from a general result of Esteves  (\cite[Thm. B]{est1}), using the fact that each irreducible component of $X$ has a  $k$-point (recall that $k$ is assumed to be algebraically closed).
Moreover, since  $X$ admits a smooth $k$-rational point, the \'etale sheafification of $\bJbar_X^*$ coincides with the Zariski sheafification by \cite[Thm. 3.4(iii)]{AK2}.
 Since $\bJ_X^*$ is an open subfunctor of $\bJbar_X^*$ then  $\bJ_X$ is an open subscheme of $\bJbar_X$.
Finally, the fact that $\bJbar_X$ satisfies the existence condition of the valuative criterion for properness follows from \cite[Thm. 32]{est1}.

Part (iii) is an immediate consequence of the fact that $\bJbar_X$ represents the Zariski sheafification of $\bJbar_X^*$ (see  also \cite[Thm. 3.4]{AK2}).

\end{proof}

Since the Euler-Poincar\'e characteristic $\chi(I):=h^0(X,I)-h^1(X,I)$ of a sheaf $I$ on $X$ is constant under deformations, we get a decomposition
\begin{equation}\label{E:dec}
\begin{sis}
& \bJbar_X=\coprod_{\chi \in \Z} \bJbar_X^{\chi},\\
& \bJ_X=\coprod_{\chi\in \Z} \bJ_X^{\chi},\\
\end{sis}
\end{equation}
where $\bJbar_X^{\chi}$ (resp. $\bJ_X^{\chi}$) denotes the open and closed subscheme of $\bJbar_X$ (resp. $\bJ_X$) parametrizing
simple rank-1 torsion-free sheaves $I$ (resp. line bundles $L$) such that $\chi(I)=\chi$ (resp. $\chi(L)=\chi$).

If $X$ has locally planar singularities, then $\bJbar_X$ has the following properties.

\begin{thm}\label{T:prop-J-planar}
Let $X$ be a connected reduced curve with locally planar singularities. Then
\begin{enumerate}[(i)]
\item \label{T:prop-J-planar1} $\bJbar_X$ is a reduced scheme with locally complete intersection singularities and embedded dimension at most $2p_a(X)$ at every point.
\item \label{T:prop-J-planar2} $\bJ_X$ is dense in $\bJbar_X$.
\item \label{T:prop-J-planar3} $\bJ_X$ is the smooth locus of $\bJbar_X$.
\end{enumerate}
\end{thm}

The required properties of $\bJbar_X$ will be deduced from the analogous properties of the punctual Hilbert scheme (i.e. the Hilbert scheme of $0$-dimensional subschemes) of $X$ via the Abel map.

Let us first review the needed properties of the punctual Hilbert scheme. Denote by $\Hilb^d(X)$ the Hilbert scheme parametrizing subschemes $D$ of $X$ of finite length $d\geq 0$, or equivalently ideal sheaves $I\subset \O_X$ such that $\O_X/I$ is a finite scheme of length $d$. Given $D\in \Hilb^d X$, we will denote by $I_D$ its ideal sheaf. 
We introduce the following subschemes of $\Hilb^d(X)$:
$$\begin{sis}
& \Hilb^d(X)_s:=\{D \in \Hilb^d(X)\: : \: I_D \text{ is simple}\},\\
& \Hilb^d(X)_l:=\{D \in \Hilb^d(X)\: : \: I_D \text{ is a line bundle}\}.\\
\end{sis}$$
By combining the results of \cite[Prop. 5.2 and Prop 5.13(i)]{AK}, we get that the natural inclusions
$$ \Hilb^d(X)_l\subseteq \Hilb^d(X)_s\subseteq \Hilb^d(X)$$
are open inclusions.

\begin{fact}\label{F:propHilb}
If $X$ is a reduced curve with locally planar singularities, then the Hilbert scheme $\Hilb^d(X)$ has the following properties:
\begin{enumerate}[(a)]
\item \label{F:propHilba} $\Hilb^d(X)$  is reduced with locally complete intersection singularities and embedded dimension at most $2d$ at every point.
\item \label{F:propHilbb} $\Hilb^d(X)_l$ is dense in $\Hilb^d(X)$.
\item \label{F:propHilbc} $\Hilb^d(X)_l$ is the smooth locus of $\Hilb^d(X)$.
\end{enumerate}
The above properties do hold true if $\Hilb^d(X)$ is replaced by $\Hilb^d(X)_s$. 
\end{fact}
\begin{proof}
Part \eqref{F:propHilba} follows from \cite[Cor. 7]{AIK} (see also \cite[Prop. 1.4]{BGS}), part \eqref{F:propHilbb} follows from  \cite[Thm. 8]{AIK} (see also \cite[Prop. 1.4]{BGS}) and  part \eqref{F:propHilbc} follows from 
\cite[Prop. 2.3]{BGS}. 

The above properties do remain true for $\Hilb^d(X)_s$ since $\Hilb^d(X)_s$ is an open subset of $\Hilb^d(X)$ containing $\Hilb^d(X)_l$. 
\end{proof}

The punctual Hilbert scheme of $X$ and the moduli space $\bJbar_X$ are related via the Abel map, which is defined as follows. Given a line bundle $M$ on $X$, we define the $M$-twisted Abel map of degree $d$ by
\begin{equation}\label{E:Abel-map}
\begin{aligned}
A_M^d:\Hilbs^d_X & \longrightarrow \bJbar_X, \\
D & \mapsto I_D\otimes M. \\
\end{aligned}
\end{equation}
Note that, by definition, it follows that
\begin{equation}\label{E:lb-locus}
(A_M^d)^{-1}(\bJ_X)=\Hilb^d(X)_l.
\end{equation}

The following result (whose proof was kindly suggested to us by J.L. Kass) shows that, locally on the codomain, the $M$-twisted Abel map of degree $p_a(X)$ is smooth and surjective (for a suitable choice of $M\in \Pic(X)$),
at least if $X$ is Gorenstein.



\begin{prop}\label{P:Abel-sur}
Let $X$ be a (connected and reduced) Gorenstein curve of arithmetic genus $g:=p_a(X)$.
 There exists a cover of $\bJbar_X$ by $k$-finite type open subsets $\{U_{\beta}\}$ such that, for each such $U_{\beta}$, there exists $M_{\beta}\in  \Pic(X)$ with the property that 
 $\Hilbs^{g}_X\supseteq  V_{\beta}:=(A^g_{M_{\beta}})^{-1}(U_{\beta})\stackrel{A^g_{M_{\beta}}}{\longrightarrow} U_{\beta}$ is smooth and surjective.
\end{prop}
\begin{proof}
Observe that, given $I\in \bJbar_X^{\chi}$ and $M \in \Pic(X)$, we have:
\begin{enumerate}[(i)]
\item \label{E:cond1} $I$ belongs to the image of $A_M^{\chi(M)-\chi}$  if (and only if)
there exists an injective homomorphism $I\to M$;
\item \label{E:cond2} $A_M^{\chi(M)-\chi}$ is smooth along $(A_M^{\chi(M)-\chi})^{-1}(I)$ provided that $\Ext^1(I,M)=0$.
\end{enumerate}
Indeed, if there exists an injective homomorphism $I\to M$, then its cokernel  is the structure sheaf of
a $0$-dimensional subscheme $D\subset X$ of length equal to  $\chi(M)-\chi(I)=\chi(M)-\chi$ with the property that 
$I_D=I\otimes M^{-1}$. Therefore
$A_M^{\chi(M)-\chi}(D)=I_D\otimes M=(I\otimes M^{-1})\otimes M=I,$ which implies part \eqref{E:cond1}.  
Part \eqref{E:cond2} follows from \cite[Thm. 5.18(ii)]{AK}\footnote{Note that in loc.cit., this is stated under the assumption that $X$ is integral. However, a close inspection of the proof reveals that this continues to hold
true under the assumption that $X$ is only reduced. The irreducibility is only used in part (i) of \cite[Thm. 5.18]{AK}.}. 

Fixing $M\in \Pic(X)$, the conditions \eqref{E:cond1} and \eqref{E:cond2} are clearly open conditions on $\bJbar_X^{\chi}$; hence the proof of the Proposition follows from the case $n=g$ of the following

\un{Claim:} For any $I\in \bJbar_X^{\chi}$ and any $n\geq g$, there exists $M_n\in \Pic(X)$ with $\chi(M)=n+\chi$ such that 
\begin{enumerate}[(a)] 
\item there exists an injective homomorphism $I\to M_n$;
\item $\Ext^1(I,M_n)=0$.
\end{enumerate}

First of all, observe that, for any $I\in \bJbar_X^{\chi}$ and any line bundle $N$, the local-to-global spectral sequence $H^p(X,{\mathcal Ext}^q(I, N))\Rightarrow \Ext^{p+q}(I,N)$ 
gives that
$$H^0(X,{\mathcal Hom}(I,N))= \Ext^0(I,N), $$
$$0\to H^1(X,{\mathcal Hom}(I,N))\to \Ext^1(I,N)\to H^1(X,{\mathcal Ext}^1(I,N)).$$
Moreover, the sheaf ${\mathcal Ext}^1(I,N)={\mathcal Ext}^1(I,\O_X)\otimes N$ vanishes by \cite[Prop. 1.6]{Har3},
so that we get 
\begin{equation}\label{E:Ext-H}
\Ext^i(I, N)=H^i(X,{\mathcal Hom}(I,N))=H^i(X, I^*\otimes N)  \hspace{0.3cm} \text{Êfor }Êi=0,1
\end{equation}
where $I^*:={\mathcal Hom}(I,\O_X)\in \bJbar_X$.  
From \eqref{E:Ext-H} and Riemann-Roch, we get 
\begin{equation}\label{E:chiExt}
\dim \Ext^0(I, N)-\dim \Ext^1(I,N)=\chi(I^*\otimes N)= \deg N+\chi(I^*)=\deg N +2(1-g)-\chi(I)=\chi(N)-\chi+1-g.
\end{equation}

We will now prove the claim by decreasing induction on $n$.
The claim is true (using \eqref{E:Ext-H}) if $n\gg 0$ and $M_n$ is chosen to be a sufficiently high power of a very ample line bundle on $X$.
Suppose now that we have a line bundle $M_{n+1}\in \Pic^{n+1}(X)$ with $\chi(M_{n+1})=n+1+\chi$ (for a certain $n\geq g$) which satisfies the properties of the Claim. 
We are going to show that, for a generic smooth point $p\in X$,
 the line bundle $M_n:=M_{n+1}\otimes \O_X(-p)\in \Pic(X)$ also satisfies the properties of the Claim.

Using \eqref{E:chiExt} and the properties of $M_{n+1}$, it is enough to show that $M_n:=M_{n+1}\otimes \O_X(-p)$, for $p\in X$ generic, satisfies
\begin{equation*}
\dim \Hom(I,M_n)=\dim \Hom(I,M_{n+1})-1, \tag{*}
\end{equation*}
\begin{equation*}
\text{the generic element } [I\to M_n]\in \Hom(I,M_n) \text{ is injective.}\tag{**}
\end{equation*} 
Tensoring the exact sequence 
$$ 0\to \O_X(-p)\to \O_X \to \O_p\to 0$$
with $I^*\otimes M_{n+1}$ and taking cohomology, we get the exact sequence
$$0\to \Hom(I,M_n)=H^0(X, I^*\otimes M_n)\to \Hom(I,M_{n+1})=H^0(X, I^*\otimes M_{n+1})\stackrel{e}{\longrightarrow} {\bf k}_p, $$
where $e$ is the evaluation of sections at $p\in X$. By the assumptions on $M_{n+1}$ and \eqref{E:chiExt}, 
we have that 
$$\dim \Hom(I,M_{n+1})= \chi(M_{n+1})-\chi+1-g=n+1+1-g \geq 2,$$
and, moreover, that the generic element  $[I\to M_{n+1}]\in \Hom(I,M_{n+1})$ is injective.
By choosing a point $p\in X$ for which there exists a section $s\in H^0(X, I^*\otimes M_{n+1})$
which does not vanish in $p$, we get that (*) and (**) holds true for $M_n=M_{n+1}\otimes \O_X(-p)$, q.e.d. 

\end{proof}

\begin{remark}
From the proof of the second statement of Theorem \ref{T:prop-J-planar}\eqref{T:prop-J-planar1}  and Remark \ref{R:Jac-integral}\eqref{R:Jac-integral3} below, it will follow that the above Proposition is, in general, false if $g$ is replaced by any smaller integer. 
\end{remark}


With the above preliminaries results, we can now give a proof of Theorem \ref{T:prop-J-planar}.  

\begin{proof}[Proof of Theorem \ref{T:prop-J-planar}]

Observe that each of the three statements of the theorem is local in $\bJbar_X$, i.e. it is sufficient to check it on an open cover
of $\bJbar_X$. Consider the open cover $\{U_{\beta}\}$ given by Proposition \ref{P:Abel-sur}.

Part (i): from Fact \ref{F:propHilb}\eqref{F:propHilba}, it follows that $V_{\beta}\subset \Hilbs^{g}_X$ is reduced with locally complete intersection singularities and embedded dimension at most $2g=2p_a(X)$. Since $(A_M^g)_{|V_{\beta}}$ is smooth and surjective into $U_{\beta}$, also $U_{\beta}$ inherits the same properties.

Part (ii): from Fact \ref{F:propHilb}\eqref{F:propHilbb}, it follows that $\Hilb^g(X)_l\cap V_{\beta}$ is dense in $V_{\beta}$. From the surjectivity of $(A_M^g)_{|V_{\beta}}$ together with \eqref{E:lb-locus}, it follows that
$A_M^g(V_{\beta}\cap \Hilb^g(X)_l)=U_{\beta}\cap \bJ_X$ is dense in $A_M^g(V_{\beta})=U_{\beta}$.

Part (iii): from Fact \ref{F:propHilb}\eqref{F:propHilbc}, it follows that $\Hilb^g(X)_l\cap V_{\beta}$ is the smooth locus of $V_{\beta}$. Since $(A_M^g)_{|V_{\beta}}$ is smooth and surjective and \eqref{E:lb-locus} holds, 
we infer that $A_M^g(V_{\beta}\cap \Hilb^g(X)_l)=U_{\beta}\cap \bJ_X$ is the smooth locus of  $A_M^g(V_{\beta})=U_{\beta}$.

\end{proof}

\begin{remark}\label{R:Jac-integral}
\noindent
\begin{enumerate}[(i)]
\item Theorem \ref{T:prop-J-planar} is well-known (except perhaps the statement about the embedded dimension) in the case where $X$ is irreducible (and hence integral):  the first assertion in part (i) and part (ii) are due to Altman-Iarrobino-Kleiman \cite[Thm. 9]{AIK}; part (iii) is due to Kleppe \cite{Kle} (unpublished, for a proof see \cite[Prop. 6.4]{Kas}). Note that, for $X$ irreducible, part (ii) is equivalent to the irreducibility of $\bJbar_X^d$ for a certain 
$d\in \Z$ (hence for all $d\in \Z$).
\item The hypothesis that $X$ has locally planar singularities is crucial in the above Theorem \ref{T:prop-J-planar}:
\begin{itemize}
\item Altman-Iarrobino-Kleiman constructed in \cite[Exa. (13)]{AIK}
an  integral curve without locally planar singularities (indeed, a curve which is a complete intersection in $\PP^3$) for which $\bJbar_X^d$ (for any $d\in \Z$) is not irreducible (equivalently, $\bJ_X^d$ is not dense in $\bJbar_X^d$).
Later, Rego (\cite[Thm. A]{Reg}) and Kleppe-Kleiman (\cite[Thm. 1]{KK}) showed that, for $X$ irreducible, $\bJbar^d_X$ is irreducible if and only if $X$  has locally planar singularities.
\item Kass proved in \cite[Thm. 2.7]{Kas3} that if $X$ is an integral curve with a unique non-Gorenstein singularity, then its compactified Jacobian $\bJbar_X^d$ (for any $d\in \Z$) contains an irreducible component $D_d$  which does not meet the open subset $\bJ_X^d\subset \bJbar_X^d$ of line bundles  and it is generically smooth of dimension $p_a(X)$. In particular, the smooth locus of $\bJbar_X^d$ is bigger then the locus $\bJ_X^d$ of line bundles.
\item Kass constructed in \cite{Kas4} an integral rational space curve $X$ of arithmetic genus $4$ for which $\bJbar_X$  is non-reduced.
\end{itemize}
\item \label{R:Jac-integral3} The statement about the embedded dimension in Theorem \ref{T:prop-J-planar} is sharp: if $X$ is a rational nodal curve  with $g$ nodes and $I\in \bJbar_X^d$ is a sheaf that is not locally free at any of the  $g$ nodes (and any $\bJbar_X^d$ contains a sheaf with these properties), then it is proved in \cite[Prop. 2.7]{CMK} that $\bJbar_X^d$ is isomorphic formal locally at $I$ to the product of $g$ nodes, hence it has embedded dimension at $I$ equal to $2g$.

\end{enumerate}
\end{remark}

\subsection{Fine compactified Jacobians}\label{S:fine-Jac}

For any  $\chi \in\mathbb Z$, the scheme $\bJbar_X^{\chi}$ is neither of finite type nor separated over $k$ (and similarly for $\bJ_X^{\chi}$) if $X$ is reducible.
However, they can be covered by
open subsets that are proper (and even projective) over $k$: the fine compactified Jacobians of $X$. The fine compactified Jacobians  depend on the choice of a polarization, whose definition is as
follows.

\begin{defi}\label{pola-def}
A \emph{polarization} on a connected curve $X$ is a tuple of rational numbers $\un q=\{\un q_{C_i}\}$, one for each irreducible component $C_i$ of $X$, such that $|\un q|:=\sum_i \un q_{C_i}\in \Z$.
We call $|\un q|$ the total degree of $\un q$.

\end{defi}

Given any subcurve $Y \subseteq X$, we set $\un{q}_Y:=\sum_j \un{q}_{C_j}$ where the sum runs
over all the irreducible components $C_j$ of $Y$.
Note that giving a polarization $\un q$ is the same as giving an
assignment $(Y\subseteq X)\mapsto \un q_Y$ such that $\un q_X\in \Z$ and which is additive on $Y$, i.e. such that if $Y_1,Y_2\subseteq X$ are two subcurves of $X$ without common irreducible components, then $\un q_{Y_1\cup Y_2}=\un q_{Y_1}+\un q_{Y_2}$.

\begin{defi}\label{def-int}
A polarization $\un q$ is called \emph{integral} at a subcurve $Y\subseteq X$ if
$\un q_Z \in \Z$ for any connected component $Z$ of $Y$ and of $Y^c$.

A polarization is called
\emph{general} if it is not integral at any proper subcurve $Y\subset X$.
\end{defi}

\begin{remark}\label{R:conn-pola}
It is easily seen that  $\un q$ is general if and only if $\un q_Y\not\in \Z$ for any proper subcurve $ Y\subset X$ such that $Y$ and $Y^c$ are connected.
\end{remark}

For each subcurve $Y$ of $X$ and each torsion-free sheaf $I$ on $X$, the restriction $I_{|Y}$ of $I$ to $Y$ is not necessarily a
torsion-free sheaf on $Y$. However, $I_{|Y}$ contains a biggest subsheaf, call it temporarily $J$, whose support has dimension zero, or in other words  such that $J$ is a torsion sheaf.
We denote by $I_{Y}$ the quotient of $I_{|Y}$
by $J$. It is easily seen that $I_Y$ is torsion-free on $Y$ and it is the biggest torsion-free quotient of $I_{|Y}$: it is actually the unique torsion-free quotient of $I$ whose support is equal to $Y$.
Moreover, if $I$ is torsion-free rank-1 then $I_Y$ is torsion-free rank-1.
We let $\deg_Y (I)$ denote the degree of $I_Y$ on $Y$, that is, $\deg_Y(I) := \chi(I_Y )-\chi(\O_Y)$.

\begin{defi}\label{sheaf-ss-qs}
\noindent Let $\un q$ be a polarization on $X$.
Let $I$ be a torsion-free rank-1  sheaf on $X$ (not necessarily simple) such that $\chi(I)=|\un q|$.
\begin{enumerate}[(i)]
\item \label{sheaf-ss} We say that $I$ is \emph{semistable} with respect to $\un q$ (or $\un q$-semistable) if
for every proper subcurve $Y\subset X$, we have that
\begin{equation}\label{multdeg-sh1}
\chi(I_Y)\geq \un q_Y.
\end{equation}
\item \label{sheaf-s} We say that $I$ is \emph{stable} with respect to $\un q$ (or $\un q$-stable) if it is semistable with respect to $\un q$
and if the inequality (\ref{multdeg-sh1}) is always strict.
\end{enumerate}
\end{defi}

\begin{remark}\label{R:conn-subcurves}
It is easily seen that a torsion-free rank-1 sheaf $I$ is  $\un q$-semistable (resp. $\un q$-stable) if and only if \eqref{multdeg-sh1} is satisfied (resp. is satisfied with strict inequality)
for any subcurve $Y\subset X$ such that  $Y$ and $Y^c$ are connected.

\end{remark}

\begin{remark}\label{R:stable-general}
Let $\un q$ be a polarization on $X$ and let $\un q'$ be a general polarization on $X$ that is obtained by slightly perturbing $\un q$. Then, for a torsion-free rank-$1$ sheaf $I$ on $X$,  we have the following chain of implications:
$$I  \: \text{ is } \un q\text{-stable} \Rightarrow I  \: \text{ is } \un q'\text{-stable}  \Rightarrow I  \: \text{ is } \un q'\text{-semistable}  \Rightarrow I  \: \text{ is } \un q\text{-semistable}. $$
\end{remark}

\begin{remark}\label{R:ss-lb}
A line bundle $L$ on $X$ is $\un q$-semistable if and only if
\begin{equation}\label{E:ss-upper}
\chi(L_{|Y})\leq \un q_Y+|Y\cap Y^c|
\end{equation}
for any subcurve $Y\subseteq X$.  Indeed, tensoring with $L$ the  exact sequence
$$0\to \O_X\to \O_{Y}\oplus \O_{Y^c} \to \O_{Y\cap Y^c}\to 0,$$
and taking Euler-Poincar\'e characteristics, we find that
$$\chi(L_{|Y})+\chi(L_{|Y^c})=\chi(L)+|Y\cap Y^c|.$$
Using this equality, we get that
$$\chi(L_{|Y^c})\geq \un q_{Y^c} \Longleftrightarrow \chi(L_{|Y})=\chi(L)-\chi(L_{|Y^c})+|Z\cap Z^c|\leq |\un q|-\un q_{Y^c}+|Z\cap Z^c|=\un q_Y+|Z\cap Z^c|,$$
which gives that \eqref{multdeg-sh1} for $Y^c$ is equivalent to \eqref{E:ss-upper} for $Y$.

\end{remark}

\begin{remark}\label{R:stabGor}
If $X$ is Gorenstein, we can write the inequality \eqref{multdeg-sh1}  in terms of the degree of $I_Y$ as follows
\begin{equation}\label{E:ineqdeg}
\deg_Y(I)\geq \un q_Y-\chi(\O_Y)=\un q_Y+\frac{\deg_Y(\omega_X)}{2}-\frac{\delta_Y}{2},
\end{equation}
where we used the adjunction formula (see \cite[Lemma 1.12]{Cat})
$$\deg_Y(\omega_X)=2p_a(Y)-2+\delta_Y=-2\chi(\O_Y)+\delta_Y.$$
The inequality \eqref{E:ineqdeg} was used to define stable rank-$1$ torsion-free sheaves on nodal curves in \cite{MV}; in particular, there is a change of notation between this paper where $\un q$-(semi)stability is defined by means of the inequality \eqref{multdeg-sh1} and the notation of loc. cit. where $\un q$-(semi)stability is defined by means of the inequality \eqref{E:ineqdeg}.

\end{remark}

Polarizations on $X$ can be constructed from  vector bundles on $X$, as we now indicate.

\begin{remark}\label{R:compEst}
Given a vector bundle $E$ on  $X$, we define the polarization $\un q^{E}$ on $X$ by setting
\begin{equation}\label{E:pol-qE}
\un{q}^E_{Y}:=-\frac{\deg(E_{|Y})}{\rk(E)},
\end{equation}
 for each subcurve $Y$ (or equivalently for each irreducible component $C_i$) of $X$.
Then  a torsion-free rank-1 sheaf $I$ on $X$ is stable (resp. semistable) with respect to $\un q^E$ in the sense of Definition \ref{sheaf-ss-qs} if and only if
$$\chi(I_Y)> ( \geq)\un q^E_Y=- \frac{\deg(E_{|Y})}{\rk(E)},$$
i.e. if $I$ is stable (resp. semistable) with respect to $E$ in the sense of \cite[Sec. 1.2]{est1}.

Moreover,  every polarization $\un q$ on $X$ is of the form $\un q^E$ for some (non-unique) vector bundle $E$. Indeed, take $r>0$ a sufficiently divisible natural number such that
$r\un q_Y\in \Z$ for every subcurve  $Y\subseteq X$.
Consider a vector bundle $E$ on $X$ of rank $r$ such that, for every subcurve $Y\subseteq X$ (or equivalently for every irreducible component $C_i$ of $X$), the degree of $E$ restricted to $Y$ is equal to
\begin{equation}\label{E:not-este}
-\deg(E_{|Y})=r \un q_{Y}.
\end{equation}
Then, comparing \eqref{E:pol-qE} and \eqref{E:not-este}, we deduce that $\un q^E=\un q$.

\end{remark}

Finally, for completeness, we mention that the usual slope (semi)stability with respect to some ample line bundle on $X$ is a special case of the above (semi)stability.

\begin{remark}\label{R:slope-stab}
Given an ample line bundle $L$ on  $X$ and an integer $\chi\in \bbZ$, the  slope (semi)stability for rank-$1$ torsion-free sheaves on $X$ of Euler-Poincar\'e characteristic equal to $\chi$ is equal to the above (semi)stability with respect to the polarization 
${}^L\un q$ defined by setting 
\begin{equation}\label{E:L-q}
{}^L\un q_Y:=\frac{\deg(L_{|Y})}{\deg L}\chi,
\end{equation}
for any subcurve $Y$ of $X$. 
The proof of the above equivalence in the nodal case can be found in \cite[Sec. 1]{Ale2} (see also \cite[Fact 2.8]{CMKV}); the same proof extends verbatim to arbitrary reduced curves. Notice that, as observed already in 
\cite[Rmk. 2.12(iv)]{MV},  slope semistability with respect to some ample line bundle $L$ is much more restrictive than $\un q$-semistability: the extreme case being when $\chi=0$, in which case there is a unique slope semistability 
(independent on the chosen line bundle $L$) while there are plenty of $\un q$-semistability conditions!
\end{remark}

The geometric implications of having a general polarization are clarified by the following result.


\begin{lemma}\label{L:nondeg}
Let $I$ be a rank-$1$ torsion-free sheaf on $X$ which is semistable with respect to a polarization $\un q$ on $X$.
\noindent
\begin{enumerate}[(i)]
\item \label{L:nondeg1} If $\un q$ is general then $I$ is also $\un q$-stable.
\item \label{L:nondeg2} If $I$ is $\un q$-stable, then $I$ is simple.
\end{enumerate}
\end{lemma}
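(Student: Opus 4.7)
For part \eqref{L:nondeg1}, the plan is simply to combine Remarks \ref{R:conn-subcurves} and \ref{R:conn-pola}. The first reduces the verification of (semi)stability of $I$ to subcurves $Y$ with both $Y$ and $Y^c$ connected, and the second characterizes generality of $\un q$ as the condition $\un q_Y\notin\Z$ at every such $Y$. Granted these two reductions, the argument is a one-liner: at any such $Y$, $\chi(I_Y)$ is an integer while $\un q_Y$ is not, so the semistability inequality $\chi(I_Y)\geq\un q_Y$ automatically becomes strict. Hence $I$ is $\un q$-stable.

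For part \eqref{L:nondeg2}, I plan to analyze the endomorphism $k$-algebra $A:=\End(I)=H^0(X,\SHom(I,I))$. The first step is to show that $A$ is reduced, commutative, and finite-dimensional over $k$. Finite-dimensionality is automatic from coherence of $\SHom(I,I)$ on the proper scheme $X$. Commutativity and reducedness both follow from the observation that any $f\in A$ acts on the stalk of $I$ at the generic point of each irreducible component $C_i$ as multiplication by a scalar $\lambda_i\in k(C_i)$; since $I$ is torsion-free, the assignment $f\mapsto(\lambda_i)$ defines an injection $A\hookrightarrow\prod_i k(C_i)$ into a product of fields. Combined with algebraic closedness of $k$, this forces $A\cong k^m$ for some $m\geq 1$, and $I$ is simple precisely when $m=1$.

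To rule out $m\geq 2$, the plan is to use a non-trivial idempotent $e\in A$ to decompose $I=\Img(e)\oplus\Ker(e)=:I_1\oplus I_2$ with both summands non-zero. Setting $X_j:=\Supp(I_j)$, the rank-$1$ hypothesis forces $X_1$ and $X_2$ to share no irreducible component (otherwise $I$ would have rank $\geq 2$ there), while $X_1\cup X_2=X$; in particular each $X_j$ is a proper subcurve of $X$. A short local check shows that $I_{X_j}=I_j$: indeed $I_{|X_1}=I_1\oplus(I_2)_{|X_1}$, and $(I_2)_{|X_1}$ is supported on the zero-dimensional set $X_1\cap X_2$, hence torsion, and so disappears in the torsion-free quotient. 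Passing to Euler characteristics and using additivity of $\un q$ over subcurves with no common component yields
\[
\chi(I_{X_1})+\chi(I_{X_2})\;=\;\chi(I)\;=\;|\un q|\;=\;\un q_{X_1}+\un q_{X_2},
\]
which is incompatible with the two strict stability inequalities $\chi(I_{X_j})>\un q_{X_j}$. The only delicate point of the plan is the identification $I_{X_j}=I_j$ together with verifying that the supports are genuinely proper subcurves with no shared component; once these local facts are in place, the contradiction with stability is immediate.
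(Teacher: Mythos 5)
Your proof is correct. Part \eqref{L:nondeg1} is identical to the paper's argument: combine Remark \ref{R:conn-subcurves} with Remark \ref{R:conn-pola} and observe that $\chi(I_Y)\in\Z$ while $\un q_Y\notin\Z$. For part \eqref{L:nondeg2} the paper argues by the same final contradiction as you do — a non-simple $I$ decomposes as $I\cong I_Y\oplus I_{Y^c}$ for some proper subcurve $Y$, whence $\chi(I)=\chi(I_Y)+\chi(I_{Y^c})>\un q_Y+\un q_{Y^c}=|\un q|$ is absurd — but it simply cites Esteves (\cite[Prop. 1]{est1}) for the existence of that decomposition. What you do differently is prove that input from scratch: the embedding $\End(I)\hookrightarrow\prod_i k(C_i)$ via the action on generic stalks (injective because a section killing all generic stalks has image of dimension-zero support inside the torsion-free $I$, hence is zero) gives $\End(I)\cong k^m$, and a nontrivial idempotent produces the splitting $I=I_1\oplus I_2$ with $\Supp(I_1)$, $\Supp(I_2)$ complementary subcurves and $I_{X_j}=I_j$. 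All the local verifications you flag as delicate do go through (each $I_j$ is annihilated by the ideal of its support since the product is a torsion subsheaf of a torsion-free sheaf, and $(I_2)_{|X_1}$ is torsion supported on $X_1\cap X_2$, so it dies in the torsion-free quotient). The trade-off is transparency versus length: your version is self-contained and makes visible exactly where torsion-freeness and rank $1$ enter, at the cost of reproving a standard structural fact that the paper is content to quote.
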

\begin{proof}
Let us first prove \eqref{L:nondeg1}. Since $\un q$ is general, from Remark \ref{R:conn-pola} it follows  that if $Y\subset X$ is a subcurve of $X$ such that $Y$ and $Y^c$ are connected then
 $\displaystyle \un q_Y \not \in \Z$. Therefore, the right hand side of \eqref{multdeg-sh1} is not an integer for such subcurves, hence the inequality is a fortiori always strict.
 This is enough to guarantee that a torsion-free rank-1 sheaf that is $\un q$-semistable is also $\un q$-stable, by Remark  \ref{R:conn-subcurves}.

Let us now prove part \eqref{L:nondeg2}. By contradiction, suppose that $I$ is $\un q$-stable and not simple. Since $I$ is not simple, we can find, according to \cite[Prop. 1]{est1}, a proper subcurve $Y\subset X$
such that  the natural map $I\to I_{Y}\oplus I_{Y^c}$ is an isomorphism, which implies that $\chi(I)=\chi(I_Y)+\chi(I_{Y^c})$.
Since $I$ is $\un q$-stable, we get from \eqref{multdeg-sh1} the two inequalities
\begin{equation*}\label{E:2-ineq}
\begin{sis}
& \chi(I_Y)> \un q_Y,\\
& \chi(I_{Y^c})> \un q_{Y^c}.\\
\end{sis}
\end{equation*}
Summing up the above inequalities, we get $\chi(I)=\chi(I_Y)+\chi(I_{Y^c})>\un q_Y+\un q_{Y^c}=|\un q|$, which is a contradiction since $\chi(I)=|\un q|$ by definition of $\un q$-stability.
\end{proof}

Later on (see Lemma \ref{L:ndg-conv}), we will see that the property stated in Lemma \ref{L:nondeg}\eqref{L:nondeg1} characterizes the polarizations  that are general, at least for curves with locally planar singularities.

For a polarization $\un q$  on $X$, we will denote by $\ov{J}^{ss}_X(\un q)$ (resp. $\ov{J}^{s}_X(\un q)$) the subscheme of
$\bJbar_X$ parametrizing simple rank-1 torsion-free sheaves $I$ on $X$ which are $\un q$-semistable (resp. $\un q$-stable). If $\un q=\un q^E$ for some vector bundle $E$ on $X$, then it follows from Remark \ref{R:compEst}
that the subscheme $J_X^s(\un q^E)$ (resp. $J_X^{ss}(\un q^E)$) coincides with the subscheme $J_E^s$ (resp. $J_E^{ss}$) in Esteves's notation (see \cite[Sec. 4]{est1}).
By \cite[Prop. 34]{est1}, the inclusions
$$\ov{J}^{s}_X(\un q)\subseteq \ov{J}^{ss}_X(\un q)\subset \bJbar_X$$
are open.


\begin{fact}[Esteves]\label{F:Este-Jac}
Let $X$ be a connected curve.
\noindent
\begin{enumerate}[(i)]
\item $\ov{J}^{s}_X(\un q)$ is a quasi-projective scheme over $k$ (not necessarily reduced). In particular,  $\ov{J}^{s}_X(\un q)$ is a scheme of
finite type and separated over $k$.
\item $\ov{J}^{ss}_X(\un q)$ is a $k$-scheme of finite type and universally closed over $k$.
\item If $\un q$ is general then $\ov{J}^{ss}_X(\un q)=\ov{J}^{s}_X(\un q)$ is a projective scheme over $k$ (not necessarily reduced).
\item $\displaystyle \bJbar_X=\bigcup_{{\un q} \text{ general}} \ov{J}_X^s(\un q).$
\end{enumerate}
\end{fact}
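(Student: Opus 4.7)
The plan is to identify every polarization as $\un q = \un q^E$ for a suitable vector bundle $E$ on $X$ via Remark \ref{R:compEst}, which matches $\ov{J}^s_X(\un q)$ and $\ov{J}^{ss}_X(\un q)$ with the Esteves moduli functors $J^s_E$ and $J^{ss}_E$ of \cite{est1}. Parts (i)--(iii) then reduce to citing the main results of \cite{est1}, while part (iv) requires a self-contained combinatorial argument exploiting simplicity.

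For part (i), I would invoke Esteves' GIT construction \cite[Thm.~A]{est1}, which represents $J^s_E$ as a quasi-projective $k$-scheme. Separatedness, i.e.\ the uniqueness clause of the valuative criterion, follows from Esteves' valuative criterion \cite[Thm.~32]{est1}: since any $\un q$-stable sheaf is simple by Lemma \ref{L:nondeg}\eqref{L:nondeg2}, two $\un q$-stable extensions of a generic family over a DVR define points of the scheme $\bJbar_X$ from Fact \ref{F:huge-Jac}\eqref{F:huge2} and so must coincide. For part (ii), the finite-type property is Esteves' boundedness of the $\un q^E$-semistable locus, and universal closedness follows from the existence part of the valuative criterion for $\bJbar_X$ combined with the fact that $\un q$-semistability is preserved under specialisation, as the defining inequality \eqref{multdeg-sh1} is closed (it uses upper-semicontinuity of $\chi(I_Y)$ along flat families applied to each of the finitely many subcurves $Y\subset X$). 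Part (iii) is then immediate: if $\un q$ is general, Lemma \ref{L:nondeg}\eqref{L:nondeg1} gives $\ov{J}^{ss}_X(\un q)=\ov{J}^s_X(\un q)$, and a quasi-projective universally closed scheme is projective.

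The main obstacle is part (iv). Given a simple rank-$1$ torsion-free sheaf $I$ with $\chi(I)=\chi$, I want to exhibit a general polarization $\un q$ of total degree $\chi$ for which $I$ is $\un q$-stable. The plan is to work inside the rational polytope
\begin{equation*}
\mathcal{P}(I) := \{\un q\in \Q^{\gamma(X)}\, :\, |\un q|=\chi \text{ and } \un q_Y<\chi(I_Y) \text{ for every proper subcurve } Y\subset X\},
\end{equation*}
cut out by finitely many open linear conditions on the affine hyperplane $|\un q|=\chi$. The key input is simplicity: by \cite[Prop.~1]{est1}, for every proper $Y\subset X$ the canonical injection $I\hookrightarrow I_Y\oplus I_{Y^c}$ has non-zero cokernel (supported on $Y\cap Y^c$), so
\begin{equation*}
\chi(I_Y)+\chi(I_{Y^c})>\chi(I).
\end{equation*}
Combined with the additivity $\un q_Y+\un q_{Y^c}=\chi$, this ensures that the strict inequalities defining $\mathcal{P}(I)$ are pairwise compatible on the hyperplane $|\un q|=\chi$, and a direct feasibility argument (for instance by induction on the number of irreducible components of $X$) exhibits an interior point. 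Finally, inside $\mathcal{P}(I)$ the non-general polarizations are cut out by finitely many proper affine subspaces (one for each connected $Y\subset X$ with connected complement), so the general polarizations form a dense open subset of $\mathcal{P}(I)$, and any such $\un q$ yields $I\in \ov{J}^s_X(\un q)$, establishing (iv).
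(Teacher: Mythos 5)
Parts (i)--(iii) of your proposal follow the paper's route: identify $\un q=\un q^E$ via Remark \ref{R:compEst} and quote Esteves (\cite[Thm. A, Thm. C]{est1}), then get (iii) from Lemma \ref{L:nondeg} plus ``quasi-projective and universally closed implies projective''. One caveat on (ii): your justification of universal closedness is not correct as stated. The inequality \eqref{multdeg-sh1} is \emph{not} a closed condition in $\bJbar_X$ --- the (semi)stable locus is \emph{open} by \cite[Prop. 34]{est1}, and a given flat limit of a semistable family need not be semistable; since $\bJbar_X$ is non-separated, the content of Esteves' theorem is that \emph{some} limit (among the many) can be chosen semistable, which is a Langton-type semistable reduction, not semicontinuity. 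Since you are citing \cite[Thm. A]{est1} anyway this does not break the argument, but the offered reasoning should be replaced by the citation, as the paper does.

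The genuine gap is in part (iv), where you depart from the paper (which simply quotes \cite[Cor. 15]{est1} together with Remark \ref{R:stable-general}) and attempt a self-contained feasibility argument for the polytope $\mathcal P(I)$. The only consequence of simplicity you extract is the complementary-pair inequality $\chi(I_Y)+\chi(I_{Y^c})>\chi(I)$, and this guarantees only that the constraints for $Y$ and $Y^c$ are \emph{pairwise} compatible; it does not imply that the whole system $\{\un q_Y<\chi(I_Y)\}_{Y}$ together with $|\un q|=\chi$ has a solution. Concretely, take $\gamma=3$, $\chi=0$, and an abstract assignment $c_{\{i\}}=1$ for each single component and $c_{\{i,j\}}=0$ for each pair: every complementary pair satisfies $c_S+c_{S^c}=1>0=\chi$, yet the three constraints $q_i+q_j<0$ sum to $2(q_1+q_2+q_3)<0$, contradicting $q_1+q_2+q_3=0$. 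So feasibility must use further structure of the function $Y\mapsto \chi(I_Y)$ (e.g. subadditivity $\chi(I_{Y\cup Z})\le \chi(I_Y)+\chi(I_Z)$ for disjoint $Y,Z$, or an explicit choice of $\un q$ adapted to $I$), and your ``direct feasibility argument by induction on the number of components'' is exactly the missing step --- it is the content of \cite[Cor. 15]{est1} and cannot be waved through. The final step (density of general rational polarizations in a nonempty open rational polytope) is fine once nonemptiness is established.
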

\begin{proof}
Part (i) follows from \cite[Thm. A(1) and Thm. C(4)]{est1}.

Part (ii) follows from \cite[Thm. A(1)]{est1}.

Part (iii): the fact that $\ov{J}^{ss}_X(\un q)=\ov{J}^{s}_X(\un q)$ follows from Lemma \ref{L:nondeg}. Its projectivity follows from (i) and (ii) since a quasi-projective scheme over $k$ which is
universally closed over $k$ must be projective over $k$.

Part (iv) follows from \cite[Cor. 15]{est1}, which asserts that a simple torsion-free rank-1 sheaf is stable with respect to a certain polarization, together with Remark \ref{R:stable-general}, which asserts that it is enough to consider general polarizations.

\end{proof}

If $\un q$ is general, we set $\ov{J}_X(\un q):=\ov{J}^{ss}_X(\un q)=\ov{J}^{s}_X(\un q)$ and we call it the
\emph{fine compactified Jacobian} with respect to the polarization $\un q$. We denote by $J_X(\un q)$ the open subset of $\ov{J}_X(\un q)$ parametrizing
line bundles on $X$. Note that $J_X(\un q)$ is isomorphic to the disjoint union of a certain number of copies of the generalized Jacobian $J(X)=\Pic^{\un 0}(X)$ of $X$.

Since, for $\un q$ general, $J_X(\un q)$ is an open subset of $\bJbar_X$, the above Theorem \ref{T:prop-J-planar} immediately yields the following
properties for fine compactified Jacobians of curves with locally planar singularities.

\begin{cor}\label{C:prop-fineJac}
Let $X$ be a connected curve with locally planar singularities and $\un q$ a general polarization on $X$. Then
\begin{enumerate}[(i)]
\item \label{C:prop-fineJac1} $\ov{J}_X(\un q)$ is a reduced scheme with locally complete intersection singularities and embedded dimension at most $2p_a(X)$ at every point.
\item \label{C:prop-fineJac2} $J_X(\un q)$ is dense in $\ov{J}_X(\un q)$. In particular, $\ov{J}_X(\un q)$ has pure dimension equal to the arithmetic genus $p_a(X)$ of $X$.
\item \label{C:prop-fineJac3} $J_X(\un q)$ is the smooth locus of $\ov{J}_X(\un q)$.
\end{enumerate}
\end{cor}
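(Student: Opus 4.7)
The plan is to deduce all three parts from the corresponding statements in Theorem \ref{T:prop-J-planar} by exploiting the fact that $\ov{J}_X(\un q)$ is an open subscheme of $\bJbar_X$. By Fact \ref{F:Este-Jac}(iii), since $\un q$ is general, $\ov{J}_X(\un q)=\ov{J}_X^{ss}(\un q)=\ov{J}_X^s(\un q)$ is indeed open in $\bJbar_X$; and tautologically $J_X(\un q)=\ov{J}_X(\un q)\cap \bJ_X$ from the definitions. All three desired properties are local on $\bJbar_X$, so they will pass to any open subscheme.

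For part \eqref{C:prop-fineJac1}, reducedness and having locally complete intersection singularities are both Zariski-local properties, so they descend from $\bJbar_X$ (which enjoys them by Theorem \ref{T:prop-J-planar}(i)) to its open subscheme $\ov{J}_X(\un q)$.

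For part \eqref{C:prop-fineJac2}, I would argue as follows. Theorem \ref{T:prop-J-planar}(ii) asserts that $\bJ_X$ is dense in $\bJbar_X$; consequently, for any open $U\subseteq \bJbar_X$ the intersection $U\cap \bJ_X$ is dense in $U$. Applying this with $U=\ov{J}_X(\un q)$ gives density of $J_X(\un q)$ in $\ov{J}_X(\un q)$. For the dimension statement, $\bJ_X$ decomposes as a disjoint union of torsors under the generalized Jacobian $J(X)=\Pic^{\un 0}(X)$, which has dimension $h^1(X,\O_X)=p_a(X)$ by Convention \ref{N:Jac-gen} (using the connectedness of $X$). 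Hence $J_X(\un q)$ is smooth and equidimensional of dimension $p_a(X)$, and being an open dense subscheme forces $\ov{J}_X(\un q)$ to be of pure dimension $p_a(X)$ as well.

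For part \eqref{C:prop-fineJac3}, smoothness is again a local property: the smooth locus of an open subscheme $U$ of a scheme $Y$ is exactly $U\cap Y_{\rm sm}$. Combined with Theorem \ref{T:prop-J-planar}(iii), which identifies the smooth locus of $\bJbar_X$ with $\bJ_X$, this gives $\ov{J}_X(\un q)_{\rm sm}=\ov{J}_X(\un q)\cap \bJ_X=J_X(\un q)$. There is no real obstacle here—every assertion is a direct formal consequence of Theorem \ref{T:prop-J-planar} together with the openness of $\ov{J}_X(\un q)$ in $\bJbar_X$; the only point to keep in mind is the small bookkeeping identifying $J_X(\un q)$ with the intersection $\ov{J}_X(\un q)\cap \bJ_X$ and recalling that $\dim J(X)=p_a(X)$ for connected $X$.
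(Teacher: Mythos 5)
Your proposal is correct and follows exactly the paper's argument: the paper deduces the corollary in one line from Theorem \ref{T:prop-J-planar} by observing that $\ov{J}_X(\un q)$ is open in $\bJbar_X$ and that all three properties are Zariski-local, with the dimension statement coming from the identification of $J_X(\un q)$ as a disjoint union of copies of $J(X)$ of dimension $p_a(X)$. Nothing is missing.
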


Later, we will prove that $\ov{J}_X(\un q)$ is connected (see Corollary \ref{C:connect}) and we will give a formula for the number of its irreducible components in terms solely of the combinatorics of the curve $X$  (see Corollary \ref{C:irre-comp}).

\begin{remark}\label{R:Ses}
If $\un q$ is not general, it may happen that $\J_X^{ss}(\un q)$ is not separated. However, it follows from \cite[Thm. 15, p. 155]{Ses} that $\J_X^{ss}(\un q)$ admits a morphism $\phi: \J_X^{ss}(\un q)\to U_X(\un q)$ onto a projective variety that is universal with respect to maps into separated varieties; in other words, $U_X(\un q)$ is the biggest separated quotient of $\J_X^{ss}(\un q)$.
We call the projective variety $U_X(\un q)$ a \emph{coarse compactified Jacobian}. The fibers of $\phi$ are S-equivalence classes of sheaves, and in particular $\phi$ is an isomorphism on the open subset $\J_X^s(\un q)$ (see loc. cit. for details). Coarse compactified Jacobians can also be constructed as a special case of moduli spaces of semistable pure sheaves, constructed by  Simpson in \cite{Sim}.

Coarse compactified Jacobians behave quite differently from fine compactified Jacobians, even for a nodal curve $X$; for example
\begin{enumerate}[(i)]
\item \label{bad1} they can have (and typically they do have) fewer irreducible components than the number $c(X)$ of irreducible components of fine compactified Jacobians, see \cite[Thm. 7.1]{MV};
\item \label{bad2} their smooth locus can be bigger than the locus of line bundles, see \cite[Thm. B(ii)]{CMKV}.
\item \label{bad3} their embedded dimension at some point can be bigger than $2p_a(X)$, see \cite[Ex. 7.2]{CMKV}. 
\end{enumerate}

\end{remark}

\section{Varying the polarization}\label{S:vary}

Fine compactified Jacobians of a connected curve $X$ depend on the choice of a general polarization $\un q$. The goal of this section is to study the dependence of fine compactified Jacobians upon the choice of the polarization.
In particular, we will prove Theorem B, which says that there is always a finite number of isomorphism classes (resp. homeomorphism classes if $k=\bbC$) of fine compactified Jacobians of a reduced curve $X$ even though this number can be arbitrarily large even for nodal curves.

To this aim, consider the space of polarizations on $X$
\begin{equation}\label{E:polspa}
\Pol:=\{\un q\in \Q^{\gamma(X)}: |\un q|\in \Z\}\subset \R^{\gamma(X)},
\end{equation}
where $\gamma(X)$ is the number of irreducible components of $X$. Define the arrangement of hyperplanes in $\R^{\gamma(X)}$
\begin{equation}\label{E:arrang}
\Arr:=\left\{\sum_{C_i\subseteq Y} x_i =n\right\}_{Y\subseteq X, n\in \Z}
\end{equation}
where $Y$ varies among all the subcurves of $X$ such that $Y$ and $Y^c$ are connected. By Remark \ref{R:conn-pola}, a polarization $\un q\in \Pol$ is general if and only if $\un q$ does not belong to $\Arr$.
Moreover, the arrangement of hyperplanes $\Arr$ subdivides $\Pol$ into chambers with the following property: if two general polarizations $\un q$ and $\un q'$ belong to the same chamber then $\lceil \un q_Y \rceil=\lceil \un q'_Y \rceil$ for any subcurve $Y\subseteq X$ such that $Y$ and $Y^c$ are connected, hence $\J_X(\un q)=\J_X(\un q')$ by Remark \ref{R:conn-subcurves}. Therefore, fine compactified Jacobians of $X$
correspond bijectively to the chambers of $\Pol$ cut out by the hyperplane arrangement $\Arr$.

Obviously, there are infinitely many chambers and therefore infinitely many different fine compactified Jacobians.
 However,  we are now going to show that there are finitely many  isomorphism classes of fine compactified Jacobians.  The simplest way to show that two fine compactified Jacobians are isomorphic is to show that there is a translation that sends one into the other.

\begin{defi}\label{D:transla}
Let $X$ be a connected curve. We say that two compactified Jacobians $\ov{J}_X(\un q)$ and $\ov{J}_X(\un q')$ are \emph{equivalent by translation } if there exists  a line bundle
$L$ on $X$ inducing an isomorphism
$$\begin{aligned}
\ov{J}_X(\un q) & \stackrel{\cong}{\longrightarrow} \ov{J}_X(\un q'), \\
I & \mapsto I\otimes L.
\end{aligned}$$
\end{defi}

Note however that, in general,  there could be  fine compactified Jacobians that are isomorphic without being equivalent by translation, see
Section \ref{S:genus1} for some explicit examples.

\begin{prop}\label{P:finite-eq}
Let $X$ be a connected curve. There is a finite number of fine compactified Jacobians up to equivalence by translation. In particular, there is a finite number of isomorphism classes of fine compactified Jacobians of $X$.
\end{prop}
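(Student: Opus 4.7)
The plan is to exploit the bijection recalled just above the statement, between fine compactified Jacobians of $X$ and chambers cut out by the arrangement $\Arr$ inside $\Pol$. First I would observe that tensoring by a line bundle $L\in\Pic(X)$ produces an equivalence by translation between $\ov{J}_X(\un q)$ and $\ov{J}_X(\un q')$, where $\un q'_{C_i}:=\un q_{C_i}+\deg_{C_i} L$ for every irreducible component $C_i$ of $X$. Indeed $L$ is locally free, so $(I\otimes L)_Y = I_Y\otimes L_{|Y}$ for every subcurve $Y\subseteq X$, and the computation $\chi((I\otimes L)_Y)=\chi(I_Y)+\deg_Y L$ turns $\un q$-stability of $I$ into $\un q'$-stability of $I\otimes L$, establishing the claimed equivalence by translation.

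Second I would check that the multidegree map $\Pic(X)\to \Z^{\gamma(X)}$, $L\mapsto(\deg_{C_1}L,\ldots,\deg_{C_{\gamma(X)}}L)$, is surjective. Since $X$ is reduced, each component $C_i$ contains a smooth point $p_i\in\Xsm$, and the line bundle $\O_X(p_i)$ has multidegree equal to the $i$-th standard basis vector $e_i\in \Z^{\gamma(X)}$. As the $e_i$ generate $\Z^{\gamma(X)}$, every element of $\Z^{\gamma(X)}$ arises as the multidegree of some line bundle on $X$, so every integral translation of the polarization space is realized by tensoring with a line bundle.

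Third I would note that $\Arr$ is invariant under translation by $\Z^{\gamma(X)}$: translation by $\un m\in\Z^{\gamma(X)}$ sends the hyperplane $\{\sum_{C_i\subseteq Y}x_i=n\}$ to $\{\sum_{C_i\subseteq Y}x_i=n+\sum_{C_i\subseteq Y}m_i\}$, which still belongs to $\Arr$ since the new right-hand side is an integer. Hence $\Arr$ descends to a finite toric arrangement of subtori in the compact torus $\R^{\gamma(X)}/\Z^{\gamma(X)}$—finite because only finitely many subcurves $Y\subseteq X$ with $Y$ and $Y^c$ both connected exist—cutting it into finitely many chambers. Lifting back, the chambers of $\Arr$ in $\Pol$ form finitely many $\Z^{\gamma(X)}$-orbits, so there are finitely many fine compactified Jacobians of $X$ up to equivalence by translation, and a fortiori finitely many isomorphism classes.

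The content of the argument is elementary; the step requiring the most care is the surjectivity of the multidegree map, which is where the reducedness of $X$ (through the existence of smooth points on every irreducible component) enters essentially. Everything else is bookkeeping on a $\Z^{\gamma(X)}$-periodic hyperplane arrangement.
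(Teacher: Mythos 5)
Your proof is correct and follows essentially the same route as the paper: reduce modulo integral translations of the polarization (realized by tensoring with line bundles of prescribed multidegree) and then count the finitely many chambers of the $\Z^{\gamma(X)}$-periodic arrangement $\Arr$, which the paper does by restricting to the unit cube $[0,1)^{\gamma(X)}$ rather than passing to the quotient torus. You merely make explicit two points the paper leaves implicit, namely the surjectivity of the multidegree map and the verification that tensoring converts $\un q$-stability into $\un q'$-stability.
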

\begin{proof}
If two generic polarizations $\un q$ and $\un q'$ are such that $\un q-\un q'\in \Z^{\gamma(X)}$, then the multiplication by a line bundle of multidegree $\un q-\un q'$ gives an isomorphism
between $\ov{J}_X(\un q')$ and $\ov{J}_X(\un q)$.  Therefore, any fine compactified Jacobian of $X$ is equivalent by translation to a fine compactified Jacobian $\ov{J}_X(\un q)$
such that $0\leq \un q_{C_i}<1$ for any irreducible component $C_i$ of $X$.
We conclude by noticing that the arrangement of hyperplanes $\Arr$ of \eqref{E:arrang} subdivides   the unitary cube $[0,1)^{\gamma(X)}\subset \R^{\gamma(X)}$ into finitely many chambers.
\end{proof}

\subsection{Nodal curves}\label{S:nodal}

In this subsection, we study how fine compactified Jacobians vary for a nodal curve.

Recall that the generalized Jacobian $J(X)$ of a reduced curve $X$ acts, via tensor product, on any fine compactified Jacobian $\J_X(\un q)$ and the orbits of this action form a stratification of $\J_X(\un q)$ into locally closed subsets.
This stratification was studied in the case of nodal curves by the first and third authors in \cite{MV}.
In order to recall these results,  let us introduce some notation. Let $X_{\rm sing}$ be the set of nodes of $X$ and for every subset $S\subseteq X_{\rm sing}$ denote by $\nu_S:X_S\to X$ the partial normalization of $X$ at the nodes belonging to $S$.  For any subcurve $Y$ of $X$, set $Y_S:=\nu_S^{-1}(Y)$. Note that $Y_S$ is the partial normalization of $Y$ at the nodes $S\cap Y_{\rm sing}$ and that every subcurve of $X_S$ is of the form $Y_S$ for some uniquely determined subcurve $Y\subseteq X$. Given a polarization $\un q$ on $X$,  define a polarization $\un q^S$ on $X_S$ by setting $\un q^S_{Y_S}:=\un q_Y$ for any subcurve $Y$ of $X$. Clearly, if $\un q$ is a general polarization on $X$ then $\un q^S$ is a general polarization on $X_S$. Moreover, consider the following subset of integral multidegrees on $X_S$:
$$B_S(\un q):=\{\un{\chi} \in \Z^{\gamma(X_S)}\: : \: |\un \chi|=|\un q^S|, \: \un \chi_{Y_S}\geq \un q_{Y_S} \text{ for any subcurve } Y\subseteq X\},$$
and for every $\un \chi\in B_S(\un q)$ denote by $J_{X_S}^{\un \chi}$ the $J(X_S)$-torsor consisting of all the line bundles $L$ on $X_S$ whose multi-Euler characteristic is equal to $\un \chi$, i.e. $\chi(L_{|Y_S})=\un \chi_{Y_S}$ for every subcurve $Y_S\subseteq X_S$.

\begin{fact}\label{F:strat}
Let $X$ be a connected nodal curve of arithmetic genus $p_a(X)=g$ and let $\J_X(\un q)$ be a fine compactified Jacobian of $X$.
\begin{enumerate}[(i)]
\item \label{F:strat1} For every $S\subseteq X_{\rm sing}$, denote by $J_{X, S}(\un q)$ the locally closed subset (with reduced scheme structure) of $\J_X(\un q)$ consisting of all the sheaves $I\in \J_X(\un q)$ such that $I$ is not locally free exactly at the nodes of $S$.  Then
\begin{enumerate}[(a)]
\item $J_{X, S}(\un q)\neq \emptyset$ if and only if $X_S$ is connected;
\item $\ov{J_{X, S}(\un q)}=\coprod_{S\subseteq S'} J_{X,S'}(\un q)$.
\end{enumerate}
\item \label{F:strat2} The pushforward ${\nu_S}_*$ along the normalization morphism $\nu_S:X_S\to X$ gives isomorphisms
$$
\begin{sis}
J_{X,S}(\un q)\cong& J_{X_S}(\un q^S)=\coprod_{{\un \chi} \in B_S(\un q)} J_{X_S}^{\un \chi}, \\
\ov{J_{X,S}(\un q)} \cong & \J_{X_S}(\un q^S).
\end{sis}
$$
\item \label{F:strat3} The decomposition of $\ov J_X(\un q)$ into orbits for the action of the generalized Jacobian $J(X)$ is equal to
$$ \J_X(\un q)=\coprod_{\stackrel{S\subseteq X_{\rm sing}}{\un \chi\in B_S(\un q)}} J_{X_S}^{\un \chi},$$
where the disjoint union runs over the subsets $S\subseteq X_{\rm sing}$ such that $X_S$ is connected.
\item \label{F:strat4} For every $I\in J_{X, S}(\un q)$, the completion of the local ring $\O_{\J_X(\un q), I}$ of the fine compactified Jacobian $\J_X(\un q)$ at $I$ is given by
$$\widehat{\O}_{\J_X(\un q), I}=k[[Z_1,\ldots,Z_{g-|S|}]] \widehat{\bigotimes_{1\leq i\leq |S|}} \frac{k[[ X_i, Y_i ]]}{( X_i Y_i )}.$$

\end{enumerate}
\end{fact}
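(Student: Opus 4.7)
The plan is to reduce the whole statement to known facts about line bundles on the partial normalizations $X_S$ via the pushforward functor $\nu_{S,*}$. The key local observation, at each node $n \in X_{\rm sing}$, is that every rank-$1$ torsion-free module over $\widehat{\O}_{X,n} \cong k[[x,y]]/(xy)$ is isomorphic either to $\widehat{\O}_{X,n}$ itself or to the maximal ideal $\m_n = (x,y)$, and in the non-locally-free case $\m_n$ coincides with $\nu_{n,*}\widehat{\O}_{X_n}$ for the local normalization $\nu_n$. Globalising, every rank-$1$ torsion-free sheaf $I$ on $X$ whose non-locally-free locus is exactly $S$ arises uniquely as $\nu_{S,*}L$ for a line bundle $L$ on $X_S$.

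The first step is to verify that $\nu_{S,*}$ preserves all numerical and stability invariants. Since $\nu_S$ is finite, $\chi(\nu_{S,*}L) = \chi(L)$, and for any subcurve $Y \subseteq X$ with preimage $Y_S$, a direct check gives $(\nu_{S,*}L)_Y \cong \nu_{S,*}(L_{|Y_S})$ and hence $\chi((\nu_{S,*}L)_Y) = \chi(L_{|Y_S})$. Combined with the defining relation $\un q^S_{Y_S} = \un q_Y$, this shows $\nu_{S,*}L$ is $\un q$-(semi)stable iff $L$ is $\un q^S$-(semi)stable. Esteves's simplicity criterion (\cite[Prop. 1]{est1}) further identifies simplicity of $\nu_{S,*}L$ with connectedness of $X_S$, giving (i)(a) and the first isomorphism of (ii), together with the decomposition $J_{X_S}(\un q^S) = \coprod_{\un\chi \in B_S(\un q)} J_{X_S}^{\un\chi}$. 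For the closure relation in (i)(b) and the second isomorphism of (ii), a one-parameter local smoothing at each node $n \in S' \setminus S$ (explicitly deforming $\m_n$ to $\widehat{\O}_{X,n}$ inside a family over $k[[t]]$) yields $J_{X,S'}(\un q) \subseteq \overline{J_{X,S}(\un q)}$ whenever $S \subseteq S'$, while the reverse containment is the semicontinuity of the non-locally-free locus. Part (iii) then follows since $\nu_S^*\colon J(X) \twoheadrightarrow J(X_S)$ is surjective when $X_S$ is connected (from the cohomology sequence of $1 \to \O_X^* \to \nu_{S,*}\O_{X_S}^* \to \bigoplus_{n\in S}(k^*)_n \to 1$), so that $J(X)$-orbits agree with $J(X_S)$-orbits on $J_{X_S}(\un q^S)$, which are precisely the $J_{X_S}^{\un \chi}$.

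The main obstacle is part (iv). The strategy is to identify $\widehat{\O}_{\J_X(\un q), I}$ with the semi-universal deformation ring of $I$ as an $\O_X$-module, which is legitimate because $\un q$ is general and $\J_X(\un q)$ is therefore a fine moduli space. The local-to-global spectral sequence gives an exact sequence
$$0 \to H^1(X, \SHom(I,I)) \to \Ext^1(I,I) \to H^0(X, \SExt^1(I,I)) \to 0,$$
where $\SHom(I, I) \cong \nu_{S,*}\O_{X_S}$ (a local computation at each non-free node), so the global part $H^1(X_S, \O_{X_S})$ has dimension $g - |S|$, while $\SExt^1(I, I)$ is concentrated at the nodes of $S$ with stalks $\Ext^1_{\O_{X,n}}(\m_n, \m_n) \cong k^2$. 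More importantly, the deformation theory of $I$ splits as a completed tensor product over $S$ of independent local deformations together with the smooth global deformations of the underlying line bundle on $X_S$; at each non-free node, a direct computation (or an appeal to the classical miniversal deformation of $\m_n$) identifies the local contribution with $k[[X_i, Y_i]]/(X_iY_i)$, realised by the flat family of ideals $(x - X_i, y - Y_i) \subset k[[x,y,X_i,Y_i]]/(xy, X_iY_i)$. Assembling these factors yields the claimed formula.
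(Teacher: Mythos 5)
First, a point of comparison: the paper does not actually prove this statement --- it is labelled a Fact, and the proof given consists of two citations, to \cite[Thm. 5.1]{MV} for parts (i)--(iii) and to \cite[Thm. A]{CMKV} for part (iv). Your write-up is essentially a reconstruction of the arguments of those two references, so the route is the same; you are supplying the content the paper outsources. For (i)--(iii) your ingredients are the right ones: the local classification of rank-$1$ torsion-free modules over $k[[x,y]]/(xy)$, the computation $\chi((\nu_{S,*}L)_Y)=\chi(L_{|Y_S})$ together with $\un q^S_{Y_S}=\un q_Y$, Esteves's simplicity criterion \cite[Prop. 1]{est1}, semicontinuity of the non-free locus plus an explicit smoothing of $\m_n$ for the closure relation, and surjectivity of $\nu_S^*$ on generalized Jacobians for the orbit decomposition. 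The only omission there is the ``if'' direction of (i)(a): you reduce it to the existence of a $\un q^S$-stable line bundle on a connected $X_S$ (i.e. $B_S(\un q)\neq\emptyset$) but do not establish that; it follows, e.g., from the non-emptiness of fine compactified Jacobians of $X_S$ (Corollary \ref{C:irre-comp} applied to $X_S$).

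The one step in (iv) that is asserted rather than proved is the identification of the local factor. Two remarks. (1) The splitting of the deformation theory of $I$ into independent local pieces times a smooth factor of dimension $g-|S|$ rests on the formal smoothness of the local-to-global morphism $\Def_I\to\prod_{n\in S}\Def_{I_n}$; the relevant obstruction lives in $H^2(X,\SHom(I,I))=0$, and this deserves to be said explicitly, because $\SExt^2(I,I)$ does \emph{not} vanish at the nodes of $S$, so the smoothness is not visible from the short exact sequence you quote. (2) More seriously, exhibiting the flat family $(x-X_i,y-Y_i)$ over $k[[X_i,Y_i]]/(X_iY_i)$ and checking that it induces an isomorphism on tangent spaces does not identify $k[[X_i,Y_i]]/(X_iY_i)$ as the miniversal base: a morphism of deformation functors that is bijective on tangent vectors need not be smooth, so versality must still be verified. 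The efficient argument (the one behind \cite{CMKV}) is to compute the miniversal deformation of the \emph{pair} $(\wh{\O}_{X,n},\m_n)$, which is smooth over $k[[A,B]]$ with total family $\Coker\left(\begin{smallmatrix} y & A\\ B & x\end{smallmatrix}\right)$ over the deformed node $xy=AB$ (this is the smoothness of \cite{FGvS}), and then to recognize deformations of $\m_n$ over the \emph{fixed} node as the fiber of the forgetful map to $\Def_{X,n}\cong\Spf k[[t]]$, given by $t\mapsto AB$, over $t=0$; that fiber is exactly $k[[A,B]]/(AB)$. With those two points supplied, your proof is complete.
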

\begin{proof}
Parts \eqref{F:strat1}, \eqref{F:strat2} and \eqref{F:strat3}  follow from \cite[Thm. 5.1]{MV} keeping in mind the change of notation of this paper (where we use the Euler characteristic) with respect to the notation of \cite{MV} (where the degree is used), see Remark \ref{R:stabGor}.

Part \eqref{F:strat4} is a special case of \cite[Thm. A]{CMKV} (see in particular \cite[Example 7.1]{CMKV}), where the local structure of (possibly non fine) compactified Jacobians of nodal curves is described. 
\end{proof}

The set of $J(X)$-orbits $\mathbb O(\J_X(\un q)):=\{J_{X_S}^{\un \chi}\}$ on $\J_X(\un q)$ forms naturally a poset (called the \emph{poset of orbits}  of $\J_X(\un q)$) by declaring that $J_{X_S}^{\un \chi}\geq J_{X_{S'}}^{\un \chi'}$ if and only if $\ov{J_{X_S}^{\un \chi}}\supseteq J_{X_{S'}}^{\un \chi'}$.
Observe that the generalized Jacobian $J(X)$ acts via tensor product on any coarse compactified Jacobian $U_X(\un q)$ (defined as in Remark \ref{R:Ses}) and hence we can define the poset of orbits of $U_X(\un q)$. However, the explicit description of Fact \ref{F:strat} fails for non fine compactified Jacobians.  

Clearly, the poset of orbits is an invariant of the fine compactified Jacobian endowed with the action of the generalized Jacobian. We will now give another description of the poset of orbits of $\J_X(\un q)$ in terms solely of the singularities of the variety
$\J_X(\un q)$ without any reference to the action of the generalized Jacobian.
With this in mind, consider a $k$-variety $V$, i.e. a reduced scheme of finite type over $k$. Define inductively a finite chain of closed subsets $\emptyset=V^{r+1}\subset V^r\subset \ldots \subset V^1\subset V^0=V$ by setting $V^i$
equal to the singular locus of $V^{i-1}$ endowed with the reduced scheme structure. The loci $V^i_{\rm reg}:=V^i\setminus V^{i+1}$, consisting of smooth points of $V^i$, form a partition of $V$ into locally closed subsets.
We define the \emph{singular poset}  of $V$, denoted by $\Sigma(V)$,  as the set of irreducible components of $V^i_{\rm reg}$ for $0\leq i\leq r$, endowed with the poset structure defined by setting $C_1\geq C_2$ if and only if $\ov{C_1}\supseteq C_2$.


\begin{prop}\label{P:inv-orb}
Let $X$ be a connected nodal curve and let $\un q$ be a general polarization on $X$. Then the poset of orbits $\mathbb O(\J_X(\un q))$  is isomorphic to the singular poset $\Sigma(\J_X(\un q))$.

In particular,  if $\J_X(\un q)\cong \J_X(\un q')$ for two general polarizations $\un q, \un q'$ on $X$ then $\mathbb O(\J_X(\un q))\cong \mathbb O(\J_X(\un q'))$.
\end{prop}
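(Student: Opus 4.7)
The plan is to match the two posets using the local structure of $\J_X(\un q)$ at a point $I \in J_{X,S}(\un q)$ provided by Fact \ref{F:strat}\eqref{F:strat4}. The bijection will identify each orbit $J_{X_S}^{\un\chi}$ with itself, now viewed as an irreducible component of an appropriate stratum $V^s_{\rm reg}$; since both posets are defined by closure-containment inside $\J_X(\un q)$, matching the underlying sets will automatically match the orders.

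First I would fix $I\in J_{X,S}(\un q)$ with $|S|=s$ and analyze the explicit local model
$$\widehat{\O}_{\J_X(\un q),I} \;\cong\; k[[Z_1,\ldots,Z_{g-s}]] \,\widehat{\otimes}\, \bigotimes_{i=1}^{s} k[[X_i,Y_i]]/(X_iY_i).$$
Iterating on $\Spec \widehat{\O}_{\J_X(\un q),I}$, the singular locus with reduced structure is $\bigcup_{i=1}^{s}\{X_i=Y_i=0\}$, the next reduced singular locus is $\bigcup_{i\ne j}\{X_i=Y_i=X_j=Y_j=0\}$, and so on; after $s$ steps one is left with the smooth factor $\Spec k[[Z_1,\ldots,Z_{g-s}]]$, and the $(s+1)$-th iterated singular locus is empty. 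Consequently $I \in V^s\setminus V^{s+1}=V^s_{\rm reg}$ and the local dimension of $V^s$ at $I$ equals $g-s$.

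Combining this with Fact \ref{F:strat}\eqref{F:strat1}--\eqref{F:strat2}, I would deduce the set-theoretic equality
$$V^s_{\rm reg} \;=\; \coprod_{\substack{|T|=s\\ \un\chi\in B_T(\un q)}} J_{X_T}^{\un\chi}.$$
The inclusion $\supseteq$ comes from applying the previous paragraph at every point of every orbit on the right, and $\subseteq$ holds because a point of $V^s_{\rm reg}$ lies in a unique orbit $J_{X_T}^{\un\chi}$ and the local analysis at that point forces $|T|=s$. Each orbit is smooth and irreducible of dimension $g-s$, being a $J(X_T)$-torsor, and distinct orbits are disjoint, so the orbits are exactly the irreducible components of $V^s_{\rm reg}$. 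This produces a bijection between $\mathbb{O}(\J_X(\un q))$ and the underlying set of $\Sigma(\J_X(\un q))$, and it preserves the order because on both sides the relation is closure-containment of the corresponding locally closed subsets of $\J_X(\un q)$.

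The second assertion of the proposition is then formal: any isomorphism $\J_X(\un q)\cong \J_X(\un q')$ of $k$-varieties respects iterated reduced singular loci and their irreducible components, hence induces an isomorphism $\Sigma(\J_X(\un q))\cong \Sigma(\J_X(\un q'))$, and then the first part yields $\mathbb O(\J_X(\un q)) \cong \mathbb O(\J_X(\un q'))$. The one substantive step is the local iterated-singular-locus computation on the explicit model; once this is in hand, the rest is just bookkeeping organized by Fact \ref{F:strat}.
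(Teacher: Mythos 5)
Your argument is correct, but it runs through a different key input than the paper's proof. You identify the iterated reduced singular loci $V^i$ by computing them on the explicit local model of Fact \ref{F:strat}\eqref{F:strat4} (a product of $|S|$ nodes with a smooth factor), concluding that a sheaf $I\in J_{X,S}(\un q)$ lies in $V^{|S|}_{\rm reg}$; the paper instead never touches the completed local rings in this proposition and argues globally, invoking Corollary \ref{C:prop-fineJac}\eqref{C:prop-fineJac3} (smooth locus $=$ line-bundle locus) together with Fact \ref{F:strat}\eqref{F:strat1}--\eqref{F:strat2}, which identify each closed stratum $\ov{J_{X,S}(\un q)}$ with the fine compactified Jacobian $\J_{X_S}(\un q^S)$ of the partial normalization, so that the same smooth-locus theorem can be applied inductively to peel off one layer of the singular filtration at a time. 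Both inputs are available in the paper (indeed Fact \ref{F:strat}\eqref{F:strat4} is exactly what the paper uses in the subsequent Proposition \ref{P:inv-top}), so your route is legitimate; what it buys is a self-contained, purely local computation that also records the local dimension $g-|S|$ of $V^{|S|}$ and sets up the topological argument of Proposition \ref{P:inv-top}, whereas the paper's route buys the structural identification of each $V^i$ as a union of genuine fine compactified Jacobians of partial normalizations. The only point you should make explicit is the (standard, but not free) fact that formation of the reduced singular locus commutes with passage to the completed local ring — this holds because $\J_X(\un q)$ is excellent, so $\Spec \widehat{\O}_{\J_X(\un q),I}\to \Spec \O_{\J_X(\un q),I}$ is a regular morphism — since otherwise the iterated computation on the formal model does not a priori determine the global $V^i$ near $I$. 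With that remark added, your proof is complete and at the same level of detail as the paper's.
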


\begin{proof}
According to Corollary \ref{C:prop-fineJac}\eqref{C:prop-fineJac3}, the smooth locus of $\J_X(\un q)$ is the locus $J_X(\un q)$ of line bundles; therefore, using Fact \ref{F:strat},  the singular locus of $\J_X(\un q)$ is equal to
$$\J_X(\un q)^1=\coprod_{\emptyset \neq S \subseteq X_{\rm sing}} J_{X, S}(\un q)=\bigcup_{|S|=1} \ov{J_{X,S}(\un q)}\cong \bigcup_{|S|=1} \J_{X_S}(\un q^S).
$$
Applying again Corollary \ref{C:prop-fineJac}\eqref{C:prop-fineJac3} and Fact \ref{F:strat} and proceeding inductively, we get that
$$\J_X(\un q)^i=\coprod_{|S|\geq i } J_{X, S}(\un q)=\bigcup_{|S|=i} \ov{J_{X,S}(\un q)}\cong \bigcup_{|S|=i} \J_{X_S}(\un q^S).
$$
Therefore the smooth locus of $J_X(\un q)^i$ is equal to
$$\J_X(\un q)^i_{\rm reg}=\J_X(\un q)^i\setminus \J_X(\un q)^{i+1}=\coprod_{|S|=i } J_{X, S}(\un q)=\coprod_{\stackrel{|S|=i}{\un \chi\in B_S(\un q)}} J_{X_S}^{\un \chi}.$$
Since each subset $J_{X_S}^{\un \chi}$ is irreducible, being a $J(X_S)$-torsor, we deduce that the singular poset of $\J_X(\un q)$ is equal to its poset of orbits, q.e.d.
\end{proof}

Moreover, as we will show in the next proposition, if our base field $k$ is the field $\bbC$ of complex numbers, then the poset of orbits of a fine compactified Jacobian $\J_X(\un q)$ is a topological invariant of the  analytic space $\J_X(\un q)^{\rm an}$
associated to $\J_X(\un q)$, endowed  with the Euclidean topology.

\begin{prop}\label{P:inv-top}
Let $X$ be a connected nodal curve of arithmetic genus $g=p_a(X)$ and let $\un q, \un q'$  be two general polarizations on $X$. If $\J_X(\un q)^{\rm an}$ and $\J_X(\un q')^{\rm an}$ are homeomorphic then  $\mathbb O(\J_X(\un q))\cong \mathbb O(\J_X(\un q'))$.
\end{prop}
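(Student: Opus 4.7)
The plan is to characterize the algebraic singular poset $\Sigma(\J_X(\un q))$ in purely topological terms, whence Proposition \ref{P:inv-orb} yields the claim. Given a complex analytic space $Y$ of pure real dimension $d$ and a point $p\in Y$, set
\begin{equation*}
r(p):=\log_2 \mathrm{rank}\, H_d(U_p,U_p\setminus\{p\};\bbZ),
\end{equation*}
computed on a sufficiently small contractible Euclidean neighborhood $U_p$ of $p$; this is well defined for $U_p$ small enough and is a homeomorphism invariant of the pointed space $(Y,p)$. Define $T^i:=\{p\in \J_X(\un q)^{\mathrm{an}}:r(p)\geq i\}$, and let $\Sigma^{\mathrm{top}}(\J_X(\un q)^{\mathrm{an}})$ be the poset whose elements are the Euclidean connected components of $T^i\setminus T^{i+1}$ for $i\geq 0$, ordered by inclusion of closures. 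This is manifestly a homeomorphism invariant of $\J_X(\un q)^{\mathrm{an}}$.

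The heart of the proof is the identification $\Sigma^{\mathrm{top}}(\J_X(\un q)^{\mathrm{an}})\cong\Sigma(\J_X(\un q))$. By Fact \ref{F:strat}(iv) the analytic germ of $\J_X(\un q)$ at $I\in J_{X,S}(\un q)$ is $\bbC^{g-|S|}\times N^{|S|}$, where $N$ denotes the germ of a complex nodal curve at its node. The long exact sequence of $(N,N\setminus\{0\})$, combined with the homotopy equivalence $N\setminus\{0\}\simeq S^1\sqcup S^1$, gives $H_2(N,N\setminus\{0\};\bbZ)\cong\bbZ^2$ and $H_n=0$ for $n\geq 3$; iterated K\"unneth for pairs applied to $(\bbC^{g-|S|}\times N^{|S|},\text{complement of origin})$ then produces $r(I)=|S|$. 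Hence $T^i=\J_X(\un q)^{i,\mathrm{an}}$. By Fact \ref{F:strat}(iii), the level set $T^i\setminus T^{i+1}$ decomposes algebraically as $\coprod_{|S|=i,\,\un\chi\in B_S(\un q)}J_{X_S}^{\un\chi}$, and each piece, being a torsor under the connected smooth algebraic group $J(X_S)$, is both Zariski-irreducible and Euclidean-connected. Thus the algebraic stratification at level $i$ coincides with the Euclidean decomposition of $T^i\setminus T^{i+1}$ into connected components, and the two partial orders agree because Euclidean and Zariski closures coincide for constructible subsets of a complex algebraic variety.

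Combining these identifications gives $\Sigma^{\mathrm{top}}(\J_X(\un q)^{\mathrm{an}})\cong\Sigma(\J_X(\un q))$, so a homeomorphism $\J_X(\un q)^{\mathrm{an}}\cong\J_X(\un q')^{\mathrm{an}}$ induces $\Sigma(\J_X(\un q))\cong\Sigma(\J_X(\un q'))$ and, via Proposition \ref{P:inv-orb}, the desired isomorphism $\mathbb O(\J_X(\un q))\cong\mathbb O(\J_X(\un q'))$. The main technical obstacle is the local-homology identity $r(I)=|S|$: it requires carrying through iterated K\"unneth for relative pairs and the base computation $H_2(N,N\setminus\{0\};\bbZ)\cong\bbZ^2$, which is precisely what distinguishes a node from a smooth point topologically and underwrites the entire filtration argument.
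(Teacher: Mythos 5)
Your proposal is correct and follows the same overall strategy as the paper: both arguments detect the stratum $J_{X,S}(\un q)$ containing a point $I$ by the rank $2^{|S|}$ of the top-degree local (co)homology of the pair $(\J_X(\un q)^{\rm an},\J_X(\un q)^{\rm an}\setminus\{I\})$, computed from the local model of Fact \ref{F:strat}\eqref{F:strat4}, and then recover the poset from the Euclidean connected components of the level sets (which are the orbits $J_{X_S}^{\un\chi}$, connected because they are torsors under $J(X_S)$) together with their closure relations. The one genuinely different ingredient is how the local computation is carried out: the paper's Lemma \ref{L:lemtop} identifies the link of the origin with a union of $2^k$ spheres of dimension $2n-1$ meeting in lower-dimensional spheres and runs a triangulation argument to show $\eta$ is an isomorphism, whereas you compute $H_2(N,N\setminus\{0\};\bbZ)\cong\bbZ^2$ from the long exact sequence of the node germ and then apply the relative K\"unneth formula to the product $\bbC^{g-|S|}\times N^{|S|}$ (the relevant couple is excisive since it is an open cover of the complement of the origin, and all groups are free, so no Tor terms appear in the top degree). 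Your route is arguably cleaner and avoids the simplicial bookkeeping; the paper's link description gives slightly more geometric information about the singularity. Your repackaging of the homeomorphism-invariance into an intrinsically defined poset $\Sigma^{\rm top}$, identified with $\Sigma(\J_X(\un q))$ and then with $\mathbb O(\J_X(\un q))$ via Proposition \ref{P:inv-orb}, is a cosmetic but legitimate reorganization of the paper's direct chase of the homeomorphism $\psi$ through the strata.
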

\begin{proof}
Let $\psi: \J_X(\un q)^{\rm an} \stackrel{\cong}{\longrightarrow} \J_X(\un q')^{\rm an}$ be a homeomorphism. Consider a sheaf $I\in J_{X,S}(\un q)^{\rm an}$ for some $S\subseteq X_{\rm sing}$  and denote by $S'$ the unique subset of $X_{\rm sing}$ such that $\psi(I)\in J_{X,S'}(\un q')^{\rm an}$.
Fact \ref{F:strat}\eqref{F:strat4} implies that  $\J_X(\un q)^{\rm an}$ (resp. $\J_X(\un q')^{\rm an}$) is locally (analytically) isomorphic at $I$ (resp. at $\psi(I)$) to  the complex analytic space given by the product of $|S|$ (resp. $|S'|$) nodes with a smooth variety of dimension $g-|S|$excision
(resp. $g-|S'|$). Using excision and Lemma \ref{L:lemtop}, we get that
$$\begin{sis}
& \dim_{\bbQ} H^{2g}(\J_X(\un q)^{\rm an}, \J_X(\un q)^{\rm an}\setminus \{I\}, \bbQ)=2^{|S|},\\
& \dim_{\bbQ} H^{2g}(\J_X(\un q')^{\rm an}, \J_X(\un q')^{\rm an}\setminus \{\psi(I)\}, \bbQ)=2^{|S'|}.\\
\end{sis}$$
Since $\psi$ is a homeomorphism, we conclude that $|S|=|S'|$, or in other words that
$$I\in \coprod_{|S|=i} J_{X,S}(\un q)^{\rm an} \: \text{ for some } i\geq 0 \Rightarrow  \psi(I)\in  \coprod_{|S|=i} J_{X,S}(\un q')^{\rm an}.$$
Therefore, the map $\psi$ induces a homeomorphism between $\displaystyle \coprod_{|S|=i} J_{X,S}(\un q)^{\rm an}\subseteq \J_X(\un q)$ and
$\displaystyle \coprod_{|S|=i} J_{X,S}(\un q')^{\rm an}\subseteq \J_X(\un q')$ for any $i\geq 0$.
Fact \ref{F:strat}\eqref{F:strat2} implies that we have the following decompositions into connected components
$$\coprod_{|S|=i } J_{X, S}(\un q)^{\rm an}=\coprod_{\stackrel{|S|=i}{\un \chi\in B_S(\un q)}} (J_{X_S}^{\un \chi})^{\rm an} \: \: \text{ and } \: \:
\coprod_{|S|=i } J_{X, S}(\un q')^{\rm an}=\coprod_{\stackrel{|S|=i}{\un \chi\in B_S(\un q')}} (J_{X_S}^{\un \chi})^{\rm an}.$$
Hence, $\psi$ induces a bijection $\psi_*: \mathbb O(\J_X(\un q))\stackrel{\cong}{\longrightarrow} \mathbb O(\J_X(\un q'))$ between the strata of $\J_X(\un q)$ and the strata of $\J_X(\un q')$ with the property that each stratum $(J_{X_S}^{\un \chi})^{\rm an}$
of $\J_X(\un q)^{\rm an}$ is sent homeomorphically by $\psi$ onto the stratum $\psi_*(J_{X_S}^{\un \chi})^{\rm an}$ of $\J_X(\un q')^{\rm an}$.
Therefore, the bijection $\psi_*$ is also an isomorphism of posets, q.e.d.
\end{proof}

\begin{lemma}\label{L:lemtop}
Let $V$ be the complex subvariety of $\bbC^{2k+n-k}$ of equations $x_1x_2=x_3x_4=\ldots=x_{2k-1}x_{2k}=0$, for some $0\leq k\leq n$.
Then $\dim_{\bbQ} H^{2n}(V,V\setminus \{0\}, \bbQ)=2^k$.
\end{lemma}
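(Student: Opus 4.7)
The plan is to split $V$ as a product of simpler pieces, apply the K\"unneth formula for local cohomology, and then reduce everything to the case of a single node.

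First, observe that the equations $x_{2i-1}x_{2i}=0$ for $i=1,\dots,k$ only involve the first $2k$ coordinates, so
\begin{equation*}
V \;=\; N_k \times \mathbb{C}^{n-k}, \qquad N_k \;=\; N_1 \times \cdots \times N_1 \; (k \text{ factors}),
\end{equation*}
where $N_1 := \{xy=0\}\subset \mathbb{C}^2$ is the standard node, and the basepoint $0\in V$ corresponds to the product of the origins. The pair $(A\times B, A\times B\setminus\{(a,b)\})$ satisfies
\begin{equation*}
A\times B \setminus\{(a,b)\} \;=\; (A\setminus\{a\})\times B \;\cup\; A\times (B\setminus\{b\}),
\end{equation*}
so the relative K\"unneth formula (valid for such good pairs of locally contractible spaces, with $\mathbb{Q}$ coefficients) gives
\begin{equation*}
H^*(V, V\setminus\{0\};\mathbb{Q}) \;\cong\; H^*(N_1, N_1\setminus\{0\};\mathbb{Q})^{\otimes k} \otimes H^*(\mathbb{C}^{n-k}, \mathbb{C}^{n-k}\setminus\{0\};\mathbb{Q}).
\end{equation*}

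Next, I would compute the factors. Since $\mathbb{C}^{n-k}$ is a manifold, $H^*(\mathbb{C}^{n-k}, \mathbb{C}^{n-k}\setminus\{0\};\mathbb{Q})$ is one-dimensional concentrated in degree $2(n-k)$. For the node, $N_1$ deformation retracts to the origin (each of its two irreducible components is a contractible complex line), while $N_1\setminus\{0\}\simeq \mathbb{C}^*\sqcup \mathbb{C}^* \simeq S^1\sqcup S^1$. The long exact sequence of the pair
\begin{equation*}
0 \to H^0(N_1,N_1\setminus 0) \to \mathbb{Q} \xrightarrow{\Delta} \mathbb{Q}^2 \to H^1(N_1, N_1\setminus 0) \to 0 \to \mathbb{Q}^2 \to H^2(N_1, N_1\setminus 0) \to 0
\end{equation*}
then yields $H^0=0$, $H^1=\mathbb{Q}$, and crucially $H^2(N_1, N_1\setminus\{0\};\mathbb{Q}) = \mathbb{Q}^2$, with all higher cohomology vanishing.

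Finally, in the K\"unneth decomposition only the top-degree contribution survives in degree $2n$: the factor from $\mathbb{C}^{n-k}$ contributes only in degree $2(n-k)$, and inside $H^{2k}(N_k,N_k\setminus 0)$ the only summand of $H^{i_1}\otimes\cdots\otimes H^{i_k}$ with $\sum i_j = 2k$ and $i_j\in\{1,2\}$ is the one with every $i_j=2$. Hence
\begin{equation*}
H^{2n}(V, V\setminus\{0\};\mathbb{Q}) \;\cong\; H^2(N_1, N_1\setminus\{0\};\mathbb{Q})^{\otimes k} \;\cong\; \mathbb{Q}^{2^k},
\end{equation*}
which is the desired dimension $2^k$. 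The only mildly delicate point is the applicability of the relative K\"unneth theorem at the singular basepoint, but this is standard for the semi-analytic pairs $(N_1, N_1\setminus 0)$ and $(\mathbb{C}^{n-k}, \mathbb{C}^{n-k}\setminus 0)$, which admit CW-pair models.
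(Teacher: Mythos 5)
Your proof is correct, but it takes a genuinely different route from the paper's. The paper uses the contractibility of $V$ to identify $H^{2n}(V,V\setminus\{0\};\mathbb{Q})$ with $H^{2n-1}(L;\mathbb{Q})$, where $L$ is the link of the origin; it then writes $V$ as the union of the $2^k$ linear $n$-planes $V_{\epsilon_\bullet}$ obtained by choosing, in each pair $\{x_{2i-1},x_{2i}\}$, which coordinate survives, so that $L$ is a union of $2^k$ spheres of dimension $2n-1$ meeting along spheres of dimension at most $2n-3$, and finally proves by a direct simplicial-chain argument that the natural map $\bigoplus_i H_{2n-1}(S_i;\mathbb{Q})\to H_{2n-1}(L;\mathbb{Q})$ is an isomorphism. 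You instead factor $V\cong N_1^{\times k}\times\mathbb{C}^{n-k}$ with $N_1=\{xy=0\}$, apply the relative K\"unneth formula to the pair $(V,V\setminus\{0\})$, and reduce everything to the elementary computation $H^2(N_1,N_1\setminus\{0\};\mathbb{Q})\cong\mathbb{Q}^2$; the degree bookkeeping forcing every nodal factor to contribute in degree $2$ is correct. Your appeal to the relative K\"unneth theorem is legitimate: the subsets $(A\setminus\{a\})\times B$ and $A\times(B\setminus\{b\})$ are open, hence form an excisive couple, and $\mathbb{Q}$-coefficients together with the finite-dimensionality of each factor remove any Tor or finiteness issues. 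The paper's link argument is more self-contained (no K\"unneth machinery) and makes the count of $2^k$ branches geometrically visible; yours is shorter, localizes the whole computation at a single node, and as a bonus yields the entire graded group $H^*(V,V\setminus\{0\};\mathbb{Q})$ rather than only the top degree.
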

\begin{proof}
Since $V$ is contractible, by homotopical invariance of the cohomology of groups we have that
\begin{equation}\label{E:red-link}
H^{2n}(V,V\setminus \{0\}, \bbQ)=H^{2n-1}(L, \bbQ)=H_{2n-1}(L,\bbQ)^{\vee},
\end{equation}
where $L$ is the link of the origin $0$ in $V$, i.e. the intersection of $V$ with a small sphere of $\bbC^{2k+n-k}$ centered at $0$.
Observe that $V$ is the union of $2^k$ vector subspaces of $\bbC^{2k+n-k}$ of dimension $n$:
$$V_{\epsilon_{\bullet}}=\langle e_{1+\epsilon_1}, e_{3+\epsilon_2}, \ldots, e_{2k-1+\epsilon_k}, e_{2k+1}, \ldots, e_{2k+n-k}\rangle \: \text{ where } \: \epsilon_{\bullet}=(\epsilon_1,\ldots, \epsilon_k)\in \{0,1\}^k, $$
 which intersect along vector subspaces of dimension less than or equal to $n-1$. It follows that $L$ is the union of $2^k$ spheres $\{S_1,\ldots, S_{2^k}\}$ of dimension $2n-1$ which intersect along spheres of dimension less than or equal to $2n-3$.
 Fix a triangulation of $L$ that induces a triangulation of each sphere $S_i$ and of their pairwise intersections. Consider the natural map
$$\eta: \bbQ^{2^k}\cong \oplus_{i=1}^{2^k} H_{2n-1}(S_i,\bbQ)\longrightarrow H_{2n-1}(L,\bbQ).$$
Since there are no simplices in $L$ of dimension greater than $2n-1$, the map $\eta$ is injective. Moreover, using the fact that the spheres $S_i$ only intersect along spheres of dimension less that or equal to $2n-3$, we can prove that $\eta$ is surjective. Indeed, let $C\in Z_{2n-1}(L,\bbQ)$ be a cycle
in $L$ of dimension $2n-1$, i.e. a simplicial $(2n-1)$-chain whose boundary $\partial(C)$ vanishes. For every $1\leq i\leq 2^k$, let $C_i$ be the chain obtained from $C$ by erasing all the simplices that are not contained in the sphere $S_i$.  By construction, we have that $C=\sum_{i=1}^{2^k} C_i$.
Therefore,  using that $C$ is a cycle, we get that (for every $i$)
$$\partial(C_i)=-\sum_{j\neq i} \partial(C_j).$$
Observe now that $\partial(C_i)$ is a $(2n-2)$-chain contained in $S_i$ while $\sum_{j\neq i}\partial(C_j)$ is a $(2n-2)$-chain contained in $\bigcup_{j\neq i} S_j$. Since $S_i$ intersects each  $S_j$ with $j\neq i$ in spheres of dimension less than or equal to $(2n-3)$, we conclude that $\partial(C_i)=0$, or in other words that $C_i\in Z_{2n-1}(S_i,\bbQ)$. Therefore, we get that $\eta\left(\sum_{i=1}^{2^k}[C_i]\right)=[C]$, which shows that $\eta$ is surjective.

The assertion now follows from the equality \eqref{E:red-link} together with the fact that $\eta$ is an isomorphism.
\end{proof}

\begin{remark}\label{R:suff-crit}
We do not know of any example of two non isomorphic (or non homeomorphic if $k=\bbC$) fine compactified Jacobians having isomorphic posets of orbits; therefore, we wonder if the converse of the second assertion of Proposition  \ref{P:inv-orb} 
or the converse of Proposition \ref{P:inv-top}  might hold true.
\end{remark}

The poset of orbits of a fine compactified Jacobian $\J_X(\un q)$ (or more generally of any coarse compactified Jacobian $U_X(\un q)$) is isomorphic to the poset of regions of a certain toric arrangement of hyperplanes, as we now explain.
Let $\Gamma_X$ be the (connected) dual graph of the nodal curve $X$, i.e. the graph whose vertices $V(\Gamma_X)$ correspond to irreducible components of $X$ and whose edges $E(\Gamma_X)$ correspond to nodes of $X$: an edge being incident to a vertex if the node corresponding to the former belongs to the  irreducible component corresponding to the latter.
We fix an orientation of $\Gamma_X$, i.e. we specify the source and target $s,t:E(\Gamma_X)\to V(\Gamma_X)$ of each edge of $\Gamma_X$. The first homology group $H_1(\Gamma_X,A)$ of the graph $\Gamma_X$ with coefficients in a commutative ring with unit $A$ (e.g. $A=\Z, \Q, \R$) is the kernel of the boundary morphism
\begin{equation}\label{E:bounda}
\begin{aligned}
\partial: C_1(\Gamma_X,A)=\bigoplus_{e\in E(\Gamma_X)} A\cdot e  & \longrightarrow C_0(\Gamma_X,A)=\bigoplus_{v\in V(\Gamma_X)} A\cdot v,\\
e & \mapsto t(e)-s(e).
\end{aligned}
\end{equation}
The map $\partial$ depends upon the choice of the orientation of $\Gamma_X$; however, $H_1(\Gamma_X,A)$ does not depend upon the chosen orientation.
Since the graph $\Gamma_X$ is connected, the image of the boundary map $\partial$ is the subgroup
$$C_0(\Gamma_X,A)_0:=\left\{\sum_{v\in V(\Gamma_X)} a_v \cdot v\: : \: \sum_{v\in V(\Gamma_X)} a_v=0\right\}\subset C_0(\Gamma_X,A). $$
When $A=\Q$ or $\R$, we can endow the vector space $C_1(\Gamma_X,A)$ with a non-degenerate bilinear form $(,)$ defined by requiring that
$$(e,f)=
\begin{cases}
0 & \text{ if } e\neq f, \\
1 & \text{ if } e=f,
\end{cases}
$$
for any $e,f\in E(\Gamma_X)$. Denoting by $H_1(\Gamma_X,A)^{\perp}$ the subspace of $C_1(\Gamma_X,A)$ perpendicular to $H_1(\Gamma_X, A)$, we have that the boundary map induces an isomorphism of vector spaces
\begin{equation}\label{E:iso-vs}
\partial: H_1(\Gamma_X, A)^{\perp}\stackrel{\cong}{\longrightarrow} C_0(\Gamma_X,A)_0.
\end{equation}

Let now $\un q$ be a polarization of total degree $|\un q|=1-p_a(X)=1-g$ and consider the element
$$\phi:=\sum_{v\in V(\Gamma_X)} \phi_v \cdot v= \sum_{v\in V(\Gamma_X)} \left(\un q_{Y_v}+\frac{\deg_{Y_v}(\omega_X)}{2}\right)\cdot v\in C_0(\Gamma_X,\Q)_0,$$
where $Y_v$ is the irreducible component of $X$ corresponding to the vertex $v\in V(\Gamma_X)$. Using the isomorphism \eqref{E:iso-vs}, we can find a unique element
$\psi=\sum_{e\in E(\Gamma_X)} \psi_e \cdot e \in H_1(\Gamma_X,\Q)^{\perp}$ such that $\partial(\psi)=\phi$.  
Consider now the arrangement of affine hyperplanes in $H_1(\Gamma,\R)$ given by
\begin{equation}\label{E:arrang-q}
\V_{\un q}:=\left\{e^*=n+\frac{1}{2}-\psi_e\right\}_{n\in \Z, e\in E(\Gamma_X)}
\end{equation}
where $e^*$ is the functional on $C_1(\Gamma,\R)$ (hence on $H_1(\Gamma_X, \R)$ by restriction) given by $e^*=(e,-)$.
 The arrangement of hyperplanes $\V_{\un q}$ is periodic with respect to the action of  $H_1(\Gamma_X,\Z)$ on $H_1(\Gamma,\R)$; hence, it induces an arrangement of hyperplanes in the real torus
 $\displaystyle \frac{H_1(\Gamma_X,\R)}{H_1(\Gamma_X,\Z)}$, which we will still denote by $\V_{\un q}$ and we will call the toric arrangement of hyperplanes associated to $\un q$.
  The toric arrangement $\V_{\un q}$ of hyperplanes subdivides  the real torus $\displaystyle \frac{H_1(\Gamma_X,\R)}{H_1(\Gamma_X,\Z)}$ into finitely many regions, which form naturally a partially ordered set (poset for short) under the natural containment relation. This poset is related to the coarse compactified Jacobian $U_X(\un q)$ as follows.

\begin{fact}[Oda-Seshadri]\label{F:OS}
Let $\un q$ be a polarization of total degree $|\un q|=1-p_a(X)=1-g$ on a connected nodal curve $X$.
The poset of regions cut out by the toric arrangement of hyperplanes $\V_{\un q}$
is isomorphic to the poset $\mathbb O(\J_X(\un q))$ of $J(X)$-orbits on $U_X(\un q)$.
\end{fact}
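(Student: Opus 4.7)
The plan is to identify both sides of the claimed isomorphism with the same combinatorial poset $\mathcal{C}(\un q)$ attached to the dual graph $\Gamma_X$ and the polarization $\un q$. On the Jacobian side, I would first reduce to the case where $\un q$ is general (the general case follows by perturbing and passing to S-equivalence, as described in Remark \ref{R:Ses}), so that $U_X(\un q) = \ov{J}_X(\un q)$ and Fact \ref{F:strat}\eqref{F:strat3} provides a complete combinatorial indexing of the $J(X)$-orbits by pairs $(S, \un \chi)$, where $S \subseteq E(\Gamma_X)$ is a subset such that $\Gamma_X \setminus S$ is connected and $\un \chi \in B_S(\un q)$. The ordering on these pairs is: $(S, \un \chi) \leq (S', \un \chi')$ iff $S \subseteq S'$ and the line bundle stratum of multidegree $\un \chi'$ on $X_{S'}$ appears in the closure of its counterpart on $X_S$, which one checks is equivalent to $\un \chi$ being a ``pullback'' of $\un \chi'$ along the natural dominance of partial normalizations.

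On the arrangement side, a region of $\V_{\un q}$ in $H_1(\Gamma_X, \R)/H_1(\Gamma_X, \Z)$ is encoded, for each edge $e \in E(\Gamma_X)$, by either (a) specifying that $e^*$ equals the critical value $n_e + \tfrac{1}{2} - \psi_e$ for some $n_e \in \Z$ (put $e \in S$ in this case), or (b) specifying the open integer strip $e^* \in (n_e - \tfrac{1}{2} - \psi_e,\ n_e + \tfrac{1}{2} - \psi_e)$ in which the region lies (put $e \notin S$ and record the integer $n_e$). Passing to the quotient by $H_1(\Gamma_X, \Z)$ identifies two tuples of data whenever they differ by a $\Z$-linear combination of cycles, which is exactly the indeterminacy in $\un \chi$ coming from twisting by $\O_X(\sum_v m_v Y_v)$ with $\sum_v m_v = 0$. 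After this identification, the edge-data $(S, (n_e)_{e\in E})$ gives a well-defined pair $(S, \un \chi)$ with $\un \chi_v := \sum_{e \ni v} \pm n_e$ (signs from the chosen orientation).

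The crucial computation, and the main obstacle, is to verify that the \emph{non-emptiness} of the region corresponding to $(S, \un \chi)$ in $H_1(\Gamma_X, \R)$ matches the stability condition $\un \chi_{Y_S} \geq \un q_{Y_S}$ for all subcurves $Y$ such that $Y$ and $Y^c$ are connected. Here one pairs each such cut $Y$ with the characteristic element $\sum_{e \in \delta(Y)} \pm e \in H_1(\Gamma_X, \R)^\perp$ perpendicular to the cycle space, and uses the defining property $\partial(\psi) = \phi$ to rewrite the hyperplane inequality $\sum_{e \in \delta(Y)} \pm e^* \geq (\text{shift from } \tfrac{1}{2} - \psi_e)$ in terms of $\un q_Y$. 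The offset $\tfrac{1}{2}$ in the hyperplane equations precisely supplies the adjunction correction $-\tfrac{\delta_Y}{2} + \tfrac{\deg_Y \omega_X}{2}$ that converts between the Euler-characteristic form (Definition \ref{sheaf-ss-qs}) and the degree form (Remark \ref{R:stabGor}) of stability, so the non-emptiness condition becomes exactly $\un \chi \in B_S(\un q)$.

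Finally, the poset structures are compared directly: a region $R'$ lies in the closure of $R$ if and only if $R'$ is obtained from $R$ by moving some free edges (those in $E \setminus S$) onto one of the two bounding hyperplanes, which corresponds to enlarging $S$ to some $S' \supseteq S$ and specializing $\un \chi$ to a compatible $\un \chi' \in B_{S'}(\un q)$. This matches the orbit-closure description derived from Fact \ref{F:strat}\eqref{F:strat1}(b) and \eqref{F:strat2}, completing the identification.
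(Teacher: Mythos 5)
The paper does not prove this statement at all: it is quoted as a known Fact, with the ``proof'' consisting of the citation to \cite{OS} and \cite[Thm. 2.9]{Ale2}. What you have written is essentially a reconstruction of the Oda--Seshadri argument as it appears in those references, in the case of a \emph{general} polarization: index the orbits by pairs $(S,\un\chi)$ via Fact \ref{F:strat}, index the faces of $\V_{\un q}$ by recording for each edge whether the face lies on a hyperplane or in which open integer strip it sits, translate non-emptiness of a face into the inequalities defining $B_S(\un q)$ using $\partial$ and the normalization $\partial(\psi)=\phi$, and match closure relations on both sides. That skeleton is correct, although the two steps you yourself single out as crucial are asserted rather than carried out. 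The non-emptiness step, for instance, comes down to the fact that the box $\prod_e\bigl[n_e-\tfrac12-\psi_e,\,n_e+\tfrac12-\psi_e\bigr]\subset C_1(\Gamma_X,\R)$ meets $H_1(\Gamma_X,\R)=\ker\partial$ exactly when $0$ lies in the zonotope $\partial(\text{box})$, and evaluating the support function of that zonotope at the indicator vectors of vertex subsets produces precisely the cut inequalities $\un\chi_{Y_S}\geq \un q_{Y_S}$; this should be written out, together with the specialization rule for multidegrees under orbit closure, before the argument can be called a proof.

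The genuine gap is your opening reduction. Fact \ref{F:OS} is stated, and is actually used in the proof of Lemma \ref{L:simple-ar}\eqref{L:simple-ar1}, for an \emph{arbitrary} polarization of total degree $1-g$, where the right-hand side is the coarse compactified Jacobian $U_X(\un q)$ and the arrangement $\V_{\un q}$ need not be simple. The proposed reduction ``perturb $\un q$ and pass to S-equivalence'' does not work as stated: perturbing $\un q$ replaces $\V_{\un q}$ by a simple arrangement whose poset of regions is in general not isomorphic to that of $\V_{\un q}$ (this failure is exactly what Lemma \ref{L:simple-ar} is about), and it replaces $U_X(\un q)$ by a fine compactified Jacobian with a different orbit structure. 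Moreover, the combinatorial orbit description you rely on is unavailable in this setting: the paper warns, immediately before the Fact, that ``the explicit description of Fact \ref{F:strat} fails for non fine compactified Jacobians.'' Handling the non-generic case requires describing the $J(X)$-orbits on $U_X(\un q)$ directly in terms of polystable representatives of S-equivalence classes and matching them with the faces of the non-simple arrangement; this is done in \cite{OS} and \cite{Ale2} but is missing from your outline.
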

\begin{proof}
See \cite{OS} or \cite[Thm. 2.9]{Ale2}.
\end{proof}

The arrangement of hyperplanes $\V_{\un q}$ determines wether the polarization $\un q$ on $X$ is generic or not, at least if $X$ does not have separating nodes, i.e. nodes whose removal disconnects the curve.
Recall that a toric arrangement of hyperplanes is said to be \emph{simple}  if the intersection of $r$ non-trivial hyperplanes in the given arrangement  has codimension at least $r$.
Moreover, following \cite[Def. 2.8]{MV}, we say that a polarization $\un q$ is \emph{non-degenerate}  if and only if $\un q$ is not integral at any proper subcurve $Y\subset X$ such that $Y$ intersects $Y^c$ in at least one non-separating node.
Note that a general polarization on a nodal curve $X$ is non-degenerate and that the converse is true if $X$ does not have separating nodes.

\begin{lemma}\label{L:simple-ar}
Let $\un q$ be a polarization of total degree $|\un q|=1-p_a(X)=1-g$  on a connected nodal curve $X$.
\begin{enumerate}[(i)]
\item \label{L:simple-ar1} $\V_{\un q}$ is simple if and only if $\un q$ is non-degenerate.
In particular, if $\un q$ is general then $\V_{\un q}$ is simple, and the converse is true if $X$ does not have separating nodes.
\item \label{L:simple-ar2} If $\V_{\un q}$ is simple then we can find a general polarization $\un q'$ such that $\V_{\un q}$ and $\V_{\un q'}$ have isomorphic poset of regions.
\end{enumerate}
\end{lemma}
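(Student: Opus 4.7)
For part (i), my plan is to give a combinatorial criterion for simplicity of $\V_{\un q}$ and match it against the definition of non-degeneracy. For a subset $S\subseteq E(\Gamma_X)$, the common kernel of the restrictions $e^*|_{H_1(\Gamma_X,\R)}$ for $e\in S$ is $H_1(\Gamma_X\setminus S,\R)$, so the rank of these functionals equals $|S|-(c(\Gamma_X\setminus S)-1)$. Consequently the codimension of the intersection of the corresponding hyperplanes in the torus drops strictly below $|S|$ precisely when $\Gamma_X\setminus S$ is disconnected. A direct computation using the defining relation $\partial\psi=\phi$, the adjunction formula $\deg_{Y_v}(\omega_X)=2p_a(Y_v)-2+\delta_{Y_v}$, and the balance condition $\partial y=0$ summed over the vertex set of each component $V_j$ shows that the intersection in question is non-empty on the torus if and only if $\un q_{Y_j}\in\Z$ for every connected component $V_j$ of $\Gamma_X\setminus S$, with $Y_j$ the corresponding subcurve; specifically, each balance equation forces a quantity $R_j\equiv \un q_{Y_j}\pmod{\Z}$ to be integral.

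Combined with the observation that bridges of $\Gamma_X$ contribute only trivial or empty hyperplanes and so may be discarded, this gives the criterion: $\V_{\un q}$ is simple if and only if every subset $S$ of non-bridge edges with $\Gamma_X\setminus S$ disconnected has at least one component $V_j$ with $\un q_{Y_j}\notin\Z$. To match this with non-degeneracy, if $\un q$ fails non-degeneracy, a witness proper subcurve $Y$ integral for $\un q$ with a non-separating node $n\in Y\cap Y^c$ yields a witness $S$ to non-simplicity built from the non-bridge cross edges of the bipartite quotient graph obtained by contracting the connected components of $Y$ and of $Y^c$; the key point is that $n$ remains a non-bridge in this quotient since the lift of any cycle through $n$ in $\Gamma_X$ still provides an alternative route. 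Conversely, given such a witness $S$ to non-simplicity, any component $V_{j_0}$ produces a proper connected subcurve $Y=Y_{j_0}$ at which $\un q$ is integral, and since $\partial V_{j_0}\subseteq S$ consists of non-bridges, $Y\cap Y^c$ contains a non-separating node, contradicting non-degeneracy. The hardest step is the ``$R_j$-computation'' matching the boundary residue in each component with $\un q_{Y_j}$ modulo $\Z$, together with the bookkeeping needed to disentangle separating from non-separating nodes when transferring between $Y$ and $S$.

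For part (ii), my plan is a perturbation argument. By part (i), together with the implication that general polarizations are non-degenerate, the locus of general polarizations is contained in the simple locus of $\Pol$; inside the simple locus, it is the complement of the countable union of hyperplanes $\{\un q_Y\in\Z\}$ over proper $Y\subset X$ with $Y$ and $Y^c$ connected, and hence is dense. Since the parameters $\psi_e$ depend linearly on $\un q$, the hyperplanes of $\V_{\un q}$ vary continuously with $\un q$; on any connected open subset of the simple locus no two non-trivial hyperplanes can collide, so the poset of regions of $\V_{\un q}$ is locally constant there. It then suffices to pick any general $\un q'$ in a small enough neighborhood of $\un q$ inside the simple locus: such a $\un q'$ exists by density, and local constancy yields the isomorphism of posets of regions between $\V_{\un q}$ and $\V_{\un q'}$.
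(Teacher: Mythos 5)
Your part (ii) is essentially the paper's argument: the linear parts $e^*$ are fixed and only the constants $\psi_e$ move with $\un q$, simplicity is an open condition, the poset of regions is locally constant on the simple locus, and general polarizations are dense there, so a small perturbation produces the required $\un q'$. Part (i) is where you genuinely diverge. The paper's proof is a short indirect chain: by \cite[Thm. 7.1]{MV}, non-degeneracy of $\un q$ is equivalent to the coarse compactified Jacobian $U_X(\un q)$ having the maximal number $c(X)$ of irreducible components; by Fact \ref{F:OS} this translates into $\V_{\un q}$ having the maximal number of top-dimensional regions; and this, finally, is equivalent to simplicity of the toric arrangement. You instead prove the equivalence directly on the dual graph: the common kernel of $\{e^*\}_{e\in S}$ is $H_1(\Gamma_X\setminus S,\R)$, so the codimension drops exactly when $\Gamma_X\setminus S$ disconnects, and your residue computation via $\partial\psi=\phi$ and adjunction (the $\delta_{Y_j}/2$ contributions cancel on the two sides) correctly identifies the obstruction to non-emptiness with the conditions $\un q_{Y_j}\in\Z$ on the components; the translation between edge subsets $S$ and witnessing subcurves $Y$ then gives the equivalence with non-degeneracy. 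Your route is self-contained and makes the role of non-separating nodes transparent, whereas the paper's is two lines but leans on an external moduli-theoretic theorem and on the (unproved there) combinatorial fact that maximal chamber count characterizes simple toric arrangements. Two pieces of the bookkeeping you flag should be made explicit when you write this up: (a) deleting from $\Gamma_X$ all non-bridge edges of $Y\cap Y^c$ really does disconnect the graph, because a connected loopless multigraph containing at least one non-bridge becomes disconnected upon removal of all its non-bridges; and (b) the resulting components are unions of connected components of $Y$ and of $Y^c$ joined along bridges, hence carry integral $\un q$ precisely because integrality at $Y$ is imposed componentwise on both $Y$ and $Y^c$. Both points check out, so the argument is sound.
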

\begin{proof}
By \cite[Thm. 7.1]{MV}, $\un q$ is non-degenerate if and only if the number of irreducible components of the compactified Jacobian $U_X(\un q)$ is the maximum possible which is indeed equal to the complexity $c(X)$ of the curve $X$, i.e. the number of spanning  trees of the dual graph $\Gamma_X$ of $X$. By Fact \ref{F:OS}, this happens if and only if the number of full-dimensional regions cut out by the toric arrangement $\V_{\un q}$  is as big as possible. This is equivalent, in turn, to the fact that
$\V_{\un q}$ is simple, which concludes the proof of \eqref{L:simple-ar1}.

Now suppose that $\V_{\un q}$ is simple. Then there exists a small Eucliden open neighborhood $U$ of $\un q$ in the space $\P_X$ of polarizations (see \eqref{E:polspa})  such that for every $\un q'\in U$ the toric arrangement $\V_{\un q'}$ of hyperplanes has its poset of regions isomorphic to the poset of regions of $\V_{\un q}$. Clearly, such an open subset $U$ will contain a point $\un q'$ not belonging to the arrangement of hyperplanes $\A_X$ defined in \eqref{E:arrang};  any such point
$\un q'$ will satisfy the conclusions of part \eqref{L:simple-ar2}.
\end{proof}

Although Fact \ref{F:OS} and Lemma \ref{L:simple-ar} are only stated for (fine) compactified Jacobians of total degree $1-p_a(X)$, they can be easily extended to any compactified Jacobian since any (fine) compactified Jacobian of a curve $X$ is
equivalent by translation to a (fine) compactified Jacobian of total degree $1-p_a(X)$ (although not to a unique one).
In particular, combining Propositions \ref{P:inv-orb} and \ref{P:inv-top}, Fact \ref{F:OS} and Lemma \ref{L:simple-ar}, we get the following lower bound for the number of non isomorphic (resp. non homeomorphic if $k=\bbC$) fine compactified Jacobians of a nodal curve $X$.

\begin{cor}\label{C:crit-iso}
Let $X$ be a connected nodal curve. Then the number of non isomorphic (resp. non homeomorphic if $k=\bbC$) fine compactified Jacobians of $X$ is bounded from below by the number of simple toric arrangement of hyperplanes
of the form $\V_{\un q}$ whose posets of regions are pairwise non isomorphic.
\end{cor}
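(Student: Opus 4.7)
The plan is to chain together the invariance results (Propositions \ref{P:inv-orb} and \ref{P:inv-top}), the combinatorial description of Oda--Seshadri (Fact \ref{F:OS}), and the perturbation lemma (Lemma \ref{L:simple-ar}), in order to convert distinct posets of regions of simple toric arrangements into distinct isomorphism (resp.\ homeomorphism) classes of fine compactified Jacobians.

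More concretely, suppose we are given simple toric arrangements $\V_{\un q_1},\ldots,\V_{\un q_m}$ of the form \eqref{E:arrang-q}, coming from polarizations $\un q_1,\ldots,\un q_m$ of total degree $1-g$, whose posets of regions are pairwise non-isomorphic. By Lemma \ref{L:simple-ar}\eqref{L:simple-ar2}, for each $i$ there exists a general polarization $\un q'_i$ such that the poset of regions of $\V_{\un q'_i}$ coincides with that of $\V_{\un q_i}$; in particular the posets of regions of $\V_{\un q'_1},\ldots,\V_{\un q'_m}$ are still pairwise non-isomorphic, and each $\J_X(\un q'_i)$ is a bona fide fine compactified Jacobian.

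Next, I would invoke Fact \ref{F:OS} (in the extension to arbitrary total degree indicated in the paragraph preceding the corollary, obtained by equivalence by translation to total degree $1-g$) to identify the poset of $J(X)$-orbits $\mathbb O(\J_X(\un q'_i))$ with the poset of regions of $\V_{\un q'_i}$. Since the latter posets are pairwise non-isomorphic by construction, the posets $\mathbb O(\J_X(\un q'_i))$ are themselves pairwise non-isomorphic.

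Finally, I would apply Proposition \ref{P:inv-orb}: if $\J_X(\un q'_i)\cong \J_X(\un q'_j)$ as schemes, then $\mathbb O(\J_X(\un q'_i))\cong \mathbb O(\J_X(\un q'_j))$, contradicting non-isomorphism of posets. Thus the $\J_X(\un q'_i)$ are pairwise non-isomorphic, giving the claimed lower bound on the number of isomorphism classes. The homeomorphism statement (when $k=\bbC$) follows in exactly the same way by substituting Proposition \ref{P:inv-top} for Proposition \ref{P:inv-orb}. I do not foresee a serious obstacle: the only mild point of care is to make sure that the extension of Fact \ref{F:OS} to polarizations of arbitrary total degree is applied coherently, which is precisely the content of the remark preceding the statement of the corollary, so no additional argument is required.
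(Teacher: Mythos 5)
Your proposal is correct and follows exactly the chain of results the paper itself invokes (Lemma \ref{L:simple-ar}\eqref{L:simple-ar2} to replace each simple arrangement by one coming from a general polarization with the same poset of regions, Fact \ref{F:OS} to identify that poset with the poset of orbits, and Propositions \ref{P:inv-orb} and \ref{P:inv-top} to conclude); the paper gives no further argument beyond citing these same four statements. The only point worth noting is that since $\un q'_i$ is general one has $U_X(\un q'_i)=\J_X(\un q'_i)$, so Fact \ref{F:OS} applies directly to the fine compactified Jacobian, which you implicitly use and which is fine.
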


We end this section by giving a sequence  of nodal curves $\{X_n\}_{n\in \bbN}$ of genus two such that the number of simple toric arrangements of hyperplanes $\{\V_{\un q}\}_{\un q\in \P_X}$ having pairwise non isomorphic posets of regions becomes arbitrarily large as $n$ goes to infinity; this implies, by Corollary \ref{C:crit-iso}, that the number of non isomorphic (resp. non homeomorphic if $k=\bbC$) fine compactified Jacobians can be arbitrarily large even for nodal curves, thus completing the proof of Theorem B from the introduction.

\begin{example}\label{E:non-iso}

Consider a genus-$2$ curve $X=X_1$ obtained from a dollar sign curve blowing up two of its 3 nodes. Then the dual graph $\Gamma_X$ of $X$ is as follows:

\unitlength 0.5mm 
\linethickness{0.4pt}
\ifx\plotpoint\undefined\newsavebox{\plotpoint}\fi 
\begin{picture}(130,45)(10,89)
\qbezier(130,110)(160,140)(190,110)
\put(146,122){\vector(3,2){.07}}
\put(179,119){\vector(3,-2){.07}}
\put(160,110){\vector(-1,0){.07}}
\put(146,98){\vector(3,-2){.07}}
\put(179,101){\vector(3,2){.07}}
\qbezier(130,110)(160,80)(190,110)
\qbezier(130,110)(160,110)(190,110)
\put(160,125){\circle*{3}}
\put(160,95){\circle*{3}}
\put(130,110){\circle*{3}}
\put(190,110){\circle*{3}}
\put(140,125){\makebox(0,0)[cc]{$e_1^1$}}
\put(180,125){\makebox(0,0)[cc]{$e_1^2$}}
\put(140,93){\makebox(0,0)[cc]{$e_2^1$}}
\put(180,93){\makebox(0,0)[cc]{$e_2^2$}}
\put(160,114){\makebox(0,0)[cc]{$e_3$}}
\end{picture}

Using the orientation depicted in  the above Figure, a basis for $H_1(\Gamma_X,\Z)\cong \Z^2$ is given by
$x:=e_1^1+e_1^2+e_3$ and $y:=e_2^1+e_2^2+e_3$. Therefore, the functionals on $H_1(\Gamma_X, \R)\cong \R^2$ associated to the edges of $\Gamma_X$ are given, in the above basis, by
$$(e_1^1)^*=(e_1^2)^*=x^* \hspace{0.5cm} (e_2^1)^*=(e_2^2)^*=y^* \hspace{0.5cm} e_3^*=x^*+y^*.$$
Then each polarization $\un q$ on $X$ of total degree $|\un q|=1-p_a(X)=-1$ gives rise to a toric arrangement $\V_{\un q}$ of $5$ lines in $\displaystyle \frac{H_1(\Gamma_X,\R)}{H_1(\Gamma_X,\Z)}\cong  \frac{\R^2}{\mathbb Z^2}$
of the form
$$ \V_{\eta_{\bullet}}=\left\{x=\eta_{e_1^1},\:  x=\eta_{e_1^2},\: y=\eta_{e_2^1},\: y=\eta_{e_2^2},\: x+y=\eta_{e_3} \right\}$$
for some rational numbers $\eta_{\bullet}$ which are determined by the polarization $\un q$, as explained in \eqref{E:arrang-q} above.
Conversely, given such a toric arrangement $\V_{\eta_{\bullet}}$ of $5$ lines in $\mathbb R^2/\mathbb Z^2$, there is a polarization $\un q$ on $X$ of total degree $|\un q|=1-p_a(X)=-1$ such that $\mathcal V_{\un q}=\V_{\eta_{\bullet}}$. Moreover, according to Lemma \ref{L:simple-ar}, the polarization $\un q$ is general in $X$ if and only if the arrangement $\V_{\un q}=\V_{\eta_{\bullet}}$ is simple.
Consider the following two simple toric arrangements of $5$ lines that are drawn on the unit square of $\R^2$ (two of the lines correspond to the edges of the unit square):

\unitlength 0.45mm 
\linethickness{0.4pt}
\ifx\plotpoint\undefined\newsavebox{\plotpoint}\fi 
\begin{picture}(110,125)(-20,102)
\put(0,100){\framebox(120,120)[cc]{}}
\put(40,220){\line(0,-1){120}}
\put(0,140){\line(1,0){120}}
\put(90,100){\line(-1,1){90}}
\put(90,220){\line(1,-1){30}}
\end{picture}

\begin{picture}(110,0)(-180,93)
\put(0,100){\framebox(120,120)[cc]{}}
\put(40,220){\line(0,-1){120}}
\put(0,140){\line(1,0){120}}
\put(48,100){\line(-1,1){48}}
\put(48,220){\line(1,-1){72}}
\end{picture}

Then it is easy to check that the poset of regions of the two toric arrangements are not isomorphic: it suffices to note that on the one on the left there are $2$ triangular two dimensional regions while on the one on the right there are $4$ triangular two dimensional regions. According to Corollary \ref{C:crit-iso}, this implies that there are at least two generic polarizations on $X$, $\un q$ and $\un q'$, such that $\J_X(\un q)$ and $\J_X(\un q')$ are not isomorphic.

More generally, blow up $X$ further in order to obtain a genus-$2$ curve $X_n$ whose dual graph $\Gamma_{X_n}$ is as follows:

\begin{picture}(130,40)(10,87)
\qbezier(130,110)(160,140)(190,110)
\qbezier(130,110)(160,80)(190,110)
\qbezier(130,110)(160,110)(190,110)
\put(160,125){\circle*{3}}
\put(160,95){\circle*{3}}
\put(135,114){\circle*{3}}
\put(140,118){\circle*{3}}
\put(146.5,122){\circle*{3}}
\put(153,124.25){\circle*{3}}
\put(175,125){\makebox(0,0)[cc]{$\dots$}}
\put(193,117){\makebox(0,0)[cc]{ $e_1^{n+1}$}}
\put(185,114){\circle*{3}}
\put(130,110){\circle*{3}}
\put(190,110){\circle*{3}}
\put(130,117){\makebox(0,0)[cc]{$e_1^1$}}
\put(135,105.5){\circle*{3}}
\put(140,101){\circle*{3}}
\put(146.5,98){\circle*{3}}
\put(153,95.25){\circle*{3}}
\put(185,105.5){\circle*{3}}
\put(130,102){\makebox(0,0)[cc]{$e_2^1$}}
\put(193,102){\makebox(0,0)[cc]{$e_2^{n+1}$}}
\put(160,114){\makebox(0,0)[cc]{$e_3$}}
\put(175,95){\makebox(0,0)[cc]{$\dots$}}
\end{picture}

\noindent In words, $X_n$ is obtained from the dollar sign curve by blowing up two of its nodes $n$ times. Arguing as above, the (simple) toric arrangements associated to the (general) polarizations $\un q$ on $X_n$ of total degree $|\un q|=1-p_a(X_n)=-1$ will be formed by $2n+3$ lines in $\mathbb R^2/\mathbb Z^2$ of the form
$$ \V_{\eta_{\bullet}}=\left\{x=\eta_{e_1^i},\:   y=\eta_{e_2^j},\: x+y=\eta_{e_3} \right\}_{1\leq i, j\leq n+1}$$
for some rational numbers $\eta_{\bullet}$ which depend on $\un q$.  For every $\frac{n}{2}< i\leq n$, consider two simple toric arrangements of hyperplanes of $\R^2/\Z^2$
$$\V_i^+:=\left\{x=\frac{h}{3n}, \: y=\frac{k}{3n}, \: x+y=\frac{2i}{3n}+
\epsilon\right\}_{0\leq h, k\leq n}\hspace{0.2cm}
\V_i^-:=\left\{x=\frac{h}{3n}, \: y=\frac{k}{3n}, \: x+y=\frac{2i}{3n}-
\epsilon\right\}_{0\leq h, k\leq n}
$$
where $\epsilon$ is a sufficiently small rational number (the poset of regions of the above toric hyperplane arrangements do not actually depend on the chosen small value of $\epsilon$).
In the next figure we have represented on the unit square in $\R^2$ the toric arrangement $\V_i^+$ on the left and the toric arrangement $\V_i^-$ on the right.


\unitlength 0.45mm 
\linethickness{0.4pt}
\ifx\plotpoint\undefined\newsavebox{\plotpoint}\fi 
\begin{picture}(110,130)(-20,97)
\put(0,100){\framebox(120,120)[cc]{}}
\put(5,220){\line(0,-1){120}}
\put(10,220){\line(0,-1){120}}
\put(15,220){\line(0,-1){120}}
\put(20,220){\line(0,-1){120}}
\put(25,220){\line(0,-1){120}}
\put(30,220){\line(0,-1){120}}
\put(35,220){\line(0,-1){120}}
\put(40,220){\line(0,-1){120}}
\put(0,105){\line(1,0){120}}
\put(0,110){\line(1,0){120}}
\put(0,115){\line(1,0){120}}
\put(0,120){\line(1,0){120}}
\put(0,125){\line(1,0){120}}
\put(0,130){\line(1,0){120}}
\put(0,135){\line(1,0){120}}
\put(0,140){\line(1,0){120}}
\put(52,100){\line(-1,1){52}}
\put(52,220){\line(1,-1){68}}
\put(25,94.5){\makebox(0,0)[cc]{$\frac{i}{3n}$}}
\put(-4,125){\makebox(0,0)[cc]{$\frac{i}{3n}$}}
\put(0,96){\makebox(0,0)[cc]{$0$}}
\put(40,94.5){\makebox(0,0)[cc]{$\frac{1}{3}$}}
\put(57,94.5){\makebox(0,0)[cc]{$\frac{2i}{3n}+\epsilon$}}
\end{picture}

\vspace{-.2cm}

\begin{picture}(110,0)(-180,92)
\put(0,100){\framebox(120,120)[cc]{}}
\put(5,220){\line(0,-1){120}}
\put(10,220){\line(0,-1){120}}
\put(15,220){\line(0,-1){120}}
\put(20,220){\line(0,-1){120}}
\put(25,220){\line(0,-1){120}}
\put(30,220){\line(0,-1){120}}
\put(35,220){\line(0,-1){120}}
\put(40,220){\line(0,-1){120}}
\put(0,105){\line(1,0){120}}
\put(0,110){\line(1,0){120}}
\put(0,115){\line(1,0){120}}
\put(0,120){\line(1,0){120}}
\put(0,125){\line(1,0){120}}
\put(0,130){\line(1,0){120}}
\put(0,135){\line(1,0){120}}
\put(0,140){\line(1,0){120}}
\put(48,100){\line(-1,1){48}}
\put(48,220){\line(1,-1){72}}
\put(25,94.5){\makebox(0,0)[cc]{$\frac{i}{3n}$}}
\put(-4,125){\makebox(0,0)[cc]{$\frac{i}{3n}$}}
\put(0,96){\makebox(0,0)[cc]{$0$}}
\put(40,94.5){\makebox(0,0)[cc]{$\frac{1}{3}$}}
\put(57,94.5){\makebox(0,0)[cc]{$\frac{2i}{3n}-\epsilon$}}
\end{picture}


It is easy to see that the the number of triangular regions cut out on $\R^2/\Z^2$ by $\V_i^+$ (resp. $\V_i^-$) is $4(n-i)+2$ (resp. $4(n-i)+4$).  This implies that the toric arrangements of hyperplanes $\{\V_i^+, \V_i^-\}_{n/2<i\leq n}$ have pairwise non isomorphic posets of regions.
According to Corollary \ref{C:crit-iso}, we conclude that there are at least $n$ if $n$ is even (resp. $n+1$ if $n$ is odd) different generic polarizations on $X_n$ giving rise to pairwise non isomorphic (resp. non homeomorphic if $k=\bbC$) fine compactified Jacobians.
\end{example}


\section{Deformation theory} \label{S:defo}

The aim of this section is to study the deformation theory and the semiuniversal deformation space of a pair $(X,I)$ where $X$ is a (reduced) connected curve and $I$ is rank-1 torsion-free simple sheaf on $X$. For basic facts  on deformation theory, we refer to the book of Sernesi \cite{Ser}.

\subsection{Deformation theory of $X$}\label{S:DefX}

The aim of this subsection is to recall some well-known facts about the deformation theory of a (reduced) curve $X$.

Let $\Def_X$ (resp. $\Def_X'$) be the local moduli functor of $X$ (resp. the locally trivial moduli functor) of $X$ in the sense of \cite[Sec. 2.4.1]{Ser}. Moreover, for any $p\in X_{\rm sing}$, we denote by $\Def_{X,p}$ the
deformation functor of the complete local $k$-algebra $\wh{\O}_{X,p}$ in the sense of \cite[Sec. 1.2.2]{Ser}. The above deformation functors are
related by the following natural morphisms:
\begin{equation}\label{E:mor-func}
\Def_X'\to \Def_X\to \Def_X^{\rm loc}:=\prod_{p\in X_{\rm sing}} \Def_{X,p}.
\end{equation}
Since $X$ is reduced, the tangent spaces to $\Def_X'$, $\Def_X$ and $\Def_{X,p}$ where $p\in X_{\rm sing}$
are isomorphic to (see \cite[Cor. 1.1.11, Thm. 2.4.1]{Ser})
\begin{equation}\label{E:tang-DefX}
\begin{aligned}
& T \Def'_X:=\Def'_X(k[\epsilon])=H^1(X,T_X), \\
& T \Def_X:=\Def_X(k[\epsilon])=\Ext^1(\Omega_X^1,\O_X), \\
& T \Def_{(X,p)}:=\Def_{(X,p)}(k[\epsilon])=(T_X^1)_p,
\end{aligned}
\end{equation}
where $\Omega_X^1$ is the sheaf of K\"ahler differentials on $X$, $T_X:={\mathcal Hom}(\Omega_X^1,\O_X)$
is the tangent sheaf of $X$ and $T_X^1={\mathcal Ext}^1(\Omega_X^1,\O_X)$ is the first cotangent sheaf of $X$, which is
a sheaf supported on $X_{\rm sing}$ by \cite[Prop. 1.1.9(ii)]{Ser}.

The usual local-to-global spectral sequence gives a short exact sequence
\begin{equation}\label{E:mor-tang}
\begin{aligned}
& 0 \to H^1(X, T_X)=T\Def_X'\to \Ext^1(\Omega_X^1,\O_X)=T\Def_X \to \\
& \to H^0(X, {\mathcal Ext}^1(\Omega_X^1,\O_X))=\bigoplus_{p\in X_{\rm sing}} {\mathcal Ext}^1(\Omega_X^1,\O_X)_p
= T\Def_{X}^{\rm loc} \to H^2(X,T_X)=0,
\end{aligned}
\end{equation}
which coincides with the exact sequence on the tangent spaces induced by \eqref{E:mor-func}.

By looking at the obstruction spaces of the above functors, one can give criteria under which the above deformation functors are smooth
(in the sense of \cite[Def. 2.2.4]{Ser}).

\begin{fact}\label{F:for-smooth}
\noindent
\begin{enumerate}[(i)]
\item \label{F:for-smooth1} $\Def_X'$ is  smooth;
\item \label{F:for-smooth2} If $X$ has l.c.i. singularities at $p\in X_{\rm sing}$ then $\Def_{X,p}$ is  smooth;
\item \label{F:for-smooth3}
If $X$ has l.c.i. singularities, then $\Def_X$ is smooth and the morphism $\displaystyle \Def_X\to \Def_{X}^{\rm loc}$ is  smooth.
\end{enumerate}
\end{fact}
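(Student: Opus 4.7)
The plan is to prove each of the three statements by showing that the relevant obstruction space vanishes, or by reducing to a smooth situation via a smooth morphism.

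For part \eqref{F:for-smooth1}, the obstruction space for the locally trivial deformation functor $\Def_X'$ is $H^2(X,T_X)$. Since $X$ is a projective curve (so $\dim X=1$), Grothendieck vanishing yields $H^2(X,T_X)=0$, and hence every obstruction vanishes; this gives smoothness in the sense of \cite[Prop. 2.2.5]{Ser}.

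For part \eqref{F:for-smooth2}, the assumption that $X$ is l.c.i.\ at $p$ means that we can write $\wh\O_{X,p}=k[[x_1,\dots,x_r]]/(f_1,\dots,f_{r-1})$. The first-order deformations of a complete intersection are given by perturbations $(f_1+tg_1,\dots,f_{r-1}+tg_{r-1})$, and any such perturbation can be iterated order by order without obstruction (this is the standard fact that complete intersection singularities have unobstructed deformation theory, see e.g.\ \cite[Cor.~3.1.13(ii) and Prop.~2.4.8]{Ser}). Equivalently, one can observe that the obstructions for $\Def_{X,p}$ live in $T^2_{\wh\O_{X,p}/k}$, and this group vanishes for l.c.i.\ singularities. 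Either way $\Def_{X,p}$ is smooth.

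For part \eqref{F:for-smooth3}, I would first note that the obstruction space of $\Def_X$ is $\Ext^2(\Omega^1_X,\O_X)$. The local-to-global spectral sequence reads
\[
E_2^{p,q}=H^p(X,\mathcal Ext^q(\Omega^1_X,\O_X))\Rightarrow \Ext^{p+q}(\Omega^1_X,\O_X).
\]
Since $X$ is a curve, $H^p=0$ for $p\geq 2$; since $X$ is l.c.i., $\mathcal Ext^q(\Omega^1_X,\O_X)=0$ for $q\geq 2$ (see \cite[Prop.~1.1.9]{Ser}); and because $T_X^1$ is supported on the zero-dimensional scheme $X_{\rm sing}$, we have $H^1(X,T_X^1)=0$. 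The only possibly nonzero contributions to $\Ext^2(\Omega^1_X,\O_X)$ thus come from $H^2(X,T_X)$ and $H^1(X,T_X^1)$, both of which vanish. Hence $\Def_X$ is smooth.

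For the smoothness of the morphism $\Def_X\to\Def_X^{\rm loc}$, I would apply the standard criterion (see \cite[Prop.~2.3.6]{Ser}): given a small extension $A'\twoheadrightarrow A$, a deformation $\xi_A\in\Def_X(A)$ and a lift of the induced local deformations to $A'$, the obstruction to lifting $\xi_A$ itself lies again in $H^2(X,T_X)$, which vanishes. Combined with the surjectivity of $T\Def_X\to T\Def_X^{\rm loc}$ coming from \eqref{E:mor-tang} and the fact that $\Def_X^{\rm loc}=\prod_{p}\Def_{X,p}$ is smooth by part \eqref{F:for-smooth2}, this yields smoothness of the morphism. The main (very mild) obstacle is just the bookkeeping to reconcile the exact sequence \eqref{E:mor-tang} with the smoothness criterion for a morphism of functors; once $H^2(X,T_X)=0$ is identified as the only relevant obstruction, everything else is formal.
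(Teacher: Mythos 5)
Your proof is correct and follows essentially the same route as the paper's: obstruction space $H^2(X,T_X)=0$ for $\Def_X'$, the standard unobstructedness of l.c.i.\ singularities for $\Def_{X,p}$, the vanishing of $\Ext^2(\Omega^1_X,\O_X)$ via the local-to-global spectral sequence for $\Def_X$ (which is exactly the content of the reference the paper cites), and the smoothness criterion for $\Def_X\to\Def_X^{\rm loc}$ using surjectivity of the tangent map from \eqref{E:mor-tang} together with smoothness of source and target. The only caveat worth recording is that the identification of $\Ext^2(\Omega^1_X,\O_X)$ as an obstruction space for $\Def_X$ is usually stated in characteristic zero; as the paper notes in a footnote, it holds in arbitrary characteristic by a result of Vistoli.
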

\begin{proof}
Part \eqref{F:for-smooth1}: an obstruction space for $\Def_X'$ is $H^2(X,T_X)$ by \cite[Prop. 2.4.6]{Ser} and $H^2(X,T_X)=0$ because $\dim X=1$.
Therefore, $\Def'_X$ is smooth.

Part \eqref{F:for-smooth2} follows from \cite[Cor. 3.1.13(ii)]{Ser}.

Part \eqref{F:for-smooth3}:  by \cite[Prop. 2.4.8]{Ser}\footnote{In loc. cit., it is assumed that the characteristic of the base field is $0$. However, the statement is true in any characteristics, see \cite[Thm. (4.4)]{Vis}.},
an obstruction space for $\Def_X$ is $\Ext^2(\Omega_X^1,\O_X)$, which is zero
by \cite[Example 2.4.9]{Ser}. Therefore we get that $\Def_X$ is smooth.

Since $\displaystyle  \Def_{X}^{\rm loc}$ is smooth by part \eqref{F:for-smooth2} and the map of tangent spaces
$\displaystyle T\Def_X \to T\Def_{X}^{\rm loc}$ is surjective by \eqref{E:mor-tang}, the smoothness of the morphism
$\displaystyle \Def_X\to \Def_{X}^{\rm loc}$ follows from  the criterion \cite[Prop. 2.3.6]{Ser}.


\end{proof}

\subsection{Deformation theory of the pair $(X,I)$}\label{S:Def-pair}

The aim of this subsection is to review some fundamental results due to Fantechi-G\"ottsche-vanStraten \cite{FGvS} on
the deformation theory of a pair $(X,I)$, where $X$ is a (reduced) curve and $I$ is a rank-1 torsion-free  sheaf on $X$ (not necessarily simple).

Let $\Def_{(X,I)}$ be the deformation  functor of the pair $(X,I)$ and, for any $p\in X_{\rm sing}$, we denote by $\Def_{(X, I) ,p}$ the
deformation functor of the pair $(\wh{O}_{X,p},I_p)$.
We have a natural commutative diagram
\begin{equation}\label{E:diag-funct}
\xymatrix{
\Def_{(X,I)} \ar[r] \ar[d] & \Def_{(X,I)}^{\rm loc}:=\prod_{p\in X_{\rm sing}} \Def_{(X,I),p} \ar[d] \\
\Def_X \ar[r] & \Def_X^{\rm loc}:=\prod_{p\in X_{\rm sing}} \Def_{X,p}.
}
\end{equation}

Under suitable hypothesis, the deformation functors appearing in the above diagram \eqref{E:diag-funct} are smooth and the horizontal morphisms are smooth as well.

\begin{fact}[Fantechi-G\"ottsche-vanStraten]\label{F:diag-smooth}
\noindent
\begin{enumerate}[(i)]
\item \label{F:diag-smooth1} The natural morphism
$$\Def_{(X,I)}\to  \Def_{(X,I)}^{\rm loc}\times_{\Def_{X}^{\rm loc}} \Def_X$$
is smooth. In particular, if $X$ has l.c.i. singularities then the morphism $\Def_{(X,I)}\to \Def_{(X,I)}^{\rm loc}$ is smooth.
\item \label{F:diag-smooth2} If $X$ has locally planar singularities at $p\in X_{\rm sing}$ then $\Def_{(X,I),p}$ is smooth. In particular, if $X$ has locally planar singularities
then $\Def_{(X,I)}^{\rm loc}$ and $\Def_{(X,I)}$ are smooth.
\end{enumerate}
\end{fact}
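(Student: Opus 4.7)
My plan is to treat the two parts in sequence via obstruction theory.

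For part \eqref{F:diag-smooth1}, I would apply the standard smoothness criterion for morphisms of deformation functors \cite[Prop. 2.3.6]{Ser}: it suffices to check that the induced map on tangent spaces is surjective and that the induced map on obstruction spaces is injective. Both assertions reduce, via the local-to-global spectral sequence
\[
E_2^{p,q} = H^p\bigl(X, \mathcal{E}xt^q(I,I)\bigr) \Longrightarrow \Ext^{p+q}(I,I),
\]
to the vanishing $H^i(X, \mathcal{H}om(I,I)) = 0$ for $i \geq 2$, which holds because $\dim X = 1$. Concretely, given a small extension $A' \twoheadrightarrow A$ with square-zero kernel $(t)$, a deformation of $(X,I)$ over $A$, an extension of $X$ to $A'$, and compatible extensions of each local pair $(\wh{\O}_{X,p}, I_p)$ to $A'$, the obstruction to gluing the local extensions of $I$ into a global extension lies in $H^1(X, \mathcal{H}om(I,I))\otimes (t)$, which is also the kernel of the map from $T\Def_{(X,I)}$ to the tangent space of the fiber product; any further obstruction lives in $H^2(X, \mathcal{H}om(I,I)) = 0$. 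The ``in particular'' clause is then immediate: when $X$ is l.c.i., Fact \ref{F:for-smooth}\eqref{F:for-smooth3} shows that the projection $\Def_{(X,I)}^{\rm loc} \times_{\Def_X^{\rm loc}} \Def_X \to \Def_{(X,I)}^{\rm loc}$ is smooth, and composition of smooth morphisms is smooth.

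For part \eqref{F:diag-smooth2}, the statement is purely local at $p$, so I would pass to $R := k[[x,y]]$ and $\wh{\O}_{X,p} = R/(f)$ for some reduced $f \in R$. Any torsion-free rank-$1$ module $I_p$ over $R/(f)$ is maximal Cohen--Macaulay, and after twisting by a sufficiently high power of the maximal ideal may be identified with an ideal of $R/(f)$ of finite colength; its preimage in $R$ is an ideal $J \subset R$ with $f \in J$ and $R/J$ of finite length $\ell$. I would then identify $\Def_{(X,I),p}$ with the deformation functor of the pair of ideals $((f), J)$ inside $R$---equivalently, with embedded deformations of the planar curve $V(f) \subset \Spec R$ together with a length-$\ell$ subscheme $V(J) \subset V(f)$. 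Smoothness of this latter functor follows because deformations of $J$ alone inside the smooth surface $\Spec R$ are unobstructed (the local punctual Hilbert scheme of a smooth surface is smooth), while the additional datum of $f' \in J'$ deforming $f$ is free: the forgetful map to the Hilbert-scheme deformation functor is smooth, with fiber a vector space (the universal ideal $J$ evaluated at the deformation). The ``in particular'' assertion follows at once: $\Def_{(X,I)}^{\rm loc}$ is a product of smooth functors and hence smooth, and combining this with part \eqref{F:diag-smooth1} and Fact \ref{F:for-smooth}\eqref{F:for-smooth3} yields smoothness of $\Def_{(X,I)}$.

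The main technical obstacle I anticipate is the identification used in part \eqref{F:diag-smooth2} of $\Def_{(X,I),p}$ with the deformation functor of the pair of ideals $((f), J)$ inside $R$: one has to argue that every abstract infinitesimal deformation of the pair $(R/(f), I_p)$---viewed as a local ring together with a module over it, with no ambient surface specified---automatically comes from an embedded deformation inside $\Spec R$. This in turn requires the standard fact that hypersurface singularities only deform to hypersurface singularities (so that the embedding dimension of the total space stays $\leq 2$), together with a corresponding lifting statement for the module $I_p$, after which the two pictures may be matched up.
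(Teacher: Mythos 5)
Your proposal is correct in outline, but note that the paper does not actually prove the two main assertions: it quotes \cite[Prop. A.1]{FGvS} for part (i) and \cite[Prop. A.3]{FGvS} for part (ii) (with a footnote that the arguments there work over any algebraically closed field), and only supplies the two ``in particular'' deductions, which you reproduce exactly (smoothness of the projection $\Def_{(X,I)}^{\rm loc}\times_{\Def_X^{\rm loc}}\Def_X\to \Def_{(X,I)}^{\rm loc}$ by base change from Fact \ref{F:for-smooth}\eqref{F:for-smooth3}, then composition; and products plus part (i) for the last claim). Your argument for part (i) is essentially the one of the cited source: the relative tangent and obstruction spaces of $\Def_{(X,I)}\to \Def_{(X,I)}^{\rm loc}\times_{\Def_X^{\rm loc}}\Def_X$ sit in $H^1$ and $H^2$ of a coherent sheaf on the curve, and $H^2$ vanishes for dimension reasons. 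For part (ii) you take a genuinely different route: \cite{FGvS} choose a length-one free resolution $0\to R^n\xrightarrow{\lambda} R^n\to I_p\to 0$ over $R=k[[x,y]]$ with $\det\lambda$ equal to $f$ up to a unit, and observe that deformations of the pair are governed by deformations of the matrix $\lambda$, which are manifestly unobstructed; you instead realize $I_p$ as an ideal of finite colength and reduce to smoothness of the Hilbert scheme of points on a smooth surface. Both work; the matrix argument is shorter and sidesteps the comparison between abstract and embedded deformations, while your route makes explicit the link with $\Hilb^d(X)$ that the paper exploits elsewhere (Theorem \ref{T:prop-J-planar}).

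Three points in your part (ii) need tightening. First, one does not ``twist by a power of the maximal ideal'': one embeds $I_p$ into the total ring of fractions of $\wh{\O}_{X,p}$ and multiplies by a suitable non-zerodivisor to land in an ideal of finite colength; this is harmless since $\Def_{(X,I),p}$ depends only on the isomorphism class of the module. Second, the smooth object is not the punctual Hilbert scheme (subschemes supported at the origin), which is in general singular; what you need is Fogarty's theorem that $\Hilb^n$ of a smooth surface is smooth, applied to the completed local ring at $[V(J)]$, i.e. to the functor of flat deformations of $J\subset R$ with no condition on the support. Third, the lifting statement you defer --- that every abstract deformation of $(\wh{\O}_{X,p},I_p)$ comes from an embedded deformation of the pair $((f),J)$ --- requires lifting the inclusion $I_p\hookrightarrow \wh{\O}_{X,p}$ along small extensions; the obstruction lies in $\Ext^1_{\wh{\O}_{X,p}}(I_p,\wh{\O}_{X,p})$, which vanishes because $I_p$ is maximal Cohen--Macaulay over the one-dimensional Gorenstein ring $\wh{\O}_{X,p}$, and one must also check that the lifted map remains injective with cokernel flat over the Artinian base. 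With these points filled in, your plan goes through.
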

\begin{proof}
Part \eqref{F:diag-smooth1}: the first assertion follows from \cite[Prop. A.1]{FGvS}\footnote{In loc. cit., it is assumed that the base field is the field of complex numbers. However, a direct inspection reveals that  the same argument works over any (algebraically closed) base field.}. The second assertion follows from the first one together with
Fact \ref{F:for-smooth}\eqref{F:for-smooth3} which implies that the morphism $\Def_{(X,I)}^{\rm loc}\times_{\Def_{X}^{\rm loc}} \Def_X \to \Def_{(X,I)}^{\rm loc}$ is smooth.

Part \eqref{F:diag-smooth2}: the first assertion follows from \cite[Prop. A.3]{FGvS}\footnote{As before, the argument of loc. cit. works over any (algebraically closed) base field.}. The second assertion follows from the first together with part \eqref{F:diag-smooth1}.

\end{proof}

\subsection{Semiuniversal deformation space}\label{S:def-space}

The aim of this subsection is to describe and study the semiuniversal deformation spaces for the deformation functors
$\Def_X$ and $\Def_{(X,I)}$.

According to \cite[Cor. 2.4.2]{Ser}, the functor $\Def_X$ admits a semiuniversal \footnote{Some authors use the word miniversal instead of semiuniversal. We prefer to use  the word semiuniversal in order to be coherent with the terminology of the book of Sernesi \cite{Ser}.} formal couple
$(R_X,\ov{\X})$, where $R_X$ is a Noetherian complete local $k$-algebra with maximal ideal $\m_{X}$ and residue field
$k$ and
$$\ov{\X}\in \wh{\Def_X}(R_X):=\varprojlim \Def_X\left(\frac{R_X}{\m_{X}^n}\right)$$
is a formal deformation of $X$ over $R_X$. Recall that this means that the morphism of functors
\begin{equation}\label{E:map-func1}
h_{R_X}:=\Hom(R_X,-)\longrightarrow \Def_X
\end{equation}
determined by $\ov{\X}$ is smooth and induces an isomorphism
of tangent spaces $T R_X:=(\m_X/\m_X^2)^{\vee}\stackrel{\cong}{\to} T \Def_X$ (see \cite[Sec. 2.2]{Ser}).
The formal couple $(R_X,\ov{\X})$ can be also viewed as a flat morphism of formal schemes
\begin{equation}\label{E:form-fam}
\ov{\pi}:{\ov \X}\to \Spf\: R_X,
\end{equation}
where $\Spf$ denotes the formal spectrum, such that the reduced scheme ${\ov \X}_{\rm red}$ underlying ${\ov \X}$ (see \cite[Prop. 10.5.4]{EGAI}) is isomorphic to $X$
(see \cite[p. 77]{Ser}).
Note that the semiuniversal formal couple $(R_X,\ov{\X})$ is unique by  \cite[Prop. 2.2.7]{Ser}.

Since $X$ is projective and $H^2(X,\O_X)=0$, Grothendieck's existence theorem (see \cite[Thm. 2.5.13]{Ser}) gives that
the formal deformation \eqref{E:form-fam} is \emph{effective}, i.e. there exists a deformation
$\pi:\X\to \Spec R_X$ of $X$ over $\Spec R_X$  whose completion along $X=\pi^{-1}([\m_X])$
is isomorphic to \eqref{E:form-fam}.  In other words, we have a Cartesian diagram
\begin{equation}\label{E:eff-fam}
\xymatrix{
X \ar[d]\ar@{^{(}->}[r]\ar@{}[dr]|{\square}& \ov{\X}\ar[r]\ar[d]^{\ov{\pi}}\ar@{}[dr]|{\square} & \X\ar[d]^{\pi}\\
\Spec k\cong [\m_X]\ar@{^{(}->}[r] & \Spf R_X\ar[r] &  \Spec R_X.
}
\end{equation}
Note also that the deformation $\pi$ is unique by \cite[Thm. 2.5.11]{Ser}.

Later on, we will need the following result on the effective semiuniversal deformation of a curve $X$ with locally planar singularities.

\begin{lemma}\label{L:codim1}
Assume that $X$ has locally planar singularities.
Let $U$ be the open subset of $\Spec R_X$ consisting of all the (schematic) points $s\in \Spec R_X$ such that
the geometric fiber $\X_{\ov s}$ of the universal family $\pi:\X\to \Spec R_X$ is smooth or has a unique singular point which is a
node. Then the codimension of the complement of $U$ inside $\Spec R_X$ is at least two.
\end{lemma}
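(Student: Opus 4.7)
The plan is to reduce the global statement to a local statement at each singular point of $X$ via the deformation-theoretic smoothness results of \S\ref{S:DefX}, and then invoke a classical fact about deformations of plane curve singularities.

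First I would apply Fact~\ref{F:for-smooth}\eqref{F:for-smooth3}: since $X$ has locally planar (hence l.c.i.) singularities, the morphism of functors $\Def_X\to \Def_X^{\rm loc}=\prod_{p\in \Xsing}\Def_{X,p}$ is smooth. Translated to the semiuniversal rings, this gives a formally smooth morphism
\[
f:\Spec R_X\longrightarrow \Spec R_X^{\rm loc}:=\prod_{p\in \Xsing}\Spec R_{X,p},
\]
where $R_{X,p}$ is the semiuniversal deformation ring of $\wh{\O}_{X,p}$. Because the family $\pi:\X\to \Spec R_X$ is, \'etale-locally near each $p\in \Xsing$, pulled back from the miniversal local family $\X_p\to \Spec R_{X,p}$, the analytic types of the singularities of the geometric fiber $\X_{\ov s}$ are determined, grouped by proximity to each $p$, by the image $f(s)=(s_p)_p$. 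Since $f$ is smooth (hence flat with equidimensional fibers), codimensions of closed subsets are preserved under $f^{-1}$, so it is enough to bound by $2$ the codimension in $\Spec R_X^{\rm loc}$ of the complement of the image of $U$.

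Next, for each $p\in\Xsing$, write $\Delta_p\subset \Spec R_{X,p}$ for the discriminant (the locus where the local fiber is singular) and $V_p^{>}\subset \Delta_p$ for the locus where the local fiber is not a single node. Unwinding the definition of $U$, the complement of $f(U)$ in $\Spec R_X^{\rm loc}$ is contained in
\[
\bigcup_{p\in\Xsing}\Bigl(V_p^{>}\times \prod_{q\neq p}\Spec R_{X,q}\Bigr)\ \cup\ \bigcup_{p\neq q}\Bigl(\Delta_p\times \Delta_q\times \prod_{r\neq p,q}\Spec R_{X,r}\Bigr),
\]
because $\X_{\ov s}$ fails to be smooth-or-one-node exactly when either some local fiber is itself ``worse than a node'' ($s_p\in V_p^{>}$) or at least two distinct local fibers are singular ($s_p\in\Delta_p$ and $s_q\in \Delta_q$ for some $p\neq q$).

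It remains to verify two local claims. First, each $\Delta_p$ is a hypersurface in $\Spec R_{X,p}$, which is standard for the discriminant of the miniversal deformation of a hypersurface singularity; this takes care of the second union, whose pieces all have codimension $\geq 2$. Second, and this is the crux, $V_p^{>}$ has codimension $\geq 2$ in $\Spec R_{X,p}$ for every singular point $p$. This is the classical statement that for a reduced plane curve singularity the stratum of deformations whose only singular fiber carries a single ordinary double point is open and dense in the discriminant of the miniversal deformation: equivalently, the generic point of every irreducible component of $\Delta_p$ parametrizes a one-nodal deformation. It can be extracted from the theory of morsifications of plane curve singularities or from Teissier's analysis of the equisingularity stratification of the discriminant. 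Granting this, $\dim V_p^{>}\leq \dim \Delta_p-1=\dim \Spec R_{X,p}-2$, and the two unions above both have codimension $\geq 2$; pulling back via the flat map $f$ yields the lemma. The only non-routine point in this plan is locating a clean reference (or giving a short independent argument) for the density of the one-node stratum in $\Delta_p$ for arbitrary, possibly reducible, plane curve singularities.
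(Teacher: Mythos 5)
Your overall strategy---reducing to the product of local deformation spaces via the smoothness of $\Def_X\to\Def_X^{\rm loc}$ from Fact \ref{F:for-smooth}\eqref{F:for-smooth3}, and then splitting the complement of $U$ into the ``two singular points'' pieces and the ``one singularity worse than a node'' pieces---is the same skeleton the paper uses, and the first of your two local claims (each pairwise product of discriminants has codimension at least two) is unproblematic, since each $\Delta_p$ is a proper closed subset of the irreducible scheme $\Spec R_{X,p}$. The genuine gap is in your second local claim, which you yourself flag as the crux: that $V_p^{>}$ has codimension at least two in $\Spec R_{X,p}$, equivalently that the uninodal stratum is dense in every component of the discriminant $\Delta_p$. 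The references you propose (morsifications, Teissier's equisingularity theory) are characteristic-zero results, while the paper works over an algebraically closed field of arbitrary characteristic; this is not a pedantic point, because discriminants of plane curve singularities genuinely misbehave in small characteristic. For instance, in characteristic $3$ the two-parameter family $y^2=x^3+ax+b$ (the char-$0$ miniversal deformation of the cusp) has discriminant $\{a=0\}$ and \emph{every} singular member of it is again a cusp; density of nodes is only restored after passing to the correct three-dimensional char-$3$ miniversal deformation $y^2=x^3+ax^2+bx+c$, whose identification is exactly the issue. So as written your argument establishes the lemma only in characteristic zero.

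The paper closes this gap by a different, elementary route: instead of quoting density of the nodal stratum, it shows directly that for a reduced $f\in k[[x,y]]$ in any characteristic one has $\dim_k k[[x,y]]/(f,\partial_x f,\partial_y f)\le 1$ if and only if $f$ defines a smooth point or a node (the delicate direction being that $(x,y)=(f,\partial_x f,\partial_y f)$ forces $f$ to be equivalent to $xy$ after a coordinate change, which is argued without dividing by the characteristic). Since $\Def_{X,p}$ is smooth by Fact \ref{F:for-smooth}\eqref{F:for-smooth2}, this quantity equals $\dim \Def_{X,p}$, and versality then gives the codimension bound: at the generic point of any component of the bad locus, the sum of the dimensions of the local deformation spaces of the singular points of the geometric fiber is at least $2$. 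If you replace your appeal to morsification theory by this $T^1$ computation (applied to the singular points of the geometric fibers, which are again planar singularities), your argument goes through in all characteristics.
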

\begin{proof}
Since the natural morphism (see \eqref{E:mor-func})
$$\Def_X \to \Def_X^{\rm loc}:=\prod_{p\in X_{\rm sing}} \Def_{X,p}$$
is smooth by Fact \ref{F:for-smooth}\eqref{F:for-smooth3}, it is enough to show  that if
$\Def_{X,p}$ has dimension at most one then $p\in X_{\rm sing}$ is either a smooth point or a node of $X$.
This is stated in \cite[Prop. 3.1.5.]{Ser} under the assumption that ${\rm char}(k)=0$. However, a slight modification of the argument of loc. cit. works in arbitrary characteristic, as we are now going to show.

First, since $X$ has locally planar singularities at $p$, we can write $\wh{\O}_{X,p}=\frac{k[[x,y]]}{f}$, for some power series $f=f(x,y)\in k[[x,y]]$. By \cite[p. 124]{Ser}, the tangent space to $\Def_{(X,p)}$ is equal to
$$T^1:=T^1_{\wh{\O}_{X,p}}=\frac{k[[x,y]]}{(f,\partial_x f, \partial_y f)}.$$
Since $\Def_{X,p}$ is smooth by Fact \ref{F:for-smooth}\eqref{F:for-smooth2}, then the dimension of $\Def_{X,p}$ is equal to $\dim_k T^1$. 

From the above description, it is clear that $\dim_k T^1=0$ if and only if $f$ contains some linear term, which happens if and only if $p$ is a smooth point of $X$.

Therefore, we are left with showing that $p$ is a node of $X$ (i.e. $f$ can be taken to be equal to $xy$) if and only if $\dim_ k T^1=1$, which is equivalent to $(x,y)=(f, \partial_x f,\partial_y f)$.
Clearly, if $f=xy$ then $\partial_x f=y$ and $\partial_y f=x$ so that $(x,y)=(f,\partial_x f,\partial_y f)=(xy, y, x)$.
Conversely, assume that $(x,y)=(f,\partial_x f,\partial_y f)$. Then clearly $f$ cannot have a linear term.
Consider the degree two part $f_2=Ax^2+Bxy+Cy^2$ of $f$. By computing the partial derivatives and imposing that
$x,y\in (f, \partial_x f,\partial_y f)$, we get that the discriminant $\Delta=B^2-4AC$ of $f_2$ is different from $0$. Then, acting with a linear change of coordinates, we can assume that $f_2=xy$.
Now, it is easily checked that via a change of coordinates of the form $x\mapsto x+g(x,y)$ and $y\mapsto y+h(x)$ with $g(x,y)\in (x,y)^2$ and $h(x)\in (x)^2$, we can transform $f$ into $xy$, and we are done.

\end{proof}

Consider now the functor
$$\bJbar_{\X}^*:\{\Spec R_X-\text{schemes} \} \longrightarrow \{ \text{Sets} \}$$
which sends a scheme $T\to \Spec R_X$ to the set of isomorphism classes of $T$-flat, coherent sheaves on
$\X_T:=T\times_{\Spec R_X} \X$ whose fibers over $T$ are simple rank-1  torsion-free sheaves. The functor
$\bJbar_{\X}^*$ contains the open subfunctor
$$\bJ_{\X}^*:\{\Spec R_X-\text{schemes} \} \longrightarrow \{ \text{Sets} \}$$
which sends a scheme $T\to \Spec R_X$ to the set of isomorphism classes of line bundles on
$\X_T$.

Analogously to Fact \ref{F:huge-Jac}, we have the following

\begin{fact}[Altman-Kleiman, Esteves]\label{F:univ-Jac}
\noindent
\begin{enumerate}[(i)]
\item \label{F:univ-Jac1}The Zariski (equiv. \'etale, equiv. fppf) sheafification  of $\bJbar_{\X}^*$ is represented by
a scheme $\bJbar_{\X}$ endowed with a morphism
$u:\bJbar_{\X}\to \Spec R_X$, which is locally of finite type and satisfies the existence part of the valuative criterion for properness.
The scheme $\bJbar_{\X}$ contains an open subset $\bJ_{\X}$ which represents the Zariski (equiv. \'etale, equiv. fppf) sheafification  of
$\bJ_{\X}^*$ and the restriction $u:\bJ_{\X}\to \Spec R_X$ is smooth.\\
Moreover, the fiber of $\bJbar_{\X}$ (resp. of $\bJ_{\X}$) over the closed point $[\m_X]\in \Spec R_X$ is isomorphic to $\bJbar_X$ (resp. $\bJ_X$).

\item \label{F:univ-Jac2} There exists a sheaf $\wh{\I}$ on $\X\times_{\Spec R_X} \bJbar_{\X}$  such for every $\F\in \bJbar_{\X}^*(T)$ there exists a unique $\Spec R_X$-map $\alpha_{\F}:T\to \bJbar_{\X}$ with the property that $\F=(\id_{\X}\times \alpha_{\F})^*(\wh{\I})\otimes \pi_2^*(N)$ for some $N\in \Pic(T)$, where
$\pi_2:\X\times_{\Spec R_X} T\to T$ is the projection onto the second factor.
The sheaf $\wh{\I}$ is uniquely determined up  to tensor product with the pullback of an invertible sheaf on $\bJbar_{\X}$ and it is called a \emph{universal sheaf} on $\bJbar_{\X}$.\\
Moreover, the restriction of $\wh{\I}$ to $X\times \bJbar_X$ is equal to a universal sheaf as in Fact \ref{F:huge-Jac}\eqref{F:huge3}.

\end{enumerate}
\end{fact}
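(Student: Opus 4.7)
The plan is to apply the proof of Fact~\ref{F:huge-Jac} verbatim in the relative setting, replacing the absolute family $X/\Spec k$ with the flat projective family $\pi:\X\to\Spec R_X$ furnished by effectivity of the semiuniversal deformation. All the ingredients (Altman--Kleiman representability, Esteves's scheme criterion, the valuative criterion, formal smoothness of the Picard functor) are already phrased in families in their original sources, so essentially no new argument is required.

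For part~\eqref{F:univ-Jac1}, I would first invoke \cite[Thm.~7.4]{AK} applied to $\pi:\X\to\Spec R_X$: since $\pi$ is flat and projective, the \'etale sheafification of the moduli functor of simple coherent sheaves on $\X/\Spec R_X$ is represented by an algebraic space locally of finite type over $\Spec R_X$. The additional conditions of being torsion-free and rank-$1$ are open on flat families of coherent sheaves (as in the proof of \cite[Prop.~5.12]{AK}), so restricting to this open subspace yields a representing algebraic space for $\bJbar_\X^*$. To upgrade this algebraic space to a scheme I would invoke \cite[Thm.~B]{est1}: its hypothesis requires a section of $\pi$ through the smooth locus of each geometric fiber, and such sections exist because $X$ (the closed fiber, which meets every geometric fiber through specialization) has a $k$-rational smooth point on each irreducible component and the Henselianity of $R_X$ together with smoothness of $\pi$ at those points lifts these points to sections of $\pi$. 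The existence part of the valuative criterion follows by applying \cite[Thm.~32]{est1} to $\pi$. The open subset $\bJ_\X\subseteq\bJbar_\X$ representing $\bJ_\X^*$ exists because being locally free is open, and the restriction $u|_{\bJ_\X}$ is smooth by the relative version of \cite[Sec.~8.4, Prop.~2]{BLR}, since the obstruction space $H^2$ of each fiber vanishes (fibers are curves). Finally, the identification of the fiber of $\bJbar_\X$ (resp.~$\bJ_\X$) over the closed point $[\m_X]\in\Spec R_X$ with $\bJbar_X$ (resp.~$\bJ_X$) is formal: base-change of the representing scheme to $\Spec k\hookrightarrow\Spec R_X$ represents the corresponding base-changed functor, which is exactly $\bJbar_X^*$ (resp.~$\bJ_X^*$), so Fact~\ref{F:huge-Jac}\eqref{F:huge2} yields the claim.

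For part~\eqref{F:univ-Jac2}, the universal sheaf $\wh{\I}$ on $\X\times_{\Spec R_X}\bJbar_\X$ is obtained, exactly as in Fact~\ref{F:huge-Jac}\eqref{F:huge3}, by evaluating the representable functor on the identity map $\id:\bJbar_\X\to\bJbar_\X$ (after passing to a suitable \'etale cover if needed, as in the absolute case); the universal property stated in the claim and the uniqueness of $\wh{\I}$ up to tensoring with the pullback of a line bundle from $\bJbar_\X$ are formal consequences of representability up to \'etale sheafification (cf.~the proof of \cite[Thm.~3.4]{AK}). The final sentence, that $\wh{\I}|_{X\times\bJbar_X}$ is itself a universal sheaf on $X\times\bJbar_X$, is immediate from the compatibility of the universal property with base-change to the closed point $[\m_X]\in\Spec R_X$, using the fiber identification established in part~\eqref{F:univ-Jac1}.

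The only non-cosmetic issue, and hence the main obstacle, is the verification of the hypothesis of Esteves's scheme-representability theorem \cite[Thm.~B]{est1} in the relative setting, since the original formulation in \cite{est1} requires sections through smooth loci of all geometric fibers rather than only the closed one. This is handled by the Henselian lifting argument above, using crucially that $\Spec R_X$ is the spectrum of a complete local ring and that $X_{\rm sm}$ meets each irreducible component of $X$; once these sections are produced, all remaining arguments are routine transcriptions of the absolute case.
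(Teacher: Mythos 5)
Your overall strategy (relativize Fact \ref{F:huge-Jac} over $\Spec R_X$) is the paper's, and most of part (i) matches, but you take a different and riskier route to schemeness. The paper does not verify the hypotheses of \cite[Thm.~B]{est1} in the relative setting; it instead quotes \cite[Cor.~52]{est1}, which says the algebraic space becomes a scheme after an \'etale cover of the base, and then observes that $R_X$ is strictly henselian, so $\Spec R_X$ admits no nontrivial connected \'etale covers and the descent is automatic. Your route via Thm.~B needs sections meeting every irreducible component of every \emph{geometric} fiber, not just of the closed fiber; your henselian lifting produces sections through smooth points of the closed fiber, and the claim that these hit every component of every geometric fiber is precisely the nontrivial content of Lemma \ref{L:geo-irr} and the map $\Sigma_s$, which you do not address. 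The paper's detour through Cor.~52 avoids this issue entirely.

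The genuine gap is in part (ii). The existence of a universal sheaf on $\X\times_{\Spec R_X}\bJbar_{\X}$ is \emph{not} a formal consequence of representability of the \'etale sheafification: a universal object for the sheafified functor need not descend to an honest sheaf on the product (there is a Brauer-type obstruction), and your parenthetical ``after passing to a suitable \'etale cover if needed'' does not resolve it. The paper's entire proof of (ii) consists of removing this obstruction by producing a section of $\pi$ through its smooth locus so that \cite[Thm.~3.4]{AK2} applies; this in turn requires that the smooth locus of $\pi$ be dense in every geometric fiber, which the paper deduces from the reducedness of all geometric fibers (\cite[Thm.~12.2.4]{EGAIV3}, using that the closed fiber $X$ is reduced) together with the strict henselianity of $R_X$. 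You do construct such sections in part (i), but you never connect them to part (ii), and the phrase ``formal consequences of representability'' passes over the one step of the statement that actually requires an argument. (Note also that the relevant reference is \cite[Thm.~3.4]{AK2}, stated there only for integral geometric fibers; the paper explicitly flags that its proof extends to reduced fibers.)
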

\begin{proof}
Part  \eqref{F:univ-Jac1}: the representability of the \'etale sheafification (and hence of the fppf sheafification) of $\bJbar_{\X}^*$ by an algebraic space
$\bJbar_{\X}$ locally of finite type over $\Spec R_X$ follows from \cite[Thm. 7.4]{AK}, where it is proved for the moduli functor of simple sheaves, along with the fact that being torsion free and rank-1 is an open condition. 
From \cite[Cor. 52]{est1}, it follows that $\bJbar_{\X}$ becomes a
scheme after an \'etale cover of $\Spec R_X$. However, since
$R_X$ is strictly henselian (being a complete local ring with algebraically closed residue field), then $\Spec R_X$ does not admit non trivial connected \'etale covers (see \cite[Sec. 2.3]{BLR});
hence $\bJbar_{\X}$ is a scheme.
The scheme $\bJbar_{\X}$ satisfies the existence part of the valuative criterion for properness by \cite[Thm. 32]{est1}.

The fact that $\bJbar_{\X}$ represents also the Zariski sheafification of $\bJbar_{\X}^*$ follows from \cite[Thm. 3.4]{AK2}\footnote{This result is stated in loc. cit. only for flat and proper morphisms with integral geometric fibers; however, the same proof works assuming only reduced geometric fibers.} once we prove that the morphism $\pi:\X\to \Spec R_X$ admits a section through its smooth locus.
Indeed, let $U$ be the smooth locus of the morphism $\pi$ and denote by $\pi':U\to \Spec R_X$ the restriction of $\pi$
to $U$. Since $X$ is assumed to be reduced, all the geometric fibers of $\pi$ are reduced by \cite[Thm. 12.2.4]{EGAIV3}; hence,
we deduce that for every $s\in \Spec R_X$ the open subset $\pi'^{-1}(s)$ is dense in $\pi^{-1}(s)$.
Now, since $R_X$ is a strictly henselian ring,  given any point $p\in \pi'^{-1}([\m_X])$, we can find a section of $\pi':U\to \Spec R_X$ passing through $p$
(see \cite[Sec. 2.3, Prop. 5]{BLR}), as required.

Since $\bJ_{\X}^*$ is an open subfunctor of $\bJbar_{\X}^*$, it follows that $\bJbar_{\X}$ contains an open subscheme
$\bJ_{\X}$ which represents the \'etale sheafification of $\bJ_{\X}^*$.
The smoothness of $\bJ_{\X}$ over $\Spec R_X$ follows from \cite[Sec. 8.4, Prop. 2]{BLR}. The last assertion of part \eqref{F:univ-Jac1} is obvious.


Part \eqref{F:univ-Jac2} is an immediate consequence of the fact that $\bJbar_{\X}$ represents the Zariski sheafification of $\bJbar_{\X}^*$ (see   also \cite[Thm. 3.4]{AK2}).
 The last assertion of part \eqref{F:univ-Jac2} is obvious.

\end{proof}

Let now $I$ be a simple rank-1 torsion-free sheaf $I$ on $X$, i.e. $I\in \bJbar_X\subset \bJbar_{\X}$.
If we denote by $R_{(X,I)}:=\wh{\O}_{\bJbar_{\X},I}$ the completion of the local ring of $\bJbar_{\X}$ at $I$ and by
$\m_{(X,I)}$ its maximal ideal, then there is a natural map $j: \Spec R_{(X,I)} \to \bJbar_{\X}$ which fits into the  following Cartesian diagram
\begin{equation}\label{E:diag-fam}
\xymatrix{
(\id\times j)^*(\wh{\I}) \ar@{-}[d] & \wh{\I}\ar@{-}[d] & \\
\X\times_{\Spec R_X} \Spec R_{(X,I)} \ar[r]^{{\rm id} \times j}\ar[d]_{\pi\times \id}
\ar@{}[dr]|{\square} &
\X\times_{\Spec R_X} \bJbar_{\X} \ar[d]\ar[r] \ar@{}[dr]|{\square}& \X\ar[d]^{\pi}\\
\Spec R_{(X,I)} \ar[r]^{j} & \bJbar_{\X} \ar[r]^{u}& \Spec R_X.
}
\end{equation}
Since $I\in \bJbar_X\subset \bJbar_{\X}$, the map $u\circ j$ sends the closed point $[\m_{X,I}]\in \Spec \wh{\O}_{\bJbar_{\X},I}$ into the closed point $[\m_X]\in \Spec R_X$. In particular, we have that
$(\pi\times \id)^{-1}(\m_{(X,I)})=\pi^{-1}(\m_X)=X$ and the restriction of $(\id\times j)^*(\wh{\I})$ to
$(\pi\times \id)^{-1}(\m_{(X,I)})=X$ is isomorphic to $I$ by the universal property in Fact \ref{F:univ-Jac}\eqref{F:univ-Jac2}. The above diagram gives rise to a deformation of the pair $(X,I)$ above $\Spec R_{(X,I)}$, which induces
a morphism of deformation functors
\begin{equation}\label{E:map-func2}
h_{R_{(X,I)}}:=\Hom(R_{(X,I)}, -)\longrightarrow \Def_{(X,I)}.
\end{equation}
We can now prove the main result of this section.

\begin{thm}\label{T:univ-rings}
Let $X$ be a (reduced) curve and $I$ a rank-1 torsion-free simple sheaf on $X$.
\begin{enumerate}[(i)]
\item \label{T:univ-rings1} There exists a Cartesian diagram of deformation functors
\begin{equation}\label{E:cart-diag}
\xymatrix{
h_{R_{(X,I)}}\ar[r] \ar[d]\ar@{}[dr]|{\square} & \Def_{(X,I)} \ar[d]\\
h_{R_X} \ar[r] & \Def_X,
}
\end{equation}
where the horizontal arrows realize $R_{(X,I)}$ and $R_X$ as the semiuniversal deformation rings for
$\Def_{(X,I)}$ and $\Def_X$, respectively.
\item \label{T:univ-rings2} If $X$ has l.c.i. singularities then $R_X$ is regular (i.e. it is a power series ring over $k$).
\item \label{T:univ-rings3} If $X$ has locally planar singularities then $R_{(X,I)}$ is regular. In particular,
the scheme $\bJbar_{\X}$ is regular.
\end{enumerate}
\end{thm}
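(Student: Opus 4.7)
The plan is to establish part (i) by invoking the universal property of $\bJbar_{\X}$, and then deduce parts (ii) and (iii) from the smoothness of the appropriate deformation functors.

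For part (i), I would evaluate $h_{R_{(X,I)}}$ on an Artinian local $k$-algebra $A$ with residue field $k$. By construction $R_{(X,I)} = \wh{\O}_{\bJbar_{\X}, I}$, so a local homomorphism $R_{(X,I)} \to A$ is the same datum as a morphism $\Spec A \to \bJbar_{\X}$ sending the closed point to $I$. Via the representability of $\bJbar_{\X}^*$ (Fact \ref{F:univ-Jac}), such morphisms correspond bijectively to pairs consisting of an $R_X$-algebra map $f\colon R_X \to A$ (equivalently, an element of $h_{R_X}(A)$) together with a simple torsion-free rank-$1$ sheaf on $\X \times_{R_X, f} \Spec A$ deforming $I$. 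This data is precisely an element of $h_{R_X}(A) \times_{\Def_X(A)} \Def_{(X,I)}(A)$, giving the Cartesian diagram. Semiuniversality of $h_{R_{(X,I)}} \to \Def_{(X,I)}$ then follows by base change: since $h_{R_X} \to \Def_X$ is smooth and induces an isomorphism on tangent spaces (being semiuniversal for $\Def_X$ via the map $h_{R_X} \to \Def_X$ coming from the effective deformation $\ov{\X}$), the same is true of the pullback $h_{R_{(X,I)}} \to \Def_{(X,I)}$, which is exactly what semiuniversality requires.

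For part (ii), when $X$ has l.c.i.\ singularities Fact \ref{F:for-smooth}(iii) says $\Def_X$ is smooth; composing with the smooth semiuniversal map $h_{R_X} \to \Def_X$ shows that $h_{R_X}$ is smooth, so $R_X$ is formally smooth over $k$, hence a power series ring on $\dim_k T\Def_X$ variables. The same reasoning gives the first assertion of part (iii): when $X$ has locally planar singularities, $\Def_{(X,I)}$ is smooth by Fact \ref{F:diag-smooth}(ii), and by the Cartesian diagram the map $h_{R_{(X,I)}} \to \Def_{(X,I)}$ is again smooth, so $R_{(X,I)}$ is regular.

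To deduce that $\bJbar_{\X}$ itself is regular, regularity of $\wh{\O}_{\bJbar_{\X}, I} = R_{(X,I)}$ forces regularity of $\O_{\bJbar_{\X}, I}$ for every $I$ in the central fiber $\bJbar_X$. Next I would show that every closed point of $\bJbar_{\X}$ lies in $\bJbar_X$: given a closed point $z$ with image $s = u(z)$, realize the specialization $s \leadsto [\m_X]$ by a DVR $\O$ mapping to $\Spec R_X$, after enlarging its fraction field to contain $\kappa(z)$, and apply the existence part of the valuative criterion of Fact \ref{F:univ-Jac}(i) to obtain a morphism $\Spec \O \to \bJbar_{\X}$ lifting it; the image of the closed point of $\Spec \O$ is a specialization of $z$ and must therefore equal $z$, which forces $u(z) = [\m_X]$ and hence $z \in \bJbar_X$. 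Finally, since in a Noetherian scheme every point is a generalization of some closed point and localizations of regular local rings are regular, regularity propagates from closed points to all of $\bJbar_{\X}$. The main obstacle is this last step: regularity at points of $\bJbar_X$ is essentially immediate from the smoothness of $\Def_{(X,I)}$, but promoting it to all of $\bJbar_{\X}$ rests on the non-trivial fact that closed points cannot escape the central fiber, for which the valuative criterion from Fact \ref{F:univ-Jac}(i) is essential.
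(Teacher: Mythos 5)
Your proof is correct and follows essentially the same route as the paper: the Cartesian property of \eqref{E:cart-diag} is obtained from the universal property of $\bJbar_{\X}$ evaluated on Artinian local rings (using $\Pic(\Spec A)=0$), regularity of $R_X$ and of $R_{(X,I)}$ follows from smoothness of $\Def_X$ and $\Def_{(X,I)}$ via Facts \ref{F:for-smooth} and \ref{F:diag-smooth}, and regularity of $\bJbar_{\X}$ is propagated from the central fiber, where your valuative-criterion argument makes explicit a detail the paper only asserts. The one caveat is in the very last step: $\bJbar_{\X}$ is only locally Noetherian and not quasi-compact, so ``every point is a generalization of a closed point'' is not automatic; it is safer to run your DVR argument at an \emph{arbitrary} point $\zeta$ to produce a specialization of $\zeta$ lying in $\bJbar_X$, which lies in the open regular locus and therefore forces $\zeta$ itself to be a regular point.
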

\begin{proof}
Part \eqref{T:univ-rings1}: the fact that the diagram \eqref{E:cart-diag} is commutative follows from the definition
of the map  \eqref{E:map-func2} and the commutativity of the diagram \eqref{E:diag-fam}.

Let us check that
the above diagram \eqref{E:cart-diag} is Cartesian. Let $A$ be an Artinian local $k$-algebra with maximal ideal $\m_A$. Suppose that there exists
a deformation $(\wt{X},\wt{I})\in \Def_{(X,I)}(A)$ of $(X,I)$ over $A$ and a homomorphism $\phi\in \Hom(R_X,A)=h_{R_X}(A)$ that have the same image in $\Def_X(A)$. We have to find a homomorphism
$\eta\in \Hom(R_{(X,I)}, A)=h_{R_{(X,I)}}(A)$ that maps into $\phi\in h_{R_X}(A)$ and $(\wt{X},\wt{I})\in \Def_{(X,I)}(A)$ via the maps of diagram \eqref{E:cart-diag}. The assumption that the elements $(\wt{X},\wt{I})\in \Def_{(X,I)}(A)$ and $\phi\in h_{R_X}(A)$ have the same image in $\Def_X(A)$ is equivalent to the fact
that $\wt{X}$ is isomorphic to $\X_A:= \X\times_{\Spec R_X}\Spec A$ with respect to the natural morphism $\Spec A\to
\Spec R_X$ induced by $\phi$. Therefore the sheaf
$\wt{I}$ can be seen as an element of $\bJbar_{\X}^*(\Spec A)$. Fact \ref{F:univ-Jac}\eqref{F:univ-Jac2} gives a map
$\alpha_{\wt{I}}:\Spec A\to \bJbar_{\X}$ such that $\wt{I}=(\id_{\X}\times \alpha_{\wt{I}})^*(\wh{\I})$, because
$\Pic(\Spec A)=0$.
Clearly the map $\alpha_{\wt{I}}$ sends $[\m_A]$ into $I\in \bJbar_X\subset \bJbar_{\X}$ and therefore it factors
through a map $\beta:\Spec A\to \Spec R_{(X,I)}$ followed by the map $j$ of \eqref{E:diag-fam}. The morphism $\beta$
determines the element $\eta \in \Hom(R_{(X,I)}, A)=h_{R_{(X,I)}}(A)$ we were looking for.

Finally, the bottom horizontal morphism realizes the ring $R_X$ as the semiuniversal deformation ring for $\Def_X$ by
the very definition of $R_X$. Since the diagram \eqref{E:cart-diag} is Cartesian, the same is true for the top horizontal arrow.

Part \eqref{T:univ-rings2}: $R_X$ is regular since the morphism $h_{R_X}\to \Def_X$ is smooth and $\Def_X$ is smooth
by Fact \ref{F:for-smooth}\eqref{F:for-smooth3}.

Part \eqref{T:univ-rings3}: $R_{(X,I)}$ is regular since the morphism $h_{R_{(X,I)}}\to \Def_{(X,I)}$ is smooth
and $\Def_{(X,I)}$ is smooth by Fact \ref{F:diag-smooth}\eqref{F:diag-smooth2}.
We deduce that the open subset $U$ of regular points of $\bJbar_{\X}$ contains the central fiber
$u^{-1}([\m_X])=\bJbar_X$, which implies that $U=\bJbar_{\X}$ because $u^{-1}([\m_X])$ contains all the closed points of $\bJbar_{\X}$; hence $\bJbar_{\X}$ is regular.

\end{proof}

\section{Universal  fine compactified Jacobians}\label{S:univ-Jac}

The aim of this section is to introduce and study the universal fine compactified Jacobians relative to the
semiuniversal deformation $\pi:\X\to \Spec R_X$ introduced in  \S\ref{S:def-space}.

\vspace{0,2cm}

The universal fine compactified Jacobian will depend on  a general polarization $\un q$ on $X$ as in Definition \ref{def-int}. Indeed, we are going to show that the polarization $\un q$ induces
a polarization on each fiber of the effective semiuniversal deformation family
$\pi:\X\to \Spec R_X$.

With this aim, we will first show that the irreducible components of the fibers of the morphism  $\pi:\X\to \Spec R_X$ are geometrically irreducible. For any (schematic) point $s\in \Spec R_X$, we denote by $\X_s:=\pi^{-1}(s)$ the fiber of $\pi$ over $s$, by $\X_{\ov s}:=\X_s\times_{k(s)} \ov{k(s)}$ the geometric fiber over $s$ and by $\psi_s:\X_{\ov s}\to \X_s$ the natural morphism.

\begin{lemma}\label{L:geo-irr}
The irreducible components of $\X_s$ are geometrically irreducible.  Therefore we get a bijection
$$\begin{aligned}
(\psi_s)_* :\left\{ \text{Subcurves of } \X_{\ov s} \right\} & \stackrel{\cong}{\longrightarrow} \left\{ \text{Subcurves of } \X_s \right\} \\
Z\subseteq \X_{\ov s} & \mapsto \psi_s(Z)\subseteq \X_s.
\end{aligned}$$
\end{lemma}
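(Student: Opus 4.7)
The bijection in (2) is a formal consequence of (1): since every subcurve is by definition a reduced closed subscheme of pure dimension one, it is the union of some of its irreducible components, so if each irreducible component of $\X_s$ is geometrically irreducible then pullback along $\psi_s$ takes irreducible components of $\X_s$ to irreducible components of $\X_{\ov s}$, and pushforward $(\psi_s)_*$ inverts this correspondence at the level of unions. Hence the proof reduces to establishing the geometric irreducibility of each component of $\X_s$.

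My plan for this hinges on two features of $R_X$: it is strictly henselian (being complete Noetherian local with algebraically closed residue field $k$) and it is excellent (being complete local Noetherian). The excellence passes to $\X$, so the normalization $\nu\colon \widetilde{\X}\to \X$ is finite, and the composition $\widetilde{\pi}\colon \widetilde{\X}\to \Spec R_X$ is proper. Since $\widetilde{\X}$ is normal, its decomposition into connected components $\widetilde{\X}=\bigsqcup_{j=1}^{r} W_j$ coincides with its decomposition into integral subschemes, and corresponds via $\X^{(j)}=\nu(W_j)$ to the decomposition of $\X$ into irreducible components.

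I would then apply Stein factorization to each restriction $\widetilde{\pi}|_{W_j}\colon W_j\to \Spec R_X$. Set $B_j:=H^0(W_j, \O_{W_j})$; properness of $\widetilde{\pi}|_{W_j}$ makes $B_j$ a finite $R_X$-algebra, while the integrality of $W_j$ makes $B_j$ a domain. Strict henselianness of $R_X$ forces every finite $R_X$-algebra to split as a product of local henselian rings, so the domain $B_j$ must itself be a local henselian ring; its residue field is a finite extension of the algebraically closed field $k$, and hence equals $k$. The morphism $W_j\to \Spec B_j$ then has geometrically connected fibers, and every point of $\Spec B_j$ has residue field controlled by the finite extension $B_j/R_X$. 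Taken together over all $j$, this controls the connected components of each geometric fiber $\widetilde{\X}_{\ov s}$ in terms of $k(s)$-rational data coming from the $B_j$.

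Finally, I would transfer back: since $\nu$ is finite and surjective, each irreducible component of $\X_s$ is the image of a union of components of $\widetilde{\X}_s$, and each such component is geometrically connected by the Stein-factorization analysis. The main obstacle will be to make the last step precise: in general the normalization does not commute with base change to the fiber, so one must argue carefully that the connected components of the pullback $\widetilde{\X}\times_\X \X_s$ (a priori only a partial desingularization of $\X_s$) still detect the irreducible components of $\X_s$, and that geometric connectedness of the former implies geometric irreducibility of the latter. This should follow by comparing generic points and using that each irreducible component of $\X_s$ contains the image of a unique connected component of $\widetilde{\X}_s$ through its generic point, where $\nu$ restricts to an isomorphism (being birational on each $W_j\to \X^{(j)}$).
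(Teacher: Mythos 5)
Your reduction of the bijection to the geometric irreducibility of the components, and your analysis of $B_j:=H^0(W_j,\O_{W_j})$ (a domain, finite over $R_X$, hence local henselian with residue field $k$), are both correct as far as they go. The gap is in the step where you pass from the Stein factorization $W_j\to\Spec B_j\to\Spec R_X$ to the fibers over a general point $s\in\Spec R_X$. Stein factorization gives that the fibers of $W_j\to \Spec B_j$ are geometrically connected \emph{over the residue fields of the points of $\Spec B_j$}. The fiber $(W_j)_s$ over $s\in\Spec R_X$ is the disjoint union of the fibers of $W_j\to\Spec B_j$ over the finitely many points of $\Spec(B_j\otimes_{R_X}k(s))$, and the residue fields of those points can be non-trivial \emph{separable} finite extensions of $k(s)$: knowing that $B_j$ is local with residue field $k$ at its closed point says nothing about the fibers over non-closed $s$ (compare $k[[t]]\subset k[[t]][x]/(x^2-t)$, a local henselian domain with residue field $k$ whose generic fiber is a separable quadratic extension of $k((t))$). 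A scheme that is geometrically connected over a non-trivial separable extension $L/k(s)$ is \emph{not} geometrically connected over $k(s)$: base change to $\ov{k(s)}$ splits it into $[L:k(s)]$ pieces. So your argument, as it stands, does not exclude exactly the phenomenon the lemma is meant to rule out. What is missing is a source of $k(s)$-rational points on each component; this is the engine of the paper's proof, which restricts to the smooth locus $V$ of $\pi$ (fiberwise dense because the fibers are geometrically reduced), uses strict henselianness of $R_X$ together with smoothness of $V\to\Spec R_X$ to produce sections through any point of the central fiber, and then invokes the fact that a connected scheme with a rational point is geometrically connected. Your use of henselianness only at the closed point of $B_j$ cannot substitute for this.

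The second difficulty, which you flag yourself, is also genuine and not just bookkeeping. Geometric connectedness of a component of $\widetilde{\X}\times_{\X}\X_s$ does not imply geometric irreducibility of its image in $\X_s$ (connected is weaker than irreducible, and $\widetilde{\X}\times_{\X}\X_s$ is not normal, so you cannot upgrade one to the other). Your proposed fix --- that $\nu$ is an isomorphism at the generic points of the components of $\X_s$ --- is not justified either: $\nu$ is an isomorphism over a dense open subset of each component of $\X$, but the non-normal locus of $\X$ may contain the generic points of the components of a special fiber $\X_s$; note in particular that $\X$ need not be normal even along the relative smooth locus of $\pi$, since the lemma makes no l.c.i.\ hypothesis and $R_X$ is only known to be regular under that hypothesis. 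Both problems disappear in the paper's argument because it never leaves $\X$: everything is done inside the open subset $V\subseteq\X$ where $\pi$ is smooth, whose fiber $V_s$ is smooth over $k(s)$ and dense in $\X_s$, so that connected components of $V_s$ coincide with its irreducible components and determine those of $\X_s$.
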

\begin{proof}
Let $V\subseteq \X$ be the biggest open subset where the restriction of the morphism $\pi:\X\to \Spec R_X$ is smooth.
Since $\pi$ is flat, the  fiber $V_{s}$ of $V$ over a point $s\in \Spec R_X$ is the smooth locus of the curve $\X_{s}=\pi^{-1}(s)$, which is geometrically reduced because the central curve $X=\pi^{-1}([\m_X])$ is reduced.  In particular,  $V_s\subseteq \X_s$ and $V_{\ov s}:=V_s\times_{k(s)} \ov{k(s)}\subseteq \X_{\ov s} $ are dense open subsets.
Therefore, the irreducible components of $\X_s$ (resp. of $\X_{\ov s}$) are equal to the irreducible components of $V_s$ (resp. of $V_{\ov s}$). However, since $V_s$ is smooth over $k(s)$ by construction, the irreducible components of $V_s$ coincide with the connected components of $V_s$ and similarly for $V_{\ov s}$.
In conclusion, we have to show that the connected components of $V_s$ are geometrically connected for any point $s\in \Spec R_X$.

We will need the following preliminary result. 

\un{Claim}: For any point $s\in \Spec R_X$, the irreducible components of $V_{\ov{\{s\}}}:=V\cap \pi^{-1}(\ov{\{s\}})$ do not meet on the central fiber $V_o:=\pi^{-1}([\m_X])\cap V$ and each of them is the closure of a unique irreducible component of $V_s$.  

Indeed, observe that $\ov{\{s\}}$ is a closed integral subscheme of $\Spec R_X$, so that $\ov{\{s\}}=\Spec T$ where $T$ is a  Noetherian complete local domain quotient of $R_X$ with residue field $k=\ov k$; hence, 
$T$ is a strictly Henselian local domain. 
This implies that $\Spec T$ is geometrically unibranch at its unique closed point $o=[\m_x]$ (see \cite[Tag 06DM]{Stack}).  Since the morphism $V_{\ov{\{s\}}}\to \ov{\{s\}}=\Spec T$ is smooth, we infer 
that $V_{\ov{\{s\}}}$ is geometrically unibranch along the central fiber $V_o$  (see \cite[Prop. 6.15.10]{EGAIV2}). This implies that two distinct irreducible components of $V_{\ov{\{s\}}}$ do not meet along the central fiber $V_o$, 
and the first assertion of the Claim follows. The second assertion follows from the fact that, since $V_{\ov{\{s\}}}\to \ov{\{s\}}$ is flat, each generic point of $V_{\ov{\{s\}}}$ maps to the generic point $s$ of $\ov{\{s\}}$, q.e.d.

\vspace{0.1cm}

Let now $C$ be a connected component of $V_s$, for some point $s\in \Spec R_X$. The closure $\ov{C}$ of $C$ inside $\X$ will contain some irreducible component of the central fiber $\X_o=\X_{[\m_X]}$ by the upper semicontinuity of the dimension of the fibers 
(see \cite[Lemma 13.1.1]{EGAIV3}) applied to the projective surjective morphism  $\wt{C}\to \ov{\{s\}}$. Hence, $\ov C\cap V$  will contain some (not necessarily unique) connected component $C_o$ of the central fiber $V_o=V_{[\m_X]}$. 
Since $R_X$ is a strictly henselian ring and $V\to \Spec R_X$ is smooth,  given any point $p\in C_o\subseteq V_o$, we can find a section $\sigma$ of $V\to \Spec R_X$ passing through $p$ (see \cite[Sec. 2.3, Prop. 5]{BLR}). 
By the Claim, $\ov C\cap V$ is the unique irreducible component  of $V_{\ov{\{s\}}}$ containing the point $p$. Therefore, the restriction of $\sigma$ at $\ov{\{s\}}$ must take values in  $\ov C\cap V$. 
In particular, $\sigma(s)$ is a $k(s)$-rational point of $C$. Now  we conclude that $C$ is geometrically connected by  \cite[Cor. 4.5.14]{EGAIV2}.
\end{proof}

Consider now the set-theoretic map
\begin{equation}\label{E:map-subcurve}
\begin{aligned}
\Sigma_s:\{\text{Subcurves of } \X_{\ov s}\} & \longrightarrow \{\text{Subcurves of } X\}\\
\X_{\ov s}\supseteq Z & \mapsto \ov{\psi_s(Z)}\cap X\subseteq X,
\end{aligned}
\end{equation}
where $\ov{\psi_s(Z)}$ is  the Zariski closure inside $\X$ of the subcurve $\psi_s(Z)\subseteq \X_s$ and the intersection
$\ov{\psi_s(Z)}\cap X$ is endowed with the reduced scheme structure. Note that $\ov{\psi_s(Z)}\cap X$ has pure dimension one (in other words, it does not contain isolated points), hence it is a subcurve of $X$, 
by the upper semicontinuity of the local dimension of the fibers (see \cite[Thm. 13.1.3]{EGAIV3}) applied to the morphism $\ov{\psi_s(Z)}\to \overline{\{s\}}$ and using the fact that $\psi_s(Z)$ has pure dimension one in $\X_s$.

The map $\Sigma_s$ satisfies two important properties that we collect in the following

\begin{lemma}\label{L:mapSigma}
\noindent
\begin{enumerate}[(i)]
\item  \label{L:mapSigma1} If $Z_1,Z_2\subseteq\X_{\ov s}$ do not have common irreducible components then
$\Sigma_s(Z_1), \Sigma_s(Z_2)\subseteq X$ do not have common irreducible components. In particular, $\Sigma_s(Z^c)=\Sigma_s(Z)^c$.
\item \label{L:mapSigma2} If $Z\subseteq \X_{\ov s}$ is connected then $\Sigma_s(Z)\subseteq X$ is connected.
\end{enumerate}
\end{lemma}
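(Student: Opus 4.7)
My plan is to prove both parts by analyzing closures in the total space $\X$ of the flat proper family $\pi\colon \X\to \Spec R_X$, reducing to classical specialization results. By Lemma~\ref{L:geo-irr}, pushing down along $\psi_s$ identifies subcurves of $\X_{\ov s}$ with subcurves of $\X_s$ in a way that preserves irreducible decompositions and geometric connectedness, so I work throughout with $W:=\psi_s(Z)\subseteq \X_s$ and $W_i:=\psi_s(Z_i)$. Setting $T:=\ov{\{s\}}$, the restriction $\pi|_{\ov W}\colon \ov W\to T$ is proper and surjective with generic fiber $W$, and $\Sigma_s(Z)=\ov W\cap X$ is precisely its set-theoretic fiber over the closed point $o$.

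For part (ii), I would apply Zariski's connectedness theorem to $\pi|_{\ov W}$. The total space $\ov W$ is connected as the closure of the connected set $W$, and the geometric generic fiber $Z$ is connected by hypothesis. After replacing $T$ by its normalization (which does not alter set-theoretic fibers of $\pi|_{\ov W}$), the base $T$ becomes geometrically unibranch; then in the Stein factorization $\ov W\to T'\to T$ the finite part $T'\to T$ is generically one-to-one (as geometric generic fibers of $\pi|_{\ov W}$ are connected), and hence an isomorphism by normality. Therefore every geometric fiber of $\pi|_{\ov W}$ is connected, including the one over $o$, which is $\Sigma_s(Z)$.

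For part (i), I would reduce to a 1-parameter family as follows: choose a 1-dimensional irreducible closed subscheme $T_0\subseteq T$ passing through $o$ and base-change along its normalization $\Spec R\to \Spec R_X$, where $R$ is a DVR. The pullbacks $V_i\subseteq (\X_R)_\eta$ of $W_i$ to the generic fiber of $\X_R\to \Spec R$ still have no common irreducible components, and the scheme-theoretic flat closures $\ov{V_i}\subseteq \X_R$ have set-theoretic special fibers equal to $\Sigma_s(Z_i)$. Part (i) thus reduces to the 1-parameter statement: for a flat proper family of reduced curves over a DVR, the scheme-theoretic closures of two subcurves of the generic fiber without common irreducible components have special fibers without common $1$-dimensional irreducible component. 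The key observation is that, by $R$-flatness, each $\ov{V_i}$ decomposes into horizontal integral subschemes of $\X_R$ in bijection with the irreducible components of $V_i$, and combining additivity of Euler characteristics under union with the flatness identities $\chi((\ov{V_i})_0)=\chi(V_i)$ and $\chi((\ov{V_1\cup V_2})_0)=\chi(V_1\cup V_2)$ yields $\chi((\ov{V_1})_0\cap (\ov{V_2})_0)=\chi(V_1\cap V_2)$; since $V_1\cap V_2$ is $0$-dimensional, this forces the scheme-theoretic intersection of the special fibers to be $0$-dimensional, precluding any common $1$-dimensional component.

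The ``in particular'' statement follows immediately from the identity $\Sigma_s(Z)\cup \Sigma_s(Z^c)=\Sigma_s(\X_{\ov s})=X$ (the last equality because $X\subseteq \pi^{-1}(\ov{\{s\}})=\ov{\X_s}$) together with the disjointness established in part (i). The main obstacle is the numerical step in the 1-parameter argument: from the equality of Euler characteristics to the actual $0$-dimensionality of the scheme-theoretic intersection of the special fibers, one must rule out embedded higher-dimensional components of the intersection, which is handled by the flatness of $\ov{V_i}$ over $R$ together with the fact that $X$ has pure dimension $1$.
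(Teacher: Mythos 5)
Your part (ii) is correct but takes a different route from the paper's: you invoke Zariski's connectedness principle, i.e.\ Stein factorization of $\ov{\psi_s(Z)}\to \ov{\{s\}}$ after normalizing the base, using that every irreducible component of $\ov{\psi_s(Z)}$ dominates $\ov{\{s\}}$. The paper instead first finds an open $W\subseteq \ov{\{s\}}$ over which the geometric fibres are connected, pulls back along a DVR realizing the specialization to $o$, replaces the total space by the flat closure of the generic fibre (checking via sections through the relative smooth locus that this does not change the reduced special fibre), and applies semicontinuity of the number of geometric connected components for flat proper morphisms \cite[Prop. (15.5.9)]{EGAIV3}. Both arguments work; yours trades the flat-closure step for the normalization of the (excellent) base.

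Part (i), however, has a genuine gap, in two places. First, the reduction to a DVR is circular as written: a $1$-dimensional irreducible closed $T_0\subseteq \ov{\{s\}}$ through $o$ has generic point $t_0$ which is in general a proper specialization of $s$, so your ``pullbacks $V_i$ of $W_i$ to the generic fibre of $\X_R$'' are the fibres of $\ov{W_i}$ over $t_0$, and the assertion that these share no irreducible component is exactly the statement of part (i) for the specialization $s\rightsquigarrow t_0$. (This is repairable by choosing the DVR to realize $s\rightsquigarrow o$ directly.) Second, and more seriously, the Euler-characteristic computation cannot deliver the conclusion. Even granting $\chi\bigl((\ov{V_1})_0\cap(\ov{V_2})_0\bigr)=\chi(V_1\cap V_2)$ --- which already requires the scheme-theoretic identity $(\ov{V_1}\cup\ov{V_2})_0=(\ov{V_1})_0\cup(\ov{V_2})_0$, i.e.\ $(J_1\cap J_2)+\m=(J_1+\m)\cap(J_2+\m)$, not automatic --- equality of Euler characteristics is perfectly consistent with the special intersection containing a curve, since $\chi$ of a projective curve is an arbitrary integer (e.g.\ $\chi(\O_{\PP^1})=1$ equals the length of one reduced point). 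Your closing appeal to ``flatness of $\ov{V_i}$ over $R$'' is not an argument: flatness of the two closures separately does not control their intersection, as $V(x)$ and $V(x-ty)$ in $\PP^2_R$ show (integral, $R$-flat, generic fibres meeting in one point, identical special fibres); that example lives in a family of surfaces, so it does not contradict the lemma, but it does show the ingredients you cite are insufficient. What actually rules out a common component here is the idea your proposal is missing and which the paper uses: over the relative smooth locus $V$ of $\pi$, the closures $\ov{\psi_s(Z_1)}\cap V$ and $\ov{\psi_s(Z_2)}\cap V$ are disjoint, being unions of distinct connected components of $\ov{\X_s}\cap V$, so $\Sigma_s(Z_1)\cap\Sigma_s(Z_2)\subseteq X_{\rm sing}$, which contains no irreducible component of $X$.
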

\begin{proof}
Let us first prove \eqref{L:mapSigma1}. Since $Z_1,Z_2$ are two subcurves of $\X_{\ov s}$ without common irreducible components then the subcurves $\psi_s(Z_1)$ and $\psi_s(Z_2)$ of $\X_s$ do not have common irreducible components by Lemma \ref{L:geo-irr}. As in the proof of Lemma \ref{L:geo-irr},  denote by $V$  the biggest open subset of $\X$ on which the restriction of the morphism $\pi$ is smooth.
Then, since $V_s:=V\cap \X_s$ is the smooth locus of $\X_s$,  we deduce that $\psi_s(Z_1)\cap V$ and $\psi_s(Z_2)\cap V$ are disjoint subsets of $\X_s\cap V$ each of which is a union of connected components of $\X_s\cap V$. 
By the Claim in the proof of Lemma \ref{L:geo-irr},  the closures $\ov{\psi_s(Z_1)}\cap V$ and $\ov{\psi_s(Z_2)}\cap V$ do not intersect in the central fiber $V_o$, or in other words 
$\Sigma_s(Z_1)\cap V=\ov{\psi_s(Z_1)}\cap V\cap X$ and $\Sigma_s(Z_2)\cap V=\ov{\psi_s(Z_2)}\cap V\cap X$ are disjoint. This implies that 
$\Sigma_s(Z_1)$ and $\Sigma_s(Z_2)$ intersect only in the singular locus of $X$, and in particular they do not share any irreducible component of $X$.



Let us now prove \eqref{L:mapSigma2}.  Consider the closed subscheme (with reduced scheme structure) $\ov{\psi_s(Z)}\subseteq \X$ and the
projective and surjective morphism $\sigma:=\pi_{|\ov{\psi_s(Z)}}:\ov{\psi_s(Z)}\to \ov{\{s\}}$, where $ \ov{\{s\}} \subseteq \Spec R_X$ is the  closure (with reduced structure) of the schematic point $s$ inside the scheme $\Spec R_X$.  Note that $\Sigma_s(Z)$ is, by definition, the reduced scheme associated to the central fiber $\ov{\psi_s(Z)}_o:=\sigma^{-1}([\m_X])$ of $\sigma$.
By Lemma \ref{L:geo-irr}, the geometric generic fiber of $\sigma$ is equal to $\psi_s(Z)\times_{k(s)}\ov{k(s)}=Z$, hence it is connected by assumption.  
Therefore, there is an open subset $W\subseteq  \ov{\{s\}}$ such that $\sigma^{-1}(W)\to W$ has geometrically connected fibers (see \cite[Tag 055G]{Stack}).

 Choose now a complete discrete valuation ring $R$, with residue field $k$,
endowed with a morphism $f: \Spec R\to  \ov{\{s\}}$ that maps the generic point $\eta$ of $\Spec R$ to a certain point $t\in W$ and the special point $0$ of $\Spec R$ to $[\m_X]$ in such a
way that the induced morphism $\Spec k(0)\to \Spec k([\m_x])$ is an isomorphism.
Consider the pull-back $\tau: \Y\to \Spec R$ of the family $\sigma:\ov{\psi_s(Z)}\to \ov{\{s\}}$ via the morphism $f$. By construction,  the special fiber $\Y_0=:\tau^{-1}(0)$ of $\tau$ is equal to
$\ov{\psi_s(Z)}_o$ and the generic fiber $\Y_{\eta}:=\tau^{-1}(\eta)$ of $\tau$ is equal to the fiber product  $\sigma^{-1}(t) \times_{\Spec k(t)}\Spec k(\eta)$. In particular, the generic fiber $\Y_{\eta}$ is geometrically connected.

Next, consider the closure $\cZ:=\ov{\Y_{\eta}}$ of the generic fiber $\Y_{\eta}$  inside $\Y$, i.e. the unique closed subscheme $\cZ$ of $\Y$ which is flat over $\Spec R$ and such that its generic fiber $\cZ_{\eta}$ is equal to $\Y_{\eta}$ (see \cite[Prop. 2.8.5]{EGAIV2}). The special fiber $\cZ_0$ of $\cZ$ is a closed subscheme of $\Y_0=\ov{\psi_s(Z)}_o$ which must  contain the dense open subset  $X_{\rm sm}\cap \Sigma_s(Z)\subseteq \Sigma_s(Z)$, where $X_{\rm sm}$ is the smooth locus of $X$. Indeed,  arguing as in the proof of Lemma \ref{L:geo-irr}, through any point $p$ of   $X_{\rm sm}\cap \Sigma_s(Z)$ there is a section of $\X\times_{\Spec R_X} \Spec R\to \Spec R$  entirely contained in $\Y$, which shows that $p$ must lie in the closure of $\Y_{\eta}$ inside $\Y$, i.e. in $\cZ$. Therefore, $\Sigma_s(Z)$ is also the reduced scheme associated to the central fiber $\cZ_0$. Finally, since the morphism $\cZ\to \Spec R$ is flat and projective by construction and the generic fiber $\cZ_{\eta}=\Y_{\eta}$ is geometrically connected, we deduce that $\cZ_0$, and hence $\Sigma_s(Z)$, is (geometrically) connected by applying \cite[Prop. (15.5.9)]{EGAIV3} (which says that the number of geometrically connected components of the fibers of a flat and proper is lower semicontinuous).


\end{proof}

We are now ready to show that a (general) polarization on $X$ induces, in a canonical way, a (general) polarization on each geometric fiber of its semiuniversal deformation $\pi:\X\to \Spec R_X$.

\begin{lemdef}\label{D:def-pola}
Let $s\in \Spec R_X$ and let $\un q$ be a polarization on $X$. The polarization $\un q^s$ induced by $\un q$ on the geometric fiber
$\X_{\ov s}$ is defined by
$$\un q^s_Z:=\un q_{\Sigma_s(Z)}\in \Q$$
for every subcurve $Z\subseteq \X_{\ov s}$. If $\un q$ is general then $\un q^s$ is general.
\end{lemdef}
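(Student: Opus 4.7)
My plan is to reduce the lemma to one key geometric statement: that $\Sigma_s$ restricted to irreducible components induces a bijection between the irreducible components of $\X_{\ov s}$ and those of $X$. Granting this, additivity of $\un q^s$ is immediate from Lemma \ref{L:mapSigma}\eqref{L:mapSigma1} together with the tautological equality $\Sigma_s(Z_1 \cup Z_2)=\Sigma_s(Z_1)\cup \Sigma_s(Z_2)$, and integrality of the total degree follows from
$$|\un q^s| = \sum_{C_i \subseteq \X_{\ov s}} \un q_{\Sigma_s(C_i)} = \sum_{D_j \subseteq X} \un q_{D_j} = |\un q| \in \Z,$$
so that $\un q^s$ is indeed a polarization.

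To establish the bijection, I would work with the smooth locus $V \subseteq \X$ of $\pi$. Exactly as in the proof of Lemma \ref{L:geo-irr}, the strict henselian property of $R_X$ together with the smoothness of $V \to \Spec R_X$ forces the connected components of $V$ to be in bijection with those of its central fiber $V \cap X = X_{\rm sm}$, hence with the irreducible components $D_j$ of $X$. Denoting by $V_j$ the component of $V$ over $D_j$, one has $\ov{V_j} \cap V = V_j$ (since $V_j$ is already closed in $V$), hence $\ov{V_j} \cap X \cap V = V_j \cap X = (D_j)_{\rm sm}$. For an irreducible component $C$ of $\X_{\ov s}$, the open dense subset $\psi_s(C) \cap V$ is irreducible (being the smooth locus of the irreducible reduced curve $\psi_s(C)$, using Lemma \ref{L:geo-irr}) and therefore lies in a unique $V_j$; then $\ov{\psi_s(C)} \subseteq \ov{V_j}$, and the preceding identification forces $\Sigma_s(C) = \ov{\psi_s(C)} \cap X$ to be contained in $D_j$. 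Note that $\Sigma_s(C)$ is nonempty and pure $1$-dimensional: the morphism $\pi|_{\ov{\psi_s(C)}}\colon \ov{\psi_s(C)}\to \ov{\{s\}}$ is proper and surjective from an irreducible variety of dimension $1+\dim\ov{\{s\}}$, so by Chevalley's fiber-dimension inequality its fiber over $[\m_X]$ has dimension at least $1$. Combined with $\Sigma_s(C)\subseteq D_j$ and the irreducibility of $D_j$, this gives $\Sigma_s(C)=D_j$. Conversely, since $V_j \to \Spec R_X$ is smooth surjective (and $R_X$ is strictly henselian, so sections through smooth points of the central fiber exist), $V_j \cap \X_{\ov s}$ is nonempty; any component $C$ of $\X_{\ov s}$ meeting $V_j$ then satisfies $\Sigma_s(C) = D_j$. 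This yields the desired bijection.

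For the last assertion, assume $\un q$ is general and, for contradiction, that $\un q^s$ is not: by Remark \ref{R:conn-pola} there exists a proper subcurve $Z \subset \X_{\ov s}$ with $Z, Z^c$ connected and $\un q^s_Z = \un q_{\Sigma_s(Z)} \in \Z$. The component bijection shows that $\Sigma_s(Z)$ and $\Sigma_s(Z^c)$ are both nonempty, so $\Sigma_s(Z)$ is a proper subcurve of $X$; Lemma \ref{L:mapSigma}\eqref{L:mapSigma1} gives $\Sigma_s(Z)^c = \Sigma_s(Z^c)$, and Lemma \ref{L:mapSigma}\eqref{L:mapSigma2} ensures that both $\Sigma_s(Z)$ and $\Sigma_s(Z^c)$ are connected. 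Applying Remark \ref{R:conn-pola} once more, this contradicts the generality of $\un q$. The hard part will be the component-bijection step, and specifically the identification $\ov{V_j} \cap X \cap V = (D_j)_{\rm sm}$ together with the dimension count that pins down $\Sigma_s(C)$; everything else is a direct bookkeeping of Lemma \ref{L:mapSigma} and Remark \ref{R:conn-pola}.
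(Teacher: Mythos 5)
Your overall skeleton matches the paper's: additivity of $\un q^s$ comes from Lemma \ref{L:mapSigma}\eqref{L:mapSigma1}, and the generality statement comes from Remark \ref{R:conn-pola} combined with Lemma \ref{L:mapSigma}. However, the ``key geometric statement'' to which you reduce everything is false: $\Sigma_s$ does \emph{not} induce a bijection between the irreducible components of $\X_{\ov s}$ and those of $X$. Take $X$ reducible with locally planar singularities (e.g.\ a nodal curve with two components) and let $s$ be the generic point of $\Spec R_X$. Every irreducible component of $\X$ dominates $\Spec R_X$ by flatness, so its generic point lies in $\X_s$ and hence $\ov{\X_s}=\X$; moreover $\X_{\ov s}$ is a smooth connected, hence irreducible, curve. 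Its unique component $C$ then satisfies $\Sigma_s(C)=\ov{\psi_s(C)}\cap X=X$, which has $\gamma(X)\geq 2$ components, so no bijection exists. The precise point where your argument breaks is the claim that the connected components of $V$ are in bijection with those of $V\cap X=X_{\rm sm}$: the henselian/section argument of Lemma \ref{L:geo-irr} produces a section of $V\to \Spec R_X$ through any point of $X_{\rm sm}$, but it does not prevent two distinct components $(D_1)_{\rm sm}$ and $(D_2)_{\rm sm}$ of $X_{\rm sm}$ from lying in the \emph{same} connected component of $V$ --- and in the example above $V$ is irreducible, hence connected, while $X_{\rm sm}$ is not.

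Fortunately, the two places where you invoke the bijection only require weaker statements that are true, and this is how the paper argues. For the total degree one only needs $\Sigma_s(\X_{\ov s})=X$ (which holds by the domination argument just given: every component of $X$ lies in $\ov{\X_s}$), whence $|\un q^s|=\un q^s_{\X_{\ov s}}=\un q_{\Sigma_s(\X_{\ov s})}=\un q_X=|\un q|\in\Z$ directly, with no sum over components. For the generality statement one only needs that for $Z\subset \X_{\ov s}$ proper with $Z$ and $Z^c$ connected, $\Sigma_s(Z)$ is a nonempty proper subcurve of $X$ with $\Sigma_s(Z)$ and $\Sigma_s(Z)^c=\Sigma_s(Z^c)$ connected: nonemptiness follows from your own Chevalley fiber-dimension argument (and is implicit in the definition of $\Sigma_s$ as a map to subcurves), properness follows from Lemma \ref{L:mapSigma}\eqref{L:mapSigma1} applied to $Z$ and $Z^c$, and connectedness is Lemma \ref{L:mapSigma}\eqref{L:mapSigma2}. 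So your proof is repairable with these substitutions, but as written its central reduction is to a false statement.
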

\begin{proof}
Let us first check that $\un q^s$ is well-defined. i.e. that $|\un q^s|\in \Z$ and that $(Z\subseteq \X_{\ov s}) \mapsto \un q^s_Z$ is additive (see the discussion after Definition \ref{pola-def}).
Since $\Sigma_s(\X_{\ov s})=X$, we have that  $|\un q^s|=\un q^s_{\X_{\ov s}}=\un q_X=|\un q|\in \Z.$ Moreover, the additivity of $\un q^s$ follows from the additivity of $\un q$ using Lemma
\ref{L:mapSigma}\eqref{L:mapSigma1}.

The last assertion follows immediately from Remark \ref{R:conn-pola} and Lemma \ref{L:mapSigma}.

\end{proof}

Given a general polarization $\un q$ on $X$, we are going to construct an open subset of $\bJbar_{\X}$, proper over $\Spec R_X$, whose geometric fibers are fine compactified Jacobians with respect to the  general polarizations constructed in the above Lemma-Definition \ref{D:def-pola}.

\begin{thm}\label{T:univ-fine}
Let $\un q$ be a general polarization on $X$. Then there exists an open subscheme $\J_{\X}(\un q)\subseteq \bJbar_{\X}$
which is projective over $\Spec R_X$ and such that the geometric fiber of  $u:\J_{\X}(\un q)\to \Spec R_X$ over a point
$s\in \Spec R_X$ is isomorphic to $\J_{\X_{\ov s}}(\un q^s)$. In particular, the fiber of $\J_{\X}(\un q)\to \Spec R_X$
over the closed point $[\m_X]\in \Spec R_X$ is isomorphic to $\J_X(\un q)$.
\end{thm}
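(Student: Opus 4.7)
The plan is to realize $\J_{\X}(\un q)$ as the relative fine compactified Jacobian associated by Esteves \cite{est1} to the family $\pi:\X\to\Spec R_X$ equipped with a polarizing vector bundle on the total space. By Remark \ref{R:compEst}, I first rewrite $\un q$ as $\un q^E$ for some vector bundle $E$ on $X$. The key step is to extend $E$ to a vector bundle $\mathcal{E}$ on $\X$: the obstruction to lifting a locally free sheaf across each infinitesimal thickening $\X_n\hookrightarrow \X_{n+1}$ in the formal family $\ov{\X}\to \Spf R_X$ lies in $H^2(X,\End(E))$, which vanishes since $\dim X=1$. Hence $E$ extends compatibly to a formal vector bundle on $\ov{\X}$, and Grothendieck's existence theorem, applicable since $\pi$ is projective (the ample bundle $\O_X(1)$ itself deforming because $H^2(X,\O_X)=0$), algebraizes it to a vector bundle $\mathcal{E}$ on $\X$ with $\mathcal{E}|_X=E$.

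With $\mathcal{E}$ in hand, I would apply Esteves' theorems \cite[Thm.~A and Thm.~C]{est1} to the polarized family $(\pi,\mathcal{E})$. This produces an open subscheme $\J_{\X}(\mathcal{E})\subseteq \bJbar_{\X}$ parametrizing sheaves that are $\mathcal{E}_{\ov s}$-stable on each geometric fiber, sitting inside a universally closed ambient $\ov{J}_{\X}^{ss}(\mathcal{E})\to \Spec R_X$. Combining openness, quasi-projectivity of the stable locus, and universal closedness yields projectivity of $u:\J_{\X}(\mathcal{E})\to \Spec R_X$ as soon as no geometric fiber admits a strictly $\mathcal{E}_{\ov s}$-semistable sheaf.

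The crux, and what I expect to be the main obstacle, is verifying that the polarization $\un q^{\mathcal{E}_{\ov s}}$ induced by the restriction $\mathcal{E}_{\ov s}$ coincides with the polarization $\un q^s$ of Lemma-Definition \ref{D:def-pola}; this simultaneously forces genericity of $\un q^{\mathcal{E}_{\ov s}}$ (whence absence of strictly semistables by Lemma \ref{L:nondeg}) and the fiberwise identification $u^{-1}(\ov s)\cong \ov{J}_{\X_{\ov s}}(\un q^s)$. Concretely, one needs $\deg(\mathcal{E}|_Z)=\deg(\mathcal{E}|_{\Sigma_s(Z)})$ for each irreducible component $Z\subseteq \X_{\ov s}$; I would establish this by imitating the proof of Lemma \ref{L:mapSigma}\eqref{L:mapSigma2}, pulling back the family $\ov{\psi_s(Z)}\to\ov{\{s\}}$ along a morphism from a complete DVR $R$ hitting $[\m_X]$ in the special fiber and a specialization of $Z$ in the generic fiber, forming the flat closure $\mathcal{Z}$ of the generic fiber inside $\X\times_{\Spec R_X}\Spec R$, and exploiting $\chi(\mathcal{E}|_{\mathcal{Z}_\eta})=\chi(\mathcal{E}|_{\mathcal{Z}_0})$ together with the fact that $\mathcal{Z}_0$ has $\Sigma_s(Z)$ as its reduction. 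Once established, \eqref{E:pol-qE} gives $\un q^{\mathcal{E}_{\ov s}}_Z=\un q^E_{\Sigma_s(Z)}=\un q_{\Sigma_s(Z)}=\un q^s_Z$, and the identification $u^{-1}([\m_X])\cong \ov{J}_X(\un q)$ is immediate from $\mathcal{E}|_X=E$.
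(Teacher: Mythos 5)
Your proposal follows essentially the same route as the paper's proof: choose $E$ with $\un q^E=\un q$, extend it over the formal family using the vanishing of $H^2(X,E\otimes E^{\vee})$ and algebraize by Grothendieck's existence theorem, apply Esteves' Theorems A and C to the polarized family $(\pi,\E)$, and reduce everything to the degree identity $\deg_Z(\E_{\ov s})=\deg_{\Sigma_s(Z)}(E)$, which you propose to prove by exactly the paper's mechanism (generic flatness, a DVR base change hitting $[\m_X]$, the flat closure of the generic fiber, and the fact that its special fiber has the same associated $1$-cycle as $\Sigma_s(Z)$). The argument is correct and matches the paper's proof in all essential steps.
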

We call the scheme $\J_{\X}(\un q)$ the \emph{universal fine compactified Jacobian} of $X$ with respect to the polarization $\un q$. We denote by $J_{\X}(\un q)$ the open subset of $\J_{\X}(\un q)$ parametrizing line bundles,
i.e. $J_{\X}(\un q)=\J_{\X}(\un q)\cap \bJ_{\X}\subseteq \bJbar_{\X}$.

\begin{proof}
This statement follows by applying to the effective semiuniversal family $\X\to \Spec R_X$ a general result of Esteves (\cite[Thm. A]{est1}). In order to connect our notations with the notations of loc. cit.,
choose a vector bundle $E$ on $X$ such that $\un q^E=\un q$ (see Remark \ref{R:compEst}), so that  our fine compactified Jacobian $\J_X(\un q)$
coincides with the variety $J_{E}^s=J_{E}^{ss}$ in \cite[Sec. 4]{est1}.

Since an obstruction space for the functor of deformations of $E$ is $H^2(X,E\otimes E^{\vee})$
(see e.g. \cite[Thm. 8.5.3(b)]{FGA}) and since this latter group is zero because $X$ is a curve, we get that $E$ can be
extended to a vector bundle $\ov{\E}$ on the formal semiuniversal deformation $\ov{\X}\to \Spf R_X$ of $X$.
However, by Grothendieck's algebraization theorem for coherent sheaves (see \cite[Thm. 8.4.2]{FGA}), the vector bundle $\ov{\E}$ is the completion of a vector bundle $\E$ on the
effective semiuniversal deformation family $\pi: \X\to \Spec R_X$ of $X$.
Note that the restriction of $\E$ to the central fiber of $\pi$ is isomorphic to the vector bundle $E$ on $X$.
 Denote by $\E_s$ (resp. $\E_{\ov s}$) the restriction of $\E$ to the fiber $X_s$ (resp. the geometric fiber $\X_{\ov s}$).

 \un{Claim:}  For any $s\in \Spec R_X$ and any subcurve $Z\subseteq \X_{\ov s}$, we have that
$$\deg_Z(\E_{\ov s})=\deg_{\psi_s(Z)}(\E_s)=\deg_{\Sigma_s(Z)}(E).$$

Indeed, the first equality  follows  from the fact that $Z$ is the pull-back of $\psi_s(Z)$ via the map $\Spec \ov{k(s)}\to \Spec k(s)$ because of Lemma \ref{L:geo-irr}.
In order to prove the second equality, consider the closed subscheme (with reduced scheme structure) $\ov{\psi_s(Z)}\subseteq \X$ and
 the projective and surjective morphism\footnote{We do not know if $\sigma$ is flat, a property that would considerably simplify the proof of Claim.}
$\sigma:=\pi_{|\ov{\psi_s(Z)}}:\ov{\psi_s(Z)}\to \ov{\{s\}}$, where $ \ov{\{s\}} \subseteq \Spec R_X$ is the closure of the schematic point $s$ inside the scheme $\Spec R_X$. Note that the central fiber $\sigma^{-1}([\m_x]):=\ov{\psi_s(Z)}_o$ of $\sigma$ is a
one-dimensional subscheme of $X$, which is generically reduced  (because $X$ is reduced) and whose underlying reduced curve is $\Sigma_s(Z)$ by definition.
In particular, the $1$-cycle associated to $\ov{\psi_s(Z)}_o$ coincides with the $1$-cycle associated to $\Sigma_s(Z)$. Therefore, since the degree of a vector bundle on a subscheme depends only on the associated cycle, we have that
\begin{equation}\label{E:eq1}
\deg_{\Sigma_s(Z)}(E)=\deg_{\ov{\psi_s(Z)}_o}(E).
\end{equation}

Observe that there exists an open subset $U\subseteq \ov{\{s\}}$ such that $\sigma_{|\sigma^{-1}(U)}:\sigma^{-1}(U)\to U$  is flat (by the Theorem of generic flatness, see \cite[Lecture 8]{Mum}).
Since the degree of a vector bundle is preserved along the fibers of a flat morphism and clearly $s\in U$, we get that
\begin{equation}\label{E:eq1b}
\deg_{\psi_s(Z)}(\E_s)=\deg_{\psi_s(Z)}(\E)=\deg_{\ov{\psi_s(Z)}_t}(\E) \hspace{0.2cm} \text{ for any } t\in U,
\end{equation}
where we set $\ov{\psi_s(Z)}_t:=\sigma^{-1}(t)$.

Choose now a complete discrete valuation ring $R$, with residue field $k$,
endowed with a morphism $f: \Spec R\to  \ov{\{s\}}$ that maps the generic point $\eta$ of $\Spec R$ to a certain point $t\in U$ and the special point $0$ of $\Spec R$ to $[\m_X]$ in such a
way that the induced morphism $\Spec k(0)\to \Spec k([\m_x])$ is an isomorphism.
Consider the pull-back $\tau: \Y\to \Spec R$ of the family $\sigma:\ov{\psi_s(Z)}\to \ov{\{s\}}$ via the morphism $f$ and denote by $\F$ the pull-back to $\Y$ of the restriction of the vector bundle $\E$ to
$\ov{\psi_s(Z)}$. By construction,  the special fiber $\Y_0=:\tau^{-1}(0)$ of $\tau$ is equal to $\ov{\psi_s(Z)}_o$ and the generic fiber $\Y_{\eta}:=\tau^{-1}(\eta)$ of $\tau$ is equal to the fiber product
$\ov{\psi_s(Z)}_t \times_{\Spec k(t)}\Spec k(\eta)$.
Therefore, we have that
\begin{equation}\label{E:eq2}
\deg_{\Y_{0}}(\F)=\deg_{\ov{\psi_s(Z)}_o}(\E)=\deg_{\ov{\psi_s(Z)}_o}(E) \hspace{0.3cm} \text{ and } \hspace{0.3cm}  \deg_{\Y_{\eta}}(\F)=\deg_{\ov{\psi_s(Z)}_t}(\E).
\end{equation}

Next, consider the closure $\cZ:=\ov{\Y_{\eta}}$ of the generic fiber $\Y_{\eta}$  inside $\Y$, i.e. the unique closed subscheme $\cZ$ of $\Y$ which is flat over $\Spec R$ and such that its generic fiber $\cZ_{\eta}$ is equal to $\Y_{\eta}$ (see \cite[Prop. 2.8.5]{EGAIV2}). The special fiber $\cZ_0$ of $\cZ$ is a closed subscheme of $\Y_0=\ov{\psi_s(Z)}_o$ which must  contain the dense open subset
$X_{\rm sm}\cap \Sigma_s(Z)\subseteq \Sigma_s(Z)$, where $X_{\rm sm}$ is the smooth locus of $X$. Indeed,  arguing as in the proof of Lemma \ref{L:geo-irr}, through any point $p$ of
$X_{\rm sm}\cap \Sigma_s(Z)$ there is a section of $\X\times_{\Spec R_X} \Spec R\to \Spec R$  entirely contained in $\Y$, which shows that $p$ must lie in the closure of $\Y_{\eta}$ inside $\Y$, i.e. in $\cZ$.
Therefore,  the $1$-cycle associated to $\cZ_0$ coincides with the $1$-cycle associated to $\Sigma_s(Z)$, from which we deduce that
\begin{equation}\label{E:eq3}
\deg_{\Sigma_s(Z)}(E)=\deg_{\cZ_0}(\F).
\end{equation}
Finally, since the morphism $\cZ\to \Spec R$ is flat, we have that
\begin{equation}\label{E:eq4}
\deg_{\cZ_{0}}(\F)= \deg_{\cZ_{\eta}}(\F)=\deg_{\Y_{\eta}}(\F).
\end{equation}
By combining \eqref{E:eq1}, \eqref{E:eq1b}, \eqref{E:eq2}, \eqref{E:eq3} and \eqref{E:eq4}, the Claim follows.

\vspace{0.1cm}

The above Claim, together with Remark \ref{R:compEst}, implies that $\un q^{\E_{\ov s}}=\un q^s$.
Therefore, exactly as before, we get that a torsion-free rank-1 sheaf $\I$ on $\X$, flat on $\Spec R_X$,
is (semi)stable with respect to $\E$ in the sense of \cite[Sec. 1.4]{est1} if and only if for every $s\in \Spec R_X$
the restriction $\I_s$ of $\I$ to $\X_{\ov s}$ is (semi)stable with respect to $\un q^s$ in the sense of Definition \ref{sheaf-ss-qs}. Since all the polarizations $\un q^s$ are general by Lemma-Definition
\ref{D:def-pola}, we get that the open subscheme $\J_{\X}(\un q):=J_{\E}^{\rm s}=J_{\E}^{\rm ss}\subset \bJbar_{\X}$ parametrizing sheaves $\I\in \bJbar_{\X}$ whose restriction to $\X_{\ov s}$ is $\un q^s$-semistable (or equivalently
$\un q^s$-stable) is a proper scheme over $\Spec R_X$ by \cite[Thm. A]{est1}. Moreover, $J_{\E}^{\rm s}$ is quasi-projective
over $\Spec R_X$ by \cite[Thm. C]{est1}; hence it is projective over $\Spec R_X$.
The description of the fibers
of $\J_{\X}(\un q)\to \Spec R_X$ is now clear from the definition of $\J_{\X}(\un q)$.

\end{proof}

If the curve $X$ has locally planar singularities, then the universal fine compactified Jacobians of $X$ have several nice properties that we collect in the following statement.

\begin{thm}\label{T:univ-Jac}
Assume that $X$ has locally planar singularities and let $\un q$ be a general polarization on $X$.
Then we have:
\begin{enumerate}[(i)]
\item \label{T:univ-Jac1} The scheme $\J_{\X}(\un q)$ is regular and irreducible.
\item \label{T:univ-Jac2} The surjective map $u:\J_{\X}(\un q)\to \Spec R_X$ is projective and flat of relative dimension $p_a(X)$.
\item \label{T:univ-Jac3} The smooth locus of $u$ is $J_{\X}(\un q)$.
\end{enumerate}
\end{thm}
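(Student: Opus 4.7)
The plan is to deduce all three parts from the regularity result of Theorem \ref{T:univ-rings}\eqref{T:univ-rings3} combined with the miracle flatness criterion. For part \eqref{T:univ-Jac1}, regularity of $\J_{\X}(\un q)$ is immediate: it is an open subscheme of the regular scheme $\bJbar_{\X}$. For part \eqref{T:univ-Jac2}, projectivity and the fiber description are supplied by Theorem \ref{T:univ-fine}, and surjectivity of $u$ follows since every fine compactified Jacobian is non-empty. The locally planar property and the arithmetic genus are preserved along the flat family $\pi$, so each geometric fiber $\X_{\ov s}$ is a connected reduced curve with locally planar singularities and arithmetic genus $p_a(X)$; hence by Corollary \ref{C:prop-fineJac}\eqref{C:prop-fineJac2} each fiber of $u$ has pure dimension $p_a(X)$.

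Flatness of $u$ is established by miracle flatness: the source $\J_{\X}(\un q)$ is Cohen--Macaulay (being regular) and the target $\Spec R_X$ is regular (Theorem \ref{T:univ-rings}\eqref{T:univ-rings2}), so it suffices to verify the local dimension equality $\dim R_{(X,I)} = \dim R_X + p_a(X)$ at each closed point $I$. Both rings are regular, so their dimensions equal $\dim T\Def_{(X,I)}$ and $\dim T\Def_X$ respectively. The forgetful morphism $\Def_{(X,I)} \to \Def_X$ is smooth by the combination of Fact \ref{F:diag-smooth}\eqref{F:diag-smooth1} and Fact \ref{F:for-smooth}\eqref{F:for-smooth3} under the locally planar hypothesis, with fiber over the trivial deformation of $X$ equal to the smooth deformation functor of $I$ on $X$, whose semiuniversal ring $\wh{\O}_{\bJbar_X, I}$ has dimension $p_a(X)$ by Corollary \ref{C:prop-fineJac}\eqref{C:prop-fineJac2}. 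Additivity of tangent space dimensions then yields $\dim T\Def_{(X,I)} = \dim T\Def_X + p_a(X)$.

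For irreducibility of $\J_{\X}(\un q)$, regularity forces connected components and irreducible components to coincide, while the flatness established above implies that every irreducible component dominates the integral scheme $\Spec R_X$. It thus suffices to check that the geometric generic fiber of $u$ is irreducible. By Lemma \ref{L:codim1} the generic point of $\Spec R_X$ lies in the smooth locus of $\pi$ (locally planar singularities being smoothable), so $\X_{\ov \eta}$ is smooth; geometric connectedness follows from Stein factorization, since $\pi_*\O_{\X} = R_X$ (as $H^0(X,\O_X) = k$ and $R_X$ is a complete local domain). Hence $\X_{\ov \eta}$ is a smooth irreducible curve and $\J_{\X_{\ov \eta}}(\un q^\eta) = \Pic^{|\un q|}(\X_{\ov \eta})$ is irreducible. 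Finally, for part \eqref{T:univ-Jac3}, flatness of $u$ reduces the smoothness of $u$ at $I$ to the smoothness of the geometric fiber at $I$, which by Corollary \ref{C:prop-fineJac}\eqref{C:prop-fineJac3} occurs precisely when $I$ is a line bundle; the smooth locus of $u$ is therefore $J_{\X}(\un q)$. The main obstacle is the dimension computation $\dim R_{(X,I)} = \dim R_X + p_a(X)$ at non-line-bundle points $I$, which relies on the smoothness results for deformation functors provided in Fact \ref{F:diag-smooth} under the locally planar hypothesis.
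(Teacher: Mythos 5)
Your overall architecture (regularity from Theorem \ref{T:univ-rings}\eqref{T:univ-rings3}, miracle flatness over the regular base $\Spec R_X$, irreducibility via the generic fibre, and part \eqref{T:univ-Jac3} from Corollary \ref{C:prop-fineJac}\eqref{C:prop-fineJac3}) is the right one, but the step you yourself flag as the main obstacle --- the local dimension equality $\dim R_{(X,I)}=\dim R_X+p_a(X)$ --- is not established by your argument. The forgetful morphism $\Def_{(X,I)}\to\Def_X$ is \emph{not} smooth at points where $I$ fails to be locally free, and it does not follow from Fact \ref{F:diag-smooth}\eqref{F:diag-smooth1} combined with Fact \ref{F:for-smooth}\eqref{F:for-smooth3}: those two facts yield smoothness of $\Def_{(X,I)}\to\Def_{(X,I)}^{\rm loc}$, whereas your claim would require smoothness of $\Def_{(X,I)}^{\rm loc}\to\Def_X^{\rm loc}$, which fails (already for a node and $I_p=\m_p$ the relevant obstruction group $\Ext^2_{\wh{\O}_{X,p}}(I_p,I_p)$ is nonzero). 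A tangent-space count shows the claim cannot be repaired: if $\Def_{(X,I)}\to\Def_X$ were smooth, then $\dim T\Def_{(X,I)}=\dim T\Def_X+\dim\Ext^1_X(I,I)$, and $\Ext^1_X(I,I)$ is the tangent space to $\bJbar_X$ at $I$, which by Theorem \ref{T:prop-J-planar} has dimension strictly greater than $p_a(X)$ precisely at the non-line-bundle points; since $R_{(X,I)}$ is regular this would give $\dim R_{(X,I)}>\dim R_X+p_a(X)$, contradicting the general inequality $\dim R_{(X,I)}\leq \dim R_X+\dim\wh{\O}_{\bJbar_X,I}=\dim R_X+p_a(X)$. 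Your phrase ``additivity of tangent space dimensions'' silently substitutes the dimension $p_a(X)$ of the semiuniversal ring $\wh{\O}_{\bJbar_X,I}$ for the dimension of the tangent space $\Ext^1_X(I,I)$ of the fibre functor; these agree only where that ring is regular, i.e.\ only at line bundles, which are exactly the points where there is nothing to prove. Since your irreducibility argument then invokes ``the flatness established above'', the gap propagates to part \eqref{T:univ-Jac1} as well.

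The paper closes this step by reversing the order of your two middle arguments, and you can repair your proof the same way: prove irreducibility \emph{first} (regularity makes connected and irreducible components coincide; $J_{\X}(\un q)$ is dense in $\J_{\X}(\un q)$ by Corollary \ref{C:prop-fineJac}\eqref{C:prop-fineJac2}, is smooth over $\Spec R_X$, and has connected generic fibre, so $\J_{\X}(\un q)$ is connected, hence irreducible). Then $\J_{\X}(\un q)$ is integral and projective over $\Spec R_X$ with all geometric fibres equidimensional of dimension $p_a(X)$ (Theorem \ref{T:univ-fine} together with Corollary \ref{C:prop-fineJac}\eqref{C:prop-fineJac2}), which forces $\dim\O_{\J_{\X}(\un q),I}=\dim R_X+p_a(X)$ at every closed point $I$ with no deformation-theoretic computation, and miracle flatness applies. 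The rest of your argument (geometric connectedness and smoothness of the generic fibre, and the deduction of part \eqref{T:univ-Jac3} from flatness plus Corollary \ref{C:prop-fineJac}\eqref{C:prop-fineJac3}) is fine.
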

\begin{proof}
The regularity of $\J_{\X}(\un q)$ follows from Theorem \ref{T:univ-rings}\eqref{T:univ-rings3}. Therefore, in order to show that $\ov{J}_{\X}(\un q)$ is irreducible, it is enough to show that it is connected.
Since the open subset $J_{\X}(\un q)$ is dense by Corollary \ref{C:prop-fineJac}, it is enough to prove that $J_{\X}(\un q)$ is connected. However, this follows easily from
the fact that $J_{\X}(\un q)$ is smooth over $\Spec R_X$ and its generic fiber is the Jacobian of degree $|\un q|$ of a smooth curve, hence it is connected.

Since also $\Spec R_X$ is regular by Theorem \ref{T:univ-rings}\eqref{T:univ-rings2}, the flatness of the map
$u:\J_{\X}(\un q)\to \Spec R_X$ will follow if we show that all the geometric fibers are equi-dimensional of the same dimension
(see \cite[Cor. of Thm 23.1, p. 179]{Mat}). By Theorem \ref{T:univ-fine}, the geometric fiber of $u$
over $s\in \Spec R_X$ is isomorphic to $\J_{\X_{\ov s}}(\un q^s)$ which has pure dimension equal to
$h^1(\X_{\ov s},\O_{\X_{\ov s}})=h^1(X,\O_X)=p_a(X)$ by Corollary \ref{C:prop-fineJac}.

The map $u$ is projective by Theorem \ref{T:univ-fine} and the fact that its smooth locus is equal to $J_{\X}(\un q)$ follows from Corollary \ref{C:prop-fineJac}.

\end{proof}

The above result on the universal fine compactified Jacobians of $X$ has also some very important consequences for the fine
compactified Jacobians of $X$, that we collect in the following two corollaries.

\begin{cor}\label{C:connect}
Assume that $X$ has locally planar singularities and let $\un q$ be a general polarization on $X$. Then $\ov{J}_X(\un q)$ is connected.
\end{cor}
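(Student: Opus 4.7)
The plan is to pass connectedness from the generic fiber of $u: \J_{\X}(\un q) \to \Spec R_X$ to its closed fiber $\ov{J}_X(\un q)$ via Stein factorization, using the fact (established in Theorem \ref{T:univ-Jac}) that $\J_{\X}(\un q)$ is regular and irreducible and that $u$ is projective and flat, together with the regularity (hence normality) of $R_X$ from Theorem \ref{T:univ-rings}\eqref{T:univ-rings2}. Recall also from Theorem \ref{T:univ-fine} that $u^{-1}([\m_X]) = \ov{J}_X(\un q)$.

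Let $\eta$ denote the generic point of $\Spec R_X$ and fix a geometric point $\bar\eta$ above $\eta$. First I would show that the geometric generic fiber $\X_{\bar\eta}$ of the semiuniversal family $\pi: \X \to \Spec R_X$ is a smooth projective irreducible curve. Smoothness is immediate from Lemma \ref{L:codim1}. For connectedness, note that $h^0(X,\O_X) = 1$ since $X$ is connected and reduced; by upper semicontinuity of $h^0$ for the proper flat morphism $\pi$ and the fact that every non-empty closed subset of the local scheme $\Spec R_X$ contains the closed point, the set $\{t : h^0(\X_t, \O_{\X_t}) \geq 2\}$ is closed and avoids $[\m_X]$, hence is empty. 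Flat base change then gives $h^0(\X_{\bar\eta}, \O_{\X_{\bar\eta}}) = 1$, which combined with smoothness makes $\X_{\bar\eta}$ a smooth projective irreducible curve. Consequently, the geometric generic fiber of $u$ is $u^{-1}(\bar\eta) = \ov{J}_{\X_{\bar\eta}}(\un q^\eta) = \Pic^{|\un q|}(\X_{\bar\eta})$, the classical degree-$|\un q|$ Jacobian of a smooth projective irreducible curve, which is connected.

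Finally I would apply the Stein factorization $u = w \circ v$, where $v: \J_{\X}(\un q) \to \Spec B$ has geometrically connected fibers and $w: \Spec B \to \Spec R_X$ is finite. Since $\J_{\X}(\un q)$ is integral, so is $\Spec B$; since the geometric generic fiber of $u$ is connected, the finite morphism $w$ must be birational. By Zariski's main theorem, a finite birational morphism onto a normal scheme is an isomorphism, so $w$ is an isomorphism and every geometric fiber of $u = v$ is connected. In particular $\ov{J}_X(\un q) = u^{-1}([\m_X])$ is connected. The main obstacle in this approach is securing connectedness of the geometric generic fiber of $\pi$ (and hence of $u$); once this topological input is in place, the conclusion follows by standard machinery.
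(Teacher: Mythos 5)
Your proposal is correct and shares the paper's overall strategy --- both pass connectedness from the generic geometric fiber of $u:\ov{J}_{\X}(\un q)\to\Spec R_X$ down to the closed fiber $\ov{J}_X(\un q)$ --- but the descent step is carried out with different machinery. The paper invokes the lower semicontinuity of the number of geometrically connected components of the fibers of a flat projective morphism (\cite[Prop. (15.5.9)]{EGAIV3}): since the generic geometric fiber is the Jacobian of a smooth connected curve, hence connected, the closed fiber has at most one component. You instead run a Stein factorization $u=w\circ v$ and use the integrality of $\J_{\X}(\un q)$ (Theorem \ref{T:univ-Jac}) together with the normality of $\Spec R_X$ (Theorem \ref{T:univ-rings}) and Zariski's main theorem to force $w$ to be an isomorphism. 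Both work: the paper's route is shorter and needs neither integrality of the total space nor normality of the base, whereas yours is more self-contained and makes explicit where the regularity results enter; your semicontinuity argument that $h^0(\X_{\bar\eta},\O_{\X_{\bar\eta}})=1$ supplies cleanly the geometric connectedness of the generic fiber of $\pi$, which the paper leaves implicit. One small imprecision to fix: smoothness of $\X_{\bar\eta}$ is \emph{not} immediate from Lemma \ref{L:codim1}, which only places the generic point in the locus of fibers that are smooth \emph{or} uninodal. Argue instead that the locus of $s\in\Spec R_X$ with $\X_{\bar s}$ smooth is open (as $\pi$ is proper and flat) and nonempty (for instance by Proposition \ref{P:1parsmooth}, since $X$ has locally planar singularities it is smoothable), hence contains the generic point of the irreducible scheme $\Spec R_X$.
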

\begin{proof}
Consider the universal fine compactified Jacobian $\ov{J}_{\X}(\un q)$ and the natural surjective morphism $u:\ov{J}_{\X}(\un q)\to \Spec R_X$.
According to Theorem \ref{T:univ-Jac}\eqref{T:univ-Jac2}, $u$ is flat and projective. Therefore, we can apply \cite[Prop. (15.5.9)]{EGAIV3} which says that the number of
connected components of the geometric fibers of $u$ is lower semicontinuous. Since the generic geometric fiber of $u$ is the Jacobian of a smooth curve (by Theorem \ref{T:univ-fine}), hence connected, we deduce that also the fiber over the closed point $[\m_X]\in \Spec R_X$, which is $\ov J_X(\un q)$ by Theorem \ref{T:univ-fine}, is connected, q.e.d.
\end{proof}

\begin{cor}\label{C:triv-can}
Assume that $X$ has locally planar singularities and let $\un q$ be a general polarization on $X$.
Then the universal fine compactified Jacobian $u:\J_{\X}(\un q)\to \Spec R_X$ (with respect to the polarization $\un q$) has trivial relative dualizing sheaf. In particular,
$\J_{X}(\un q)$ has trivial dualizing sheaf.
\end{cor}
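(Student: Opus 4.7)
The plan is to first show that $\omega_u$ is an invertible sheaf on $\J_\X(\un q)$, then trivialize it on the dense open subset $J_\X(\un q)$ of line bundles, and finally extend the trivialization to all of $\J_\X(\un q)$ via a codimension-two argument combined with normality; the triviality of $\omega_{\J_X(\un q)}$ will then follow by base change to the closed point.

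First, since both $\J_\X(\un q)$ and $\Spec R_X$ are regular (by Theorems \ref{T:univ-rings}(\ref{T:univ-rings2}) and \ref{T:univ-Jac}(\ref{T:univ-Jac1})) and $u$ is flat (Theorem \ref{T:univ-Jac}(\ref{T:univ-Jac2})), the morphism $u$ is Gorenstein, so $\omega_u$ is invertible and its formation commutes with arbitrary base change. On the smooth locus of $u$, which equals $J_\X(\un q)$ by Theorem \ref{T:univ-Jac}(\ref{T:univ-Jac3}), I would use the relative Picard (group) scheme structure on $\bJ_\X \to \Spec R_X$ to obtain the canonical identification $T_{J_\X(\un q)/\Spec R_X} \cong u^* R^1\pi_*\O_\X$ of the relative tangent bundle with the pullback of the Lie algebra at the identity. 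Since $\pi$ is a flat proper family of geometrically connected reduced curves, cohomology and base change shows $R^1\pi_*\O_\X$ is locally free of rank $p_a(X)$, hence free because $R_X$ is local; taking determinants of duals yields $\omega_u|_{J_\X(\un q)} \cong \O_{J_\X(\un q)}$.

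Next I would bound the codimension of $\J_\X(\un q) \setminus J_\X(\un q)$ in $\J_\X(\un q)$. Over the smooth locus $V \subseteq \Spec R_X$ of $\pi$ every rank-$1$ torsion-free sheaf on a fiber of $\pi$ is a line bundle, so the non-line-bundle locus $\J_\X(\un q) \setminus J_\X(\un q)$ lies over $\Spec R_X \setminus V$, which has codimension at least one in the irreducible regular scheme $\Spec R_X$. Within any singular fiber of $u$ the non-line-bundle locus is a proper closed subset by Corollary \ref{C:prop-fineJac}(\ref{C:prop-fineJac2}), so has codimension at least one in the fiber. Flatness of $u$ (Theorem \ref{T:univ-Jac}(\ref{T:univ-Jac2})) then forces $\dim(\J_\X(\un q) \setminus J_\X(\un q)) \leq \dim \J_\X(\un q) - 2$. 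Since $\J_\X(\un q)$ is regular and hence normal, the trivializing section and its inverse on $J_\X(\un q)$ extend uniquely across this codimension-two complement by Hartogs, producing an isomorphism $\omega_u \cong \O_{\J_\X(\un q)}$. Restricting along the closed fiber inclusion $\J_X(\un q) \hookrightarrow \J_\X(\un q)$ and invoking base-change compatibility of $\omega_u$ then delivers $\omega_{\J_X(\un q)} \cong \O_{\J_X(\un q)}$.

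The main point requiring care is the canonical identification $T_{J_\X(\un q)/\Spec R_X} \cong u^* R^1\pi_*\O_\X$: it rests on the relative group scheme structure on $\bJ_\X$, and is what upgrades the classical fiberwise triviality of the canonical bundle of a Jacobian into a global trivialization over the smooth locus of $u$. Without such a canonical trivialization one would only obtain fiberwise information, which is insufficient for extending the trivialization across the non-smooth locus of $u$.
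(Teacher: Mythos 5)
Your argument is correct, and it reaches the two key inputs of the proof --- a trivialization of $\omega_u$ over a large open subset, and a codimension-two bound on the complement --- by a genuinely different route from the paper's. For the trivialization, the paper works over $u^{-1}(U)$, where $U\subseteq \Spec R_X$ is the locus of fibres with at most one node: there the fibres of $u$ are abelian varieties or compactified Jacobians of integral uninodal curves (trivial dualizing sheaf by Arinkin's theorem), and one concludes via the evaluation map $u^*u_*(\omega_u)\to \omega_u$ together with $\Pic(U)=0$. You instead trivialize $\omega_u$ on the whole smooth locus $J_{\X}(\un q)$ of $u$ by identifying the relative tangent bundle with $u^*R^1\pi_*\O_{\X}$ via the group structure on $\bJ_{\X}=\Pic_{\X/\Spec R_X}$ and using that $R^1\pi_*\O_{\X}$ is free over the local ring $R_X$; this identification is standard (it is the content of \cite[Sec.\ 8.4]{BLR}, and the openness and $J(\X)$-stability of $J_{\X}(\un q)$ in $\bJ_{\X}$ make the restriction harmless), so the step is sound. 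For the codimension bound, the paper invokes Lemma \ref{L:codim1}, a deformation-theoretic estimate on $\Spec R_X$, whereas you add the codimension $\geq 1$ of the discriminant in $\Spec R_X$ (the generic fibre of $\pi$ is smooth, so the discriminant is a proper closed subset of the integral scheme $\Spec R_X$) to the codimension $\geq 1$ of the non-locally-free locus in each fibre (density of line bundles, Corollary \ref{C:prop-fineJac}, applied to the fibres, which again have locally planar singularities). One small imprecision: what legitimizes adding these two codimensions is not flatness of $u$ but the dimension formula for finite-type morphisms over the universally catenary base $\Spec R_X$; the inputs you list are exactly the right ones, though. On balance your route is more self-contained --- it avoids both Arinkin's result and Lemma \ref{L:codim1} --- at the cost of having to justify the Lie-algebra description of the relative tangent bundle, while the paper's route needs only fibrewise information about mild (uninodal) degenerations plus the vanishing of $\Pic(U)$.
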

\begin{proof}
Observe that the relative dualizing sheaf, call it $\omega_u$, of the universal fine compactified Jacobian $u:\J_{\X}(\un q)\to \Spec R_X$
is a line bundle because the fibers of $u$ have l.c.i. singularities by Theorem \ref{T:univ-fine} and Corollary \ref{C:prop-fineJac}.

Consider now the open subset $U\subseteq \Spec R_X$ consisting of those points $s\in \Spec R_X$ such that the geometric fiber $\X_{\ov s}$ over $s$ has at most a unique singular point which is a
node (as in Lemma \ref{L:codim1}).

\un{CLAIM:} $(\omega_u)_{|u^{-1}(U)}=\O_{u^{-1}(U)}$.

Indeed, Theorem \ref{T:univ-fine} implies that the geometric fiber of $\ov J_{\X}(\un q)\to \Spec R_X$ over a point $s$ is isomorphic to $\ov J_{\X_{\ov s}}(\un q^s)$.
If $\X_{\ov s}$ is smooth or if it has a separating node, then $\ov J_{\X_{\ov s}}(\un q^s)$ is an abelian variety, hence it has trivial dualizing sheaf. If $\X_{\ov s}$ is irreducible with a node then
$\ov J_{\X_{\ov s}}(\un q^s)$ has trivial dualizing sheaf by \cite[Cor. 9]{arin1}.
Therefore, the fibers of the proper map $u^{-1}(U) \to U $ have trivial canonical sheaf. It follows that $u_*(\omega_u)_{|U}$ is a line bundle on $U$ and that the natural evaluation morphism
$u^*u_*(\omega_u)_{|u^{-1}(U)}\to (\omega_u)_{|u^{-1}(U)}$ is an isomorphism. Since $\Pic(U)=0$, the line bundle $u_*(\omega_u)_{|U}$ is trivial, hence also $(\omega_u)_{|u^{-1}(U)}$ is trivial, q.e.d.


The above Claim implies that $\omega_u$ and $\O_{\J_{\X}(\un q)}$ agree on an open subset $u^{-1}(U)\subset \ov J_{\X}(\un q)$ whose complement has
codimension at least two by Lemma \ref{L:codim1}. Since $\J_{\X}(\un q)$ is regular (hence $S_2$) by Theorem \ref{T:univ-Jac},
this implies that $\omega_u=\O_{\J_{\X}(\un q)}$.

The second assertion follows now by restricting the equality  $\omega_u=\O_{\J_{\X}(\un q)}$ to the fiber
$\J_X(\un q)$ of $u$ over the closed point $[\m_X]\in \Spec R_X$.
\end{proof}

Note that a statement similar to Corollary \ref{C:triv-can} was proved  by Arinkin in \cite[Cor. 9]{arin1} for the universal compactified Jacobian over the moduli stack of \emph{integral} curves
with locally planar singularities.

\vspace{0,2cm}

Finally, note that the universal fine compactified Jacobians are acted upon by the universal generalized Jacobian, whose properties are collected into the following

\begin{fact}[Bosch-L\"utkebohmert-Raynaud]\label{F:ungenJac}
There is an open subset of $\bJ_{\X}$, called the \emph{universal generalized Jacobian} of $\pi:\X\to \Spec R_X$
and denoted by $v: J(\X) \to \Spec R_X$, whose geometric fiber over any point $s\in \Spec R_X$ is the generalized Jacobian
$J(\X_{\ov s})$ of the geometric fiber $\X_{\ov s}$ of $\pi$ over $s$.

The morphism $v$ makes $J(\X)$ into a smooth and separated group scheme of finite type over $\Spec R_X$.
\end{fact}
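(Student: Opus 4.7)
The plan is to carve out $J(\X)$ as an open and closed subgroup scheme of the relative Picard scheme $\bJ_{\X}$ of Fact~\ref{F:univ-Jac} by fixing the multidegree, and then to derive smoothness and the group-scheme structure directly from those of $\bJ_{\X}$, while invoking the work of Bosch-L\"utkebohmert-Raynaud for the separatedness and finite type.

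The key preliminary ingredient is Lemma~\ref{L:geo-irr}, which gives, for each $s \in \Spec R_X$, a canonical bijection between the irreducible components of $\X_{\ov s}$ and those of the closed fibre $X$ via the specialization map. Using this identification, I would define a locally constant function
$$\underline{\deg}: \bJ_{\X} \lra \Z^{\gamma(X)}$$
assigning to a line bundle $L$ on $\X_{\ov s}$ its multidegree, viewed in $\Z^{\gamma(X)}$; local constancy is automatic because the multidegree is locally constant on the Picard scheme of a flat projective family with geometrically reduced fibres. I would then set
$$J(\X) := \underline{\deg}^{-1}(\underline 0) \subseteq \bJ_{\X},$$
which is an open and closed subscheme whose geometric fibre over $s$ is $\Pic^{\underline 0}(\X_{\ov s}) = J(\X_{\ov s})$ by construction. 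The smoothness of $v: J(\X) \to \Spec R_X$ is inherited from Fact~\ref{F:univ-Jac}(i), and the group-scheme structure (with unit section given by the trivial line bundle, multiplication by tensor product, and inversion by duality) is inherited from $\bJ_{\X}$, using additivity of multidegree to see that each of these operations preserves the condition $\underline{\deg} = \underline 0$.

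The content that requires real work, and that is genuinely due to Bosch-L\"utkebohmert-Raynaud, is the combination of finite type and separatedness of $v$. The main obstacle is separatedness: applying the valuative criterion, one must show that for any trait $T = \Spec R$ with $R$ a DVR and any morphism $T \to \Spec R_X$, two line bundles on $\X_T$ of relative multidegree $\underline 0$ that become isomorphic on the generic fibre are already isomorphic on all of $\X_T$. Granting this, finite type follows because the fibres are generalized Jacobians of connected reduced curves (hence of finite type individually) and one gets global boundedness by a standard Castelnuovo-Mumford argument after twisting by a relatively ample line bundle on $\pi: \X \to \Spec R_X$. Both assertions are precisely the inputs needed to construct N\'eron models of Jacobians of singular curves, and they are established in \cite[Chaps.~8-9]{BLR} for any flat projective morphism with geometrically reduced connected fibres; since $\pi$ satisfies these hypotheses (reducedness and connectedness of the geometric fibres propagating from $X$), their theorem applies verbatim to our setting, yielding the stated separatedness and finite type of $v: J(\X) \to \Spec R_X$.
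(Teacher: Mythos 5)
There is a genuine gap at the first step of your construction: the multidegree function $\underline{\deg}\colon \bJ_{\X}\to \Z^{\gamma(X)}$ is not well defined. Lemma \ref{L:geo-irr} does not give a bijection between the irreducible components of $\X_{\ov s}$ and those of the closed fibre $X$; it identifies the subcurves of $\X_{\ov s}$ with those of $\X_s$, and the subsequent map $\Sigma_s$ to subcurves of $X$ is neither injective nor surjective in general. Indeed the number of irreducible components of the fibres of $\pi$ jumps: any deformation smoothing a node that joins two distinct components of $X$ merges those components, and (in the locally planar case) the generic fibre is a smooth connected, hence irreducible, curve, so a line bundle on it carries only a total degree, not a $\gamma(X)$-tuple of degrees. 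Consequently there is no locally constant $\Z^{\gamma(X)}$-valued multidegree on $\bJ_{\X}$. The correct definition of $J(\X)$ --- line bundles whose restriction to each geometric fibre has degree $0$ on every irreducible component \emph{of that fibre} --- cuts out a locus that is open but \emph{not} closed: over a trait with irreducible generic fibre and a two-component special fibre, the line bundle $\O(\sigma-\sigma')$ attached to sections $\sigma,\sigma'$ through the two different components has degree $0$ on the generic fibre but multidegree $(1,-1)$ on the special one, so the multidegree-zero locus inside the (open and closed) total-degree-zero locus is only open. The openness of this locus, which your argument treats as automatic, is genuinely part of what must be proved.

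Once the definition is repaired, the remaining steps of your outline (smoothness inherited from Fact \ref{F:univ-Jac}\eqref{F:univ-Jac1}, group structure via tensor product, separatedness by the valuative criterion and finite type by boundedness) are reasonable, but at that point you would essentially be re-deriving the theorem you cite. The paper's proof is a single application of \cite[Sec. 9.3, Thm. 7]{BLR}, which is available because $\Spec R_X$ is a strictly henselian local scheme ($R_X$ being complete local with algebraically closed residue field) and the geometric fibres of $\pi$ are reduced and connected; that one reference already provides the existence of $J(\X)$ as an open group subscheme of $\bJ_{\X}$ with the stated fibres, together with its smoothness, separatedness and finiteness over $\Spec R_X$. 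If you want to keep your more hands-on route, the honest division of labour is: openness and representability of the relative $\Pic^{\un 0}$, not just separatedness and finite type, must be delegated to (or reproved following) Bosch--L\"utkebohmert--Raynaud.
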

\begin{proof}
The existence of a group scheme $v:J(\X)\to \Spec R_X$ whose fibers are the generalized Jacobians of the fibers of $\pi:\X\to \Spec R_X$
follows by \cite[Sec. 9.3, Thm. 7]{BLR}, which can be applied since $\Spec R_X$
is a strictly henselian local scheme (because $R_X$ is a complete local ring) and the geometric fibers of $\pi:\X\to \Spec R_X$ are reduced and connected since $X$ is assumed
to be so. The result of loc. cit. gives also that the map $v$ is smooth, separated and of finite type.
\end{proof}

\subsection{1-parameter regular smoothings of $X$}\label{S:1par-sm}

The aim of this subsection is to study relative fine compactified Jacobians associated to a 1-parameter smoothing of a curve $X$ and their relationship with the N\'eron models of the Jacobians of the generic fiber.
As a corollary, we will get a combinatorial formula for the number of irreducible components of a fine compactified
Jacobian of a curve with locally planar singularities.

Let us start with the definition of  1-parameter regular smoothings of a curve $X$.

\begin{defi}\label{D:1par-sm}
A \emph{1-parameter  regular smoothing} of $X$ is a proper and flat morphism $f:\SS\to B=\Spec R$ where $R$ is a complete discrete valuation domain (DVR for short)
with residue field $k$ and quotient field $K$ and $\SS$ is a regular scheme of dimension two, i.e. a regular surface, and
such that the special fiber  $\SS_k$ is isomorphic to $X$ and the generic fiber $\SS_K$ is a $K$-smooth curve.
\end{defi}

The natural question one may ask is the following: which (reduced) curves $X$ admit a 1-parameter regular smoothing?
Of course, if $X$ admits a 1-parameter regular smoothing $f:\SS\to \Spec R$, then $X$ is a divisor inside a regular surface $\SS$, which implies
that $X$ has locally planar singularities.
Indeed, it is well known to the experts that this necessary condition turns out to be  also sufficient. We include a proof here since we couldn't find a suitable reference.

\begin{prop}\label{P:1parsmooth}
A (reduced) curve $X$ admits a 1-parameter regular smoothing  if and only if $X$ has locally planar singularities.
More precisely, if $X$ has locally planar singularities then there exists a complete discrete valuation domain $R$ (and indeed we can take $R=k[[t]]$) and a morphism $\alpha:\Spec R\to \Spec R_X$ such that
the pull-back
\begin{equation}\label{E:1par-pull}
\xymatrix{
\SS \ar[r]\ar[d]_{f} \ar@{}[dr]|{\square}& \X \ar[d]^{\pi} \\
\Spec R \ar[r]^{\alpha} & \Spec R_X
}
\end{equation}
is a 1-parameter regular smoothing of $X$. 
\end{prop}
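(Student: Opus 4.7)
The ``only if'' direction is immediate: in a $1$-parameter regular smoothing $f:\SS\to\Spec R$, the special fiber $X=\SS_k$ is cut out by any uniformizer $t\in R$ and is an effective Cartier divisor in the regular surface $\SS$; locally at $p\in X$ one has $\wh{\O}_{\SS,p}\cong k[[u,v]]$, so $\wh{\O}_{X,p}\cong k[[u,v]]/(g)$ with $g$ reduced (since $X$ is reduced), exhibiting the locally planar singularity at $p$.

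For the converse, my plan is to construct $\alpha$ as a surjection of complete local rings $R_X\twoheadrightarrow R\cong k[[t]]$ cutting out a regular curve in $\Spec R_X$ through $[\m_X]$. By Theorem~\ref{T:univ-rings}\eqref{T:univ-rings2}, $R_X$ is itself regular, and by Fact~\ref{F:for-smooth}\eqref{F:for-smooth3} the morphism $R_X^{\mathrm{loc}}:=\wh{\bigotimes}_{p\in X_{\mathrm{sing}}}R_{X,p}\to R_X$ is formally smooth, so I may write $R_X\cong R_X^{\mathrm{loc}}[[\vec b]]$. For each singular $p$, the locally planar hypothesis gives $\wh{\O}_{X,p}=k[[x_p,y_p]]/(f_p)$, and the miniversal family at $p$ has the form $k[[x_p,y_p,\vec a^p]]/(F_p)$ with $F_p=f_p+\sum_i a_i^p g_i^p$, the $g_i^p$ lifting a $k$-basis of $T^1=k[[x_p,y_p]]/(f_p,\partial_{x_p}f_p,\partial_{y_p}f_p)$; since $p$ is singular, $1$ is nonzero in $T^1$, so I may take $g_1^p=1$. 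I then define $\alpha:R_X\to R:=k[[t]]$ by $a_1^p\mapsto t$ for every singular $p$, $a_i^p\mapsto 0$ for $i>1$, and $b_j\mapsto 0$.

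Setting $\SS:=\X\times_{\Spec R_X}\Spec R$, the structure map $f:\SS\to\Spec R$ is automatically proper and flat. For regularity of $\SS$, since it is excellent with open regular locus, it suffices to check at closed points, which all lie in $X$; at smooth points of $X$ the map $\pi$ is smooth, hence so is $f$, while at $p\in X_{\mathrm{sing}}$ the construction yields
$$\wh{\O}_{\SS,p}\cong\frac{k[[x_p,y_p,t]]}{(f_p(x_p,y_p)+t)}\cong k[[x_p,y_p]],$$
a regular $2$-dimensional local ring. For smoothness of the generic fiber $\SS_K$ it is enough to verify that the non-smooth locus of $f$ is contained in $X$; locally near $p$ this map corresponds to $t\mapsto -f_p$ on $\Spec k[[x_p,y_p]]$, with non-smooth locus $V(\partial_{x_p}f_p,\partial_{y_p}f_p)$ supported at the isolated plane curve singularity $\{p\}\subset X$, which together with smoothness at smooth points of $X$ bounds the non-smooth locus of $f$ by a finite subset of $X$.

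The key point requiring care is matching the local and global deformation data: I rely crucially on Fact~\ref{F:for-smooth}\eqref{F:for-smooth3} to split off the locally trivial directions $\vec b$, and on the particular form of a planar miniversal family (choosing $g_1^p=1$) to simultaneously smooth all singularities along a single parameter $t$. Once these two ingredients are in place, both the regularity of $\SS$ and the smoothness of $\SS_K$ reduce to the clean identity $k[[x,y,t]]/(f+t)\cong k[[x,y]]$, so I do not expect any further hidden obstacle.
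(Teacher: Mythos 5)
Your outline coincides with the paper's: both produce, at each singular point, the one-parameter local family $k[[x,y,t]]/(f\pm t)\cong k[[x,y]]$ and lift it to an arc in $\Spec R_X$ using the smoothness of $\Def_X\to\Def_X^{\rm loc}$ (Fact \ref{F:for-smooth}\eqref{F:for-smooth3}); this correctly gives regularity of $\SS$. The gap is in the final step, the smoothness of the generic fiber, and it is genuine because the statement is over an algebraically closed field of \emph{arbitrary} characteristic. You claim that the non-smooth locus of $\Spec k[[x,y]]\to\Spec k[[t]]$, $t\mapsto -f$, is $V(\partial_x f,\partial_y f)$ and that this set is ``supported at the isolated plane curve singularity $\{p\}$''. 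Isolatedness of the singularity only gives that $V(f,\partial_x f,\partial_y f)$ is zero-dimensional; the critical locus $V(\partial_x f,\partial_y f)$ may contain a whole curve not lying in $\{f=0\}$ when ${\rm char}(k)=p>0$. For example, in characteristic $3$ take $f=(x+y)^3+x^2y^2$: it is reduced with an isolated singularity at the origin, yet $\partial_x f=-xy^2$ and $\partial_y f=-x^2y$ vanish along $\{xy=0\}$, and at the height-one prime $(x)$ (which survives to the generic fiber since $x\nmid f$) the map $\Omega^1_{k((t))/k}\otimes L\to\Omega^1_{k[[x,y]]/k}\otimes L$, $dt\mapsto \partial_xf\,dx+\partial_yf\,dy$, is zero, so the generic fiber is not geometrically regular over $k((t))$ there. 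With your choice of $\alpha$ the scheme $\SS$ would be regular but $\SS_K$ would not be $K$-smooth. In characteristic $0$ your argument does close up, since a common irreducible factor of both partials of a reduced power series would force a repeated factor of $f$.

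The paper's proof contains exactly the two ingredients you are missing. First (Claim 2 there), it replaces $f$ by $fH$ for a suitable unit $H=1+ax+by$ --- which does not change $\wh{\O}_{X,p}$, hence not the deformation problem --- so as to arrange that $\partial_xf$ and $\partial_yf$ have no common irreducible factor. Second (Claim 3), since $K=k((t))$ is not perfect in positive characteristic, regularity of $\SS_K$ does not imply $K$-smoothness and the ``Jacobian criterion'' must be replaced by the formal-smoothness criterion: geometric regularity at each height-one prime $(g)$ with $g\nmid f$ is checked via the injectivity of $\Omega^1_{k((t))/k}\otimes L\to\Omega^1_{k[[x,y]]/k}\otimes L$, which holds precisely when $g$ does not divide both partial derivatives. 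A smaller point: your reduction of the smoothness of $f:\SS\to\Spec R$ and of $\SS_K$ to computations on the completions $\wh{\O}_{\SS,p}$ also needs justification; the paper's Claim 1 supplies it by exhibiting a faithfully flat regular morphism from the disjoint union of the $\Spec\wh{\O}_{\SS,p}$ onto $\SS$ and descending (geometric) regularity along it.
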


\begin{proof}
We have already observed that the only if condition is trivially satisfied. Conversely, assume that $X$ has locally planar singularities, and let us prove that $X$ admits a 1-parameter regular smoothing.

Consider the natural morphisms of deformation functors
\begin{equation*}
F:h_{R_X}\to \Def_X \to \Def_X^{\rm loc}=\prod_{p\in X_{\rm sing}} \Def_{X,p}=\prod_{p\in X} \Def_{X,p},
\end{equation*}
obtained by composing the morphism \eqref{E:mor-func} with the morphism \eqref{E:map-func1} and using the fact if $p$ is a smooth point of $X$ then $\Def_{X,p}$ is the trivial deformation functor 
(see \cite[Thm. 1.2.4]{Ser}). 
Observe that $F$ is smooth because the first morphism is smooth since $R_X$ is a semiuniversal deformation
ring for $\Def_X$ and the second morphism is smooth by Fact \ref{F:for-smooth}\eqref{F:for-smooth3}.

Given an element $\alpha\in h_{R_X}(R)=\Hom(\Spec R, \Spec R_X)$ associated to a Cartesian diagram like in
\eqref{E:1par-pull}, the image of $\alpha$ into $\Def_{X,p}(R)$ corresponds to the formal deformation of
$\wh{\O}_{X,p}$ given by the right square of the following diagram 
\begin{equation*}
\xymatrix{
\Spec \wh{\O}_{X,p} \ar[r] \ar[d]  \ar@{}[dr]|{\square}& \Spf \wh{\O}_{\SS,p}\ar[d]\ar[r] \ar[d]  \ar@{}[dr]|{\square}& \Spec \wh{\O}_{\SS,p}\ar[d] \\
\Spec k \ar[r] & \Spf R  \ar[r] & \Spec R.
}
\end{equation*}

\un{Claim 1:} The morphism $f:\SS\to \Spec R$ is a 1-parameter regular smoothing of $X$ if and only if, for any $p\in X$, we have that 
\begin{enumerate}[(i)]
\item \label{cond1} $\wh{\O}_{\SS,p}$ is regular;
\item \label{cond2} $\wh{\O}_{\SS,p}\otimes_R K$ is geometrically regular over $K$ (i.e. $\wh{\O}_{\SS,p}\otimes_R K'$ is regular for any field extension $K\subseteq K'$).
\end{enumerate}

Indeed, by definition, the surface $\SS$ is regular if and only if the local ring $\O_{\SS,q}$ is regular for any schematic point $q\in \SS$ or, equivalently (see \cite[Thm. 19.3]{Mat}), for any closed point $q\in \SS$. 
Clearly, the closed points of $\SS$ are exactly the closed points of its special fiber $\SS_k=X$. Moreover the local ring $\O_{S, q}$ is regular if and only if its completion 
$\wh{\O}_{\SS, q}$ is regular 
(see \cite[Thm. 21.1(i)]{Mat}).  
Putting everything together, we deduce that $\SS$ is regular if and only if \eqref{cond1} is satisfied.  

Consider now the $\Spec R$-morphism   $\mu:\coprod_{p\in X} \Spec \wh{\O}_{\SS,p}\stackrel{\mu''}{\longrightarrow} \coprod_{p\in X} \Spec \O_{\SS, p} \stackrel{\mu'}{\longrightarrow} \SS$. 
The morphism $\mu$ is flat since 
any localization  is flat (see \cite[Thm. 4.5]{Mat}) and any completion  is flat (see \cite[Thm. 8.8]{Mat}). Moreover, the image of $\mu$ is open because $\mu$ is flat and it contains the special fiber 
$\SS_k=X\subset \SS$ which contains all the closed points of $\SS$; therefore, $\mu$ must be surjective, which implies that $\mu$ is faithfully flat. Finally, $\mu$ has geometrically regular fibers (hence it is regular, i.e.
flat with geometrically regular fibers, see \cite[(33.A)]{Mat0}): this is obvious for $\mu'$ (because it is the disjoint union of localization morphisms), it is true for $\mu''$ because each local ring $\O_{\SS, p}$ is a G-ring (in the sense of \cite[\S 33]{Mat0}) being the localization of a scheme of finite type over a complete local ring (as it follows from \cite[Thm. 68, Thm. 77]{Mat0}) and the composition of regular morphisms is regular (see \cite[(33.B), Lemma 1(i)]{Mat0}). By base changing the morphism $\mu$ to the generic point $\Spec K$ of $\Spec R$, we get a morphism $\mu_K:\coprod_{p\in X} \Spec (\wh{\O}_{\SS,p}\otimes_R K)  \to \SS_K$ which is also faithfully flat and regular, because both properties are stable under base change. 
Therefore, by applying \cite[(33.B), Lemma 1]{Mat0}, we deduce that $\SS_K$ is geometrically regular over $K$ if and only if $\wh{\O}_{\SS,p}\otimes_R K$ is geometrically regular over $K$ for any $p\in X$. 
Hence, $\SS_K$ is $K$-smooth (which is equivalent to the fact that $\SS_K$ is geometrically regular over $K$, because $\SS_K$ is of finite type over $K$ by assumption) if and only if \eqref{cond2} is satisfied, q.e.d.

\vspace{0.1cm}

Suppose now that for any $p\in X$ we can find  an element of $\Def_{X,p}(R)$ corresponding to a formal deformation
\begin{equation}\label{E:fordiag}
\xymatrix{
\Spec \wh{\O}_{X,p} \ar[r] \ar[d]  \ar@{}[dr]|{\square}& \Spf \A\ar[d]\ar[r]  \ar@{}[dr]|{\square}& \Spec \A \ar[d]  \\
\Spec k \ar[r] & \Spf R \ar[r] & \Spec R
}
\end{equation}
such that $\A$ is a regular complete local ring, $R\to \A$ is a local flat morphism and $\A\otimes_R K$ is $K$-formally smooth. Then, using the smoothness of $F$, we can lift this element
to an element $\alpha\in h_{R_X}(R)$ whose associated Cartesian diagram \eqref{E:1par-pull} gives rise to a $1$-parameter regular smoothing of $X$ by the above Claim.

Let us now check this local statement. 
Since $X$ has locally planar (isolated) singularities, we can write
$$\wh{\O}_{X,p}=\frac{k[[x,y]]}{(f)},$$
for some reduced element $0\neq f=f(x,y)\in (x,y)\subset k[[x,y]]$. 

\un{Claim 2:} Up to replacing $f$ with $fH$ for some  invertible element $H\in k[[x,y]]$, we can assume that 
\begin{equation}\label{E:cond-deriv}
\partial_x f \: \text{ and } \: \partial_y f  \text{ do not have common irreducible factors},
\end{equation}
where $\partial_x$ is the formal partial derivative with respect to $x$ and similarly for $\partial_y$.

More precisely, we will show that there exists $a,b\in k$ with the property that $\wt{f}:=(1+ax+by)f$ satisfies the conclusion of the Claim, i.e.  $\partial_x \wt{f}$   and  $\partial_y \wt{f}$   do not have common irreducible factors;
 it will then follow that the same is true for a generic point $(a,b)\in {\mathbb A}^2(k)$.
By contradiction, assume that 
\begin{equation*}\tag{*}
(1+ax+by)\partial_x f +a f \: \text{ and } \: (1+ax+by)\partial_y f +b f \: \: \text{ have a common irreducible factor  for every } a,b\in k. 
\end{equation*}
Observe that $(\partial_x f, \partial_y f)\neq (0,0)$, for otherwise 
$f$ would be a $p$-power in $k[[x,y]]$ with $p={\rm char}(k)>0$, which contradicts the fact that $f$ is reduced. So we can assume, without loss of generality that $\partial_y f\neq 0$.  We now specialize condition (*) by putting $b=0$ and using 
that $1+ax$ is invertible in $k[[x,y]]$, in order to get that  
\begin{equation*}\tag{**}
(1+ax)\partial_x f +a f \: \text{ and } \: \partial_y f \: \: \text{ have a common irreducible factor  for every } a\in k. 
\end{equation*}
Since $k$ is an infinite field (being algebraically closed) and $0\neq \partial_y f$ has, of course, a finite number of irreducible factors, we infer from (**) that there exists an irreducible factor  $q\in k[[x,y]]$ of $\partial_y f$ such that 
$q$ is also an irreducible factor of $(1+ax)\partial_x f+a f=\partial_x f+a(x\partial_x f +f)$ for infinitely many $a\in k$. This however can happen (if and) only if $q$ divides $f$ and $\partial_x f$. 
This implies that the hypersurface $\{f=0\}$ is singular along the entire irreducible component $\{q=0\}$, which contradicts the hypothesis that $\{f=0\}$ has an isolated singularities in $(0,0)$, q.e.d.

\vspace{0.1cm}

From now on, we will assume that $f$ satisfies the conditions of \eqref{E:cond-deriv}.   Let  $R:=k[[t]]$ and consider the local complete $k[[t]]$-algebra 
$$\displaystyle \A:=\frac{k[[x,y,t]]}{(f-t)}.$$
The $k[[x,y]]$-algebra homomorphism (well-defined since $f\in (x,y)$)
\begin{equation}\label{E:isoA}
\begin{aligned}
\A=\frac{k[[x,y,t]]}{(f-t)} & \longrightarrow k[[x,y]] \\
t & \mapsto f 
\end{aligned}
\end{equation}
 is clearly an isomorphism. Therefore, $\A$ is a regular local ring. 
Moreover, since $f $ is not a zero-divisor in $k[[x,y]]$, the algebra $\A$ is flat over $k[[t]]$. From now on, we will use the isomorphism \eqref{E:isoA} to freely identify $\A$ with $k[[x,y]]$ seen as a 
$k[[t]]$-algebra via the map sending $t$ into $f$.

It remains to show that $\A\otimes_{k[[t]]} k((t))$ is geometrically regular over $k((t))$.
Since $\A\otimes_{k[[t]]} k((t))$ is the localization of $\A$ at the multiplicative system generated by $(t)$, we have to check  (by \cite[Def. in (33.A) and Prop. in (28.N)]{Mat0})) that, 
for any ideal $\m$ in the fiber of $\A$ over the generic point of $k[[t]]$ (i.e. such that $\m\cap k[[t]]=(0)$), the local ring $\A_{\m}$ is  formally smooth over $k((t))$ 
for the $\m$-adic topology on $\A_m$ and the discrete topology on $k((t))$ (see \cite[(28.C)]{Mat0} for the definition of formal smoothness). 
Since formal smoothness is preserved under localization (as it follows easily from \cite[(28.E) and (28.F)]{Mat0}), it is enough to prove that $\A_{\m}$ is  formally smooth over $k((t))$ 
for any closed point $\m$ of $\A\otimes_{k[[t]]} k((t))$.
The closed points of  $\A\otimes_{k[[t]]} k((t))$ correspond exactly to those prime ideals of $\A\cong k[[x,y]]$ of the form $\m=(g)$ for some irreducible element $g\in k[[x,y]]$ that is not a factor of $f$. 
Indeed, any such ideal $\m$ of $k[[x,y]]$ must be of height one and hence it must be principal (since $k[[x,y]]$ is regular, see \cite[Thm. 20.1, Thm. 20.3]{Mat}), i.e. $\m=(g)$ for some $g$ irreducible element of $k[[x,y]]$.
Furthermore, the condition $(g)\cap k[[t]]=(0)$ is satisfied if and only if $g$ is not an irreducible factor of $f$. Therefore, we are left with proving the following 

 \un{Claim 3:}  $k[[x,y]]_{(g)}$ is formally smooth over $k((t))$ for any irreducible $g\in k[[x,y]]$ that is not an irreducible factor of $f$. 

Observe first of all that $k[[x,y]]_{(g)}$ is formally smooth over $k$ because $k[[x,y]]$ is formally smooth over $k$ (see \cite[(28.D), Example 3]{Mat0}) and formal smoothness is preserved by localization as observed before.
Therefore, $k[[x,y]]_{(g)}$ is regular (see \cite[Thm. 61]{Mat0}).

Consider now the residue field $L=k[[x,y]]_{(g)}/(g)$ of the local ring $k[[x,y]]_{(g)}$, which is a field extension of $k((t))$.  If $L$ is a separable extension of $k((t))$ (which is always the case if ${\rm char}(k)=0$) then $k[[x,y]]_{(g)}$  is
formally smooth over $k((t))$ by \cite[(28.M)]{Mat0}. In the general case, using \cite[Thm. 66]{Mat},  the Claim is equivalent to the \emph{injectivity}  of the natural $L$-linear map
\begin{equation}\label{E:difmap}
\alpha: \Omega^1_{k((t))/k}\otimes_{k((t))} L \to  \Omega^1_{k[[x,y]]_{(g)}/k}\otimes_{k[[x,y]]_{(g)}} L= \Omega^1_{k[[x,y]]/k}\otimes_{k[[x,y]]} L,
\end{equation}
where in the second equality we have used that the K\"ahler differentials commute with the localization together with the base change for the tensor product. 
Therefore, to conclude the proof of the Claim, it is enough to prove the injectivity of the map \eqref{E:difmap} if ${\rm char}(k)=p>0$. Under this assumption (and recalling that $k$ is assumed to be algebraically closed), we have clearly 
that $k((t))^p=k((t^p))\subset k((t))$, from which it follows that $t$ is a p-basis of $k((t))/k$ in the sense of \cite[\S 26]{Mat}. Therefore, using \cite[Thm. 26.5]{Mat}, we deduce that $\Omega^1_{k((t))/k}$ is the $k((t))$-vector space of 
dimension one generated by  $dt$. Hence, the injectivity of the above $L$-linear map $\alpha$ translates into $\alpha(dt\otimes 1)\neq 0$. Since the natural map $k[[t]]\to k[[x,y]]$ sends $t$ into $f$, we can compute 
\begin{equation}\label{E:Imdt}
\alpha(dt\otimes 1)=d(f)\otimes 1 =(\partial_x f dx+\partial_y f dy)\otimes 1\in  \Omega^1_{k[[x,y]]/k}\otimes_{k[[x,y]]} L.
\end{equation}
Now observe that the map $k[[x,y]]\to k[[x,y]]_{(g)}\to L=k[[x,y]]_{(g)}/(g)$ is also equal to the composition $k[[x,y]]\to k[[x,y]]/(g)\to {\rm Quot}(k[[x,y]]/(g))\cong L$, where ${\rm Quot}$ denotes the quotient field. Moreover, since $dx$ and $dy$ 
generate a free  rank-$2$ submodule of the $k[[x,y]]$-module $\Omega^1_{k[[x,y]]}$, the right hand side of \eqref{E:Imdt} is zero if and only if $g$ divides both $\partial_x f$ and  $\partial_y f$.
Since this does not happen for our choice of $f$ (see Claim 1), the proof is complete. 
 



\end{proof}

From now till the end of this subsection, we fix a 1-parameter regular smoothing $f:\SS\to B=\Spec R$ of $X$ as in Proposition \ref{P:1parsmooth}.
Let ${\mathcal Pic} _f $ denote the \emph{relative Picard functor} of $f$ (often denoted
$ {\mathcal Pic} _{\SS/B}$ in the literature, see \cite[Chap. 8]{BLR} for the general theory) and let
$  {\mathcal Pic} _f^d$ be the subfunctor parametrizing line bundles of relative degree $d\in \Z$.
The functor ${\mathcal Pic} _f$ (resp. ${\mathcal Pic}_f^d$) is represented by a scheme $\Pic_f$ (resp. $\Pic^d_f$) locally of finite presentation over $B$ (see
\cite[Sec. 8.2, Thm. 2]{BLR}) and formally smooth over $B$ (by \cite[Sec. 8.4, Prop. 2]{BLR}).
The generic fiber of $\Pic_f$ (resp. $\Pic^d_f$) is isomorphic to $\Pic(\SS_K)$ (resp. $\Pic^d(\SS_K)$).

Note that $\Pic_f^d$ is not separated over $B$ whenever $X$ is reducible. The biggest separated quotient of $\Pic_f^d$ coincides with the \emph{N\'eron model}
$N(\Pic^d {\SS}_K)$ of $\Pic^d(\SS_K)$, as proved by Raynaud in \cite[Sec. 8]{Ray}
(see also \cite[Sec. 9.5, Thm. 4]{BLR}). Recall that $N(\Pic^d {\mathcal S}_K)$ is smooth and separated over $B$, the generic fiber
$N(\Pic^d {\SS}_K)_K$ is isomorphic to $\Pic^d \SS_K$ and $N(\Pic^d {\SS}_K)$ is uniquely characterized by the
following universal property (the N\'eron mapping property, cf.  \cite[Sec. 1.2, Def. 1]{BLR}):
every $K$-morphism $q_K:Z_K\rightarrow N(\Pic^d {\SS}_K)_K=\Pic^d \SS_K$ defined on  the generic fiber of some scheme
$Z$ smooth over $B$ admits a unique extension  to a $B$-morphism $q:Z\rightarrow N(\Pic^d {\SS}_K)$.
Moreover, $N(\Pic^0 {\SS}_K)$ is a $B$-group scheme while, for every $d\in \Z$, $N(\Pic^d S_K)$ is a torsor under $N(\Pic^0 {\SS}_K)$.

Fix now a general polarization $\un q$ on $X$ and consider the Cartesian diagram
\begin{equation}\label{E:Jacf}
\xymatrix{
\J_f(\un q) \ar[r]\ar[d] \ar@{}[dr]|{\square}& \J_{\X}(\un q) \ar[d]^{u} \\
B=\Spec R \ar[r]^{\alpha} & \Spec R_X
}
\end{equation}
We call the scheme $\J_f(\un q)$ the \emph{$f$-relative fine compactified Jacobian}  with respect to the polarization $\un q$.
Similarly, by replacing $\J_{\X}(\un q)$ with $J_{\X}(\un q)$ in the above diagram, we define the open subset $J_f(\un q)\subset \J_f(\un q)$.
Note  that the generic fibers of $\J_f(\un q)$ and $J_f(\un q)$ coincide and are  equal to $\J_f(\un q)_K=J_f(\un q)_K=\Pic^{d}(\SS_K)$ with $d:=|\un q|+p_a(X)-1$, while their special
fibers are equal to  $\J_f(\un q)_k=\J_X(\un q)$ and $J_f(\un q)_k=J_X(\un q)$, respectively.
From Theorem \ref{T:univ-Jac}, we get that the morphism $\J_f(\un q)\to B$ is flat and its smooth locus is $J_f(\un q)$.
Therefore, the universal property of the N\'eron model $N(\Pic^{d}\SS_K)$ gives a unique $B$-morphism $q_f:J_f(\un q)\to N(\Pic^{d}\SS_K)$ which is the identity on
the generic fiber. Indeed, J. L. Kass proved in \cite[Thm. A]{Kas} that the above map is an isomorphism.

\begin{fact}[Kass]\label{F:Jac-Ner}
For a 1-paramater regular smoothing $f:\X\to B=\Spec R$ as above and any general polarization $\un q$ on $X$, the natural $B$-morphism
$$q_f:J_f(\un q)\to N(\Pic^{|\un q|+p_a(X)-1}\SS_K)$$
is an isomorphism.
\end{fact}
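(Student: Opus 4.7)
The plan is to prove Fact \ref{F:Jac-Ner} by verifying that $J_f(\un q)$ itself satisfies the defining universal property of the N\'eron model of $\Pic^d\SS_K$, where $d:=|\un q|+p_a(X)-1$; by uniqueness of N\'eron models this forces $q_f$ to be the canonical isomorphism.

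First I would collect the formal properties of $u_f\colon J_f(\un q)\to B$. Base-changing the diagram \eqref{E:Jacf} along $\alpha$ and invoking Theorem \ref{T:univ-fine} together with Theorem \ref{T:univ-Jac}\eqref{T:univ-Jac3}, one obtains that $u_f$ is smooth and separated of relative dimension $p_a(X)$, with generic fiber $\Pic^d(\SS_K)$. To establish the N\'eron mapping property, take any smooth $B$-scheme $Z$ and any $K$-morphism $g_K\colon Z_K\to (J_f(\un q))_K=\Pic^d(\SS_K)$; the task is to extend $g_K$ uniquely to a $B$-morphism $g\colon Z\to J_f(\un q)$. Up to tensoring with the pullback of a line bundle from $Z_K$, the morphism $g_K$ corresponds to a line bundle $M_K$ on $\SS_K\times_K Z_K$. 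Since $\SS$ is a regular surface and $Z$ is smooth over $B$, the product $\SS\times_B Z$ is regular, so every Weil divisor on it is Cartier and $M_K$ admits some extension to a line bundle $M$ on $\SS\times_B Z$; two such extensions differ, up to pullback from $Z$, by twisting by a $\Z$-linear combination of the divisors $C_i\times_k Z_k$ associated to the irreducible components $\{C_i\}$ of $X$.

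The heart of the matter is to single out, among all these extensions, a canonical one whose restriction to every geometric fiber $X\times_B\{z\}$ with $z\in Z_k(\ov{k})$ is $\un q$-stable. Twisting $M$ by $C_i\times_k Z_k$ alters the multidegree on any such fiber by the $i$-th row of the intersection matrix $\bigl((C_i\cdot C_j)_{\SS}\bigr)_{i,j}$, so the ambiguity in the multidegree is precisely the image of the intersection pairing $\Z^{\gamma(X)}\to\Z^{\gamma(X)}$. The crucial combinatorial claim is that, because $\un q$ is general, every coset in the cokernel of this pairing is represented by exactly one $\un q$-stable multidegree of total degree $d$; this should follow from the simplicity of the toric hyperplane arrangement associated to $\un q$ (see Lemma \ref{L:simple-ar} and Fact \ref{F:OS}) combined with Raynaud's classical description of the component group of $N(\Pic^0\SS_K)$ as this same cokernel.

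Granted this claim, for each $z\in Z_k(\ov{k})$ there is a unique twist of $M$ producing a $\un q$-stable multidegree on $X\times_B\{z\}$, and the openness of the $\un q$-stability condition guarantees that after performing this twist globally the resulting $M$ is flat over $Z$ with $\un q$-stable geometric fibers over all of $Z$. The moduli interpretation of $J_f(\un q)$ given in Theorem \ref{T:univ-fine} then yields the sought-after extension $g\colon Z\to J_f(\un q)$, and uniqueness of $g$ follows from uniqueness of the $\un q$-stable representative. The main obstacle is precisely the combinatorial claim above: it requires a careful analysis of how a general polarization interacts with the intersection lattice of $X$ sitting inside the regular surface $\SS$, and constitutes the heart of Kass's argument.
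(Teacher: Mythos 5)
First, a point of reference: the paper does not prove this statement at all --- it is recorded as a Fact and attributed to Kass \cite[Thm. A]{Kas}, with the text only observing that the morphism $q_f$ exists by the N\'eron mapping property because $J_f(\un q)\to B$ is smooth. So there is no in-paper argument to compare yours against; what you have written is a reconstruction of the cited proof, and its overall architecture (extend line bundles across the special fiber using regularity of the total space, then normalize the multidegree by twisting with components of $X$) is indeed the standard route. One structural simplification you are missing: since $q_f$ already exists and is the identity on generic fibers, and both $J_f(\un q)$ and $N(\Pic^d\SS_K)$ are smooth and separated over $B$, it suffices to prove that $q_f$ is an isomorphism on special fibers (fiberwise isomorphism plus flatness then gives an isomorphism); there is no need to re-verify the N\'eron mapping property for an arbitrary smooth $B$-scheme $Z$, which is where your argument accumulates avoidable technicalities (the line bundle $M_K$ only exists after normalizing by pullbacks from $Z_K$, and the vertical divisors are of the form $C_i\times_k W_j$ for $W_j$ the irreducible components of $Z_k$, not just $C_i\times_k Z_k$).

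The genuine gap is the step you yourself flag as ``the heart of Kass's argument'': the claim that every class in $\Delta_X^d=\Z^{\gamma(X)}_d/\Lambda_X$ contains \emph{exactly one} $\un q$-stable multidegree. This is the entire mathematical content of the Fact --- granting it, the special fiber of $q_f$ restricts to an isomorphism of $J(X)$-torsors component by component and bijectively on component sets, by Fact \ref{F:complNer} --- and your proposal does not prove it. Worse, the tools you invoke for it do not cover the generality needed: Lemma \ref{L:simple-ar} and Fact \ref{F:OS} are stated and proved in the paper only for \emph{nodal} curves, whereas Fact \ref{F:Jac-Ner} must hold for every curve with locally planar singularities (every such curve admits a $1$-parameter regular smoothing). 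One can reduce the claim to a purely lattice-theoretic statement about the sublattice $\Lambda_X$ generated by the rows of the intersection matrix $(|C_i\cap C_j|)$ and the ``general'' position of $\un q$ --- at which point the nodal combinatorics of \cite{MV} or Busonero applies --- but that reduction has to be made explicit. Finally, be careful of circularity: you must not use Corollary \ref{C:irre-comp} (which would give you the count $\#\{\un q\text{-stable multidegrees}\}=c(X)$ for free), because in the paper that corollary is \emph{deduced from} Fact \ref{F:Jac-Ner} together with Fact \ref{F:complNer}.
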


From the above isomorphism, we can deduce a formula for the number of irreducible components of $\J_X(\un q)$.
We first need to recall the description due to Raynaud of the group of connected components of the N\'eron model
$N(\Pic^0(\SS_K))$ (see \cite[Sec. 9.6]{BLR} for a detailed exposition).

Denote by $C_1,\ldots, C_{\gamma}$ the irreducible components of  $X$.
A \emph{multidegree} on $X$ is an ordered $\gamma$-tuple of integers
$$\un d=(\un d_{C_1}, \ldots, \un d_{C_{\gamma}})\in \Z^{\gamma}.$$
We denote by $|\un d|=\sum_{i=1}^{\gamma} \un d_{C_i}$ the total degree of $\un d$.
We now define an equivalence relation on the set of multidegrees on $X$.
For every irreducible component $C_i$ of $X$, consider the multidegree $\un{C_i}=((\un{C_i})_1,\ldots, (\un{C_i})_{\gamma} )$ of total degree $0$ defined by
$$(\un{C_i})_j=\begin{sis}
&|C_i\cap C_j| &  \text{ if } i\neq j,\\
&-\sum_{k\neq i} |C_i\cap C_k| & \text{ if } i=j,
\end{sis}
$$
where $|C_i\cap C_j|$ denotes the length of the scheme-theoretic intersection of $C_i$ and $C_j$.
Clearly, if we take a 1-parameter regular smoothing $f:\SS\to B$ of $X$ as in Proposition \ref{P:1parsmooth}, then
$|C_i\cap C_j|$ is also equal to the intersection product of the two divisors $C_i$ and $C_j$ inside the regular
surface $\SS$.

Denote by $\Lambda_X\subseteq \Z^{\gamma}$ the subgroup of $\Z^{\gamma}$ generated by the multidegrees $\un{C_i}$ for $i=1,\ldots,\gamma$.
It is easy to see that $\sum_i \un{C_i}=0$ and this is the only relation among the multidegrees $\un{C_i}$. Therefore,
$\Lambda_X$ is a free abelian group of rank $\gamma-1$.

\begin{defi}\label{D:equiv-mult}
Two multidegrees $\un d$ and $\un d'$ are said to be \emph{equivalent}, and we write $\un d\equiv \un d'$,  if $\un d-\un d'\in \Lambda_X$.
In particular, if $\un d\equiv \un d'$ then $|\un d|=|\un d'|$.

For every $d\in \Z$, we denote by $\Delta_X^d$ the set of equivalence classes of multidegrees of total degree $d=|\un d|$.
Clearly $\Delta_X^0$ is a finite group under component-wise addition of multidegrees (called the \emph{degree class group } of $X$) and each $\Delta_X^d$ is a torsor under $\Delta_X^0$. The cardinality of $\Delta_X^0$ is called
the \emph{degree class number} or the \emph{complexity} of $X$, and it is denoted by $c(X)$.
\end{defi}
The name degree class group was first introduced by L. Caporaso in \cite[Sec. 4.1]{Cap}. The name complexity
comes from the fact that if $X$ is a nodal curve then $c(X)$ is the complexity of the dual graph $\Gamma_X$
of $X$, i.e. the number of spanning trees of $\Gamma_X$ (see e.g. \cite[Sec. 2.2]{MV}).

\begin{fact}[Raynaud]\label{F:complNer}
The group of connected component of the $B$-group scheme $N(\Pic^0 \SS_K)$ is isomorphic to $\Delta^0_X$.
In particular, the special fiber of $N(\Pic^d \SS_K)$ for any $d\in \Z$ is isomorphic to the disjoint union of $c(X)$
copies of the generalized Jacobian $J(X)$ of $X$.
\end{fact}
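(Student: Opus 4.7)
The plan is to deduce this from Raynaud's construction of $N(\Pic^0 \SS_K)$ as a quotient of the relative Picard scheme. Recall that the subscheme $\Pic^0_f \subset \Pic_f$ parametrizing line bundles of relative degree $0$ is formally smooth over $B$ but not separated, with special fiber $(\Pic^0_f)_k = \coprod_{|\un d|=0} \Pic^{\un d}(X)$, each component a $J(X)$-torsor. By Raynaud's theorem (see \cite[Sec. 8]{Ray} or \cite[Sec. 9.5, Thm. 4]{BLR}), $N(\Pic^0 \SS_K)$ is the quotient of $\Pic^0_f$ by the schematic closure $E$ of the unit section of its generic fiber; the task is then to identify $E$ with the subgroup generated by the classes of the components of $X$.

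First I would exhibit explicit generators of $E$. For each irreducible component $C_i \subset X$, the line bundle $\O_\SS(C_i)$ on the regular surface $\SS$ has trivial restriction to $\SS_K$ (since $C_i$ lies in the special fiber) and therefore defines a section $\sigma_i: B \to \Pic^0_f$ whose generic value is the identity. Its value at the special point is $\O_\SS(C_i)|_X$, whose multidegree on $C_j$ is the intersection number $C_i \cdot C_j$ computed on $\SS$. Using that $X = \sum_j C_j$ is numerically trivial as a fiber, so $C_i \cdot X = 0$, a direct computation gives $C_i^2 = -\sum_{j \neq i}|C_i \cap C_j|$ and $C_i \cdot C_j = |C_i \cap C_j|$ for $j \neq i$: exactly the multidegree $\un{C_i}$ of Definition \ref{D:equiv-mult}.

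The main obstacle will be to show that the $\sigma_i$ generate all of $E$. For this I would argue that any line bundle on the regular surface $\SS$ whose restriction to $\SS_K$ is trivial is of the form $\O_\SS(D)$ for some divisor $D$ supported on the special fiber $X$, hence a $\Z$-linear combination of the $C_i$; so its class in $\Pic^0_f(B)$ is a $\Z$-linear combination of the $\sigma_i$. Granting this, the image of $E$ in $\pi_0\bigl((\Pic^0_f)_k\bigr) \cong \{\un d \in \Z^\gamma : |\un d|=0\}$ is exactly $\Lambda_X$, yielding
\[
\pi_0\bigl(N(\Pic^0 \SS_K)_k\bigr) \cong \{\un d \in \Z^\gamma : |\un d|=0\}/\Lambda_X = \Delta_X^0,
\]
a finite group of order $c(X)$.

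The final assertion is then formal: since $N(\Pic^d \SS_K)$ is a torsor under $N(\Pic^0 \SS_K)$, its special fiber is a torsor under $N(\Pic^0 \SS_K)_k$; the identity component of the latter is $J(X)$ and its component group has cardinality $c(X)$, so $N(\Pic^d \SS_K)_k$ is (non-canonically) a disjoint union of $c(X)$ copies of $J(X)$.
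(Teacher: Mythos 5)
The paper does not actually prove this Fact: it is quoted from Raynaud, with pointers to \cite[Prop. 8.1.2]{Ray} and \cite[Sec. 9.6]{BLR}. Your sketch is a correct reconstruction of exactly the argument carried out in those references --- identify $N(\Pic^0\SS_K)$ with $\Pic^0_f/E$, observe that $\O_{\SS}(C_i)$ gives a section of $E$ whose special value has multidegree $\un{C_i}$ (your intersection-theoretic computation using $C_i\cdot X=0$ is right, and $|C_i\cap C_j|=C_i\cdot C_j$ because $\SS$ is regular), and use factoriality of the regular surface $\SS$ to see that every line bundle on $\SS$ trivial on $\SS_K$ is $\O_{\SS}(\sum a_iC_i)$. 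Two steps that your outline elides, and that the cited sources do take care of, are worth naming. First, to pass from ``every $B$-point of $E$ has special multidegree in $\Lambda_X$'' to ``the image of $E_k$ in $\pi_0\bigl((\Pic^0_f)_k\bigr)$ is exactly $\Lambda_X$'', you need every point of the special fibre $E_k$ of the schematic closure to come from a $B$-point of $E$, and every $B$-point of $\Pic^0_f$ to come from an honest line bundle on $\SS$; both use that $R$ is strictly henselian (so that $E$ is, locally at each point of $E_k$, a copy of $B$, and $\Pic_f(B)=\Pic(\SS)$ because $f$ admits a section through the smooth locus). Second, for the final assertion you also need the identity component of $N(\Pic^0\SS_K)_k$ to be $J(X)$ itself rather than a quotient of $J(X)$ by a finite subgroup, i.e. $E_k\cap J(X)=\{e\}$; this holds because $\sum_i a_i\un{C_i}=\un 0$ forces $\sum_i a_iC_i$ to be an integer multiple of the whole fibre $X$, which is a principal divisor on $\SS$, so the corresponding line bundle is already trivial. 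With these two remarks supplied, your argument is complete and agrees with the source the paper cites.
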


For a proof, see the original paper of Raynaud \cite[Prop. 8.1.2]{Ray} or \cite[Sec. 9.6]{BLR}.

Finally, by combining Corollary \ref{C:prop-fineJac}, Fact \ref{F:Jac-Ner} and Fact \ref{F:complNer},
we obtain a formula for the number of irreducible components of a fine compactified Jacobian.

\begin{cor}\label{C:irre-comp}
Assume that $X$ has locally planar singularities and let $\un q$ be any general polarization on $X$.
Then $\J_X(\un q)$ has $c(X)$ irreducible components.
\end{cor}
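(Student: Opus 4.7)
The plan is to combine the three ingredients indicated in the statement: the existence of a suitable smoothing, the Kass identification with a N\'eron model, and Raynaud's description of the components of that N\'eron model. Since $X$ has locally planar singularities, Proposition \ref{P:1parsmooth} supplies a $1$-parameter regular smoothing $f:\SS\to B=\Spec R$ of $X$ together with a morphism $\alpha:B\to \Spec R_X$ realizing $f$ as a pull-back of the semiuniversal family $\pi:\X\to \Spec R_X$. Via the Cartesian diagram \eqref{E:Jacf} this produces the $f$-relative fine compactified Jacobian $\J_f(\un q)$ and its open subscheme $J_f(\un q)$ of line bundles, whose special fibers over the closed point of $B$ are precisely $\J_X(\un q)$ and $J_X(\un q)$.

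Next, set $d:=|\un q|+p_a(X)-1$. The content of Fact \ref{F:Jac-Ner} (Kass) is that the canonical $B$-morphism
\begin{equation*}
q_f:J_f(\un q)\stackrel{\cong}{\longrightarrow} N(\Pic^{d}\SS_K)
\end{equation*}
is an isomorphism of $B$-schemes. Restricting to the special fiber over $\Spec k$ yields an isomorphism $J_X(\un q)\cong N(\Pic^d \SS_K)_k$. By Fact \ref{F:complNer} (Raynaud) the right-hand side is isomorphic to the disjoint union of $c(X)$ copies of the generalized Jacobian $J(X)$. Since $J(X)$ is a connected smooth commutative algebraic group of dimension $p_a(X)$, it is irreducible; hence $J_X(\un q)$ has exactly $c(X)$ irreducible components, each of which is isomorphic to $J(X)$.

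To transfer this count to the compactification $\J_X(\un q)$, I would invoke Corollary \ref{C:prop-fineJac}\eqref{C:prop-fineJac2}, which asserts that $J_X(\un q)$ is an open dense subset of $\J_X(\un q)$. Irreducible components of a scheme are in bijection with the irreducible components of any open dense subscheme via the closure operation, so the Zariski closures in $\J_X(\un q)$ of the $c(X)$ irreducible components of $J_X(\un q)$ are exactly the irreducible components of $\J_X(\un q)$. This yields the desired count.

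There is essentially no genuine obstacle here once the machinery is in place; the proof is a direct assembly of results already established. The only point that deserves a line of care is the irreducibility of each copy of $J(X)$ in the special fiber, which is immediate from the fact that $J(X)=\Pic^{\un 0}(X)$ is connected and smooth (hence integral); and the passage from connected components of $J_X(\un q)$ to irreducible components of $\J_X(\un q)$ via density, which is entirely formal.
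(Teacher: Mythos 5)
Your proof is correct and follows exactly the route the paper indicates: combine the $1$-parameter regular smoothing from Proposition \ref{P:1parsmooth}, Kass's identification $J_f(\un q)\cong N(\Pic^{|\un q|+p_a(X)-1}\SS_K)$ from Fact \ref{F:Jac-Ner}, Raynaud's description of the special fiber of the N\'eron model from Fact \ref{F:complNer}, and the density of $J_X(\un q)$ in $\J_X(\un q)$ from Corollary \ref{C:prop-fineJac}\eqref{C:prop-fineJac2}. The paper gives no further argument beyond this assembly, so your write-up matches its proof in both substance and structure.
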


The above Corollary was obtained for nodal curves $X$ by S. Busonero in his PhD thesis (unpublished) in a combinatorial way; a slight variation of his proof is given  in  \cite[Sec. 3]{MV}.

\vspace{0.1cm}

Using the above Corollary, we can now prove the converse of Lemma \ref{L:nondeg}\eqref{L:nondeg1} for curves with locally planar singularities, generalizing the result of \cite[Prop. 7.3]{MV}  for nodal curves.

\begin{lemma}\label{L:ndg-conv}
Assume that $X$ has locally planar singularities. For a polarization $\un q$ on $X$, the following conditions are equivalent
\begin{enumerate}[(i)]
\item \label{L:ndg-conv1} $\un q$ is general.
\item \label{L:ndg-conv2} Every rank-1 torsion-free sheaf which is $\un q$-semistable is also $\un q$-stable.
\item \label{L:ndg-conv3} Every simple rank-1 torsion-free sheaf which is $\un q$-semistable is also $\un q$-stable.
\item \label{L:ndg-conv4} Every line bundle which is $\un q$-semistable is also $\un q$-stable.
\end{enumerate}
\end{lemma}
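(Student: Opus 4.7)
The chain (i) $\Rightarrow$ (ii) $\Rightarrow$ (iii) $\Rightarrow$ (iv) is routine: (i) $\Rightarrow$ (ii) is Lemma \ref{L:nondeg}\eqref{L:nondeg1}, and (ii) $\Rightarrow$ (iii) $\Rightarrow$ (iv) hold because every line bundle on the connected reduced curve $X$ is a simple rank-$1$ torsion-free sheaf (with endomorphism ring $H^0(X,\O_X)=k$).

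The substantive direction is (iv) $\Rightarrow$ (i), which I propose to prove by contrapositive. Assuming $\un q$ is not general, Remark \ref{R:conn-pola} supplies a proper subcurve $Y_0\subsetneq X$ with $Y_0$ and $Y_0^c$ connected and $n:=\un q_{Y_0}\in\Z$. The plan is to produce a $\un q$-semistable line bundle $L$ on $X$ satisfying the equality $\chi(L|_{Y_0})=n$, which then fails to be $\un q$-stable.

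For this, I choose two general polarizations $\un q^{\pm}$ obtained from $\un q$ by a small perturbation crossing exactly the single wall $\{\un q_{Y_0}=n\}$ of $\Arr$, so that $\un q^-_{Y_0}<n<\un q^+_{Y_0}$. By Corollary \ref{C:irre-comp} combined with the N\'eron-model identifications of Facts \ref{F:Jac-Ner} and \ref{F:complNer}, the sets $S(\un q^{\pm})$ of $\un q^{\pm}$-stable multidegrees of total degree $d=|\un q|+p_a(X)-1$ are each in bijection with the degree class group $\Delta_X^d$, and hence each have cardinality $c(X)$. Using $\chi(\O_{Y_0})+\chi(\O_{Y_0^c})=\chi(\O_X)+\delta_{Y_0}$ together with Remark \ref{R:ss-lb}, a direct computation translates the $\un q^{\pm}$-stability conditions at $Y_0$ and $Y_0^c$ into the two shifted consecutive integer intervals
\[
\un d_{Y_0}\in[n-\chi(\O_{Y_0}),\,n-1+\delta_{Y_0}-\chi(\O_{Y_0})]\ \text{ for }\un q^-,\qquad \un d_{Y_0}\in[n+1-\chi(\O_{Y_0}),\,n+\delta_{Y_0}-\chi(\O_{Y_0})]\ \text{ for }\un q^+,
\]
while the stability conditions at every other subcurve with both sides connected agree on integer multidegrees. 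Consequently any multidegree in $S(\un q^-)\setminus S(\un q^+)$ must take the boundary value $\un d_{Y_0}=n-\chi(\O_{Y_0})$, and a line bundle $L$ of such a multidegree then has $\chi(L|_{Y_0})=n$ and is $\un q$-semistable: the remaining inequalities are inherited from $\un q^-$-stability since, for a sufficiently small perturbation, the integer condition $\chi(L|_Z)>\un q^-_Z$ forces $\chi(L|_Z)\geq\un q_Z$.

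The main obstacle is verifying that $S(\un q^-)\neq S(\un q^+)$. This reduces to the assertion, made after \eqref{E:arrang}, that distinct chambers of $\Arr$ yield distinct fine compactified Jacobians: if $S(\un q^-)=S(\un q^+)$, then $J_X(\un q^-)=J_X(\un q^+)$ as subsets of $\mathrm{Pic}(X)$, and by the density of line bundles inside each fine compactified Jacobian (Corollary \ref{C:prop-fineJac}\eqref{C:prop-fineJac2}) one would conclude $\J_X(\un q^-)=\J_X(\un q^+)$, contradicting the separation of chambers across the wall $\{\un q_{Y_0}=n\}$. This is precisely where the locally planar hypothesis enters, through Corollaries \ref{C:irre-comp} and \ref{C:prop-fineJac}.
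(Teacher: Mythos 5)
Your reduction of (iv)$\Rightarrow$(i) to the single statement $S(\un q^-)\neq S(\un q^+)$ is cleanly set up, but the justification you give for that inequality is circular, and this is where the entire content of the implication lives. The assertion that distinct chambers of $\Arr$ yield distinct fine compactified Jacobians is not actually proved in the paper: the discussion after \eqref{E:arrang} only shows that polarizations in the \emph{same} chamber give the \emph{same} $\J_X(\un q)$, and the word ``bijectively'' there is not backed by an argument. Proving injectivity across the wall $\{\un q_{Y_0}=n\}$ amounts precisely to exhibiting a sheaf that is stable on one side and not on the other, i.e. a strictly $\un q$-semistable sheaf --- which is what (iv)$\Rightarrow$(i) asks you to produce. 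So you are assuming what you must prove. There is a second defect inside the same step: even granting $S(\un q^-)=S(\un q^+)$, the passage from $J_X(\un q^-)=J_X(\un q^+)$ to $\J_X(\un q^-)=\J_X(\un q^+)$ via density is not valid, because $\bJbar_X$ is not separated and two distinct open subschemes can share a dense open subset (this is exactly the mechanism by which a family of line bundles can have several torsion-free limits). A further, more minor, issue: if $\un q$ lies on several hyperplanes of $\Arr$ simultaneously, you cannot choose $\un q^{\pm}$ general yet separated by only the one wall $\{\un q_{Y_0}=n\}$, so the claim that the stability conditions at all other subcurves agree for $\un q^-$ and $\un q^+$ needs care. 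Your counting observation ($|S(\un q^{\pm})|=c(X)$ via Corollary \ref{C:irre-comp}) is correct but is never used to force $S(\un q^-)\neq S(\un q^+)$: two complete systems of representatives of $\Delta_X^d$ can coincide.

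The paper closes exactly this gap by a direct construction rather than by wall-crossing. Since $\un q_{Y},\un q_{Y^c}\in\Z$, the polarization restricts to polarizations $\un q_{|Y}$ and $\un q_{|Y^c}$ on the two connected pieces; a small general perturbation on each piece has nonempty fine compactified Jacobian by Corollary \ref{C:irre-comp} (this is the first place the locally planar hypothesis enters, via $c(Y)\geq 1$ and Corollary \ref{C:prop-fineJac}), so each piece carries a semistable line bundle, and one takes any $L\in\Pic(X)$ restricting to these. The equality $\chi(L_{|Y})=\un q_Y$ kills stability, and $\un q$-semistability of $L$ on an \emph{arbitrary} subcurve $Z$ is then verified through Remark \ref{R:ss-lb} together with the inequality $|Z\cap Z^c|\geq |Z\cap Y\cap \ov{Y\setminus Z}|+|Z\cap Y^c\cap\ov{Y^c\setminus Z}|$, obtained by embedding $X$ in a smooth surface and using bilinearity and positivity of the intersection product (the second use of local planarity). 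If you wish to salvage your route, you would need an independent proof that the set of stable multidegrees genuinely changes across a wall; the natural proof of that fact is again the explicit construction of the boundary line bundle, which returns you to the paper's argument.
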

\begin{proof}
\eqref{L:ndg-conv1} $\Rightarrow$ \eqref{L:ndg-conv2} follows from Lemma \ref{L:nondeg}\eqref{L:nondeg1}.   

\eqref{L:ndg-conv2} $\Rightarrow$ \eqref{L:ndg-conv3} $\Rightarrow$ \eqref{L:ndg-conv4} are trivial.

\eqref{L:ndg-conv4} $\Rightarrow$ \eqref{L:ndg-conv1}: by contradiction, assume that $\un q$ is not general. Then, by Remark \ref{R:conn-pola}, we can find a proper subcurve $Y\subseteq X$  with $Y$ and $Y^c$ connected and such that
$\un q_Y, \un q_{Y^c}\in \Z$. This implies that we can define a polarization $\un q_{|Y}$ on the connected curve $Y$ by setting $(\un q_{|Y})_Z:=\un q_Z$ for any subcurve $Z\subseteq Y$.
And similarly, we can define a polarization $\un q_{|Y^c}$ on $Y^c$.

\un{Claim 1:} There exists a line bundle $L$ such that $L_{|Y}$ is $\un q_{|Y}$-semistable and $L_{|Y^c}$ is  $\un q_{|Y^c}$-semistable.

Clearly, it is enough to show the existence of a line bundle $L_1$ (resp. $L_2$) on $Y$ (resp. on $Y^c$) that is $\un q_{|Y}$-semistable (resp. $\un q_{|Y^c}$-semistable); the line bundle $L$ that satisfies the conclusion of the Claim is then any line bundle such that $L_{|Y}=L_1$ and $L_{|Y^c}=L_2$ (clearly such a line bundle exists!).
Let us prove the existence of $L_1$ on $Y$; the argument for $L_2$ being the same.   We can deform slightly the polarization $\un q_{|Y}$ on $Y$ in order to obtain a general polarization $\wt{\un q}$ on $Y$.  Corollary \ref{C:irre-comp} implies that  $\J_Y(\wt{\un q})$ is non empty (since $c(Y)\geq 1$); hence also $J_Y(\wt{\un q})$ is non empty by Corollary \ref{C:prop-fineJac}\eqref{C:prop-fineJac2}.  In particular, we can find a line bundle $L_1$ on $Y$ that is $\wt{\un q}$-semistable. Remark \ref{R:stable-general} implies that $L_1$ is also $\un q_{|Y}$-semistable, and the Claim is proved.

Observe that the line bundle $L$ is not $\un q$-stable since $\chi(L_Y)=\chi(L_{|Y})=|\un q_{|Y}|=\un q_Y$ and similarly for $Y^c$.
Therefore, we find the desired contradiction by proving the following

\un{Claim 2:} The line bundle $L$ is $\un q$-semistable.

Let $Z$ be a subcurve of $X$ and set $Z_1:=Z\cap Y$,  $Z_2:=Z\cap Y^c$, $W_1:=\ov{Y\setminus Z_1}$ and $W_2:= \ov{Y^c\setminus Z_2}$. Tensoring with $L$ the exact sequence
$$0\to \O_Z\to \O_{Z_1}\oplus \O_{Z_2} \to \O_{Z_1\cap Z_2}\to 0,$$
and  taking the Euler-Poincar\'e characteristic we get
\begin{equation*}
\chi(L_{|Z})=\chi(L_{|Z_1})+\chi(L_{|Z_2})-|Z_1\cap Z_2|.
\end{equation*}
Combining the above equality with the fact that $L_{|Y}$ (resp. $L_{|Y^c}$) is $\un q_{|Y}$-semistable (resp. $\un q_{|Y^c}$-semistable) by Claim 1 and using Remark \ref{R:ss-lb}, we compute
\begin{equation}\label{E:chiZ}
\chi(L_{|Z})=\chi(L_{|Z_1})+\chi(L_{|Z_2})-|Z_1\cap Z_2|\leq \un q_{Z_1}+|Z_1\cap W_1| +\un q_{Z_2}+|Z_2\cap W_2| -|Z_1\cap Z_2|\leq
\end{equation}
$$\leq \un q_Z+|Z_1\cap W_1|  +|Z_2\cap W_2|.$$
Since $X$ has locally planar singularity, we can embed $X$ inside a smooth projective surface $S$ (see \ref{N:lps}). In this way, the number $|Z_i\cap W_i|$ is equal to the intersection number $Z_i\cdot W_i$ of the divisors $Z_i$ and $W_i$ inside the smooth projective surface $S$. Using that the intersection product of divisors in $S$ is bilinear, we get that
\begin{equation}\label{E:intZ}
|Z\cap Z^c|=Z\cdot Z^c=(Z_1+Z_2)\cdot (W_1+W_2)\geq Z_1\cdot W_1+Z_2\cdot W_2=|Z_1\cap W_1| +|Z_2\cap W_2|,
\end{equation}
where we used that $Z_1\cdot W_2\geq 0$ because $Z_1$ and $W_2$ do not have common components and similarly  $Z_2\cdot W_1\geq 0$.
Substituting \eqref{E:intZ} into \eqref{E:chiZ}, we find that
$$\chi(L_{|Z})\leq \un q_Z+|Z\cap Z^c|,
$$
for every subcurve $Z\subseteq X$, which implies that $L$ is $\un q$-semistable  by Remark \ref{R:ss-lb}.  

\end{proof}

It would be interesting to know if the above Lemma \ref{L:ndg-conv} holds true for every (reduced) curve $X$.

\section{Abel maps}\label{S:Abel}

The aim of this section is to define Abel maps for  singular (reduced) curves. Although in the following sections we will only use Abel maps for curves with locally planar
singularities,  the results of this section are valid for a broader class of singular curves, namely those for which every separating point is a node (see Condition \eqref{E:dagger}),
which includes for example all Gorenstein curves.

\subsection{Abel maps without separating points}\label{S:Abel-nonsep}

The aim of this subsection is to define the Abel maps for a reduced curve $X$ without separating points (in the sense
of \ref{N:sep-node}).

For every (geometric) point $p$ on the curve $X$, its sheaf of ideals $\m_p$ is a torsion-free rank-1 sheaf of degree
$-1$ on $X$. Also, if $p$ is not a separating point of $X$, then $\m_p$ is simple (see \cite[Example 38]{est1}).
Therefore, if $X$ does not have separating points (which is clearly equivalent to the fact that $\delta_Y\geq 2$ for every proper subcurve $Y$ of $X$) then $\m_p$ is torsion-free rank-1 and simple for any $p\in X$.

Let $\I_\Delta$ be the ideal of the diagonal $\Delta$ of $X\times X$. For any line bundle $L\in \Pic(X)$,
consider the sheaf $\I_\Delta\otimes p_1^*L,$
where $p_1:X\times X \to X$ denotes the projection onto the first factor. The sheaf $\I_{\Delta}\otimes p_1^*L$ is flat over $X$
(with respect to the second projection $p_2:X\times X \to X$) and for any point $p$ of $X$ it holds
$$\I_{\Delta}\otimes p_1^*L_{|X\times \{p\}}=\m_p\otimes L.$$
Therefore, if $X$ does not have separating points, then $\I_{\Delta}\otimes p_1^*L\in \bJbar_X^*(X)$ where $\bJbar_X^*$ is the functor
defined by \eqref{E:func-Jbar}. Using the universal property of $\bJbar_X$ (see Fact \ref{F:huge-Jac}\eqref{F:huge3}),
the sheaf $\I_{\Delta}\otimes p_1^*L$ induces a morphism
\begin{equation}\label{D:abel-map}
\begin{aligned}
A_L:X & \to \bJbar_X\\
p & \mapsto \m_p\otimes L.
\end{aligned}
\end{equation}
We call the map $A_L$ the \emph{(L-twisted) Abel map} of $X$.

From the definition \eqref{E:Abel-map}, it follows that a priori the Abel map $A_L$ takes values in the big compactified Jacobian $\bJbar_X^{\chi(L)-1}$. Under the assumption that $X$ is Gorenstein, we can prove that the Abel map $A_L$ takes always values in a fine compactified Jacobian contained in $\bJbar_X^{\chi(L)-1}$  .

\begin{lemma}\label{L:Im-Abel}
Assume that $X$ is Gorenstein. Then for every $L\in \Pic(X)$ there exists a general polarization $\un q$ on $X$ of total degree $|\un q|=\chi(L)-1$ such that
$\Im(A_L)\subseteq \ov{J}_X(\un q)$.
\end{lemma}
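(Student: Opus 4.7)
My plan is to reformulate the target condition as a fixed system of strict linear inequalities on $\un q$ and then to exhibit an explicit polarization (depending on $L$ and $\omega_X$) satisfying them.

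First, I would translate ``$\Im(A_L)\subseteq \ov J_X(\un q)$'' into inequalities. Because $\un q$ will be general, Lemma \ref{L:nondeg}\eqref{L:nondeg1} identifies $\un q$-semistability with $\un q$-stability, so the containment holds iff every $\m_p\otimes L$ is $\un q$-stable. For each proper subcurve $Y\subseteq X$, I would compute the Euler characteristic $\chi((\m_p\otimes L)_Y)$. If $p\notin Y$ then $(\m_p)_{|Y}=\O_Y$ and $(\m_p\otimes L)_Y=L_{|Y}$; if $p\in Y$ then the surjection $(\m_p)_{|Y}\twoheadrightarrow \m_p\cdot \O_Y=\m_{p,Y}$ has torsion kernel (a local computation at a node verifies this even when $p\in Y\cap Y^c$), so $(\m_p)_Y=\m_{p,Y}$ and $(\m_p\otimes L)_Y=\m_{p,Y}\otimes L_{|Y}$. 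Thus
\[
\chi((\m_p\otimes L)_Y)=\chi(L_{|Y})-[p\in Y]\in\{\chi(L_{|Y}),\,\chi(L_{|Y})-1\},
\]
and taking the minimum over $p\in X$ (any non-empty $Y$ contains some $p$) the conjunction of stability conditions for all $p$ becomes the single finite system $\un q_Y<\chi(L_{|Y})-1$ for every proper subcurve $Y$, together with $|\un q|=\chi(L)-1$.

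Next, I would exhibit a polarization satisfying these inequalities. Since $X$ is Gorenstein, $\deg\omega_X=2p_a(X)-2$, and for every proper subcurve $Y$ the adjunction formula of Remark \ref{R:stabGor} gives $\deg_Y\omega_X=2p_a(Y)-2+\delta_Y$, so
\[
\chi(L_{|Y})-1=\deg_Y L-\frac{\deg_Y\omega_X}{2}+\frac{\delta_Y}{2}-1.
\]
I would therefore try
\[
\un q_{C_i}:=\deg_{C_i}(L)-\frac{\deg_{C_i}(\omega_X)}{2}-\epsilon_i,
\]
with positive rationals $\epsilon_i$ such that $\sum_i\epsilon_i=1$. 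One checks $|\un q|=\deg L-(p_a(X)-1)-1=\chi(L)-1$, and the desired strict inequality reduces to
\[
\sum_{C_i\subseteq Y}\epsilon_i>1-\frac{\delta_Y}{2}.
\]
The standing hypothesis of this subsection that $X$ has no separating points forces $\delta_Y\geq 2$ for every proper subcurve $Y$ (otherwise $Y\cap Y^c$ would consist of a single reduced point, a separating point of $X$), so the right-hand side is $\leq 0$ and the inequality holds since $\epsilon_i>0$.

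Finally, to make $\un q$ general, I would perturb it slightly within the affine hyperplane $\{|\un q|=\chi(L)-1\}\subset \Q^{\gamma(X)}$: the non-general locus is a finite union of proper affine subspaces of that hyperplane (given by the conditions $\un q_Z\in\Z$ for connected $Z$ with connected complement, cf.\ Remark \ref{R:conn-pola}), while the strict inequalities $\un q_Y<\chi(L_{|Y})-1$ define an open set. The conceptual difficulty lies in turning simultaneous stability over the \emph{whole} compact image $A_L(X)$ into a handful of inequalities independent of $p$; once that is done, the Gorenstein plus no-separating-points assumption provides exactly the slack $\delta_Y\geq 2$ needed for an adjunction-based construction of $\un q$ to succeed.
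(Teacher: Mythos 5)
Your proof is correct and follows essentially the same route as the paper: the paper's argument also sets $\un q'_{C_i}=\deg_{C_i}L-\tfrac{\deg_{C_i}(\omega_X)}{2}-\epsilon_i$ (with the specific choice $\epsilon_i=\tfrac{1}{\gamma(X)}$), verifies stability of each $\m_p\otimes L$ via the adjunction formula and the bound $\delta_Y\geq 2$ coming from the absence of separating points, and then perturbs to a general polarization using Remark \ref{R:stable-general}. Your reformulation in terms of Euler characteristics and the reduction to a $p$-independent system of inequalities is just a repackaging of the same computation.
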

\begin{proof}
Consider the polarization $\un q'$ on $X$ defined by setting, for every irreducible component $C_i$ of $X$,
$$\un q'_{C_i}=\deg_{C_i}L-\frac{\deg_{C_i}(\omega_X)}{2} -\frac{1}{\gamma(X)},$$
where $\gamma(X)$ denotes, as usual, the number of irreducible components of $X$.
Note that for any subcurve $Y\subseteq X$ we have that $\un q'_Y=\deg_Y(L)-\frac{\deg_Y(\omega_X)}{2}-\frac{\gamma(Y)}{\gamma(X)}$ and in particular
$|\un q'|=\deg L-\frac{\deg \omega_X}{2} -1=\chi(L)-1$.

We claim that every sheaf in the image of $A_L$ is $\un q'$-stable. Indeed, for any proper subcurve
$\emptyset \neq Y\subsetneq X$ and any point $p\in X$, we have that
\begin{equation}\label{E:stab-Abel}
\deg_Y(\m_p\otimes L)-\un q'_Y-\frac{\deg_Y(\omega_X)}{2}=\deg_Y(L)+\deg_Y(\m_p)-\deg_Y(L)+\frac{\gamma(Y)}{\gamma(X)}
\geq-1+\frac{\gamma(Y)}{\gamma(X)}>-1\geq -\frac{\delta_Y}{2},
\end{equation}
where we have used that $\gamma(Y)>0$ since $Y$ is not the empty subcurve and that $\delta_Y\geq 2$ since $X$ does not
have separating points by assumption. Therefore, $A_L(p)$ is $\un q'$-stable for every $p\in X$ by Remark \ref{R:stabGor}. Using Remark \ref{R:stable-general},
we can  deform slightly $\un q'$ in order to obtain a general polarization $\un q$ with $|\un q|=|\un q'|$ and for which $A_L(p)$ is $\un q$-stable for every $p\in X$, which implies that
 $\Im A_L\subseteq \ov{J}_X(\un q)$, q.e.d.
\end{proof}






Those fine compactified Jacobians  for which there exists an Abel map  as in the above Lemma \ref{L:Im-Abel}
are quite special, hence they deserve a special name.

\begin{defi}\label{D:exi-Abel}
Let $X$ be a curve without separating points.
A fine compactified Jacobian $\ov{J}_X(\un q)$ is said to \emph{admit an Abel map} if there exists a line bundle
$L\in \Pic(X)$ (necessarily of degree $|\un q|+p_a(X)$) such that $\Im A_L\subseteq \ov{J}_X(\un q)$.
\end{defi}



Observe that clearly the property of admitting an Abel map is invariant under equivalence by translation
(in the sense of Definition \ref{D:transla}).

\begin{remark}\label{R:exi-Abelmap}
It is possible to prove that a curve $X$ without separating points and having at most two irreducible components is such that any fine compactified
Jacobian of $X$  admits an Abel map.

However, in Section \ref{S:genus1} we are going to show examples
of curves with more than two components and having a fine compactified Jacobian which does not admit an Abel map
(see Proposition \ref{P:JacIn} and Proposition \ref{P:JacIV}). In particular, Proposition \ref{P:JacIn} shows that, as the number of irreducible components of $X$ increases,
fine compactified Jacobians of $X$ that admit an Abel map became more and more rare.
\end{remark}

\subsection{Abel maps with separating points} \label{S:Abel-sep}

The aim of this subsection is to define Abel maps for (reduced) curves $X$ having separating points (in the sense of \eqref{N:sep-node}) but satisfying
the following
\begin{equation}\label{E:dagger}
\un{\text{Condition } (\dagger)}: \text{Every separating point is a node.}
\end{equation}
Indeed, there are plenty of curves that satisfy condition $(\dagger)$ due to the following result of Catanese
(see \cite[Prop. 1.10]{Cat}).

\begin{fact}[Catanese]\label{F:Gor-dagger}
A (reduced) Gorenstein curve $X$ satisfies condition $(\dagger)$.
\end{fact}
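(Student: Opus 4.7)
The plan is to reduce to a local statement at the separating point. Fix a separating point $n \in X$ with witnessing subcurve $Z \subset X$ (so $\delta_Z = 1$ and $Z \cap Z^c = \{n\}$), and set $A := \widehat{\mathcal{O}}_{X, n}$, $A_1 := \widehat{\mathcal{O}}_{Z, n}$, $A_2 := \widehat{\mathcal{O}}_{Z^c, n}$. The goal is to prove $A \cong k[[x, y]]/(xy)$. The conditions $X = Z \cup Z^c$ (scheme-theoretically, since $X$ is reduced) and $\delta_Z = 1$ translate into the short exact sequence of $A$-modules
\[
0 \to A \to A_1 \oplus A_2 \to k \to 0.
\]
Denoting by $\mathfrak{I}_Z$ and $\mathfrak{I}_{Z^c}$ the kernels of $A \twoheadrightarrow A_1$ and $A \twoheadrightarrow A_2$, one reads off $\m_A = \mathfrak{I}_{Z^c} \oplus \mathfrak{I}_Z$ as an internal direct sum of ideals of $A$, with $\mathfrak{I}_Z \cdot \mathfrak{I}_{Z^c} = 0$ and $\mathfrak{I}_{Z^c} \cong \m_{A_1}$, $\mathfrak{I}_Z \cong \m_{A_2}$ as $A$-modules (each viewed through the appropriate projection $A \twoheadrightarrow A_i$).

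The heart of the argument is a computation of the Cohen-Macaulay type $t(A) := \dim_k \Ext^1_A(k, A)$, which equals $1$ precisely when $A$ is Gorenstein. Applying $\Hom_A(-, A)$ to $0 \to \m_A \to A \to k \to 0$ and using $\Hom_A(k, A) = 0$ produces $t(A) = \dim_k \Hom_A(\m_A, A)/A$. Exploiting $\mathfrak{I}_Z \cdot \mathfrak{I}_{Z^c} = 0$ together with the reducedness identity $\mathrm{ann}_A(\mathfrak{I}_Z) = \mathfrak{I}_{Z^c}$, one checks that any $A$-linear map $\mathfrak{I}_{Z^c} \to A$ automatically lands in $\mathfrak{I}_{Z^c}$ and corresponds to an $A_1$-endomorphism of $\m_{A_1}$; hence
\[
\Hom_A(\m_A, A) \;\cong\; \End_{A_1}(\m_{A_1}) \oplus \End_{A_2}(\m_{A_2}) \;=:\; B_1 \oplus B_2,
\]
and the natural map $A \to B_1 \oplus B_2$ factors through $A \hookrightarrow A_1 \oplus A_2 \hookrightarrow B_1 \oplus B_2$. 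Summing $k$-lengths along this chain yields the key formula
\[
t(A) = \mathrm{length}_k(B_1/A_1) + \mathrm{length}_k(B_2/A_2) + 1.
\]

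The Gorenstein hypothesis then forces $t(A) = 1$, hence $B_1 = A_1$ and $B_2 = A_2$. By a classical characterization of regularity, for a one-dimensional Cohen-Macaulay local Noetherian ring $A_i$ the equality $\End_{A_i}(\m_{A_i}) = A_i$ forces $\m_{A_i}$ to be principal, so $A_i$ is a DVR and $A_i \cong k[[t_i]]$. The initial exact sequence then identifies $A$ with $\{(f, g) \in k[[x]] \oplus k[[y]] : f(0) = g(0)\} \cong k[[x, y]]/(xy)$, proving that $n$ is a node.

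The main obstacle is the identification $\Hom_A(\mathfrak{I}_{Z^c}, A) = \End_{A_1}(\m_{A_1})$: once one verifies that $\mathfrak{I}_Z$ annihilates $\mathfrak{I}_{Z^c}$ in $A$ and that $\mathrm{ann}_A(\mathfrak{I}_Z) = \mathfrak{I}_{Z^c}$, promoting $A$-linearity to $A_1$-linearity is automatic and the remaining ingredients -- a length count and the classical regularity criterion -- are standard.
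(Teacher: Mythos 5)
Your proof is correct. Note first that the paper does not actually prove this Fact: it defers entirely to Catanese \cite[Prop.\ 1.10]{Cat}, whose published argument runs through the dualizing sheaf and the conductor of the decomposition $X=Z\cup Z^c$. Your argument is a self-contained, purely local replacement that recasts the same duality in ring-theoretic terms: the hypothesis $\delta_Z=1$ is exactly what makes the Mayer--Vietoris sequence $0\to A\to A_1\oplus A_2\to k\to 0$ split the maximal ideal as $\m_A=\mathfrak{I}_{Z^c}\oplus\mathfrak{I}_Z$ with $\mathfrak{I}_{Z^c}\cong\m_{A_1}$, $\mathfrak{I}_Z\cong\m_{A_2}$, and then the Cohen--Macaulay type computes as $t(A)=\ell(B_1/A_1)+\ell(B_2/A_2)+1$, forcing $B_i=A_i$ when $A$ is Gorenstein. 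All the intermediate checks go through: $\ann_A(\mathfrak{I}_Z)=\mathfrak{I}_{Z^c}$ because $\m_{A_2}$ is a faithful $A_2$-module ($A_2$ is reduced of dimension one, hence $\m_{A_2}$ contains a nonzerodivisor); $\Hom_A(k,A)=0$ because $\depth A\geq 1$; the finiteness of $\ell(B_i/A_i)$ is automatic since $t(A)=\dim_k\Ext^1_A(k,A)$ is finite; and the closing step ($\End_{A_i}(\m_{A_i})=A_i$ implies $A_i$ is a DVR, via the observation that a map $\m\to A$ hitting a unit forces $\m$ to be principal) is the classical criterion. The argument is characteristic-free, consistent with the paper's conventions, and it is instructive to check it against Example \ref{Ex:non-dagger}: there $A_2$ is the seminormal union of two branches, so $B_2\supsetneq A_2$ and $t(A)\geq 2$, recovering the paper's claim that that curve is not Gorenstein at the separating point.
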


Let us give an example of a curve that does not satisfy condition $(\dagger)$.

\begin{example}\label{Ex:non-dagger}
Consider a curve $X$ made of three irreducible
smooth components $Y_1$, $Y_2$ and $Y_3$ glued at one point $p\in Y_1\cap Y_2\cap Y_3$ with linearly independent tangent directions, i.e. in a such a way that, locally at $p$, the three components $Y_1$, $Y_2$ and $Y_3$ look like the three
coordinate axes in ${\mathbb A}^3$.
A straightforward local computation gives that $\delta_{Y_1}=|Y_1\cap (Y_2\cup Y_3)|=1$, so that $p$ is a separating point of $X$ (in the sense of \ref{N:sep-node}). However $p$ is clearly
not a node of $X$. Combined with Fact \ref{F:Gor-dagger}, this shows that $X$ is not Gorenstein at
$p\in X$.
\end{example}


Throughout this section, we fix a connected (reduced) curve $X$ satisfying condition $(\dagger)$ and let $\{n_1,\ldots$ $ ,n_{r-1}\}$ be its separating points. Since $X$ satisfies condition $(\dagger)$, each $n_i$ is a node. Denote by $\wt{X}$ the partial normalization of $X$ at the set $\{n_1,\ldots,n_{r-1}\}$.
Since each $n_i$ is a node, the curve $\wt{X}$ is a disjoint union of
$r$ connected reduced curves $\{Y_1,\ldots,Y_r\}$ such that each $Y_i$ does not have separating points.
Note also that $X$ has locally planar singularities if and only if each $Y_i$ has locally planar singularities.
We have a natural morphism
\begin{equation}\label{E:part-norm}
\tau:\wt{X}=\coprod_i Y_i\to X.
\end{equation}
We can naturally identify each $Y_i$ with a subcurve of $X$ in such a way that their union  is $X$ and that they do not have common
 irreducible components. In particular, the irreducible components of $X$ are the union of the irreducible
components of the curves $Y_i$. We call the subcurves $Y_i$ (or their image in $X$) the \emph{separating blocks }Êof $X$.

Let us first show how the study of  fine compactified Jacobians of $X$
can be reduced to the study of  fine compactified Jacobians of $Y_i$.
Observe that, given a polarization $\un q^i$ on each curve $Y_i$, we get a polarization $\un q$ on $X$ such that for every irreducible component $C$ of $X$ we have
\begin{equation}
\un q_C=
\begin{cases} 
\un q^i_C & \text{ if }  C\subseteq Y_i   \text{  and } C\cap Y_j=\emptyset  \text{ for all } j\neq i, \\
\un q^i_C-\frac{1}{2} & \text{ if }  C\subseteq Y_i   \text{  and } C\cap Y_j\neq \emptyset  \text{ for some } j\neq i.\\
\end{cases}
\end{equation}
Note that $|\un q|=\sum_i |\un q^i|+1-r$ so that indeed $\un q$ is a polarization on $X$.
We say that $\un q$ is the polarization \emph{induced} by the polarizations $(\un q^1, \ldots, \un q^r)$ and we write  $\un q:=(\un q^1,\ldots,\un q^r)$.


\begin{prop}\label{P:norm-sheaves}
Let $X$ be a connected curve satisfying condition $(\dagger)$.
\begin{enumerate}[(i)]
\item \label{P:norm-sheaves1} The pull-back map
$$\begin{aligned}
\tau^*: \bJbar_X & \longrightarrow \prod_{i=1}^r \bJbar_{Y_i}\\
I & \mapsto (I_{|Y_1},\ldots, I_{|Y_r}),
\end{aligned}
$$
is an isomorphism. Moreover $\tau^*(\bJ_X)=\prod_i \bJ_{Y_i}$.
\item \label{P:norm-sheaves2} Given a polarization $\un q^i$ on each curve $Y_i$, consider the induced
polarization $\un q:=(\un q^1,\ldots, \un q^r)$ on $X$ as above. Then $\un q$ is general if and only if each
$\un q^i$ is general and in this case the morphism $\tau^*$ induces an isomorphism
\begin{equation}\label{E:iso-Jac}
\tau^*: \ov{J}_X(\un q)\xrightarrow{\cong} \prod_i \ov{J}_{Y_i}(\un q^i).
\end{equation}
\item \label{P:norm-sheaves3}
If $\un q$ is a general polarization on $X$ then there exists a general polarization $\un q'$ with
$|\un q'|=|\un q|$ on $X$
which is induced by some polarizations $\un q^i$ on $Y_i$ and such that
$$\ov{J}_X(\un q)=\ov{J}_X(\un q').$$
\end{enumerate}
\end{prop}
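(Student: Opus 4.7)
For part (i), the first statement that $\tau^*\colon \bJbar_X \to \prod_i \bJbar_{Y_i}$ is an isomorphism is exactly \cite[Prop. 3.2]{est2}, which I would cite. For the statement $\tau^*(\bJ_X)=\prod_i \bJ_{Y_i}$, the key observation is that if $I$ is a simple rank-$1$ torsion-free sheaf on $X$, then $I$ is automatically locally free at each separating node $n_j$: indeed if $I_{n_j}$ were non-invertible, then $I_{n_j}$ would be the pushforward of its pullback to the two branches of the node (the only non-invertible torsion-free rank-$1$ module at a node), which would force $I = {\tau_j}_* \tau_j^* I$ in a neighborhood of $n_j$, destroying simplicity since $n_j$ is separating (compare \cite[Prop.~1]{est1} together with Example 38 of loc.~cit.). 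Granted this, away from the separating nodes $\tau$ is a local isomorphism, so local freeness of $I$ on $X$ is equivalent to local freeness of each $I_{|Y_i}$ on $Y_i$.

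For part (ii), the strategy is to rewrite the conditions of Definition \ref{def-int} and \ref{sheaf-ss-qs} in terms of the block decomposition. Using the tree structure of the ``block graph'' (vertices = blocks $Y_i$, edges = separating nodes), every subcurve $Z\subseteq X$ with $Z$ and $Z^c$ both connected falls into two types: either $(a)$ $Z$ is contained entirely in a single $Y_i$ with $Z$ and $Y_i\setminus Z$ both connected within $Y_i$, or $(b)$ $Z$ is a union of whole blocks $\bigcup_{i\in S}Y_i$ where $S$ and its complement define a partition of the block tree into two subtrees (so $|Z\cap Z^c|=1$). Computing $\un q_Z$ in each case from the inductive definition, in case $(a)$ one gets $\un q_Z=\un q^i_Z - \frac{1}{2}(\text{number of boundary components of }Y_i\text{ contained in }Z)$, while in case $(b)$ one gets $\un q_Z = \sum_{i\in S}|\un q^i|-|S|+\tfrac12$. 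From this the equivalence ``$\un q$ general $\Leftrightarrow$ each $\un q^i$ general'' follows by inspection (Remark \ref{R:conn-pola}). For the isomorphism $\tau^*\colon \ov J_X(\un q)\xrightarrow{\cong}\prod_i\ov J_{Y_i}(\un q^i)$, one writes the Mayer--Vietoris identity $\chi(I_Z)=\sum_i\chi((I_{|Y_i})_{Z\cap Y_i})-\#\{\text{separating nodes in the interior of }Z\}$ (using that $I$ is invertible at separating nodes, by (i)) and compares with the analogous expansion of $\un q_Z$ in terms of the $\un q^i_{Z\cap Y_i}$; the stability inequalities on the two sides match termwise, so (i) already gives the claim.

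For part (iii), given a general polarization $\un q$ on $X$, I want to find a general polarization $\un q'$ in the \emph{same chamber} of the hyperplane arrangement $\Arr$ of \eqref{E:arrang} (so that $\ov J_X(\un q)=\ov J_X(\un q')$) which is induced from polarizations on the blocks. From the formulas in part (ii), a polarization $\un q'$ on $X$ is induced precisely when $\un q'_{Y_i}+\frac{m_i}{2}\in\Z$ for each $i=1,\dots,r$, where $m_i$ is the number of components of $Y_i$ meeting another block. These $r$ affine conditions (of which only $r-1$ are independent since $\sum_i(\un q'_{Y_i}+m_i/2)=|\un q'|+r-1\in\Z$ automatically) cut out a codimension-$(r-1)$ affine subspace $\Lambda$ of the polarization space. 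I would argue that $\Lambda$ meets every chamber of $\Arr$: each defining hyperplane of $\Lambda$ is of the form $\un q'_{Y_i}=N-m_i/2$; if $m_i$ is odd, then $N-m_i/2\notin\Z$ so this hyperplane is not in $\Arr$ and passes through every chamber constrained by $\un q_{Y_i}\in(N-1,N)$, while if $m_i$ is even (which forces $Y_i$ to be an interior vertex of the block tree with $\geq 2$ separating nodes, hence $Y_i^c$ disconnected), then the subcurve $Y_i$ does not contribute a hyperplane to $\Arr$ at all. A short induction on $r$, using the block tree to sequentially adjust $\un q$ inside its chamber one block at a time, produces the desired $\un q'$.

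The main obstacle is part (iii): the combinatorial bookkeeping required to check that the induced-polarization affine subspace $\Lambda$ actually hits every chamber of $\Arr$, especially when several blocks impose simultaneous non-trivial constraints, requires a careful exploitation of the tree structure of the block graph. The argument in parts (i) and (ii) is essentially bookkeeping once the key fact that simple sheaves are locally free at separating nodes is in hand.
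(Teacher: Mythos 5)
Your part (i) matches the paper's argument (local freeness of a simple sheaf at each separating node, plus the absence of gluing data across such a node), and your overall strategy for (ii) and (iii) is the paper's as well; the difference is that the paper reduces everything to the case of a single separating node ($r=2$) and iterates, whereas you run the case analysis for general $r$ directly, and this is where a genuine gap appears. Your dichotomy for subcurves $Z$ with $Z$ and $Z^c$ connected --- (a) $Z$ inside a single block, or (b) $Z$ a union of whole blocks --- is false for $r\geq 3$. Take a block tree $Y_1 - Y_2 - Y_3$ with $Y_2=A\cup B$, where $A$ carries the node $Y_1\cap Y_2$ and $B$ carries the node $Y_2\cap Y_3$; then $Z=Y_1\cup A$ and $Z^c=B\cup Y_3$ are both connected, yet neither $Z$ nor $Z^c$ is of type (a) or (b). Such mixed subcurves (one block split, together with all branches hanging off the part lying in $Z$) must be handled, and your formulas for $\un q_Z$ do not cover them. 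Relatedly, your case (a) is calibrated by connectivity inside $Y_i$ rather than connectivity of $Z^c$ inside $X$: with the correct condition, a subcurve $Z\subsetneq Y_i$ with $Z^c$ connected in $X$ can never contain a boundary component of $Y_i$ (otherwise the nonempty piece $\ov{Y_i\setminus Z}$ and the branches hanging off that boundary component disconnect $Z^c$), so your $-\tfrac12\cdot(\text{count})$ correction is always zero there. If it were not, the asserted equivalence ``$\un q$ general $\Leftrightarrow$ each $\un q^i$ general'' would not follow by inspection, since an odd count turns the condition $\un q_Z\notin\Z$ into $\un q^i_Z\notin\tfrac12+\Z$. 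The paper sidesteps all of this: for $r=2$ the trichotomy ($C_1,C_2\subset Z^c$, forcing $Z\subset Y_i$; $C_1,C_2\subset Z$, forcing $Z^c\subset Y_i$; one on each side, forcing $Z=Y_i$) is exhaustive, and the general case follows by splitting off one separating node at a time.

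For part (iii) you correctly identify the locus of induced polarizations and the goal of showing it meets the chamber of $\un q$, but the argument you sketch (comparing the hyperplanes defining that locus with those of $\Arr$) does not show that the particular chamber containing $\un q$ meets it, and you acknowledge that the induction is not carried out. The paper supplies the missing ingredient explicitly in the $r=2$ case: writing $\un q_{Y_1}=m_1+\tfrac12+r$ with $-\tfrac12<r<\tfrac12$, it modifies only the two components $C_1,C_2$ adjacent to the separating node, by $-r$ and $+r$ respectively. This direction leaves $\un q_Z$ unchanged for every $Z$ having $C_1$ and $C_2$ on the same side, and it moves $\un q_{Y_1}$ by less than $\tfrac12$ within the interval $(m_1,m_1+1)$, so no hyperplane of $\Arr$ is crossed; one then checks directly, using that $\chi(I_{Y_i})$ is an integer, that the semistability inequalities for $\un q$ and $\un q'$ coincide. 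This is precisely the perturbation direction your proposed induction on the block tree needs, and without identifying it (and verifying non-crossing) part (iii) remains unproved.
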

\begin{proof}
It is enough, by re-iterating the argument, to consider the case where there is only one separating point $n_1=n$, i.e. $r=2$.
Therefore the normalization
$\wt{X}$ of $X$ at $n$ is the disjoint union of two connected curves $Y_1$ and $Y_2$, which we also identify with two subcurves of $X$ meeting at the node $n$.
Denote by $C_1$ (resp. $C_2$) the irreducible component of $Y_1$ (resp. $Y_2$) that contains the separating point $n$.
A warning about the notation: given a subcurve $Z\subset X$, we will denote
by $Z^c$  the complementary subcurve of $Z$ inside $X$, i.e. $\ov{X\setminus Z}$.
In the case where $Z\subset Y_i\subset X$ for some $i=1,2$ we will write $\ov{Y_i\setminus Z}$ for the complementary
subcurve of $Z$ inside $Y_i$.

Part \eqref{P:norm-sheaves1} is well-known, see \cite[Example 37]{est1} and \cite[Prop. 3.2]{est2}. The crucial fact is that if $I$ is simple
then $I$ must be locally free at the separating point $n$; hence $\tau^*(I)$ is still torsion-free, rank-1
and its restrictions $\tau^*(I)_{|Y_i}=I_{|Y_i}$ are torsion-free, rank-1 and simple.
Moreover, since $n$ is a separating point, the sheaf $I$ is completely determined by its pull-back $\tau^*(I)$, i.e.
there are no gluing conditions.
Finally, $I$ is a line bundle if and only if its pull-back $\tau^*(I)$ is a line bundle.

Part \eqref{P:norm-sheaves2}.
Assume first that each $\un q^i$ is a  general polarization on $Y_i$ for $i=1, 2$. Consider a proper subcurve $Z\subset X$ such that $Z$ and $Z^c$ are connected.
There are three possibilities:
\begin{equation}\label{E:case1}
\begin{sis}
& \un{ \text{Case I:}} \: C_1, C_2\subset Z^c \Longrightarrow Z\subset Y_i \text{ and } \ov{Y_i\setminus Z} \text{ is connected (for some } i=1,2) , \\
& \un{ \text{Case II:}} \: C_1, C_2\subset Z\Longrightarrow Z^c\subset Y_i \text{ and } \ov{Y_i\setminus Z^c} \text{ is connected (for some } i=1,2) , \\
& \un{ \text{Case III:}} \: C_i \subset Z \text{ and } C_{3-i}\subset Z^c \Longrightarrow Z=Y_i \text{ and }
Z^c=Y_{3-i} \text{ (for some } i=1,2). \\
\end{sis}
\end{equation}
Therefore, from the definition of $\un q=(\un q^1, \un q^2)$, it follows that
\begin{equation}\label{E:qtot}
\un q_Z=
\begin{sis}
& \un q^i_Z& \text{ in case I,}\\
& |\un q|-\un q_{Z^c} = |\un q|-\un q^i_{Z^c} & \text{ in case II,}\\
& |\un q^i|-\frac{1}{2} & \text{ in case III.}
\end{sis}
\end{equation}
In each of the cases I, II, III we conclude that $\un q_Z\not\in \Z$ using that $\un q^i$ is general and that $|\un q|, |\un q^i|\in \Z$. Therefore $\un q$ is general by Remark \ref{R:conn-pola}.

Conversely, assume that $\un q$ is general and let us show that $\un q^i$ is general for $i=1,2$.
Consider a proper subcurve $Z\subset Y_i$ such that $Z$ and $\ov{Y_i\setminus Z}$ is connected. There are two possibilities:
\begin{equation}\label{E:case2}
\begin{sis}
& \un{ \text{Case A:}} \: C_i\not\subset Z \Longrightarrow Z^c \text{ is connected,}  \\
& \un{ \text{Case B:}} \: C_i\subset Z \Longrightarrow (\ov{Y_i\setminus Z})^c \text{ is connected.} \\
\end{sis}
\end{equation}
Using the definition of $\un q=(\un q^1,\un q^2)$, we compute
\begin{equation}\label{E:qpart}
\un q^i_Z=
\begin{sis}
& \un q_Z & \text{ in case A,}\\
& |\un q^i|-\un q^i_{\ov{Y^i\setminus Z}}= |\un q^i|-\un q_{\ov{Y^i\setminus Z}}& \text{ in case B.}\\
\end{sis}
\end{equation}
In each of the cases A, B we conclude that $\un q^i_Z\not\in \Z$
using that $\un q$ is general and $|\un q^i|\in \Z$. Therefore $\un q^i$ is general by Remark \ref{R:conn-pola}.

Finally, in order to prove \eqref{E:iso-Jac}, it is enough, using part \eqref{P:norm-sheaves1}, to show that
a simple torsion-free rank-1 sheaf $I$ on $X$  is $\un q$-semistable if and only if $I_{|Y_i}$  is $\un q^i$-semistable for $i=1,2$.
Observe first that, since $I$ is locally free at the node $n$ (see the proof of part \eqref{P:norm-sheaves1}), we have
that for any subcurve $Z\subset X$ it holds
\begin{equation}\label{E:split1}
\chi(I_Z)=
\begin{cases} 
\chi(I_{Z\cap Y_i})=\chi(I_Z) & \text{ if } Z\subseteq Y_i  \text{  for some } i,\\
\chi(I_{Z\cap Y_1})+\chi(I_{Z\cap Y_2})-1 & \text{ otherwise. }
\end{cases}
\end{equation}

Assume first that $I_{|Y_i}$  is $\un q^i$-semistable for $i=1,2$.
Using \eqref{E:split1}, we get
$$\chi(I)=\chi(I_{Y_1})+\chi(I_{Y_2})-1=|\un q^1|+|\un q^2|-1=|\un q|.$$
Consider a proper subcurve $Z\subset X$ such that $Z$ and $Z^c$ are connected. Using  \eqref{E:case1}, \eqref{E:qtot} and \eqref{E:split1}, we compute
\begin{equation*}
\chi(I_Z)-\un q_Z=
\begin{sis}
& \chi(I_Z) -\un q^i_Z=  \chi(I_{Z\cap Y_i}) -\un q^i_{Z\cap Y_i} \hspace{5,13cm} \text{ in case I,}\\
&  \chi(I_{Z\cap Y_{1}}) +\chi(I_{Z\cap Y_2})-1  -|\un q|+ \un q^i_{Z^c}=
\\
&= \chi(I_{\ov{Y_i\setminus Z^c}}) +\chi(I_{Y_{3-i}})-|\un q^1|-|\un q^2| + \un q^i_{Z^c}= \chi(I_{\ov{Y_i\setminus Z^c}}) -\un q^i_{\ov{Y_i\setminus Z^c}}
\hspace{0,5cm} \text{ in case II,}\\
& \chi(I_{Y_i})-|\un q^i|+\frac{1}{2}=\frac{1}{2}  \hspace{6.7cm} \text{ in case III.}
\end{sis}
\end{equation*}
In each of the cases I, II, III we conclude that $\chi(I_Z)-\un q_Z\geq 0$ using that $I_{|Y_i}$ is
$\un q^i$-semistable. Therefore $I$ is $\un q$-semistable by Remark \ref{R:conn-subcurves}.

Conversely, assume that $I$  is $\un q$-semistable. Using \eqref{E:split1}, we get that
\begin{equation}\label{E:tot-sum}
|\un q^1|+|\un q^2|=|\un q|+1=\chi(I)+1=\chi(I_{Y_1})+\chi(I_{Y_2}).
\end{equation}
Since $I$ is $\un q$-semistable, inequalities \eqref{multdeg-sh1} applied to $Y_i$ for $i=1,2$ give that
\begin{equation}\label{E:two-ineq}
\chi(I_{Y_i})\geq \un q_{Y_i}=|\un q^i|-\frac{1}{2}.
\end{equation}
Since $\chi(I_{Y_i})$ and  $|\un q^i|$ are integral numbers, from \eqref{E:two-ineq} we get that
$\chi(I_{Y_i})\geq |\un q^i|$ which combined with \eqref{E:tot-sum} gives that $\chi(I_{Y_i})=|\un q^i|$.
Consider now a subcurve $Z\subset Y_i$ (for some $i=1,2$) such that $Z$ and $\ov{Y_i\setminus Z}$ are connected. Since $I$ is locally free at $n$, we have that $(I_{|Y_i})_{Z}=I_{Z}$.
Using \eqref{E:case2} and \eqref{E:qpart},  we compute
\begin{equation*}
 \chi((I_{|Y_i})_Z)-\un q^i_Z=
\begin{sis}
& \chi(I_Z)-\un q_Z \hspace{5.6cm} \text{ in case A,}\\
&  \chi(I_{\ov{Y_i\setminus Z}^c}) -\chi(I_{Y_{3-i}})+1-|\un q^i|+\un q_{\ov{Y^i\setminus Z}}=  \\
&
 =\chi(I_{\ov{Y_i\setminus Z}^c}) -\un q_{\ov{Y^i\setminus Z}^c}
 \hspace{4cm} \text{ in case B.}\\
\end{sis}
\end{equation*}
In each of the cases A, B we conclude that $ \chi((I_{|Y_i})_Z)-\un q^i_Z\geq 0$ using that $I$ is $\un q$-semistable. Therefore $I_{|Y_i}$ is $\un q^i$-semistable by Remark \ref{R:conn-subcurves}.

Part \eqref{P:norm-sheaves3}: note that a polarization $\un q'$ on $X$ is induced by some polarizations $\un q^i$ on
$Y_i$ if and only if $\un q'_{Y_i}+\frac{1}{2}\in \Z$ for $i=1,2$. For a general polarization $\un q$ on $Y$, we have that
\begin{equation*}\label{E:hypo-pola}
\begin{sis}
& |\un q|=\un q_{Y_1}+\un q_{Y_2} \in \Z,\\
& \un q_{Y_i}\not\in \Z.\\
\end{sis}
\end{equation*}
Therefore, we can find unique integral numbers $m_1,m_2\in \Z$ and a unique rational number
$r\in \Q$ with $-\frac{1}{2}<r <\frac{1}{2}$ such that
\begin{equation}\label{E:approxi}
\begin{sis}
& \un q_{Y_1}=m_1+\frac{1}{2}+r,\\
& \un q_{Y_2}=m_2-\frac{1}{2}-r.\\
\end{sis}
\end{equation}
Define now the polarization $\un q'$ on $X$ in such a way that for an irreducible component $C$ of $X$,
we have that
\begin{equation*}
\un q'_C:=
\begin{sis}
& \un q_C & \text{ if } C \neq C_1, C_2,\\
& \un q_{C_1}-r & \text{ if } C=C_1,\\
& \un q_{C_2}+r & \text{ if } C=C_2.\\
\end{sis}
\end{equation*}
In particular for any subcurve $Z\subset X$, the polarization $\un q'$ is such that
\begin{equation}\label{E:new-pola}
\un q'_Z:=
\begin{sis}
& \un q_Z & \text{ if either } C_1,C_2\subset Z \text{ or } C_1, C_2\subset Z^c,\\
& \un q_{Z}-r & \text{ if } C_1\subset Z \text{ and } C_2\subset Z^c,\\
& \un q_{Z}+r & \text{ if } C_2\subset Z \text{ and } C_1\subset Z^c.\\
\end{sis}
\end{equation}
Specializing to the case $Z=Y_1, Y_2$ and using \eqref{E:approxi}, we get that
\begin{equation}\label{E:value-extreme}
\begin{sis}
& \un q'_{Y_1}=\un q_{Y_1}-r=m_1+\frac{1}{2}, \\
& \un q'_{Y_2}=\un q_{Y_2}+r=m_2-\frac{1}{2}, \\
& |\un q'|=\un q'_{Y_1}+\un q'_{Y_2}=m_1+m_2=\un q_{Y_1}+\un q_{Y_2}=|\un q|.
\end{sis}
\end{equation}
As observed before, this implies that $\un q'$ is induced by two (uniquely determined) polarizations $\un q^1$ and
$\un q^2$ on $Y_1$ and $Y_2$, respectively, and moreover that $|\un q'|=|\un q|$.

Let us check that $\un q'$ is general. Consider a proper subcurve $Z\subset X$ such that $Z$ and $Z^c$ are connected.
Using \eqref{E:case1}, \eqref{E:new-pola} and \eqref{E:value-extreme}, we compute that
\begin{equation}\label{E:q'}
\un q'_Z=
\begin{sis}
& \un q_Z & \text{ in case I and II,}\\
& \un q'_{Y_i}= \un q_{Y_i} +(-1)^i r= m_i+(-1)^{i+1} \frac{1}{2} & \text{ in case III.} \\
\end{sis}
\end{equation}
In each of the above cases I, II, III we get that $\un q'_Z\not\in \Z$ using that $\un q$ is general and that $m_i\in \Z$.
Therefore $\un q'$ is general by Remark \ref{R:conn-pola}.

Finally, in order to check that $\ov{J}_X(\un q)=\ov{J}_X(\un q')$ we must show that a simple rank-1 torsion-free
sheaf $I$ on $X$ with $\chi(I)=|\un q|=|\un q'|$ is $\un q$-semistable if and only if it is $\un q'$-semistable.
Using Remark \ref{R:conn-subcurves}, it is sufficient (and necessary) to check that for, any proper subcurve
$Z\subset X$ such that $Z$ and $Z^c$ are connected, $I$ satisfies \eqref{multdeg-sh1} with respect to $\un q_Z$ if and
only if it satisfies \eqref{multdeg-sh1} with respect to $\un q'_Z$. If $Z$ belongs to case I or II (according to the
classification \eqref{E:case1}), this is clear by \eqref{E:q'}. If $Z$ belongs to case III, i.e. if $Z=Y_i$ for some $i=1,2$, then, using \eqref{E:approxi} together with the fact that $-\frac{1}{2}<r<\frac{1}{2}$ and
$m_i, \chi(I_{Y_i})\in \Z$, we get that
$$\chi(I_{Y_i})\geq \un q_{Y_i}= m_i+(-1)^{i+1}\left( \frac{1}{2}+r \right)
\Longleftrightarrow
\begin{cases} 
\chi(I_{Y_i})\geq m_i+1 & \text{ if } i=1,\\
\chi(I_{Y_i})\geq m_i & \text{ if } i=2.\\
\end{cases} $$
Similarly using \eqref{E:value-extreme}, we get that
$$ \chi(I_{Y_i})\geq \un q'_{Y_i}= m_i+(-1)^{i+1} \frac{1}{2}
\Longleftrightarrow
\begin{cases} 
\chi(I_{Y_i})\geq m_i+1 & \text{ if } i=1,\\
\chi(I_{Y_i})\geq m_i & \text{ if } i=2.\\
\end{cases} 
$$
This shows that $I$ satisfies \eqref{multdeg-sh1} with respect to $\un q_{Y_1}$ if and only if it satisfies
\eqref{multdeg-sh1} with respect to $\un q'_{Y_1}$, which concludes our proof.

\end{proof}

We can now define the Abel maps for $X$.

\begin{prop}\label{P:Abel-sep}
Let $X$ be a connected curve satisfying condition $(\dagger)$ as above.
\begin{enumerate}[(i)]
\item \label{P:Abel-sep1} For any line bundle $L\in \Pic(X)$, there exists a unique morphism $A_L:X\to \bJbar_X$ such that for any $i=1,\ldots,r$ it holds:
\begin{enumerate}[(a)]
\item \label{P:Abel-sep1a}  the following diagram is commutative
\begin{equation*}
\xymatrix{
Y_i\ar[r]^{A_{L_i}}\ar@{_{(}->}[d] & \bJbar_{Y_i} & \\
X  \ar[r]^{A_L} & \bJbar_X \ar[r]^{\tau^*}_{\cong} & \prod_j \bJbar_{Y_j}\ar@{->>}[ul]_{\pi_i}
}
\end{equation*}
where $\pi_i$ denotes the projection onto the $i$-th factor, $\tau^*$ is the isomorphism of Proposition \ref{P:norm-sheaves}\eqref{P:norm-sheaves1} and $A_{L_i}$ is the $L_i$-twisted map of \eqref{E:part-norm}
for $L_i:=L_{|Y_i}$.
\item \label{P:Abel-sep1b} The composition
$$Y_i \hookrightarrow X \xrightarrow{A_L}\bJbar_X \xrightarrow[\cong]{\tau^*}  \prod_j \bJbar_{Y_j}
\xrightarrow{\prod_{j\neq i}\pi_j} \prod_{j\neq i} \bJbar_{Y_j}$$
is a constant map.
\end{enumerate}
Explicitly, the morphism $A_L$ is given for $p\in Y_i$ (with $1\leq i\leq r$) by 
\begin{equation*}
\tau^*(A_L(p))=(L_1(-n_1^i),\ldots, L_{i-1}(-n_{i-1}^i),\m_p \otimes L_{i},L_{i+1}(-n_{i+1}^i),\ldots,L_r(-n_r^i))
\end{equation*}
where for any $h\neq k$ we denote by $n_k^h$ the unique separating node of $X$ that belongs to $Y_k$ and such that $Y_k$ and $Y_h$ belong to the distinct connected components of the partial normalization of $X$ at $n_k^h$
(note that such a point $n_k^h$ exists and it is a smooth point of $Y_k$). 

\item  \label{P:Abel-sep2} Let $\un q^i$ be a general polarization on $Y_i$ for any $1\leq i\leq r$ and denote by $\un q$ the induced (general) polarization on $X$. Then
    $$A_L(X)\subset \J_X(\un q)\Leftrightarrow A_{L_i}(Y_i)\subset \J_{Y_i}(\un q^i) \text{ for any }
    1 \leq i\leq r.$$
\end{enumerate}
\end{prop}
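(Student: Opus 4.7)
The plan is to reduce both parts to Proposition~\ref{P:norm-sheaves} via the isomorphism $\tau^*\colon \bJbar_X\xrightarrow{\cong}\prod_j\bJbar_{Y_j}$. Defining a morphism $A_L\colon X\to\bJbar_X$ is equivalent to defining its components $\phi_j:=\pi_j\circ\tau^*\circ A_L\colon X\to\bJbar_{Y_j}$ for $j=1,\dots,r$. Condition~(a) forces $\phi_j|_{Y_j}=A_{L_j}$, while condition~(b) forces $\phi_j|_{Y_i}$ to be a constant with value $M_i^j\in\bJbar_{Y_j}$ for every $i\neq j$. Uniqueness of $A_L$ will follow once the $M_i^j$ are pinned down by the descent constraints from $\wt X$ to $X$.

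Next I would exploit the tree structure of the separating blocks. The graph $T$ whose vertices are $Y_1,\dots,Y_r$ and whose edges are the separating nodes $n_1,\dots,n_{r-1}$ is connected and acyclic (removing a separating node disconnects $X$), hence a tree. Fix $j$ and delete the vertex $Y_j$: the forest $T\setminus\{Y_j\}$ breaks into subtrees $T_1,\dots,T_{s_j}$, each attached to $Y_j$ through a unique separating node $m_\ell\in Y_j$. For every block $Y_i\subset T_\ell$ with $i\neq j$, set
\[
M_i^j:=\m_{m_\ell}\otimes L_j\in\bJbar_{Y_j},
\]
which makes sense because $Y_j$ has no separating points, so $\m_{m_\ell}$ is simple by the discussion preceding~\eqref{D:abel-map}. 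Define $\phi_j\colon\wt X\to\bJbar_{Y_j}$ as $A_{L_j}$ on $Y_j$ and as the constant $M_i^j$ on each $Y_i$ with $i\neq j$.

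Then I verify that $\phi_j$ descends along $\tau\colon\wt X\to X$. Since $X$ is the pushout of $\wt X$ identifying the two preimages of each separating node $n_k$, it is enough to check that both preimages of $n_k$ are sent to the same point of $\bJbar_{Y_j}$. If $n_k\in Y_j$, one preimage gives $A_{L_j}(n_k)=\m_{n_k}\otimes L_j$, and the other lies in the block of the subtree $T_\ell$ whose attachment node is $m_\ell=n_k$, so by construction it gives $M_i^j=\m_{n_k}\otimes L_j$. If $n_k\notin Y_j$, both preimages of $n_k$ lie in the same subtree $T_\ell$, hence both map to the common constant $\m_{m_\ell}\otimes L_j$. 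Setting $A_L:=(\tau^*)^{-1}\circ(\phi_1,\dots,\phi_r)$ yields the required morphism, and uniqueness follows because the descent conditions just used leave no freedom in the choice of the $M_i^j$.

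For part~(ii), I invoke the isomorphism $\tau^*\colon\ov J_X(\un q)\xrightarrow{\cong}\prod_i\ov J_{Y_i}(\un q^i)$ of Proposition~\ref{P:norm-sheaves}\eqref{P:norm-sheaves2}, which translates $A_L(X)\subseteq\ov J_X(\un q)$ into $\phi_j(X)\subseteq\ov J_{Y_j}(\un q^j)$ for every $j$. Since $\phi_j(X)=A_{L_j}(Y_j)\cup\{M_i^j:i\neq j\}$ and each $M_i^j=A_{L_j}(m_\ell)$ already lies in $A_{L_j}(Y_j)$ (as $m_\ell\in Y_j$), the condition $\phi_j(X)\subseteq\ov J_{Y_j}(\un q^j)$ reduces to $A_{L_j}(Y_j)\subseteq\ov J_{Y_j}(\un q^j)$, giving the claim. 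The only real obstacle is the combinatorial bookkeeping in part~(i): organizing the tree of separating blocks so that the constants $M_i^j$ are canonically determined and descent along $\tau$ is automatic; part~(ii) is then essentially formal.
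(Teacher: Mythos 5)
Your proposal is correct and follows essentially the same route as the paper: both reduce to the components $\phi_j=\pi_j\circ\tau^*\circ A_L$, exploit the tree structure of the separating blocks, and pin down the constants $M_i^j$ from the gluing conditions at the separating nodes, the only difference being that you give a closed-form description ($M_i^j=A_{L_j}(m_\ell)$ with $m_\ell$ the attachment node of the subtree of $T\setminus\{Y_j\}$ containing $Y_i$) where the paper determines them by induction on $r$ by peeling off a leaf block. Your explicit formula also makes the backward implication in part (ii) transparent, since it shows directly that each constant already lies in $A_{L_j}(Y_j)$.
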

\begin{proof}
Part \eqref{P:Abel-sep1}: assume that such a map $A_L$ exists and let us prove its uniqueness.
From \eqref{P:Abel-sep1a} and \eqref{P:Abel-sep1b} it follows that the composition
$$\wt{A_L}: \wt{X}=\coprod_i Y_i \xrightarrow{\tau} X \xrightarrow{A_L} \bJbar_X\xrightarrow{\tau^*} \prod_i \bJbar_{Y_i}$$
is such that for every $1\leq i\leq r$ and every $p\in Y_i$ it holds
\begin{equation}\label{E:def-AL}
(\wt{A_L})_{|Y_i}(p)=(M_1^i,\ldots,M_{i-1}^i,A_{L_i}(p),M_{i+1}^i,\ldots,M_r^i)
\end{equation}
for some elements $M_j^i\in \bJbar_{Y_j}$ for $j\neq i$. Moreover, if we set $\tau^{-1}(n_k)=\{n_k^1,n_k^2\}$  then we must have that
\begin{equation}\label{E:gluing}
\wt{A_L}(n_k^1)=\wt{A_L}(n_k^2) \text{ for any }Ê1\leq k\leq r-1.
\end{equation}

\un{Claim:} The unique elements   $M_j^i\in \bJbar_{Y_j}$ (for any $i\neq j$) such that the map $\wt{A_L}$ in \eqref{E:def-AL} satisfies the
conditions in \eqref{E:gluing} are given by $M_j^i=L_j(-n_j^i)$, where $n_j^i$ are as above. 

The claim clearly implies the uniqueness of the map $\wt{A_L}$, hence the uniqueness of the map $A_L$. Moreover, the same claim also shows the existence of the map $A_L$
with the desired properties: it is enough to define $\wt{A_L}$ via the formula \eqref{E:def-AL} and notice that, since the conditions \eqref{E:gluing} are satisfied, then the map
$\wt{A_L}$ descends to a map $A_L:X\to \bJbar_X$.

It remains therefore to prove the Claim. Choose a separating node $n_k$ of $X$ with inverse image $\tau^{-1}(n_k)=\{n_k^1,n_k^2\}$ and suppose that $n_k^1\in Y_i$ and $n_k^2\in Y_j$.
Clearly,  we have that $n_k^1=n_i^j$ and  $n_k^2=n_j^i$ by construction and it is easily checked that
\begin{equation}
n_k^i=n_k^j \: \text{ for any } \: k\neq i, j.
\tag{*}\end{equation}
From condition \eqref{E:gluing} applied to $n_k$, we deduce that
\begin{equation}
\begin{sis}
& M_i^j=\m_{n_k^1}\otimes L_i=L_i(-n_i ^j),\\
& M_j^i=\m_{n_k^2}\otimes L_j=L_j(-n_j^i), \\
& M_k^i=M_ k^j \text{ for any } k\neq i,j.
\end{sis}\tag{**}
\end{equation}
By combining (*) and (**), it is easily checked that the unique elements $M_j^i$ that satisfy  condition \eqref{E:gluing} for every separating node are given by $M_j^i=L_j(-n_j^i)$, q.e.d.



Part \eqref{P:Abel-sep2} follows easily from the diagram in \eqref{P:Abel-sep1a} and the isomorphism \eqref{E:iso-Jac}.
\end{proof}

We call the map $A_L$ of Proposition \ref{P:Abel-sep}\eqref{P:Abel-sep1} the \emph{($L$-twisted) Abel map} of $X$.
We can extend Definition \ref{D:exi-Abel} to the case of curves satisfying condition $(\dagger)$ from \eqref{F:Gor-dagger}.

\begin{defi}\label{D:ex-Abel-sep}
Let $X$ be a curve satisfying condition $(\dagger)$. We say that a fine compactified Jacobian $\ov{J}_X(\un q)$
of $X$ \emph{admits an Abel map} if there exists $L\in \Pic(X)$ (necessarily of degree $|\un q|+p_a(X)$) such that $\Im A_L\subseteq \ov{J}_X(\un q)$.
\end{defi}

By combining Propositions \ref{P:norm-sheaves} and \ref{P:Abel-sep}, we can easily reduce the problem of the existence of an
Abel map for a fine compactified Jacobian of $X$ to the analogous question on the separating blocks of $X$.

\begin{cor}\label{C:exist-Abel}
Let $X$ be a curve satisfying condition $(\dagger)$ with separating blocks $Y_1, \ldots, Y_r$.
\begin{enumerate}[(i)]
\item \label{C:exist-Abel1} Let $\un q$ be a general polarization of $X$ and assume (without loss of generality by
    Proposition \ref{P:norm-sheaves}\eqref{P:norm-sheaves3}) that $\un q$ is induced by some general polarizations $\un q^i$ on
    $Y_i$. Then $\ov{J}_X(\un q)$ admits an Abel map if and only if each $\ov{J}_{Y_i}(\un q^i)$ admits an Abel map.
\item \label{C:exist-Abel2} If $X$ is Gorenstein, then for any $L\in \Pic(X)$ there exists a general polarization $\un q$ on $X$ of total degree
$|\un q|=\chi(L)-1$ such that $\Im A_L\subseteq \ov{J}_X(\un q)$.
\end{enumerate}
\end{cor}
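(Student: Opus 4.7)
The plan is to reduce both parts to analogous statements on the separating blocks $Y_1,\dots,Y_r$ of $X$ via Propositions \ref{P:norm-sheaves} and \ref{P:Abel-sep}.

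For part \eqref{C:exist-Abel1}, I would apply Proposition \ref{P:Abel-sep}\eqref{P:Abel-sep2} directly in both directions. The ``only if'' direction is immediate: if $A_L(X)\subseteq \ov J_X(\un q)$, setting $L_i := L|_{Y_i}$ gives $A_{L_i}(Y_i)\subseteq \ov J_{Y_i}(\un q^i)$ for every $i$. For the ``if'' direction, the only genuine content is that one can always glue prescribed line bundles $L_i\in \Pic(Y_i)$ into a single $L\in \Pic(X)$ with $L|_{Y_i}=L_i$; this is because the restriction map $\Pic(X)\to \prod_i \Pic(Y_i)$ is surjective, its kernel being the torus $(k^*)^{r-1}$ of gluing scalars at the $r-1$ separating nodes. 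Applying Proposition \ref{P:Abel-sep}\eqref{P:Abel-sep2} to such an $L$ then yields $A_L(X)\subseteq \ov J_X(\un q)$.

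For part \eqref{C:exist-Abel2}, since $X$ is Gorenstein, Fact \ref{F:Gor-dagger} ensures that $X$ satisfies condition $(\dagger)$, so the separating blocks $Y_1,\dots,Y_r$ are well defined. Each $Y_i$ is itself Gorenstein --- the Gorenstein property is local, and the preimages in $Y_i$ of the separating nodes of $X$ are smooth points --- and has no separating points by construction. Setting $L_i := L|_{Y_i}$, I would apply Lemma \ref{L:Im-Abel} to each $Y_i$ to produce a general polarization $\un q^i$ on $Y_i$ with $|\un q^i|=\chi(L_i)-1$ and $A_{L_i}(Y_i)\subseteq \ov J_{Y_i}(\un q^i)$. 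Letting $\un q := (\un q^1,\dots,\un q^r)$ be the induced polarization on $X$, Proposition \ref{P:norm-sheaves}\eqref{P:norm-sheaves2} gives that $\un q$ is general, and part \eqref{C:exist-Abel1} --- or equivalently Proposition \ref{P:Abel-sep}\eqref{P:Abel-sep2} applied directly --- then yields $\Im A_L\subseteq \ov J_X(\un q)$. The claimed total degree of $\un q$ then falls out of the bookkeeping formula $|\un q|=\sum_i |\un q^i|+1-r$ combined with the identity $\chi(L)=\sum_i \chi(L_i)-(r-1)$, which comes from the exact sequence $0\to L\to \tau_*\tau^* L\to \bigoplus_{k=1}^{r-1} k(n_k)\to 0$.

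The only delicate point --- and the step warranting the most care --- is verifying, when invoking Proposition \ref{P:Abel-sep}\eqref{P:Abel-sep2}, that each ``constant'' sheaf $M_j^i$ appearing in the expression $\tau^*(A_L(p))=(M_1^i,\dots,A_{L_i}(p),\dots,M_r^i)$ really belongs to $\ov J_{Y_j}(\un q^j)$. By the inductive construction in the proof of Proposition \ref{P:Abel-sep}\eqref{P:Abel-sep1}, each such $M_j^i$ has the form $\m_{n^*}\otimes L_j = A_{L_j}(n^*)$ for an appropriate separating-node preimage $n^*\in Y_j$, so it lies in $A_{L_j}(Y_j)\subseteq \ov J_{Y_j}(\un q^j)$ by construction of $\un q^j$. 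This is precisely what is needed for the isomorphism $\tau^*\colon \ov J_X(\un q)\xrightarrow{\cong}\prod_i \ov J_{Y_i}(\un q^i)$ to carry $\Im A_L$ into the target.
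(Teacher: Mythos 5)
Your proposal follows the paper's route exactly: part (i) is read off from Proposition \ref{P:Abel-sep}\eqref{P:Abel-sep2} (together with the observation that prescribed $L_i\in\Pic(Y_i)$ glue to some $L\in\Pic(X)$), and part (ii) combines Lemma \ref{L:Im-Abel} on each separating block with Proposition \ref{P:norm-sheaves}\eqref{P:norm-sheaves2} and Proposition \ref{P:Abel-sep}\eqref{P:Abel-sep2}; this is precisely the paper's one-line proof. One small correction: the kernel of $\tau^*\colon\Pic(X)\to\prod_i\Pic(Y_i)$ is trivial, not a torus $(k^*)^{r-1}$ --- since the $n_k$ are separating nodes the gluing scalars are absorbed by automorphisms of the $L_i$, which is exactly why Proposition \ref{P:norm-sheaves}\eqref{P:norm-sheaves1} gives an \emph{isomorphism}. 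Only surjectivity is needed, so this does not affect your argument.

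The one place where you should not wave your hands is the total-degree count in part (ii), because the computation you invoke does not produce the number you claim. From $|\un q^i|=\chi(L_i)-1$ and $|\un q|=\sum_i|\un q^i|+1-r$ one gets $|\un q|=\sum_i\chi(L_i)-2r+1$, and substituting $\sum_i\chi(L_i)=\chi(L)+r-1$ yields $|\un q|=\chi(L)-r$, not $\chi(L)-1$. This is consistent with computing $\chi$ of the sheaves themselves: $A_L(p)$ restricts on each of the $r$ blocks to a twist of the ideal sheaf of a point, so $\chi(A_L(p))=\sum_i\bigl(\chi(L_i)-1\bigr)-(r-1)=\chi(L)-r$ (for $r=2$ and $L=\O_X$ on two elliptic curves glued at a node this gives $-3$, whereas $\chi(L)-1=-2$). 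So either you have an arithmetic slip, or the normalization $|\un q|=\chi(L)-1$ in the statement only matches the construction when $r=1$; the paper's own proof is silent on this point. In any case the essential content --- the existence of a \emph{general} polarization $\un q$ with $\Im A_L\subseteq\ov{J}_X(\un q)$ --- is correctly established by your argument; but you should carry out the degree bookkeeping explicitly and reconcile it with the stated value rather than assert that it ``falls out.''
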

\begin{proof}
Part \eqref{C:exist-Abel1} follows from Proposition \ref{P:Abel-sep}\eqref{P:Abel-sep2}. Part \eqref{C:exist-Abel2}
follows from Proposition \ref{P:Abel-sep}\eqref{P:Abel-sep2} together with Lemma \ref{L:Im-Abel}.
\end{proof}

When is the Abel map $A_L$ an embedding? The answer is provided by the following result, whose proof is identical to the
 proof of \cite[Thm. 6.3]{CCE}.

\begin{fact}[Caporaso-Coelho-Esteves]\label{F:emb-Abel}
Let $X$ be a curve satisfying condition $(\dagger)$ and $L\in \Pic(X)$. The Abel map $A_L$  is an embedding away from the rational separating blocks (which are isomorphic to $\PP^1$)
 while it contracts each rational separating block $Y_i\cong \PP^1$  into a seminormal point of $A_L(X)$, i.e. an ordinary singularity with linearly independent tangent directions.
\end{fact}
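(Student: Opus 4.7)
The plan is to reduce the entire statement to the behavior of the Abel map on each individual separating block, and then apply (blockwise) the Caporaso--Coelho--Esteves argument from \cite[Thm.~6.3]{CCE}. By Proposition~\ref{P:Abel-sep}, under the isomorphism $\tau^*\colon \bJbar_X \xrightarrow{\cong} \prod_j \bJbar_{Y_j}$ of Proposition~\ref{P:norm-sheaves}, the restriction of $A_L$ to each block $Y_i \hookrightarrow X$ is the $L_i$-twisted Abel map $A_{L_i}\colon Y_i \to \bJbar_{Y_i}$ sitting in the $i$-th factor, with constant $j$-th component equal to a uniquely determined $M_j^i \in \bJbar_{Y_j}$ for $j\ne i$. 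Hence
$$A_L(X) \;=\; \bigcup_{i=1}^{r} \, \{(M_1^i,\ldots,M_{i-1}^i)\} \times A_{L_i}(Y_i) \times \{(M_{i+1}^i,\ldots,M_r^i)\},$$
a ``tree-like'' arrangement of copies of $A_{L_i}(Y_i)$ along distinct coordinate-axes of the ambient product, glued at the images of separating nodes according to the relations $M_j^i=\mathfrak{m}_{n^j}\otimes L_j$ coming from the proof of Proposition~\ref{P:Abel-sep}. In particular, injectivity, the contraction of rational blocks, and the seminormality of the resulting singularities can each be checked one block at a time.

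Next I would dispose of the rational blocks and prove the embedding property on the non-rational ones. If $Y_i \cong \mathbb{P}^1$, then $\mathfrak{m}_p \cong \mathcal{O}_{\mathbb{P}^1}(-1)$ for every $p\in \mathbb{P}^1$, so $\mathfrak{m}_p\otimes L_i$ is independent of $p$ and $A_{L_i}$ is constant; combined with the decomposition above, this shows $A_L$ contracts $Y_i$ to a single point. If $Y_i\not\cong \mathbb{P}^1$, injectivity on points follows by noting that an isomorphism $\mathfrak{m}_p\otimes L_i\cong \mathfrak{m}_q\otimes L_i$ forces $\mathcal{O}_{Y_i}(p-q)\cong \mathcal{O}_{Y_i}$, and a standard argument on curves without separating points (comparing $h^0$ of $\mathfrak{m}_p^{\vee}$ with $h^0$ of $\mathfrak{m}_q^{\vee}$) then forces the normalization of $Y_i$ to be rational and $Y_i\cong \mathbb{P}^1$, a contradiction. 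For injectivity of the differential I would identify, at any $p\in Y_i$, the tangent map
$$ (dA_{L_i})_p\colon T_p Y_i \longrightarrow T_{A_{L_i}(p)}\bJbar_{Y_i}= \mathrm{Ext}^1_{Y_i}(\mathfrak{m}_p\otimes L_i, \mathfrak{m}_p\otimes L_i)= \mathrm{Ext}^1_{Y_i}(\mathfrak{m}_p,\mathfrak{m}_p)$$
with the connecting map of the sequence $0\to \mathfrak{m}_p\to \mathcal{O}_{Y_i}\to k_p\to 0$, and show that it is injective, exactly as in CCE; this uses only the universal-sheaf description of the tangent space provided by Fact~\ref{F:huge-Jac} together with the non-rationality of $Y_i$.

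Finally, to obtain the description of the image at a contracted rational block, I would examine the local picture of $A_L(X)$ at the image point of a $\mathbb{P}^1$-block $Y_i$. By the product decomposition, the branches of $A_L(X)$ through this point are precisely the images $A_{L_j}(Y_j)$ of the separating blocks $Y_j$ adjacent to $Y_i$ in the tree of separating nodes, and these branches live inside pairwise distinct factors $\bJbar_{Y_j}$ of the ambient product. Their tangent spaces at the meeting point therefore span a direct sum in $\bigoplus_{j\neq i} T\bJbar_{Y_j}$, yielding an ordinary singularity with linearly independent tangent directions, which is by definition a seminormal point. The main obstacle will be the tangent-injectivity step at non-separating singular points of a non-rational block $Y_i$: CCE's proof is written under a Gorenstein hypothesis, whereas condition~$(\dagger)$ permits arbitrary (non-Gorenstein) singularities away from separating nodes; however, since such singularities are by construction non-separating in $Y_i$, the ideals $\mathfrak{m}_p$ are simple and the $\mathrm{Ext}^1$-computation in $\bJbar_{Y_i}$ is controlled by Fact~\ref{F:huge-Jac} without any recourse to the dualizing sheaf, which is precisely why the CCE argument extends verbatim to our setting.
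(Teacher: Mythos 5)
Your proposal is correct and follows essentially the same route as the paper, which offers no independent argument but simply observes that the proof of \cite[Thm. 6.3]{CCE} carries over verbatim to curves satisfying condition $(\dagger)$. Your blockwise reduction via Proposition \ref{P:Abel-sep}, the contraction of the rational blocks because $\m_p\otimes L_i$ is independent of $p$ on $\PP^1$, the deferral to CCE for injectivity of $A_{L_i}$ on points and tangent vectors of a non-rational block, and the linear independence of the tangent directions of the branches coming from distinct factors of $\prod_j \bJbar_{Y_j}$ are exactly the ingredients of that argument, and your closing remark identifying where the Gorenstein hypothesis could enter matches the paper's own justification for the extension.
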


\section{Examples: Locally planar curves of arithmetic genus $1$}\label{S:genus1}

In this section, we are going to study fine compactified Jacobians and Abel maps for singular curves of arithmetic genus $1$ with locally planar singularities. According to Fact \ref{F:Gor-dagger}, such a curve $X$ satisfies the condition $(\dagger)$ and therefore, using Proposition \ref{P:norm-sheaves}
and Proposition \ref{P:Abel-sep}, we can reduce the study of fine compactified Jacobians and Abel maps to the
case where  $X$ does not have separating points (or equivalently separating nodes).
Under this additional assumption, a classification is possible.

\begin{fact}\label{F:clas-g1}
Let $X$ be a (reduced) connected singular curve without separating points, with locally planar singularities and $p_a(X)=1$. Then $X$ is one of the curves depicted
in Figure \ref{Fig:Kod}, which are called Kodaira curves.
\end{fact}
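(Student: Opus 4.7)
The plan is to combine the arithmetic-genus formula with the no-separating-node condition (valid by Fact~\ref{F:Gor-dagger}, since $X$ is locally planar hence Gorenstein) to reduce the classification to finitely many combinatorial possibilities. Writing $\nu:X^\nu\to X$ for the normalization, $\gamma=\gamma(X)$ for the number of irreducible components, $g^\nu=g^\nu(X)$ for the geometric genus, and $\delta(X)=\sum_p \delta_p$ for the total $\delta$-invariant, the cohomology sequence of $0\to \O_X \to \nu_*\O_{X^\nu} \to \nu_*\O_{X^\nu}/\O_X\to 0$ gives $p_a(X) = g^\nu + \delta(X) - \gamma + 1$, using $h^0(X^\nu,\O_{X^\nu}) = \gamma$ (because $X$ is connected). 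Thus $p_a(X)=1$ is equivalent to $\delta(X)+g^\nu=\gamma$.

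I would then introduce the bipartite incidence graph $H$ with vertex set the disjoint union of the irreducible components and the singular points of $X$, where each singular point $p$ is joined to each component $C_i$ by one edge per branch of $p$ lying on $C_i$; in particular $\deg_H(p)=b_p$, the branch number of $p$. Connectedness of $X$ forces that of $H$, giving
\begin{equation*}
\sum_p(b_p-1) \;=\; \gamma-1+h_1(H).
\end{equation*}
Combining the classical bound $\delta_p \geq b_p-1$ (with equality iff $p$ is an ordinary $b_p$-fold point) with $\delta+g^\nu=\gamma$ yields
\begin{equation*}
g^\nu+\Delta+h_1(H)=1, \qquad \Delta:=\sum_p(\delta_p-b_p+1)\geq 0,
\end{equation*}
leaving only three possible triples: (a)~$(0,0,1)$, (b)~$(0,1,0)$, (c)~$(1,0,0)$.

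In case (a) each singularity is an ordinary $m$-fold point; if $n_m$ denotes how many there are, the relations $\sum(m-1)n_m=\gamma$ and $\sum\binom{m}{2}n_m=\gamma$ subtract to $\sum\binom{m-1}{2}n_m=0$, which forces $n_m=0$ for $m\geq 3$. Hence $X$ has $\gamma$ smooth rational components joined at $\gamma$ nodes, and requiring that no node be separating forces the nodal dual graph to be a pure $\gamma$-cycle (a self-loop when $\gamma=1$), yielding type $I_\gamma$. In case (b) $H$ is a tree with exactly one ``special'' singularity $p_0$ satisfying $\delta_{p_0}=b_{p_0}$; any other singularity would be an ordinary node, but such a node is a bridge of the tree $H$, so its partial normalization disconnects $X$ and it is a separating point of $X$ (in the sense of Convention~\ref{N:sep-node}, since the local intersection length there is one). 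Hence $p_0$ is the only singularity, and the planar inequality $\delta_p\geq \binom{b_p}{2}$ restricts $(b_{p_0},\delta_{p_0})$ to $(1,1)$, $(2,2)$ or $(3,3)$, giving respectively the cusp (type II, $\gamma=1$), the tacnode (type III, $\gamma=2$) and the ordinary triple point (type IV, $\gamma=3$). Case (c) is eliminated similarly: a tree $H$ forbids self-nodes, and $\sum(b_p-1)=\sum\binom{b_p}{2}=\gamma-1$ subtract to $\sum\binom{b_p-1}{2}=0$, so all singularities are connecting nodes lying on a tree, each of which is then a separating point.

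The main obstacle is case (b). Beyond the general inequality $\delta_p\geq \binom{b_p}{2}$ (which rules out $b_p\geq 4$), one must classify the planar singularities with $\delta_p=b_p$ for $b_p\in\{1,2,3\}$: the unibranch case requires a Puiseux-series argument showing the local ring is forced to be $k[[t^2,t^3]]$, while the bibranch and tribranch cases follow from the additivity $\delta_p=\sum\delta(B_i)+\sum_{i<j} I(B_i,B_j)$, which makes tangency or non-smoothness of any branch push $\delta_p$ strictly above $b_p$. A secondary point is the justification that non-node planar singularities are automatically non-separating, which rests on the observation that at such a point the local intersection of any subcurve with its complement has length at least two.
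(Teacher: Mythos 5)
Your proof is correct, but it takes a genuinely different route from the paper's, which is essentially a two-step citation: since $p_a(X)=1$ and $X$ has no separating points, the dualizing sheaf is trivial, $\omega_X\cong\O_X$, by \cite[Example 39]{est1}, and curves with trivial dualizing sheaf were classified by Catanese \cite[Prop. 1.18]{Cat}; one then inspects that list and retains the locally planar members. You instead give a self-contained numerical classification: $p_a=1$ gives $\delta(X)+g^\nu=\gamma$, connectedness of the incidence graph gives $\sum_p(b_p-1)=\gamma-1+h_1(H)$, and the bound $\delta_p\geq b_p-1$ then forces $g^\nu+\Delta+h_1(H)=1$, after which the three cases are settled by the planar estimate $\delta_p\geq\binom{b_p}{2}$ together with the no-separating-point hypothesis. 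What your route buys is independence from Catanese's classification and from the triviality of $\omega_X$ (which you never need); what it costs is exactly the local analysis you flag at the end, namely identifying the planar germs with $\delta_p=b_p\leq 3$ as the cusp, tacnode and ordinary triple point --- this is the content one otherwise outsources to \cite{Cat}. (For the unibranch case prefer the value-semigroup argument --- $\delta=1$ forces the semigroup $\langle 2,3\rangle$, hence $\wh{\O}\cong k[[t^2,t^3]]$ --- over Puiseux series, since the paper works in arbitrary characteristic.) Two minor tightenings: in case (a) you needn't assert that each singularity is an ordinary $m$-fold point before computing; combining $\delta_p=b_p-1$ (from $\Delta=0$) directly with $\delta_p\geq\binom{b_p}{2}$ gives $b_p=2$ at once. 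And in case (b), when you call a node $p\neq p_0$ a bridge, you are implicitly using that its two branches lie on distinct components --- which does hold, since a self-node would create a $2$-cycle in the tree $H$ --- and that no other singular point joins the two resulting pieces; both follow from $H$ being a tree, but deserve a sentence.
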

\begin{proof}
Since $X$ has non separating points and $p_a(X)=1$ then $X$ has trivial canonical sheaf
by \cite[Example 39]{est1}. These curves were classified by Catanese in \cite[Prop. 1.18]{Cat}.
An inspection of the classification in loc. cit. reveals that the only such singular curves that have
locally planar singularities are the ones depicted in Figure \ref{Fig:Kod}, i.e. the Kodaira curves.
\end{proof}

Note that the curves in Figure \ref{Fig:Kod} are exactly
the reduced fibers appearing in the well-known Kodaira classification of singular fibers of minimal
elliptic fibrations (see \cite[Chap. V, Sec. 7]{BPV}). This explains why they are called  Kodaira curves.

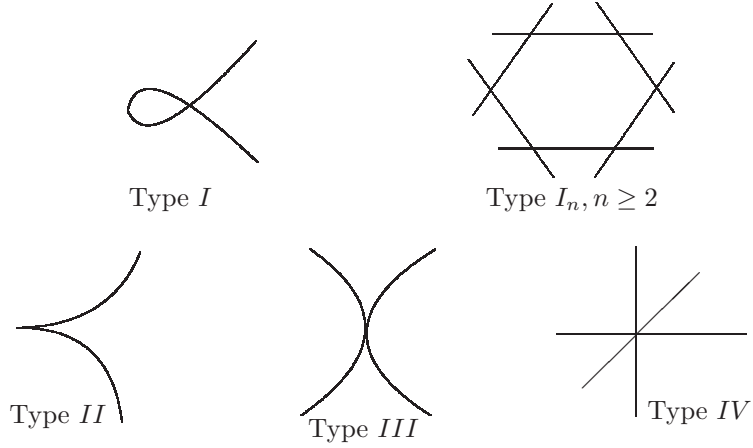
\begin{figure}[h]
\begin{center}
\unitlength .8mm 
\linethickness{0.4pt}
\ifx\plotpoint\undefined\newsavebox{\plotpoint}\fi 
\begin{picture}(125.75,63.75)(0,126)
\qbezier(45.25,168.25)(25.875,187)(24,176.75)
\qbezier(24,176.75)(27.5,168.5)(45,188.25)
\put(31,162){\makebox(0,0)[cc]{Type $I$}}
\qbezier(26,153.25)(21.125,141)(5.75,140.75)
\qbezier(5.75,140.75)(21.125,141)(23,125.25)
\put(12.5,126){\makebox(0,0)[cc]{Type $II$}}
\put(84,189.5){\line(1,0){26.25}}
\qbezier(54,153.75)(72.75,140.5)(52.5,126.25)
\qbezier(74.5,153.75)(52.375,140)(73.75,126.25)
\put(62.75,123.75){\makebox(0,0)[cc]{Type $III$}}
\put(107.5,154.25){\line(0,-1){28.25}}
\put(94.5,139.75){\line(1,0){31.25}}
\put(98.75,130.75){\line(1,1){19.25}}
\put(118,126.75){\makebox(0,0)[cc]{Type $IV$}}
\put(85,170.5){\line(1,0){25.5}}
\multiput(80.75,176)(.0336787565,.0479274611){386}{\line(0,1){.0479274611}}
\multiput(113.75,176.5)(-.0337301587,.0482804233){378}{\line(0,1){.0482804233}}
\multiput(80,185.25)(.0337349398,-.0469879518){415}{\line(0,-1){.0469879518}}
\multiput(113.75,184.75)(-.0336787565,-.0492227979){386}{\line(0,-1){.0492227979}}
\put(97,162){\makebox(0,0)[cc]{Type $I_n, n\geq 2$}}
\end{picture}

\end{center}
\caption{Kodaira curves.}
\label{Fig:Kod}
\end{figure}

Abel maps for Kodaira curves behave particularly well, due to the following result proved in
\cite[Example 39]{est1}.

\begin{fact}[Esteves]\label{F:Abel-g1}
Let $X$ be a connected curve without separating points and such that $p_a(X)=1$.
Then for any $L\in \Pic(X)$ the image $A_L(X)\subseteq \bJbar_X$ of $X$ via the $L$-twisted Abel map is equal
to a fine compactified Jacobian $\ov{J}_X(\un q)$ of $X$ and $A_L$ induces an isomorphism
$$A_L:X\stackrel{\cong}{\longrightarrow} A_L(X)=\ov{J}_X(\un q).$$
\end{fact}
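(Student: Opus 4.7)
The proof strategy has three steps.

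\textbf{Step 1 (Finding a polarization).} Since $p_a(X) = 1$ and $X$ has no separating points, $X$ has trivial dualizing sheaf by \cite[Example 39]{est1}; in particular $X$ is Gorenstein. Lemma \ref{L:Im-Abel} then produces a general polarization $\un q$ on $X$ with $|\un q| = \chi(L) - 1$ such that $A_L(X) \subseteq \ov J_X(\un q)$. By Corollaries \ref{C:prop-fineJac} and \ref{C:connect}, the target $\ov J_X(\un q)$ is a reduced connected projective curve pure of dimension one.

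\textbf{Step 2 (Closed immersion).} I would show that $A_L\colon X \to \ov J_X(\un q)$ is injective on closed points and on tangent spaces. For points: an isomorphism $\m_p \otimes L \cong \m_q \otimes L$ forces $\m_p \cong \m_q$, and since $\m_p$ fails to be locally free precisely at $p$ (for singular $p$) while for smooth $p$ the multidegree of $\m_p$ identifies the irreducible component containing $p$, one concludes $p = q$ by invoking injectivity of the restriction of the Abel-Jacobi map to the smooth locus of each irreducible component. For tangent spaces: the differential at $p$ is the Kodaira-Spencer map of the family $\I_\Delta \otimes p_1^* L$, a natural map $T_p X \to \Ext^1_{\O_X}(\m_p, \m_p)$, whose nonvanishing is checked by an explicit computation using a planar presentation $\wh\O_{X,p} = k[[x,y]]/(f)$ to resolve $\m_p$. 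Since $A_L$ is proper, injective on points and injective on tangent spaces, it is a closed immersion.

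\textbf{Step 3 (Matching components).} By Corollary \ref{C:irre-comp}, $\ov J_X(\un q)$ has $c(X)$ irreducible components, each one-dimensional by Corollary \ref{C:prop-fineJac}\eqref{C:prop-fineJac2}. A case check on the Kodaira list (Fact \ref{F:clas-g1}) shows $c(X)$ equals the number $\gamma$ of irreducible components of $X$: trivially for the irreducible types I and II (where $c(X) = \gamma = 1$); for type III (tacnode with $|C_1 \cap C_2| = 2$), a direct computation of the degree class group gives $c(X) = 2$; for type IV (ordinary triple point), the intersection matrix yields $c(X) = 3$; and for type $I_n$, $c(X) = n$ is the standard cycle-graph complexity. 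Since $A_L$ is an injective closed immersion, the $\gamma$ irreducible components of $X$ map bijectively to $\gamma = c(X)$ distinct components of $\ov J_X(\un q)$; each image is a one-dimensional closed subscheme of a one-dimensional irreducible target component, hence coincides with it. Therefore $A_L(X) = \ov J_X(\un q)$ and $A_L$ is an isomorphism onto its image.

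\textbf{The principal obstacle} is the tangent-space injectivity in Step 2 at non-nodal planar singular points (cusps, tacnodes, triple points in the Kodaira list), which requires an explicit Ext calculation. The locally planar assumption makes this tractable via a short free resolution of $\m_p$, but it remains the technical heart of the argument.
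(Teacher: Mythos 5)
The paper does not prove this statement at all: it is recorded as a Fact and attributed entirely to \cite[Example 39]{est1}, so the only meaningful comparison is with Esteves's argument, which is genuinely different from yours. Esteves's proof runs along the following lines: the hypotheses force $\omega_X\cong \O_X$, and one then inverts the Abel map sheaf-theoretically. A sheaf $I$ parametrized by the relevant fine compactified Jacobian has $\chi(I\otimes L^{-1})=\chi(\O_X)-1=-1$, so $\Hom(I\otimes L^{-1},\O_X)=\Hom(I\otimes L^{-1},\omega_X)\cong H^1(X,I\otimes L^{-1})^{\vee}\neq 0$ by duality; one checks, using stability, that a nonzero homomorphism $I\otimes L^{-1}\to \O_X$ must be injective with cokernel of length one, whence $I\cong \m_p\otimes L$ for a unique point $p$. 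Since the construction works in families, it gives a two-sided inverse to $A_L$ as a morphism of schemes. This buys two things your route does not: it requires no planarity hypothesis, and it bypasses all tangent-space and component-counting analysis.

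Measured as a self-contained proof, your proposal has two genuine gaps. First, the step you flag yourself --- injectivity of the differential of $A_L$ at the singular points --- is the entire content of the claim that $A_L$ is a closed immersion, and it is only announced, not carried out; injectivity on closed points alone does not suffice, since a bijective proper morphism of reduced curves need not be an isomorphism (normalization of a cusp) and in positive characteristic need not even be birational. Until that $\Ext$ computation is done, there is no proof. Second, and more structurally, Corollaries \ref{C:prop-fineJac}, \ref{C:connect}, \ref{C:irre-comp} and the classification Fact \ref{F:clas-g1} underlying your Step 3 all carry the hypothesis that $X$ has locally planar singularities, which is absent from the statement; the reduction to the Kodaira list and the identity $c(X)=\gamma(X)$ are only available under that assumption, so even if completed your argument proves a strictly weaker statement than the one asserted (and than the one Esteves proves). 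If you are content with the locally planar case --- which is all that Proposition \ref{P:Jac-Kod} actually uses --- the architecture of Steps 1 and 3 is sound, but Step 2 must be completed before this counts as a proof.
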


From the above Fact \ref{F:Abel-g1}, we deduce that, up to equivalence by translation (in the sense of Definition
\ref{D:transla}), there is exactly one fine compactified Jacobian that admits an Abel map and this fine compactified Jacobian is isomorphic to the curve itself.
This last property is indeed true for any fine compactified Jacobian of a Kodaira curve, as shown in the following

\begin{prop}\label{P:Jac-Kod}
Let $X$ be a Kodaira curve. Then every fine compactified Jacobian of $X$ is isomorphic to $X$.
\end{prop}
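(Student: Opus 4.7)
The plan is to argue that any fine compactified Jacobian $\ov{J}_X(\un q)$ is again a Kodaira curve of the same type as $X$, and hence isomorphic to it, using two geometric invariants: the number of irreducible components and the algebraic-group structure of the smooth locus.

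First I would assemble the structural results already available. By Corollary \ref{C:prop-fineJac}, the curve $\ov{J}_X(\un q)$ is reduced with locally complete intersection singularities and of pure dimension $p_a(X)=1$; by Corollary \ref{C:connect} it is connected; and by Corollary \ref{C:triv-can} its dualizing sheaf is trivial. Hence $\ov{J}_X(\un q)$ is a connected reduced Gorenstein curve of arithmetic genus one with $\omega=\O$. By Catanese's classification \cite[Prop.~1.18]{Cat}, as invoked in Fact \ref{F:clas-g1}, such a curve must be either a smooth elliptic curve or a Kodaira curve. Since $X$ is singular, its generalized Jacobian $J(X)$ is a one-dimensional non-complete commutative algebraic group, so $J_X(\un q)\cong\coprod_{c(X)} J(X)$ (by Theorem A) is not complete, forcing $\ov{J}_X(\un q)\supsetneq J_X(\un q)$ to be singular. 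This rules out the smooth elliptic alternative, so $\ov{J}_X(\un q)$ is a Kodaira curve.

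Next, I would identify the specific Kodaira type of $\ov{J}_X(\un q)$. By Corollary \ref{C:irre-comp}, $\ov{J}_X(\un q)$ has $c(X)$ irreducible components, the same number as $X$. The key observation is that for a Kodaira curve $Y$, the smooth locus decomposes as a disjoint union of $c(Y)$ copies of a single one-dimensional algebraic group: $\Gm$ for Types $I$ and $I_n$ (each rational component contributes $\PP^1$ minus two special points), and $\Ga$ for Types $II$, $III$, $IV$ (each rational component contributes $\PP^1$ minus one special point). On the other hand, Theorem A identifies the smooth locus of $\ov{J}_X(\un q)$ with $\coprod_{c(X)} J(X)$. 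Thus whether $\ov{J}_X(\un q)$ is of ``multiplicative'' type $I/I_n$ or ``additive'' type $II/III/IV$ is determined by whether $J(X)\cong\Gm$ or $J(X)\cong\Ga$.

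To finish I would compute $J(X)$ for each Kodaira type via the standard long exact sequence coming from
\[ 1 \to \O_X^*\to \nu_*\O_{X^\nu}^*\to \nu_*\O_{X^\nu}^*/\O_X^*\to 1, \]
using that all irreducible components of a Kodaira curve are rational so $\Pic^0(X^\nu)=0$. The expected outcome is $J(X)\cong\Gm$ for Types $I$ and $I_n$ and $J(X)\cong\Ga$ for Types $II$, $III$, and $IV$. Combining this with the previous step, the pair $(c(X),J(X))$ uniquely determines the Kodaira type of $\ov{J}_X(\un q)$, forcing it to coincide with the Kodaira type of $X$. Since within each Kodaira type there is a unique representative up to isomorphism---$\PP^1$ has enough automorphisms to normalize any configuration of marked points on each component---we conclude that $\ov{J}_X(\un q)\cong X$. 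The hardest part will be the local computation of $J(X)$ for the tacnodal case (Type $III$) and the triple-point case (Type $IV$), where the local unit-group contribution $\tilde\O_p^*/\O_p^*$ has dimension $\delta>1$: one must check that the $\Gm$-factors arising from comparisons of constant values of units on different branches are killed by the image of $H^0(\nu_*\O_{X^\nu}^*)\to \tilde\O_p^*/\O_p^*$, while a residual $\Ga$-factor survives from the comparison of tangent-level data. This calculation is short but decisive, since it is what separates $I_2$ from $III$ and $I_3$ from $IV$.
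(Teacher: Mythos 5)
Your overall strategy genuinely differs from the paper's in its first half: you work intrinsically with $\ov{J}_X(\un q)$, using Theorem A (reduced, l.c.i., connected, pure dimension one, trivial dualizing sheaf) to place it in Catanese's classification of genus-one curves with trivial dualizing sheaf, whereas the paper realizes $\ov{J}_X(\un q)$ as the special fibre of a minimal elliptic fibration $\ov{J}_f(\un q)\to \Spec R$ --- the regular minimal model of $\Pic^{|\un q|}(\SS_K)$ attached to a $1$-parameter regular smoothing, identified via Fact \ref{F:Jac-Ner} --- and then quotes Kodaira's classification of reduced fibres. Your endgame, distinguishing the type by the pair consisting of the number of irreducible components and of $\Gm$ versus $\Ga$, coincides with the paper's.

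The gap is in the first step. Fact \ref{F:clas-g1} has \emph{locally planar singularities} as a hypothesis, and its proof explicitly says that Catanese's list of singular curves with trivial dualizing sheaf must be pruned of the members that are not locally planar. What Corollary \ref{C:prop-fineJac} gives you for $\ov{J}_X(\un q)$ is only that it is reduced with l.c.i. singularities; local planarity of fine compactified Jacobians is established nowhere in the paper (and for the non-nodal Kodaira types no local description of $\ov{J}_X(\un q)$ is available). The unpruned list is strictly larger than the union of smooth elliptic curves and Kodaira curves: it contains the elliptic $m$-fold point curves for $m\geq 4$, namely $m$ rational branches through a single Gorenstein singularity of embedding dimension $m-1$; for $m=4$ this is the cone over four general points of $\PP^2$, a complete intersection of two quadric cones in ${\mathbb A}^3$, hence even l.c.i. but not planar. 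So the dichotomy you assert does not follow from the properties you have listed. The gap is repairable inside your framework: each extra curve on the list has $m\geq 4$ components and smooth locus a disjoint union of copies of $\Ga$, which is incompatible with your invariants, since $J(X)\cong\Ga$ forces $X$ to be of type $II$, $III$ or $IV$ and hence $c(X)\leq 3$. But as written the step is unjustified, and it is exactly the point that the paper's detour through the regular surface $\ov{J}_f(\un q)$ is designed to settle: a fibre of a fibration of a regular surface is a divisor in a regular surface, hence automatically locally planar.
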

\begin{proof}
Let $\ov{J}_X(\un q)$ be a fine compactified Jacobian of $X$. By Proposition \ref{P:1parsmooth}, we can find a $1$-parameter regular smoothing $f:\SS\to B=\Spec R$ of $X$
(in the sense of Definition \ref{D:1par-sm}), where $R$ is a complete discrete valuation ring with quotient field $K$. Note that the generic fiber $\SS_K$ of $f$ is an elliptic curve.
Following the notation of  \S\ref{S:1par-sm}, we can form the $f$-relative fine compactified Jacobian $\pi:\ov{J}_f(\un q)\to B$ with respect to the polarization $\un q$.
Recall that $\pi$ is a projective and flat morphism whose generic fiber is $\Pic^{|\un q|}(\SS_K)$ and whose special fiber is $\ov{J}_X(\un q)$.
Using Theorem \ref{T:univ-Jac}, it is easy to show that if we choose a generic $1$-parameter smoothing $f:\SS\to B$ of $X$, then the surface $\ov{J}_f(\un q)$ is
regular. Moreover, Fact \ref{F:Jac-Ner} implies that the smooth locus $J_f(\un q)\to B$ of $\pi$ is isomorphic to the N\'eron model of the generic fiber $\Pic^{|\un q|}(\SS_K)$.
Therefore, using the well-known relation between the N\'eron model and  the regular minimal model  of the elliptic curve $\Pic^{|\un q|}(\SS_K)\cong \SS_K$ over $K$
(see \cite[Chap. 1.5, Prop. 1]{BLR}), we deduce that $\pi: \ov{J}_f(\un q)\to B$ is the regular minimal model of $\Pic^{|\un q|}(\SS_K)$. In particular,
$\pi$ is a minimal elliptic fibration with reduced fibers and therefore, according to Kodaira's classification (see see \cite[Chap. V, Sec. 7]{BPV}),  the special fiber $\ov{J}_X(\un q)$
of $\pi$ must be a smooth elliptic curve or a Kodaira curve.

According to Corollary \ref{C:irre-comp},  the number of irreducible components of $\ov{J}_X(\un q)$ is equal to the complexity $c(X)$ of $X$.
However, it is very easy to see that for a Kodaira curve $X$ the complexity number $c(X)$ is equal to the number of irreducible components of $X$.
Therefore if $c(X)\geq 4$, i.e. if $X$ is of Type $I_n$ with $n\geq 4$, then necessarily $\ov{J}_X(\un q)$ is of type $I_n$, hence it is isomorphic to $X$.

In the case $n\leq 3$, the required isomorphism $\ov{J}_X(\un q)\cong X$ follows from the fact that the smooth locus of $\ov{J}_X(\un q)$ is isomorphic to a disjoint union of torsors
under $\Pic^{\un 0}(X)$ (see Corollary \ref{C:prop-fineJac}) and that
$$\Pic^{\un 0}(X)=\begin{cases}
\Gm & \:\: \text{Êif }ÊX \:\: \text{ is of Type }ÊI \:\: \text{ or } I_n \: (n\geq 2),\\
\Ga & \:\: \text{ if } X \:\: \text{ is of Type } II, III \:\: \text{ or } IV.\\
\end{cases}
$$

\end{proof}

Let us now classify the fine compactified Jacobians for a Kodaira curve $X$, up to equivalence by translation, and indicate which of them admits an Abel map.

\vspace{0,1cm}

\fbox{$X$ is of Type $I$ or Type $II$}

\vspace{0,1cm}

Since the curve $X$ is irreducible, we have that the fine compactified Jacobians of $X$ are of the
form $\bJbar_X^d$ for some $d\in \Z$. Hence they are all equivalent by translation and each of them admits an Abel map.

\vspace{0,2cm}

\fbox{$X$ is of Type $I_n$, with $n\geq 2$}

\vspace{0,1cm}

The fine compactified Jacobians of $X$ up to equivalence by translation and their behavior with respect to the Abel map
are described in the following proposition.

\begin{prop}\label{P:JacIn}
Let $X$ be a Kodaira curve of type $I_n$ (with $n\geq 2$) and let $\{C_1,\ldots, C_n\}$ be the irreducible components of $X$,
ordered in such a way that, for any $1\leq i \leq n$, $C_i$ intersects $C_{i-1}$ and $C_{i+1}$,
with the cyclic convention that $C_{n+1}:=C_1$.
\begin{enumerate}[(i)]
\item \label{P:JacIn1} Any fine compactified Jacobian is equivalent by translation to a unique fine compactified Jacobian of the form
$\ov{J}_X(\un q)$ for a polarization $\un q$ that satisfies
\begin{equation*}
\un q=\left(q_1,\ldots,q_{n-1},-\sum_{i=1}^{n-1} q_i\right) \text{ with } 0\leq q_i<1, \tag{*}
\end{equation*}
\begin{equation*}
\sum_{i=r}^{s} q_i \not\in \Z \text{ for any } 1\leq r \leq s \leq n-1, \tag{**}
\end{equation*}
\begin{equation*}
q_i=\frac{k_i}{n} \text{ with } k_i=1,\ldots, n-1,  \: \text{ for any } 1\leq i\leq n-1. \tag{***}
\end{equation*}
In particular, there are exactly $(n-1)!$ fine compactified Jacobians of $X$ up to equivalence by translation.
\item \label{P:JacIn2}
The unique fine compactified Jacobian, up to equivalence by translation, that admits an Abel map is
$$\ov{J}_X\left(\frac{n-1}{n}, \ldots, \frac{n-1}{n}, -\frac{(n-1)^2}{n}\right).$$
\end{enumerate}
\end{prop}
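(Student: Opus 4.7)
The plan for part (i) combines three ingredients. First, Proposition \ref{P:finite-eq} allows one to translate any fine compactified Jacobian to a representative with $0 \leq q_i < 1$ for $i = 1, \ldots, n-1$, and a further translation by a multidegree supported on $C_n$ normalizes $q_n = -\sum_{i=1}^{n-1} q_i$; distinct points of $[0,1)^{n-1}$ correspond to inequivalent translation classes, so (*) selects a unique representative per class. Second, in the cycle $I_n$ the proper subcurves $Y$ with both $Y$ and $Y^c$ connected are precisely the arcs $\{C_r, C_{r+1}, \ldots, C_s\}$ (indices read cyclically); by passing to complements we may assume $C_n \notin Y$, which gives the range $1 \leq r \leq s \leq n-1$. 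By Remark \ref{R:conn-pola}, genericity of $\un q$ is then equivalent to (**).

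The heart of the proof is the enumeration of the chambers cut on $[0,1)^{n-1}$ by the arrangement $\{\sum_{i=r}^{s} q_i \in \Z\}_{1\le r\le s\le n-1}$. Associate to each generic $\un q$ the $n$ points $t_0 := 0$ and $t_j := \{q_1 + \cdots + q_j\} \in \R/\Z$ for $1 \leq j \leq n-1$; condition (**) says these are pairwise distinct. Since $\sum_{i=r}^{s} q_i = t_s - t_{r-1}$, the integer parts of all arc-sums are determined by, and in turn determine, the cyclic order of $t_0, \ldots, t_{n-1}$ on $\R/\Z$. As $t_0 = 0$ is fixed, there are exactly $(n-1)!$ such orders, and by the discussion after \eqref{E:arrang} each order corresponds to a distinct fine compactified Jacobian. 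To see that each chamber contains a unique point satisfying (***), observe that if $q_i = k_i/n$ with $k_i \in \{1, \ldots, n-1\}$ then the partial sums $K_j := k_1 + \cdots + k_j \pmod{n}$ lie in $\{1, \ldots, n-1\}$ (again by (**)), and requiring $(K_1, \ldots, K_{n-1})$ to realize a prescribed order forces $(K_1, \ldots, K_{n-1})$ to be a permutation of $\{1, \ldots, n-1\}$; then $k_i \equiv K_i - K_{i-1} \pmod{n}$ is uniquely determined in $\{1, \ldots, n-1\}$. This sets up a bijection between chambers and permutations in $S_{n-1}$.

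For part (ii), by Fact \ref{F:Abel-g1} every Abel map $A_L$ is an isomorphism from $X$ onto a fine compactified Jacobian $\ov{J}_X(\un q_L)$; replacing $L$ by $L \otimes M$ translates $\un q_L$ by the multidegree of $M$, so up to equivalence by translation there is at most one fine compactified Jacobian admitting an Abel map. It remains only to identify its normalized representative. Choose $L \in \Pic(X)$ with $\deg_{C_i} L = 1$ for every $i$, so $\chi(L) = n$. Applying the construction in the proof of Lemma \ref{L:Im-Abel}, and using $\omega_X = \O_X$ (since $X$ is Kodaira) together with $\gamma(X) = n$, one obtains $\un q'_{C_i} = \deg_{C_i}(L) - \tfrac{1}{n} = \tfrac{n-1}{n}$ for each $i$. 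This $\un q'$ is already general: for a proper arc $Y$ of size $k \in \{1, \ldots, n-1\}$ one has $\un q'_Y = k(n-1)/n \notin \Z$ since $\gcd(n, n-1) = 1$ and $0 < k < n$, so no perturbation is required. Finally, subtracting the multidegree $(0, \ldots, 0, n-1)$ (for instance that of $\O_X((n-1)p)$ with $p \in C_n$ a smooth point) normalizes $\un q'$ to the form in (*), yielding the asserted representative $\bigl(\tfrac{n-1}{n}, \ldots, \tfrac{n-1}{n}, -\tfrac{(n-1)^2}{n}\bigr)$. The main obstacle is the chamber–permutation bijection in part (i), together with the verification that condition (***) picks out a canonical lattice representative within each chamber; the reductions via Proposition \ref{P:finite-eq} and the computation of part (ii) are essentially direct consequences of the cited results.
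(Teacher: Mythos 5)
Your proof is correct and follows the same route as the paper's: normalize by translation to the fundamental domain $[0,1)^{n-1}$ with $|\un q|=0$, observe that the proper subcurves with connected complement are exactly the arcs $C_r\cup\dots\cup C_s$ so that genericity becomes (**), and then count the chambers of the induced arrangement; for part (ii) you invoke Lemma \ref{L:Im-Abel} and Fact \ref{F:Abel-g1} exactly as the paper does (the paper simply takes $L$ of multidegree $(1,\ldots,1,-(n-2))$ and lands directly on the normalized representative, whereas you take multidegree $(1,\ldots,1)$ and translate afterwards). The only substantive difference is that you actually carry out the two combinatorial steps that the paper declares an ``entertaining exercise'' left to the reader — the bijection between chambers and linear orders of the fractional partial sums $t_1,\ldots,t_{n-1}$, giving the count $(n-1)!$, and the identification of the unique (***)-point in each chamber via the permutation $(K_1,\ldots,K_{n-1})$ — and your arguments for both are correct.
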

\begin{proof}
Part \eqref{P:JacIn1}:  given any polarization $\un q'$, there exists a unique polarization $\un q$ that satisfies conditions (*) and such that $\un q-\un q'\in \Z^n$.
Since  any connected subcurve $Y\subset X$ is such that $Y$ or $Y^c$ is equal to $C_r\cup\ldots\cup C_s$ (for some $1\leq r\leq s\leq n-1$),
we deduce that  a polarization $\un q$  that satisfies (*) is general if and only if it satisfies (**). Hence any fine compactified Jacobian
is equivalent by translation to a unique $\ov{J}_X(\un q)$,  for a polarization $\un q$ that satisfies (*) and (**).
Consider now the arrangement of hyperplanes in $\R^{n-1}$ given by
$$\left\{\sum_{i=r}^s q_i= n\right\}$$
for all $1\leq r\leq s\leq n-1$ and all $n\in \Z$. This arrangement of hyperplanes cuts the hypercube $[0,1]^{n-1}$ into finitely many chambers.
Notice that a polarization $\un q$ satisfies (*) and (**) if and only if it belongs to the interior of one of these chambers.
Arguing as in the proof of Proposition \ref{P:finite-eq}, it is easy to see that two polarizations $\un q$ and $\un q'$ satisfying (*) and (**) belong to the same
chamber if and only if $\ov{J}_X(\un q)=\ov{J}_X(\un q')$.
Now it is an entertaining exercise (that we leave to the reader) to check that any chamber contains exactly one polarization $\un q$ that satisfies  (***).
This proves the first claim of part \eqref{P:JacIn1}. The second claim in part \eqref{P:JacIn1}  is an easy counting argument that we again leave to the reader.

Part \eqref{P:JacIn2}: if we take a line bundle $L$ of multidegree $\un{\deg} L=(1,\ldots, 1, -(n-2))$ then from the proof of Lemma \ref{L:Im-Abel}
it follows that $\Im A_L \subseteq \ov{J}:=\ov{J}_X(\frac{n-1}{n}, \ldots, \frac{n-1}{n}, -\frac{(n-1)^2}{n})$. Therefore, from Fact \ref{F:Abel-g1},
it follows that $\ov{J}$
is the unique fine compactified Jacobian, up to equivalence by translation, that admits
an Abel map.


\end{proof}

\fbox{$X$ is of Type $III$}

\vspace{0,1cm}

Since $X$ has two irreducible components, then every fine compactified Jacobian of $X$ admits an Abel map
by Remark \ref{R:exi-Abelmap}. By Fact \ref{F:Abel-g1}, all fine compactified Jacobians of $X$ are therefore equivalent by translation.

\vspace{0,1cm}

\fbox{$X$ is of Type $IV$}

\vspace{0,1cm}

The fine compactified Jacobians of $X$ up to equivalence by translation and their behavior with respect to the Abel map
are described in the following proposition.

\begin{prop}\label{P:JacIV}
Let $X$ be a Kodaira curve of type $IV$.
\begin{enumerate}[(i)]
\item \label{P:JacIV1} Any fine compactified Jacobian of $X$ is equivalent by translation to either $\ov{J}_1:=\ov{J}_X\left(\frac{2}{3}, \frac{2}{3}, -\frac{4}{3}\right)$ or
$\ov{J}_2:=\ov{J}_X\left(\frac{1}{3}, \frac{1}{3}, -\frac{2}{3}\right)$.
\item \label{P:JacIV2} $\ov{J}_1$ admits an Abel map while
$\ov{J}_2$ does not admit an Abel map.
\end{enumerate}
\end{prop}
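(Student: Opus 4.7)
The proof splits into two parts. I would first classify polarizations up to translation, then directly construct an Abel map into $\ov J_1$ and obstruct one into $\ov J_2$ by evaluating at the triple point $n$ of $X$.

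For part (i), arguing as in Proposition~\ref{P:finite-eq}, every fine compactified Jacobian of $X$ is equivalent by translation to a unique $\ov J_X(\un q)$ with $\un q = (q_1, q_2, q_3) \in [0,1)^3$ and $q_1+q_2+q_3 \in \Z$. The only proper subcurves $Y \subset X$ with both $Y$ and $Y^c$ connected are $Y=C_i$ and $Y = C_j \cup C_k$, and in either case $\un q_Y \in \Z$ if and only if $q_i \in \Z$; hence by Remark~\ref{R:conn-pola}, generality is equivalent to $q_1, q_2, q_3 \in (0,1)$, forcing $|\un q| \in \{1,2\}$. On each of the two open simplices so defined the integer ceilings $\lceil \un q_Y \rceil$ are constant, so Remark~\ref{R:conn-subcurves} shows $\ov J_X(\un q)$ does not vary; this yields exactly two classes, represented by $\ov J_X(1/3,1/3,1/3)$ and $\ov J_X(2/3,2/3,2/3)$. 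Translating by $(0,0,-1)$ and $(0,0,-2)$ gives the representatives $\ov J_2$ and $\ov J_1$ of the statement.

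For part (ii) the key input is an explicit description of the non-locally-free sheaf $\m_n$ and of its torsion-free restrictions. When $p \in C_i$ is smooth, $\m_p = \O_X(-p)$ is a line bundle of degree $-1$ on $C_i$ and $0$ on the other two components; when $p = n$, the sheaf $\m_n$ is simple by \cite[Example 38]{est1} (since $X$ has no separating points) but not locally free, and a direct local computation at the planar triple point shows
\begin{equation*}
(\m_n)_{C_i} = \O_{C_i}(-n), \qquad (\m_n)_{C_j \cup C_k} = \text{maximal ideal of } n \text{ in } C_j \cup C_k,
\end{equation*}
both of Euler characteristic $0$ (using $\chi(\O_{C_i}) = 1 = \chi(\O_{C_j \cup C_k})$). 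Since tensoring with a line bundle commutes with taking the biggest torsion-free quotient, one has $\chi((\m_p \otimes L)_Y) = \chi((\m_p)_Y) + \deg(L|_Y)$, which makes every stability inequality linear in the multidegree of $L$.

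To build an Abel map into $\ov J_1$, I would take $L$ of multidegree $(1,1,-1)$, so $\chi(\m_p \otimes L) = 0 = |\un q|$, and verify the six inequalities $\chi((\m_p \otimes L)_Y) \ge \un q_Y$ case-by-case for $p = n$ and for $p$ generic in each $C_i$; the check is routine using the formula above. Conversely, suppose $L$ of multidegree $(a_1, a_2, a_3)$ with $a_1+a_2+a_3 = 1$ produced an Abel map into $\ov J_2 = \ov J_X(1/3, 1/3, -2/3)$. Evaluating at the triple point $n$ gives
\begin{equation*}
a_i = \chi((\m_n \otimes L)_{C_i}) \ge \lceil \un q_{C_i} \rceil,
\end{equation*}
that is $a_1, a_2 \ge 1$ and $a_3 \ge 0$, contradicting $a_1+a_2+a_3 = 1$. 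The main technical point is therefore the local analysis of $\m_n$ at the planar triple point; once the restrictions $(\m_n)_Y$ are in hand, both assertions of (ii) reduce to elementary arithmetic in $\Z^3$.
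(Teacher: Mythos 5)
Your proposal is correct and follows essentially the same route as the paper: part (i) by the chamber decomposition of $[0,1)^3$ (the paper defers this to the type $I_3$ argument of Proposition \ref{P:JacIn}), the positive half of (ii) via $L$ of multidegree $(1,1,-1)$ (your direct check of the inequalities is just an unwinding of the proof of Lemma \ref{L:Im-Abel}, which for this $L$ produces exactly the polarization $\left(\frac{2}{3},\frac{2}{3},-\frac{4}{3}\right)$), and the negative half by evaluating $A_L$ at the triple point and deriving $a_1,a_2\geq 1$, $a_3\geq 0$ against $a_1+a_2+a_3=1$, which is word for word the paper's contradiction.
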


\begin{proof}
Part \eqref{P:JacIV1}  is proved exactly as in the case of the Kodaira curve of type $I_3$ (see Proposition \ref{P:JacIn}\eqref{P:JacIn1}).


Part \eqref{P:JacIV2}: if we take a line bundle $L$ of multidegree $\un{\deg} L=(1,1,-1)$ then $\Im A_L\subseteq \ov{J}_1$ as  follows from the proof of Lemma \ref{L:Im-Abel}.
Therefore $\ov{J}_1$ admits an Abel map.

Let us now show that $\ov{J}_2$ does not admit  an Abel map. Suppose by contradiction that there
exists a line bundle $L$ of multidegree $\deg L=(d_1,d_2,d_3)$ such that $A_L(p)=\m_p\otimes L\in \ov{J}_2$
where $p$ denotes the unique singular point of $X$. The stability of $\m_p\otimes L$ with respect to
the polarization $(q_1,q_2,q_3)=(\frac{1}{3}, \frac{1}{3}, -\frac{2}{3})$ gives for any irreducible component
$C_i$ of $X$:
$$d_i=d_i-1+1=\deg_{C_i}(\m_p\otimes L)+1=\chi((\m_p\otimes L)_{C_i})>q_i.$$
We deduce that $d_1\geq 1$, $d_2\geq 1$ and $d_3\geq 0$. However if $\Im A_L\subset \ov{J}_2$ then the total degree
of $L$ must be one, which contradicts the previous conditions.

\end{proof}

\begin{remark}
Realize a Kodaira curve $X$ of Type $IV$ as the plane cubic with equation $y(x+y)(x-y)=0$.
One can show that  the singular points in $\ov{J}_1$ and in $\ov{J}_2$ correspond to two sheaves that are not locally isomorphic:
the singular point of $\ov{J}_1$ is the sheaf $\I_1:=\m_p\otimes L$ where $\m_p$ is the ideal sheaf of the point $p$ defined by $(x,y)$ and
$L$ is any line bundle on $X$ of multidegree $(1,1,-1)$;
the singular point of $\ov{J}_2$ is the sheaf $\I_2:=\I_Z\otimes M$ where $\I_Z$ is the ideal sheaf of the length $2$ subscheme defined by
$(x, y^2)$ and $M$  is any line bundle on $X$ of multidegree $(1,1,0)$.

Moreover, denote by $\wt{X}$ the seminormalization of $X$ (explicitly $\wt{X}$ can be realized as the union of three lines in projective space meeting in one point
with linearly independent directions) and $\pi:\wt{X}\to X$ is the natural map. Using Table 2 of \cite{Kas3} (where the unique singularity of $\wt{X}$ is called $\wt{D}_4$
and the unique singular point  of $X$ is classically called $D_4$), it can be shown that, up to the tensorization with a suitable line bundle on $X$,  $\I_2$ is the pushforward
of the trivial line bundle on $\wt{X}$ while $\I_1$ is the pushforward  of the canonical sheaf on $\wt{X}$, which is not a line bundle since $\wt{X}$ is not Gorenstein
(see Example \ref{Ex:non-dagger}).
\end{remark}

\begin{remark}
Simpson (possibly coarse) compactified Jacobians of Kodaira curves have been studied by A. C. L{\'o}pez Mart{\'{\i}}n in \cite[Sec. 5]{LM}, see also \cite{LM2}, \cite{LRST}.
\end{remark}

\vspace{0,3cm}

\noindent {\bf Acknowledgments.} We are extremely grateful to E. Esteves for several useful conversations on fine compactified Jacobians and for generously answering several mathematical questions.
We thank E. Sernesi for useful conversations on deformation theory of curves with locally planar singularities and E. Markman for asking about the embedded dimension of compactified Jacobians. 
We are very grateful to the referees for their very careful reading of the paper and for the many suggestions and questions that helped in improving a lot the presentation.

This project started while the first author was visiting the Mathematics Department of the University of Roma ``Tor Vergata'' funded by the ``Michele Cuozzo'' 2009 award. She wishes to express her gratitude to Michele Cuozzo's family and to the Department for this great opportunity.

The three authors were partially supported by the FCT (Portugal) project \textit{Geometria de espa\c cos de moduli de curvas e variedades abelianas} (EXPL/MAT-GEO/1168/2013).
M. Melo and F. Viviani were partially supported by the FCT projects \textit{Espa\c cos de Moduli em Geometria Alg\'ebrica} (PTDC/MAT/ 111332/2009) and \textit{Comunidade Portuguesa de Geometria Alg\'ebrica} (PTDC/MAT-GEO/0675/2012).  A. Rapagnetta and F. Viviani were partially supported by the MIUR project  \textit{Spazi di moduli e applicazioni} (FIRB 2012).
F. Viviani was partially supported by CMUC - Centro de Matem\'atica da Universidade de Coimbra.

\end{document}